\def\Ext{\operatorname{Ext}}
\def\ev{\operatorname{ev}}
\def\oM{\overline{M}}
\def\oPic{\overline{\Pic}}
\def\bP{\Bbb P}
\def\bp{\bold p}
\def\bE{\Bbb E}
\def\dlog{\operatorname{dlog}}
\def\Mat{\operatorname{Mat}}
\def\gr{\operatorname{gr}}
\def\Sym{\operatorname{Sym}}
\def\Tr{\operatorname{Tr}}
\def\Re{\operatorname{Re}}
\def\U{\operatorname{U}}
\def\el{\operatorname{el}}
\def\Bun{\operatorname{Bun}}
\def\cBun{\operatorname{\it Bun}}
\def\cO{\Cal O}
\def\cm{\mathfrak m}
\def\codim{\operatorname{codim}}
\def\bZ{\Bbb Z}
\def\bC{\Bbb C}
\def\bQ{\Bbb Q}
\def\bR{\Bbb R}
\def\bA{\Bbb A}
\def\cD{\Cal D}
\def\cS{\Cal S}
\def\tS{\tilde S}
\def\oS{\overline{S}}
\def\tS{\tilde{S}}
\def\phi{\varphi}
\def\oN{\overline{N}}
\def\cD{\Cal D}
\def\cM{\Cal M}
\def\PGL{\operatorname{PGL}}
\def\GL{\operatorname{GL}}
\def\dlog{\operatorname{dlog}}
\def\div{\operatorname{div}}
\def\Res{\operatorname{Res}}
\def\cO{\Cal O}
\def\Sing{\operatorname{Sing}}
\def\Pic{\operatorname{Pic}}
\def\MHV{\operatorname{MHV}}
\def\cPic{\operatorname{\Cal P\mathit{ic}}}
\def\ocPic{\overline{\cPic}}
\def\Hom{\operatorname{Hom}}
\def\Cal{\mathcal}
\def\eps{\varepsilon}
\def\bLa{{\bold\Lambda}}
\def\bbLa{\overline{\bold\Lambda}}
\def\arrow{\mathop{\longrightarrow}\limits}
\def\Efi#1#2#3#4#5{\displaystyle
#1\!\!-\!\!#2
\!\!-\!\!#3
\!\!-\!\!#4
\hskip-24.2pt\lower4.5pt\hbox{${\scriptstyle|}
\hskip-3.35pt\lower6pt\hbox{$#5$}$}}
\def\Evia#1#2#3#4#5{\displaystyle
#1\!\!-\!\!#2
\!\!-\!\!#3
\hskip-24.2pt\lower4.5pt\hbox{${\scriptstyle|}
\hskip-3.35pt\lower6pt\hbox{$#4\!\!-\!\!\!-\!\!\!-\!\!$}$\hskip2.3pt${\scriptstyle|}
\hskip-3.35pt\lower6pt\hbox{$#5$}$}}
\def\Ezia#1#2#3#4{\displaystyle
#1\!\!-\!\!#2
\hskip-14.8pt\lower4.5pt\hbox{${\scriptstyle|}
\hskip-3.35pt\lower6pt\hbox{$#3\!\!-\!\!$}$\hskip2.3pt${\scriptstyle|}
\hskip-3.35pt\lower6pt\hbox{$#4$}$}}
\def\Efia#1#2#3#4#5#6{\displaystyle
#1\!\!-\!\!#2
\!\!-\!\!#3
\!\!-\!\!#4
\hskip-24.2pt\lower4.5pt\hbox{${\scriptstyle|}
\hskip-3.35pt\lower6pt\hbox{$#5$}$\hskip5.7pt${\scriptstyle|}
\hskip-3.35pt\lower6pt\hbox{$#6$}$}}
\def\Esi#1#2#3#4#5#6{\displaystyle
#1\!\!-\!\!#2
\!\!-\!\!#3
\!\!-\!\!#4\!\!-\!\!#5
\hskip-24.2pt\lower4.5pt\hbox{${\scriptstyle|}
\hskip-3.35pt\lower6pt\hbox{$#6$
\lower3pt\hbox{\ }}$}}
\def\Esia#1#2#3#4#5#6#7{\displaystyle
#1\!\!-\!\!#2
\!\!-\!\!#3
\!\!-\!\!#4\!\!-\!\!#5
\hskip-24.2pt\lower4.5pt\hbox{${\scriptstyle|}
\hskip-3.35pt\lower6pt\hbox{$#6$\hskip-3.8pt\lower4.5pt\hbox{${\scriptstyle|}
\hskip-3.35pt\lower6pt\hbox{$#7$}$}}
\lower3pt\hbox{\ }$}}
\def\Ese#1#2#3#4#5#6#7{\displaystyle
#1\!\!-\!\!#2
\!\!-\!\!#3
\!\!-\!\!#4\!\!-\!\!#5\!\!-\!\!#6
\hskip-33.6pt\lower4.5pt\hbox{${\scriptstyle|}
\hskip-3.35pt\lower6pt\hbox{$#7$
\lower3pt\hbox{\ }
}$}}
\def\Esea#1#2#3#4#5#6#7#8{\displaystyle
#1\!\!-\!\!#2
\!\!-\!\!#3
\!\!-\!\!#4\!\!-\!\!#5\!\!-\!\!#6\!\!-\!\!#7
\hskip-33.6pt\lower4.5pt\hbox{${\scriptstyle|}
\hskip-3.35pt\lower6pt\hbox{$#8$
\lower3pt\hbox{\ }
}$}}
\def\Eei#1#2#3#4#5#6#7#8{\displaystyle
#1\!\!-\!\!#2
\!\!-\!\!#3
\!\!-\!\!#4\!\!-\!\!#5\!\!-\!\!#6\!\!-\!\!#7
\hskip-43.2pt\lower4.5pt\hbox{${\scriptstyle|}
\hskip-3.35pt\lower6pt\hbox{$#8$
\lower3pt\hbox{\ }
}$}}
\def\Eeia#1#2#3#4#5#6#7#8#9{{\displaystyle
#1\!\!-\!\!#2
\!\!-\!\!#3
\!\!-\!\!#4\!\!-\!\!#5\!\!-\!\!#6\!\!-\!\!#7\!\!-\!\!#8
\hskip-52.2pt\lower4.5pt\hbox{${\scriptstyle|}
\hskip-3.35pt\lower6pt\hbox{$#9$
\lower3pt\hbox{\ }
}$}}}
\def\arrow{\mathop{\longrightarrow}\limits}
\def\oM{\overline{M}}
\def\os{\overline{S}}
\def\pt{\operatorname{pt}}
\def\ocM{\overline{\Cal M}}
\def\oQ{\overline{Q}}
\def\tS{\tilde{S}}
\def\ts{\tS}
\def\ts7{\tilde{S}_7}
\def\cN{\Cal N}
\def\cQ{\Cal Q}
\def\bP{\Bbb P}
\def\bL{\Bbb L}
\def\cC{\Cal C}
\def\bE{\Bbb E}
\def\Bl{\operatorname{Bl}}
\def\dlog{operatorname{dlog}}
\def\RHom{\operatorname{RHom}}
\def\Sym{\operatorname{Sym}}
\def\Hom{\operatorname{Hom}}
\def\cO{\Cal O}
\def\cS{\Cal S}
\def\cD{\Cal D}
\def\Pic{\operatorname{Pic}}
\def\codim{\operatorname{codim}}
\def\bZ{\Bbb Z}
\def\bC{\Bbb C}
\def\bQ{\Bbb Q}
\def\bR{\Bbb R}
\def\bA{\Bbb A}
\def\cD{\Cal D}
\def\cS{\Cal S}
\def\tS{\tilde S}
\def\oS{\overline{S}}
\def\tS{\tilde{S}}
\def\oN{\overline{N}}
\def\cM{\Cal M}
\def\PGL{\operatorname{PGL}}
\def\dlog{\operatorname{dlog}}
\def\Res{\operatorname{Res}}
\def\Ker{\operatorname{Ker}}
\def\Sing{\operatorname{Sing}}
\def\Cal{\mathcal}
\def\Pic{\operatorname{Pic}}
\def\Ku{\operatorname{Kum}}
\def\Kt{\operatorname{K3}}
\def\oN{\overline{N}}
\def\os7p{\oS_7'}
\def\os7{\oS_7}
\def\on6{\oN_6}
\def\n6{\oN_6}
\def\cL{\Cal L}
\def\cG{\Cal G}
\def\dP{\bold{dP}_4}
\def\dPf{\bold{dP}_5}
\newtheoremstyle{mystyle}{}{}{\itshape}{}{\scshape}{.}{ }{}
\theoremstyle{mystyle}
\newtheorem{Theorem}{Theorem}[section]
\newtheorem{Proposition}[Theorem]{Proposition}
\newtheorem{Lemma}[Theorem]{Lemma}
\newtheorem{Corollary}[Theorem]{Corollary}
\newtheorem{Claim}[Theorem]{Claim}
\newtheoremstyle{myreview}{}{}{}{}{\scshape}{.}{ }{}
\theoremstyle{myreview}
\newtheorem{Definition}[Theorem]{Definition}
\newtheorem{Example}[Theorem]{Example}
\newtheorem{Remark}[Theorem]{Remark}
\newtheorem{Notation}[Theorem]{Notation}
\newtheorem{Assumption}[Theorem]{Assumption}
\newtheorem{Amplification}[Theorem]{Amplification}
\newtheorem{Summary}[Theorem]{Summary}
\newtheorem{Review}[Theorem]{}
\newcounter{et}[Theorem]
\def\cooltag{\tag{\arabic{section}.\arabic{Theorem}.\arabic{et}}\addtocounter{et}{1}}
\begin{document}
\title{Scattering amplitudes of stable curves}  
\author{Jenia Tevelev}

\address{Department of Mathematics and Statistics, University of Massachusetts Amherst, 710 North Pleasant Street, Amherst, MA 01003, USA}
\email{tevelev@umass.edu}



\begin{abstract}
Hypertree divisors on the 
moduli space 
of stable rational curves
 were introduced by Castravet and Tevelev in \cite{CT_Crelle}.
Their equations appear
as numerators of scattering amplitude forms 
for  $n$ 
particles in  $N=4$ 
Yang--Mills theory in the work of Arkani-Hamed, Bourjaily, Cachazo, Postnikov and Trnka \cite{MHV}. 
Rather than being a  coincidence, this is just the tip of the iceberg of an exciting relation between algebraic geometry and high energy physics.
We interpret leading singularities of 
scattering amplitudes of massless particles as  {\em probabilistic Brill--Noether theory}:
the study of statistics
of images of $n$ marked points under a random meromorphic function
uniformly distributed with respect to the translation-invariant volume form of the Jacobian.
We focus on the maximum helicity violating case, which 
leads to a beautiful physics-inspired geometry for various classes of complex algebraic curves: smooth, stable, hyperelliptic, real algebraic, etc.
\end{abstract}

\maketitle

\section{Introduction}

\begin{Review}
In physics, momenta 
of $n$ particles satisfy the momentum conservation law.
$\bold p_1+\ldots+\bold p_n=0$.
In~mathematics,  meromorphic forms $\omega$ 
with simple poles $p_1,\ldots,p_n$ (log forms) on 
a compact Riemann surface
satisfy the residue theorem:
\vskip-2pt
\begin{figure}[htbp]
\makebox[0.32\textwidth]{\vbox{$$\bold p_1+\ldots+\bold p_n=0$$\\ \ \\ \ }}
\makebox[0.25\textwidth]{\vbox{\includegraphics[height=0.8in]{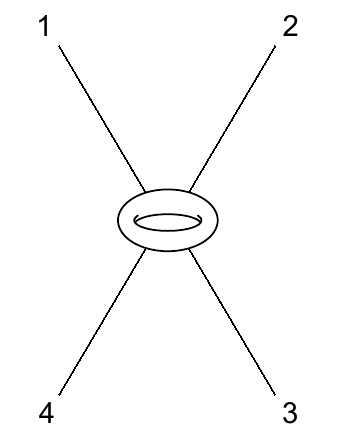}}}
\ \ \ \makebox[0.32\textwidth]{\vbox{$$\Res_{p_1}\omega+\ldots+\Res_{p_n}\omega=0$$\\ \ \\ \ }}
\end{figure}
\vskip-2pt
\noindent
Let $C$ be a stable  projective algebraic  curve with marked points $p_1,\ldots,p_n$. The exact sequence 
 $0\to H^0(C,\omega_C)\to H^0(C,\omega_C(p_1+\ldots+p_n))\arrow^{\Res}\bC^n \arrow^{\Sigma}\bC\to0$
 (valid for $n\ge1$)
 shows that we can always view momenta of $n$  one-dimensional particles satisfying the momentum conservation law
as residues of a form $\omega\in H^0(C,\omega_C(p_1+\ldots+p_n))$. For a smooth curve $C$, 
ambiguities in the choice of $\omega$ can be eliminated by fixing its integrals along periods.
For a nodal curve $C$, a local section of $\omega_C(p_1+\ldots+p_n)$ at each node $q_1,\ldots,q_r\in C$ can be identified with a log form on
the normalization~$C^\nu$ which has opposite residues at the points $q_i^-$, $q_i^+$ mapping to the node $q_i$. 
These residues can be viewed as momenta of ``internal'' or "on-shell" particles satisfying a momentum conservation law 
for each of the $s$ irreducible components of $C$, as follows from an exact sequence
$$0\to H^0(C^\nu,\omega_{C^\nu})\to H^0(C,\omega_C(p_1+\ldots+p_n))\arrow^{\Res}\bC^{n+r} \arrow^{\Sigma_1,\ldots,\Sigma_s}\bC^s\to0.$$
When all components of $C$ are rational, $H^0(C^\nu,\omega_{C^\nu})=0$ and the datum of a log-form on $C$ is equivalent to the datum of external and internal momenta
satisfying momentum conservation laws (one law for each irreducible component of~$C$). Equivalently, 
this is the datum of one ``edge variable'' for each internal (corresponding to a node) and external
(corresponding to a marked point) 
edge of the dual graph of $C$ with a conservation law for each vertex of the dual graph. 
\end{Review}

\begin{Review}
Imagine a random process that gives 
a vector of log forms  on $(C; p_1,\ldots,p_n)$.
What will be the probability  distribution of 
residues? We will study the following 
 simple random process.
The~first step  is a random tensor product factorization
\begin{equation}\label{afqrgqrb}
\omega_C(p_1+\ldots+p_n)=L\otimes\tilde L,
\cooltag\end{equation}
into two line bundles, $L$ of degree $d$ and $\tilde L$ of degree $\tilde d$ 
(and $d+\tilde d=2g-2+n$).
The~choice of $L$ is a choice of a  point in $\Pic^dC$
and, when $C$ is smooth,  every connected component of the Picard group 
has a uniform probability measure,  the volume form invariant under translations
by the subgroup $\Pic^0C$ of topologically trivial line bundles.
If~$C$ is not smooth  then
$\Pic^0C$ contains non-compact factors~$\bC^*$ and probabilistic interpretation is less straightforward.
In fact the case most related to physics literature is when all irreducible components of $C$ are rational curves
and  $\Pic^0C\simeq(\bC^*)^g$.
Non-compactness will not be an issue for us.
\end{Review}

\begin{Review}
The second step is a choice of sections $s_\alpha\in H^0(C,L)$, $\tilde s_{\tilde\alpha}\in H^0(C,\tilde L)$
for some indices 
$\alpha\in I$ and 
$\tilde\alpha\in\tilde I$.
Tensoring these sections gives a matrix of log forms
$$\omega_{\alpha\tilde\alpha}=s_\alpha\otimes \tilde s_{\tilde\alpha}\in H^0(C,\omega_C(p_1+\ldots+p_n)).$$
We focus on the case $|I|=|\tilde I|=2$, which is  related to the $\cN=4$ Yang--Mills theory.
Let~$\bL\subset \Mat_{2,2}$ be the subvariety of matrices of rank at most $1$.
The projectivization  $\bP(\bL)$ is a quadric in $\bP^3$ isomorphic to $\bP^1\times\bP^1$.
Let
$$Y=\{\bp_1,\ldots,\bp_n\in\bL\quad|\quad \bp_1+\ldots+\bp_n=0\}\subset \bL^n.$$
The space $\Mat_{2,2}$ can be viewed as the complexified $4$-dimensional Minkowski space 
and $\bL$ as the complexified light cone.
Thus $Y$ parametrizes complexified momenta of $n$ massless particles in $4$ dimensions
satisfying the momentum conservation law.
We observe that  
$\left(\Res_{p_1}\omega_{\alpha\tilde\alpha},\ldots,\Res_{p_n}\omega_{\alpha\tilde\alpha}\right)\in Y$, since
each matrix 
$\Res_{p_i}\omega_{\alpha\tilde\alpha}$ is proportional  to the matrix $(s_\alpha(p_i)\tilde s_{\tilde\alpha}(p_i))$
after choosing trivializations.
So this matrix has rank at most one and  the residues of $\omega_{\alpha\tilde\alpha}$
(at marked points and nodes) can be viewed as momenta of massless (external and internal) particles.

This construction is reversible, at least in the most physically relevant case.
Namely, suppose all components of $C$ are rational and the dual graph is $3$-valent.
Given massless momenta  $\bold p_1,\ldots,\bold p_n$ (external) and  $\bold q_1,\ldots,\bold q_r$ (internal)
satisfying momentum conservation laws (one for each vertex of the dual graph), suppose
 that no two momenta adjacent to the same vertex are proportional.
Then there exists a factorization \eqref{afqrgqrb} and
sections $s_\alpha\in H^0(C,L)$, $\tilde s_{\tilde\alpha}\in H^0(C,\tilde L)$ for $\alpha,\tilde\alpha=1,2$
such that the momenta are  the residues of the $2\times 2$ matrix of log forms $(s_\alpha\otimes \tilde s_{\tilde\alpha})$.\footnote{Indeed, let $(\omega_{\alpha\tilde\alpha})$ be  the $2\times 2$ matrix of log forms that corresponds to the momenta.
Since the restriction of $\omega_C(p_1+\ldots+p_n)$ to every irreducible component of $C$ has degree $1$,
the sections $\omega_{\alpha\tilde\alpha}$ globally generate the line bundle $\omega_C(p_1+\ldots+p_n)$ and therefore
give a morphism $C\to\bP^3$ such that every irreducible component of $C$ maps to a line. By assumption, images of marked points and nodes belong to the quadric
$\bP(\bL)$. Since a line intersecting a quadric in $3$ different points belongs to it, the image of $C$ lies on the quadric, i.e.~the matrix of log forms has rank~$1$.
The line bundles $L$ and $\tilde L$ are pull-backs of $\cO_{\bP^1\times\bP^1}(1,0)$ 
and $\cO_{\bP^1\times\bP^1}(0,1)$, where $\bP(\bL)\cong \bP^1\times\bP^1$.}
\end{Review}

\begin{Review}
We would like to focus on situations where there are no constraints on external and internal momenta,
i.e.~when a dense subset of points of $Y$ can be obtained as residues up to a finite ambiguity.
This requires imposing the following condition:\footnote{
By the Riemann--Roch theorem,
$H^0(C, L)$ and $H^0(C,\tilde L)$ have expected dimensions
$r=d+1-g$ and $\tilde r=\tilde d+1-g$.
Rescaling all $s_\alpha$ by $z\in\bC^*$ and $\tilde s_{\tilde \alpha}$ by $z^{-1}$
gives the same forms $\omega_{\alpha\tilde\alpha}$ and so the same residues.
Thus the variety of choices $(L,\tilde L,s_\alpha,s_{\tilde \alpha})$ modulo $\bC^*$ 
has expected dimension
$g+2r+2\tilde r-1=g+2n-1$. On the other hand, $Y$ is a variety of dimension $3n-4$
(assuming $n\ge 4$).
Thus the condition on dimensions is $g+2n-1=3n-4$, which is equivalent to \eqref{mnabdvfb}.
}
\begin{equation}\label{mnabdvfb}
n=g+3.
\cooltag\end{equation}
\end{Review}

\begin{Review}
We would like to study distribution of  residues of the matrix $\omega_{\alpha\tilde\alpha}$ 
but  we run into a  problem: there is no
natural probability measure 
on the space of sections $s_\alpha$, $\tilde s_{\tilde\alpha}$ unless we choose some extra data
such as a hermitian metric on $L$.
Instead of making a choice of a specific 
probability measure, we can design an experiment with outcomes 
that don't depend on sections
by introducing a rational  (i.e.~defined on an open subset) map
$\Lambda:\,Y\dashrightarrow M$
to some algebraic variety $M$.
If $\Lambda$ is independent of sections $s_\alpha$ and $\tilde s_{\tilde\alpha}$
then it descends to a rational map 
$\bLa:\,\Pic^dC\dashrightarrow M$
and describing probability measure on $M$ becomes a well-posed problem.
Concretely, projecting the quadric $\bP(\bL)\cong\bP^1\times\bP^1$ to the first or the second factor gives rational maps 
$\bL^n\dashrightarrow(\bP^1)^n$ known as spinor variables in physics.
Taking a quotient by the $\PGL_2$ action gives a rational map $(\bP^1)^n\dashrightarrow M_{0,n}$ to
the moduli space of $n$ distinct marked points on $\bP^1$.
To summarize, we have two rational maps
$$\Lambda,  \tilde\Lambda:\, Y\hookrightarrow\bL^n\dashrightarrow(\bP^1)^n\dashrightarrow M_{0,n}.$$
The image of a point 
$\left(\Res_{p_1}\omega_{\alpha\tilde\alpha},\ldots,\Res_{p_n}\omega_{\alpha\tilde\alpha}\right)$
under the map $\Lambda$ (resp.,  $\tilde\Lambda$)
is given by an $n$-tuple $([s_1(p_i):s_2(p_i)])_{i=1\ldots,n}$ 
(resp.,~$([\tilde s_1(p_i):\tilde s_2(p_i)])_{i=1\ldots,n}$) of points in $\bP^1$ modulo $\PGL_2$.
One can further  compactify $M_{0,n}$ by the Grothendieck--Knudsen 
moduli space $\oM_{0,n}$ of stable rational curves or choose a different copactification.
\end{Review}

Requiring that $\Lambda$ descends to $\bLa:\,\Pic^dC\dashrightarrow M_{0,n}$ 
obviously means that the expected dimension of $H^0(C,L)$, which in the physical context is known as helicity, 
should be equal to $2$. By Riemann--Roch, this is equivalent to the requirement
\begin{equation}\label{ffffffasdh}
d=g+1.
\cooltag\end{equation}

\begin{Definition}\label{KSLJDfksjdg} 
Let $(C;p_1,\ldots,p_n)$ be a  stable curve of genus~$g$ with $n=g+3$ marked points.
Fix a multidegree vector  $\vec d=(d_i)$ (one degree $d_i$ for each irreducible component of $C$)
such that $d=\sum d_i$ satisfies \eqref{ffffffasdh}. This gives a
connected component $\Pic^{\vec d}C\subset\Pic^dC$.
We say that a pair $(C,\vec d)$ is {\em a maximum helicity violating (MHV) curve} 
if, for a generic line bundle $L\in \Pic^{\vec d}C$, we have
\begin{enumerate}
\item 
$L$ is not special, i.e.~has exactly two linearly independent global sections: 
$$H^0(C,L)=\bC^2,\quad H^1(C,L)=0.$$
\item 
The evaluation map $\alpha:\,H^0(C,L)\otimes\cO_C\to L$ is surjective, equivalently
$$\phi_L:\,C\dashrightarrow\bP^1$$ 
is a morphism (which is automatic if $C$ is smooth) and $L\simeq\phi_L^*\cO_{\bP^1}(1)$.
\item The rational {\em scattering amplitude map} 
$$
\bLa:\,\Pic^{\vec d}C\dashrightarrow M_{0,n},\qquad
L\mapsto \left(\phi_L(p_1),\ldots,\phi_L(p_n)\right)$$
is dominant at $L$, or equivalently generically finite.
\end{enumerate}
\end{Definition}

\begin{Definition}
The {\em scattering amplitude form} of an MHV curve 
is a unique (up to a constant multiple) non-zero $\Pic^{\vec 0}C$-invariant holomorphic $g$-form 
$$A\in H^0(\Pic^{\vec d}C,\Omega^g)$$
viewed as a multi-valued meromorphic form on~$M_{0,n}$. 
\end{Definition}

\begin{Summary}
We start with a stable  curve $C$ of genus $g$ with $n=g+3$ marked points.
The~MHV experiment is a choice of a random line bundle $L$ of degree $d=g+1$ or,
equivalently, a random meromorphic function $\phi_L:\,C\to\bP^1$ of degree~$d$.
When $C$ is smooth, 
the probability distribution of $L$ is uniform with respect to the translation-invariant volume form on the Jacobian.
We study probability measure on $M_{0,n}$
that gives statistics of points $\phi_L(p_1),\ldots,\phi_L(p_n)\in\bP^1$.
This gives
a family of (complexified) probability measures that depend on $4g$ complex parameters
describing the input curve $(C, p_1, ..., p_{n})\in\oM_{g,n}$.
\end{Summary}

\begin{Remark}
It~is clearly superfluous to keep the notation $g$, $d$, $n$ for quantities  related by the equations 
\eqref{mnabdvfb}, \eqref{ffffffasdh}
throughout this paper.
Our excuse is that they are associated with the main players, the curve $C$, its Picard group  and $M_{0,n}$.  
\end{Remark}

\begin{Example}\label{asgsrgr}
An MHV curve of genus $0$ is just $\bP^1$ with three marked points. In~this case $d=1$ and $\Pic^1C$ is a point, namely the line bundle $L=\cO_{\bP^1}(1)$.
The~map $\phi_L:\,\bP^1\to\bP^1$ is an isomorphism 
which maps $p_1,p_2,p_3$ to three distinct points, which can be moved to the points $0,1,\infty$ by applying the $\PGL_2$ action.
It~follows that the scattering amplitude map $\bLa$ in genus $0$ is simply
$$\pt=\Pic^1\bP^1\arrow^{\bLa} M_{0,3}=\pt.$$
\end{Example}

\begin{Notation}
A connected component  $\Pic^{\vec d}C$ of the Picard group is determined by multidegrees 
$d_s=\deg L|_{C_s}$, one for each  irreducible component $C_s\subset C$.
We ~have $d=\sum d_s$. We draw an MHV curve
as an {\em on-shell diagram}, a dual graph of~$C$ with marked points as exterior legs.
\begin{figure}[htbp]
\includegraphics[width=\textwidth]{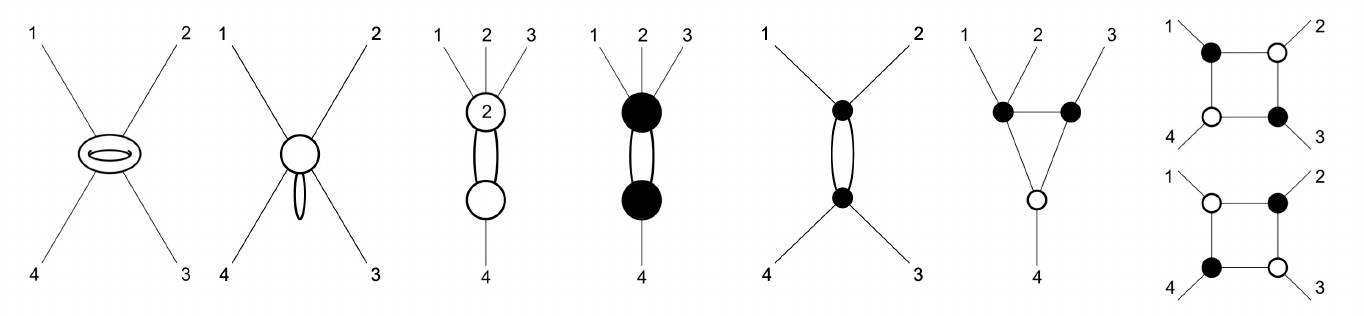}
\caption{On-shell diagrams of MHV curves of genus $1$}\label{SDgSg}
\end{figure}
Vertices of the diagram correspond to irreducible components of $C$ and interior
edges correspond to nodes.
Genus zero components are drawn as circles,
components of positive genus~$g_s$ are decorated with $g_s$ holes.
Components where the line bundle has degree $d_s=0$ are left blank,
components of degree $d_s=1$ are shaded black, and if $d_s>1$ then we  just write $d_s$ in or next to the component.
If the curve is irreducible then we don't indicate the degree at all since it's always equal to $g+1$.
The advantage of the on-shell diagram over the dual graph of the stable curve is that it records the multidegrees $\vec d$ of~$L$. In Theorem~\ref{wgearhstjsrjrj}, we will show that the locus of MHV curves is open.
In particular, the set of possible on-shell diagrams is closed under edge contractions.
\end{Notation}

\begin{Example}\label{rbqerhqer}
In genus $1$ there are many possibilities illustrated in Figure~\ref{SDgSg}. 
Here $d=2$ and $n=4$. So the map $\phi_L:\,C\to\bP^1$ is a double cover 
and~the scattering amplitude map $\bLa$ takes $L\in\Pic^2C$ to the cross-ratio of four points 
$$\phi_L(p_1),\ \phi_L(p_2),\ \phi_L(p_3),\ \phi_L(p_4)\in\bP^1.$$
It turns out that if $C$ is a smooth elliptic curve then $\bLa:\,\Pic^2C\to\bP^1$ is  a double cover
and the scattering amplitude form $A$ is an integrand  of an elliptic integral.
\end{Example}

The calculations in genus $0$ and $1$ motivate the following theorem\footnote{
In the time since the paper appeared on the arXiv for the first time, many further interpretations and generalizations of this 
``Tevelev degree'' have been investigated, see e.g.~\cite{Gen7,Gen6,Gen1,Gen5,Gen2,Gen3,Gen4}.}, which is
explored from different points of view in subsequent sections:

\begin{Theorem}\label{2^gTheorem}
The scattering amplitude map $\bLa$ has degree $2^g$ for a general curve $C$.
\end{Theorem}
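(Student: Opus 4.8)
The plan is to compute the degree of $\bLa$ for a general smooth curve $C$ of genus $g$ by a monodromy/degeneration argument, and then invoke the fact (implicit in Definition~\ref{KSLJDfksjdg}) that being MHV and the degree of $\bLa$ are preserved under a suitable specialization so that the general-$C$ answer governs the general MHV curve. Concretely, for a general smooth $C$ with a fixed component $\Pic^{\vec d}C$ with $d = g+1$, a general $L \in \Pic^{\vec d}C$ is non-special with $h^0(L) = 2$, hence defines a degree-$(g+1)$ map $\phi_L : C \to \bP^1$, and $\bLa(L)$ records the configuration $(\phi_L(p_1), \ldots, \phi_L(p_n)) \in M_{0,n}$ with $n = g+3$. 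I want to count how many $L$ produce a prescribed general configuration. The key reinterpretation: giving $L$ together with a basis of $H^0(L)$ up to $\PGL_2$ is the same as giving a degree-$(g+1)$ map $f : C \to \bP^1$ up to post-composition by $\PGL_2$; and $\bLa(L)$ is determined by where the $p_i$ land. So I would instead fix general points $q_1,\ldots,q_n \in \bP^1$ (representing the target configuration) and count degree-$(g+1)$ maps $f : C \to \bP^1$, up to $\PGL_2$, with $f(p_i) = q_i$ for all $i$ — no wait, that over-rigidifies; rather one fixes the configuration in $M_{0,n}$, i.e.\ counts such $f$ up to the residual $\PGL_2$ acting simultaneously on source constraints. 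Since $\dim \Pic^{\vec d}C = g$ and $\dim M_{0,n} = n-3 = g$, this is the expected $0$-dimensional count.

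The cleanest route to the number $2^g$ is a Brill--Noether / degeneration computation. First I would set up the incidence variety $\mathcal{Z} \subset \Pic^{\vec d}C \times M_{0,n}$ and show $\bLa$ is generically finite (this is exactly condition (3) of the MHV definition, which holds for general $C$ — one should check it, e.g.\ by exhibiting a single $L$ where the differential of $\bLa$ is an isomorphism, identifying $T_L \Pic^{\vec d}C = H^1(C,\cO_C)$ with $H^0(C, K_C)^\vee$ and the differential of $\bLa$ with the map dual to evaluation of the $g$ forms $\omega$ with prescribed residue data — the nondegeneracy is then a statement that $p_1,\ldots,p_n$ impose independent conditions in the appropriate sense). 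Then, to get the actual degree, degenerate $C$ to a totally degenerate stable curve — a union of $\bP^1$'s — or better, use the genus-$1$ base case of Example~\ref{rbqerhqer} and an inductive gluing. The inductive step: degenerate $C$ of genus $g$ to $C' \cup_x E$, where $C'$ has genus $g-1$ carrying $n-1$ of the marked points (plus the node) and $E$ is an elliptic tail carrying the last marked point (plus the node), with a compatible multidegree. On such a curve, a limit line bundle of the right multidegree restricts to a degree-$g$ (non-special) bundle on $C'$ and a degree-$1$ bundle on $E$; the map $\phi_L$ contracts $E$ or maps it $2:1$ depending on the multidegree, and counting limits multiplies the genus-$(g-1)$ count by the genus-$1$ contribution, which is $2$ by the base case. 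Hence $\deg \bLa = 2 \cdot 2^{g-1} = 2^g$, with base case $\deg = 1$ in genus $0$ (Example~\ref{asgsrgr}) and $\deg = 2$ in genus $1$.

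Alternatively — and this may be the slickest approach — I would identify $\bLa$ with a classical map whose degree is known. A degree-$(g+1)$ pencil on a general genus-$g$ curve, i.e.\ a $g^1_{g+1}$, is nothing special by Brill--Noether theory, and the variety of such is birational to $\Pic^{g+1}C$; the composite "remember the images of $g+3$ general points up to $\PGL_2$" should be comparable to a linear-projection construction. In fact the most structural statement is: $\bLa$ is, up to birational modification, the $\PGL_2$-quotient of a map computing, for each $L$, the pencil $|L|$ evaluated at the $p_i$; dualizing via Serre duality, $L \mapsto (\text{residue data of the } 2\text{-dimensional space of log forms built from } L \text{ and } \omega_C(\sum p_i)\otimes L^{-1})$, and the factor $2$ per genus comes from the hyperelliptic-type involution $L \leftrightarrow \omega_C(\sum p_i) \otimes L^{-1} = \tilde L$ combined with... — here one must be careful, since $\deg \tilde L = 2g-2+n-(g+1) = g+1$ too, so $L \mapsto \tilde L$ is an involution on $\Pic^{\vec d'}C$, but generally a different component; it only interchanges $\Lambda$ and $\tilde\Lambda$. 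So this symmetry alone gives a factor $2$ only in special situations and is not the source of $2^g$.

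\textbf{Main obstacle.} The hard part will be the limit-linear-series bookkeeping in the degeneration: controlling which multidegree components $\Pic^{\vec d}C$ of the degenerate curve carry the relevant limit bundles, verifying that each contributes with multiplicity one (no excess intersection, i.e.\ the limit $\bLa$ stays generically finite and the scheme-theoretic fiber is reduced), and checking that no limits are "lost to the boundary" of $M_{0,n}$ — recall the Remark warning that compactifying by $\oM_{0,n}$ is not always wise, so one must argue the count is not affected by marked points colliding in the limit. Making the induction rigorous will likely require either Eisenbud--Harris limit linear series on a chain of elliptic curves, or a transversality argument for the map $\bLa$ on a carefully chosen one-parameter family, together with a dimension count showing the only contributions come from the "expected" boundary strata. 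Establishing generic finiteness (condition (3)) for general $C$ — really a Brill--Noether-general statement that the $n = g+3$ marked points impose independent conditions on the relevant cohomology — is a prerequisite I would prove first, as everything downstream depends on it.
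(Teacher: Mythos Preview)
Your degeneration strategy has a concrete failure point that the paper itself flags: a curve with a separating node is \emph{never} an MHV curve (Lemma~\ref{qwefv2e2}). Your proposed limit $C' \cup_x E$ with an elliptic tail attached at a single node is exactly a one-channel factorization, so the scattering amplitude map is not even generically finite there---the image has smaller dimension. This is not merely a bookkeeping issue of limit linear series; the very map whose degree you want to compute degenerates out of existence on that boundary stratum. You could try a non-separating degeneration (e.g.\ an irreducible one-nodal curve, or the two-channel factorizations of Section~\ref{zdfbsdh}), but then the clean ``multiply by $2$'' inductive step evaporates, and the recursion becomes the two-channel analysis of Theorems~\ref{safbsfhsfn}--\ref{ADvAeg}, which does not directly yield a degree formula.

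The paper's proof takes a completely different route, combining two independent computations. The \emph{lower bound} (in fact exact value) comes from smooth hyperelliptic curves via Jacobi's matrix model: $\Pic^{g+1}C\setminus E$ is identified with $\PGL_2$-conjugacy classes of $2\times 2$ polynomial matrices $M$ with $-\det M = f$, and fixing the parabolic data at the $n=g+3$ marked points amounts to intersecting $2g+3$ affine quadrics in a linear space of dimension $2g+3$, giving $2^{2g+3}$ points; after dividing by the $2^n$ choices of $p_i$ versus $\tau(p_i)$ one gets $2^g$ (Theorem~\ref{asfgbsfgasfh}). The \emph{upper bound} comes from real algebraic geometry: for a smooth M-curve of type~A or~B, the real Picard group $\Pic^{g+1}(\bR)$ has exactly $2^g$ connected components, and Theorem~\ref{dfvwevev} shows that over any real point of $M_{0,n}$ the fibre of $\bLa$ lies entirely in $\Pic^{g+1}(\bR)$ with at most one preimage per component. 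Since M-curves are Zariski dense in $M_{g,n}$, Stein factorization propagates the bound $\le 2^g$ to the general curve, and the hyperelliptic computation forces equality (Corollary~\ref{dfvwefv}).
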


\begin{Review}[Contents of~\ref{khgckhfckhfc}] 
One of the goals of the paper is to {\em localize}  $2^g$ line bundles
in the preimage of a general point of $M_{0,n}$,
i.e. to describe branches of the multi-valued inverse function~$\bLa^{-1}$.
In Theorem~\ref{efvwb} we  show that every smooth curve  is an MHV curve.
We use the theory of special divisors \cite{ACGH}, which was classically developed to 
study the images of Abel--Jacobi maps, in particular the focus was on the $d=g-1$ case.
The MHV regime $d=g+1$ is just as rich and exciting.
We~introduce various special divisors in $\Pic^{g+1}C$  as well as the {\em planar locus} $W$  that parametrizes presentations of the curve $C$
as a nodal plane curve of degree $g+1$. 
\end{Review}

\begin{Review}[Contents of~\ref{zdfbsdh}]
Given that all smooth curves are MHV curves, one expects a simple classification of stable MHV curves
but this is quite a delicate question, which we start to investigate in this section.
These results are not central to the paper but they are used throughout.
If~the curve $C$ has a node such that removing it separates $C$ into two connected components (i.e.~the on-shell diagram
is not $2$-connected) then $C$ is not an MHV curve -- the only MHV curves with compact Jacobians
are smooth curves.
This~is  known as a {\em one-channel factorization} in physics.
If the on-shell diagram is $2$-connected but not $3$-connected 
(more precisely, if $C$ has a two-channel factorization,
see Definition~\ref{FbfhFHF}) then
$C$ is separated into two components by removing two nodes, and
information about the scattering amplitude can be read from the components, see
Theorems~\ref{safbsfhsfn} and~\ref{ADvAeg}.
\end{Review}

\begin{Review}[Contents of~\ref{zdadthethfbsdh}] 
To study families of MHV curves and scattering amplitudes,
we 
introduce the 
universal
scattering amplitude map 
$$\bLa:\ \cPic^{MHV}\cC\dashrightarrow M_{0,n}$$
over the
locus of MHV curves $\ocM^{\MHV}_{g,n}\subset\ocM_{g,n}$.
We use two open substacks in the stack of quasi-maps: moduli of
stable quotients \cite{MOP} and moduli of presentations of slope-stable line bundles.
We find a convenient polarization for MHV curves
and compactify $\cPic^{MHV}\cC$ by a projective family 
of compactified Jacobians. 
\end{Review}

\begin{Review}[Example~\ref{rbqerhqer} --  continued]
While a smooth elliptic curve 
$E$  degenerates into a wheel $C$ of {\em four} projective lines,  $\Pic^2E$ 
degenerates into $\oPic^{MHV}C$, a wheel of {\em two} projective lines, the  MHV components $\Pic^{0,1,0,1}$ and $\Pic^{1,0,1,0}$ 
represented by stacked on-shell diagrams on the right side of Figure~\ref{SDgSg}.
Each of these components has the same  amplitude form,
a phenomenon called {\em ``square move''} by physicists.
For any of the curves in Figure ~\ref{SDgSg},
the fibre of the universal scattering amplitude map is 
a ramified double cover of $\bP^1$ by an irreducible curve of arithmetic genus $1$ 
except for the four-wheel, when it becomes a reducible $2:1$ cover 
$$\oPic^{MHV}C=\bP^1\cup\bP^1\arrow^{2:1}\bP^1=\oM_{0,4}.$$
\end{Review}

\begin{Review}[Contents of~\ref{sdvqefv}] 
Scattering amplitude maps and forms  of hyperelliptic curves give a new perspective on the theory of parabolic vector bundles of rank~$2$ on~$\bP^1$.
A~hyperelliptic curve $C$ is given by the equation $y^2=f(z)$, where $f$ is a polynomial of degree $2g+2$ or $2g+1$ without multiple roots.
This gives a double cover map 
$$\phi_h:\,C\to\bP^1,\quad  (z,y)\mapsto z.$$
Marked points $p_1,\ldots,p_n$ project to points $z_1,\ldots,z_n\in\bP^1$, which we assume are different.
In the study of pointed hyperelliptic curves it is often assumed that all marked points are Weierstrass points (the roots of $f(z)$)
but in our approach the marked points are  decoupled from the Weierstrass points.
We show in 
Lemma~\ref{SSRHS}
that the scattering amplitude map of hyperelliptic curves
factors as follows:
$$\bLa:\,\Pic^{g+1}C\arrow^{\bbLa} \Bun(\bP^1;z_1,\ldots,z_n)\mathop{\dashrightarrow}\limits^{\Xi} M_{0,n},$$
where $\Bun(\bP^1;z_1,\ldots,z_n)$ is the moduli stack of parabolic rank $2$ bundles with trivial determinant,
the map $\bbLa$ associates to a line bundle $L\in \Pic^{g+1}C$ its push-forward $(\phi_h)_*L$
with parabolic lines determined by marked points, and finally $\Xi$ 
is a  very basic birational map 
which can be described as follows. A projectivization of a generic parabolic vector bundle is a ruled surface $\bP^1\times\bP^1$ with points 
$(z_1,q_1),\ldots,(z_n,q_n)$ giving the parabolic structure and the map $\Xi$  assigns $(q_1,\ldots,q_n)$ to this bundle, see Figure~\ref{shasrhar}.
\begin{figure}[htbp]
\includegraphics[width=2.6in]{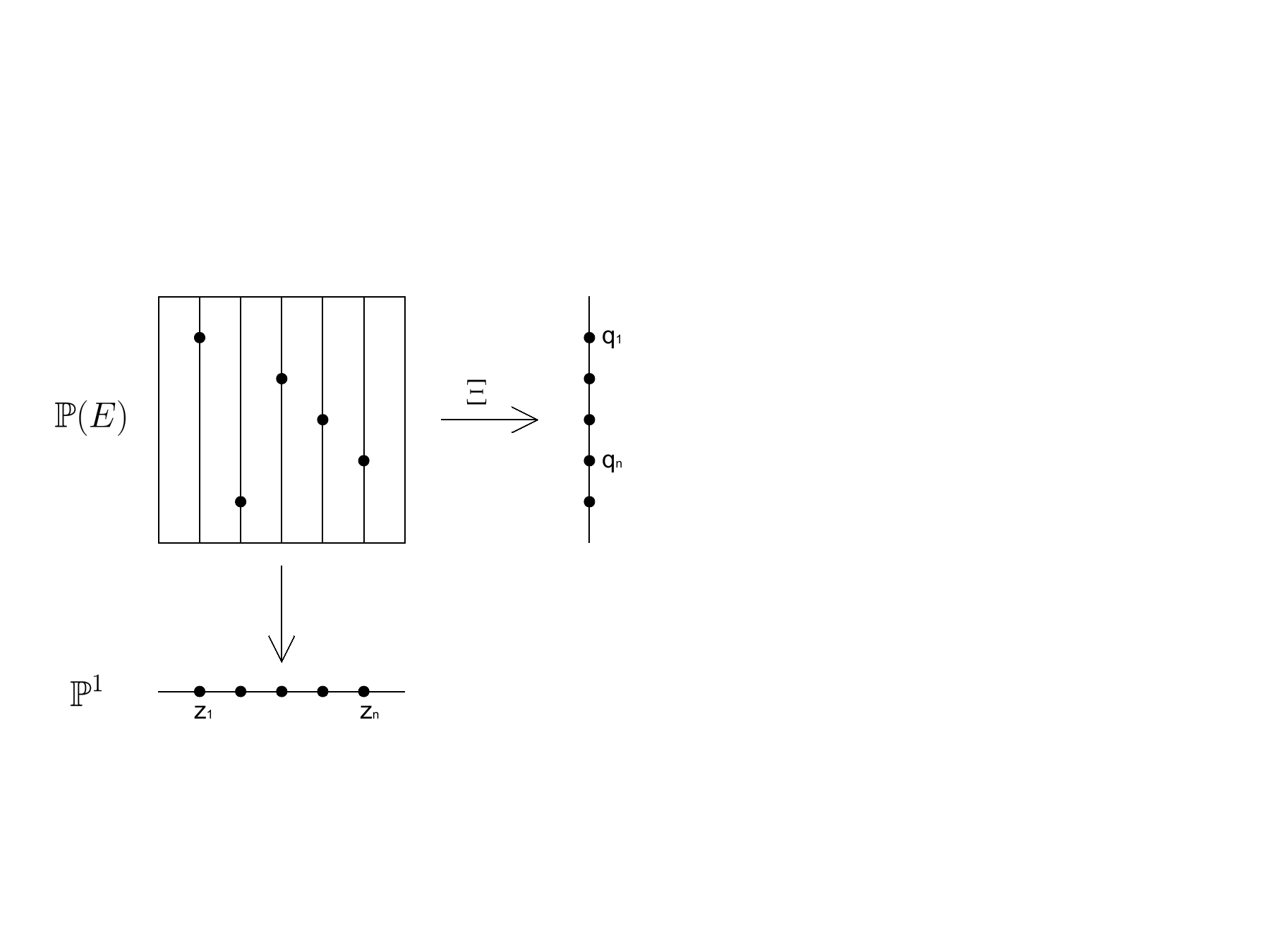}
\caption{$\Xi:\,\Bun(\bP^1;z_1,\ldots,z_n)\dashrightarrow M_{0,n}$, \quad $(E;V_1,\ldots,V_n)\mapsto (q_1,\ldots,q_n).$}\label{shasrhar}
\end{figure}
It~follows that in the hyperelliptic case the scattering amplitude map
combines effects of the birational morphism $\Xi$, which only depends on $z_1,\ldots,z_n$
but not on $C$, and the map~$\bbLa$, which is thus a finer invariant in the hyperelliptic case.
To study the scattering amplitude form as a form on the moduli space of parabolic bundles,
we have to choose its  projective model.
We study effects of a well-known wall-crossing between  projective models
and the action of the Weyl group $W(D_n)$ by elementary transformations.
\end{Review}

\begin{Review}[Contents of~\ref{kwJHEfg}] 
In Theorem~\ref{asfgbsfgasfh} we make another step towards proving Theorem~\ref{2^gTheorem}
- we show that the scattering amplitude map $\bLa$
has degree~$2^g$ for every hyperelliptic curve $C$.
We~use beautiful results of Jacobi \cite{Jacobi} (amplified by Moser and Mumford~\cite{Mumford}) that give an explicit
model for the Jacobian of a hyperelliptic curve (with a removed theta-divisor)
as the orbit space of conjugacy classes of $2\times 2$ polynomial matrices.
Moreover, translation-invariant vector 
fields on the Jacobian can be described in the form of the Lax differential equation
and this can be used 
to compute branches of the scattering amplitude form.
\end{Review}

\begin{Review}[Contents of~\ref{sfgasgsRH}] 
In genus $2$, where every curve is hyperelliptic, we combine methods of \ref{sdvqefv} 
with a classical observation of Halphen \cite{Halphen}: 
a sufficiently general divisor $P=p_1+\ldots+p_n$ of marked points  on a smooth MHV curve
embeds~it
$\phi_{P}:\,C\hookrightarrow \bP^3$
as a degree $n=g+3$ space curve. 
This~gives a way to study the scattering amplitude 
using geometry of $\bP^3$.
By the results of  \ref{sdvqefv},
the scattering amplitude map $\bLa:\,\Pic^3C\dashrightarrow \oM_{0,5}\simeq \dPf$
into the~quintic del Pezzo surface
factors through the moduli space of parabolic vector bundles of rank~$2$, which in this case is the quartic del Pezzo surface~$\dP$.
In~Theorem~\ref{vasdgsgG} we resolve this map by a finite morphism $\bbLa:\,\Bl_{16}\Pic^3C\to\dP$
of degree $4$ 
from the blow-up of $\Pic^3C$ in $16$ special points
to the quartic del Pezzo surface.
We~show in Theorem~\ref{sDGSG}  that whenever all marked points are Weierstrass points,
$\bbLa$~becomes a well-known map: 
it factors as a composition of two double covers, 
$$\Bl_{16}\Pic^3C\arrow^{2:1}  \hbox{\rm K3}\arrow^{2:1}\dP,$$
where the K3 surface is the minimal resolution of the Kummer surface.
For general marked points away from the Weierstrass points, the intermediate K3 surface disappears from the picture 
but the degree~$4$ scattering amplitude morphism $\bbLa$  as well as the
{\em double sixteen} configuration  on the abelian surface survive.
\end{Review}

\begin{Review}[Contents of~\ref{sdcqwv}] In this section we inject a measure of reality
into the study of scattering amplitude forms of smooth curves. In order to obtain real probability measures, we have
to turn to real algebraic geometry.
The answer is beautiful for
smooth real curves with the maximal number of real ovals, so called M-curves,
with marked points distributed as in Figure~\ref{wEGsg}: 
\begin{figure}[htbp]
\centerline{\includegraphics[width=\textwidth]{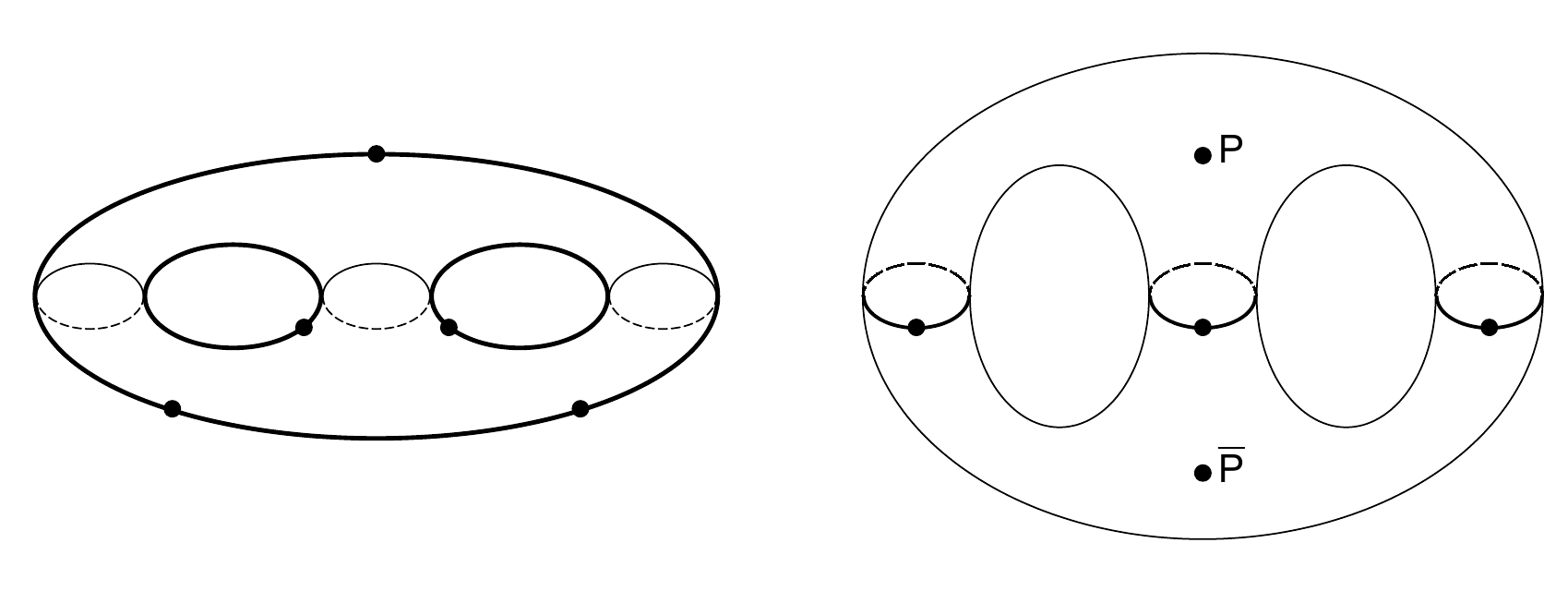}}
\caption{MHV M-curves of type A (left) and type B (right)}\label{wEGsg}
\end{figure}
either all marked points are real and all ovals contain one marked point except for one that contains three (type~A)
or all but two marked points are real, one for each real oval, and the other two are complex-conjugate
(type B). The main result of this section is Theorem~\ref{dfvwevev}:
line bundles in every fiber of the scattering amplitude map $\bLa$
over $M_{0,n}(\bR)$
``localize'' into different connected components (there are conveniently exactly $2^g$ of them)
$$\Pic^{g+1}_I(\bR)\subset \Pic^{g+1}(\bR).$$ 
We use this theorem of real algebraic geometry
to prove Theorem~\ref{2^gTheorem}, which is a theorem of complex algebraic geometry.
\begin{figure}[htbp]
\centerline{\includegraphics[width=0.55\textwidth]{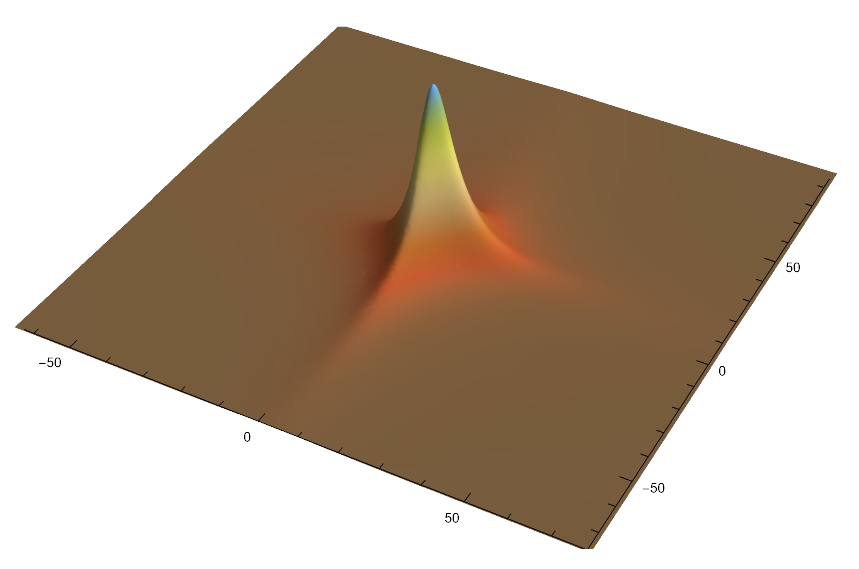}\qquad
\includegraphics[width=0.35\textwidth]{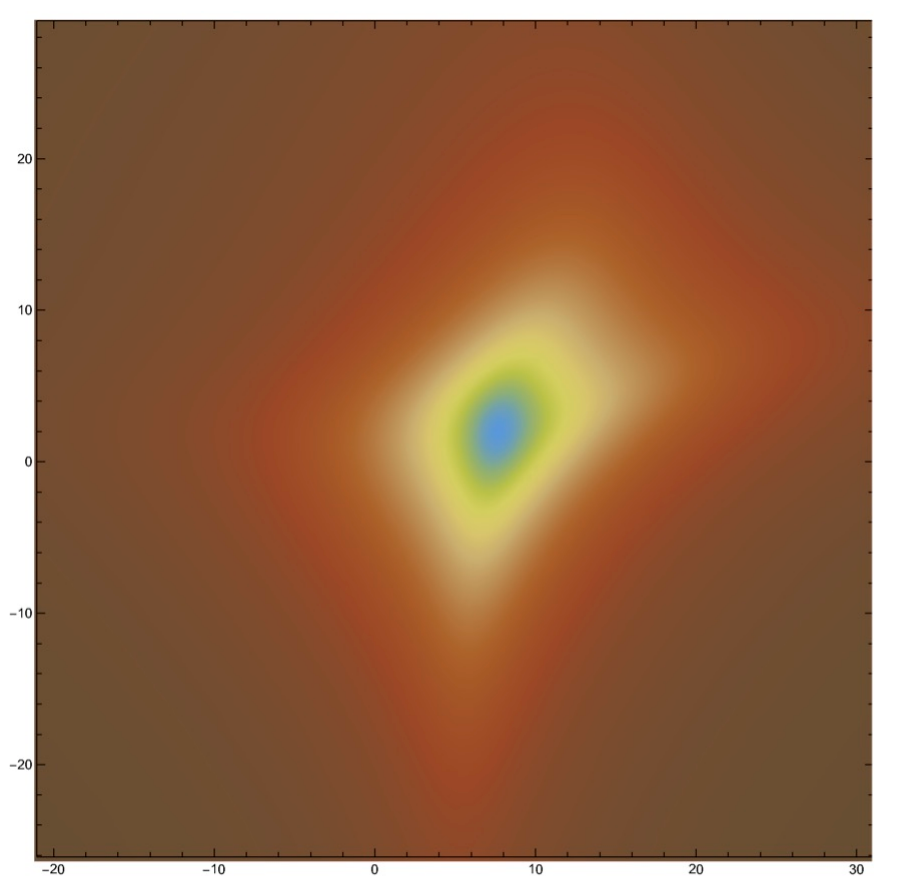}}
\caption{Scattering amplitude probability measures in genus $2$}\label{kgfkhgfh}
\end{figure}
Back to M-curves, in Theorem~\ref{DSvSDb} we show that the scattering amplitude map 
$\bLa$ gives real-analytic open immersions 
$$\bLa_I:\,\Pic_I^{g+1}(\bR)\hookrightarrow M_{0,n}(\bR)$$
from an open dense subset of every connected component of  $\Pic^{g+1}(\bR)$.
This gives real probability measures $|A_I|$ on $M_{0,n}(\bR)$ that depend on $4g$ real parameters.
For an especially nice connected component $\Pic_H^{g+1}(\bR)$,
which we call the {\em Huisman component}, 
the scattering amplitude map induces a real-analytic isomorphism
$$\bR^g/\bZ^g\simeq\Pic^{g+1}_H(\bR)\arrow^\bLa(\bR\bP^1)^g,$$
which gives a positive, smooth (in fact real-analytic) scattering amplitude probability measure on $(\bR\bP^1)^g$, see Theorem~\ref{sGARSGARHA}.
As an example, we study scattering amplitude probability measures in genus $2$,
which produces pretty probability density functions as in Figure~\ref{kgfkhgfh}.
\end{Review}

\begin{Review}[Contents of~\ref{SDgSfh}] 
In this section we describe {\em maximally degenerate} stable MHV curves,
i.e.~curves with trivalent on-shell diagrams. 
By combining results from \cite{CT_Crelle} and \cite{MHV},
we show that they are given by CT hypertrees: collections
$\Gamma=\{\Gamma_1,\ldots,\Gamma_d\}$
of triples in $\{1,\ldots,n\}$ that satisfy
\begin{equation}\label{CondS}
\bigl|\bigcup_{j\in S}\Gamma_j\bigr|\ge |S|+2
\quad\hbox{\rm for every $S\subset\{1,\ldots,d\}$}.
\tag{\ddag}\end{equation}
For example, as observed in \cite{CT_Crelle},  every checkerboard triangulation of a $2$-sphere gives a CT hypertree,
in fact two of them.
\begin{figure}[htbp]
\includegraphics[width=\textwidth]{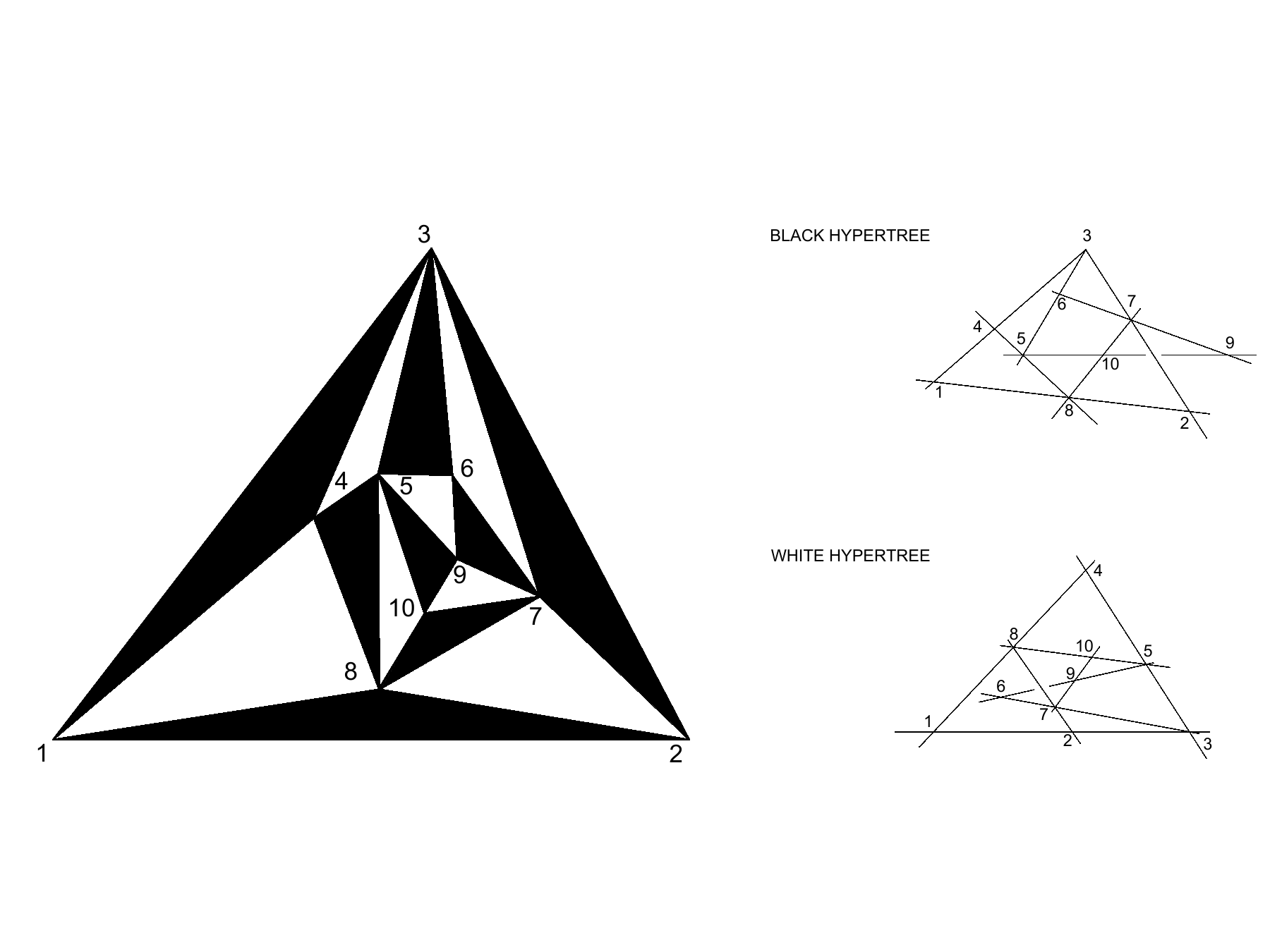}
\caption{\small Two spherical CT hypertrees from the same triangulation \cite{CT_Crelle}}\label{triangpic}
\end{figure} 
Vertices of the triangulation give the indexing set $\{1,\ldots,n\}$ 
and black triangles give triples, see Figure~\ref{triangpic}.
Another CT hypertree is given by  white triangles.
The~scattering amplitude perspective uncovers geometry of
spherical CT hypertrees: 
in Theorem~\ref{sVSGsrg}~we show that they  appear as stable degenerations of real MHV M-curves.
Beautiful results of Tutte on arborescences associated with triangulations
make an appearance in the proof.
\end{Review}

\begin{Review}[Contents of~\ref{lastsection}]
Here we compare our compactified Jacobian approach  to 
a  traditional Grassmannian approach of  \cite{Grass}, especially in the non-MHV case.
\end{Review}

{\bf Acknowledgements.}
I am grateful to Ana-Maria Castravet and Rahul Pandharipande for useful discussions
and to the  referee for  comments and corrections.
The paper started from a correspondence with Freddy Cachazo and Nick Early, who brought  \cite{MHV} to my attention. 
The project was  supported by the NSF grants DMS-1701704 and DMS-2101726 and the Simons Fellowship.

Graphics by \url{www.wolfram.com/mathematica} and \url{www.plainformstudio.com}

\tableofcontents

\section{Massless Brill--Noether theory}\label{khgckhfckhfc}

\begin{Example}
Let $C$ be a smooth genus $1$  curve with $4$ marked points $p_1,\ldots,p_4$.
Every line bundle $L\in\Pic^2C$ gives a double cover 
\begin{figure}[htbp]
\includegraphics[height=2in]{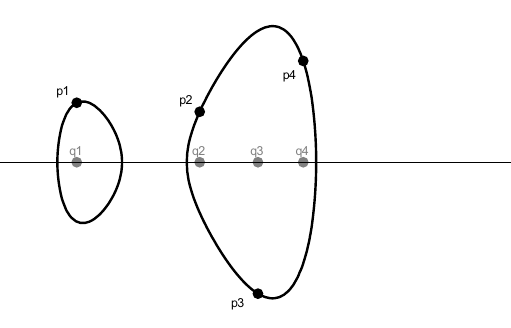}
\end{figure}
$\phi_L:\,C\arrow^{2:1}\bP^1$
presenting $C$ in the form $y^2=f(x)$, where $f$ is a polynomial of degree $3$ or~$4$.
Recall that 
$$M_{0,4}\simeq\bP^1\setminus\{0,1,\infty\}$$ via the cross-ratio function of four points.
We~compactify $M_{0,4}$ by $\oM_{0,4}\simeq\bP^1$.
If~we denote by $q_1,\ldots,q_4$ the images of marked points $\phi_L(p_1),\ldots,\phi_L(p_4)$
then the scattering amplitude map $\bLa$ takes $L\in\Pic^2C$ to their cross-ratio function
$${q_4-q_1\over q_2-q_1}\cdot {q_2-q_3\over q_4-q_3}.$$
Of course $\Pic^2 C\simeq C$ but not canonically. In~fact $\Pic^2C$ has
$6$ distinguished points 
$$p_{ij}=\cO(p_i+p_j),\quad i\ne j,$$
and  carries a natural degree $2$ line bundle
$$\cL=\cO(p_{12}+p_{34})\simeq\cO(p_{13}+p_{24})\simeq\cO(p_{14}+p_{23}).$$
The images of points $p_{ij}\in\Pic^2 C$ under the scattering amplitude map are
$$\bLa(p_{14})=\bLa(p_{23})=0,\quad \bLa(p_{13})=\bLa(p_{24})=1,\quad \bLa(p_{12})=\bLa(p_{34})=\infty$$
and   $\bLa$ is the  double cover 
$$\phi_{\cL}:\,\Pic^2C\arrow^{2:1}\bP^1.$$ 
In the complex torus model $\Pic^2C=\bC/\bZ+\bZ\tau$, 
the  inverse of the scattering amplitude map $\bLa$ is given by the elliptic integral
$$z=\int_{x_0}^x {dx \over \sqrt{g(x)}},$$
where $g$ is a degree $4$ or $3$ polynomial. To summarize:
\begin{enumerate}
\item The scattering amplitude form $A(x)={\displaystyle dx \over\displaystyle \sqrt{g(x)}}$ has
$4$ branch points. 
\item The curve $C$ is uniquely determined by the scattering amplitude form.
\item Marked points $p_1,\ldots,p_4$ are determined by $\bLa^{-1}(0)$, $\bLa^{-1}(1)$ and $\bLa^{-1}(\infty)$ 
but not uniquely: the action of the Klein $4$-group that permutes the marked points  $p_1,\ldots,p_4$ in pairs is not detected by the scattering amplitude.
\end{enumerate}
\end{Example}

\begin{Remark}
A special   case is when $p_1,\ldots,p_4$ are $2$-torsion points of an elliptic curve with  origin $p_1$.
We identify $\Pic^2C$ with $C=\Pic^1C$ by tensoring with $\cO(p_1)$.
Then $\cL$ is identified with $\cO(2p_1)$. The $6$ points $p_{ij}$ specialize to $3$ points $p_2,p_3,p_4$, 
each with multiplicity $2$. A special feature of this scattering amplitude form is that three of its branch points are at $0,1,\infty$.
The fourth one determines the $j$-invariant. More generally, 
we will see in Section~\ref{sdvqefv} that scattering amplitudes of hyperelliptic curves
can detect whether or not the marked points are at the Weierstrass points.
\end{Remark}


%
%
%
%

\begin{Theorem}\label{efvwb}
Every smooth pointed curve $(C;p_1,\ldots,p_n)$ of genus $g$ is an MHV curve.
\end{Theorem}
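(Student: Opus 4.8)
The plan is to verify the three conditions of Definition~\ref{KSLJDfksjdg} for a general line bundle $L\in\Pic^{g+1}C$ on a smooth curve $C$ of genus $g$ with $n=g+3$ marked points. Condition (1) is the statement that a general $L$ of degree $d=g+1$ is non-special. Since $d=g+1>g-1$ and $C$ is smooth, $H^1(C,L)=H^0(C,\omega_C\otimes L^{-1})^\vee$, and $\omega_C\otimes L^{-1}$ has degree $2g-2-(g+1)=g-3<0$ when $g\geq 0$ is such that... more carefully, one should argue via the Brill--Noether locus $W^r_d(C)$: for $d=g+1$ the general $L$ has $h^0(L)=d+1-g=2$ exactly, because $W^2_{g+1}(C)$ has expected dimension $\rho=g-3\cdot(g+1-g-1+2)\cdot\ldots$ and in fact is a proper subvariety of $\Pic^{g+1}C$ (it is empty or lower-dimensional for general $C$, and even for special $C$ one only needs that it is not everything). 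So (1) holds for $L$ outside a proper closed subset.

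For condition (2), given $h^0(C,L)=2$, the evaluation map $H^0(C,L)\otimes\cO_C\to L$ fails to be surjective at $p$ exactly when both sections vanish at $p$, i.e. when $h^0(C,L(-p))=2$, which forces $h^0(C,L(-p))=h^0(C,L)$; but $L(-p)$ has degree $g$, so a general $L$ has $h^0(L(-p))\leq 1$ by the same Brill--Noether count as above (or: the base locus of a general degree $g+1$ pencil is empty since $C$ is not hyperelliptic-constrained in a way that matters here — if a section of the pencil always vanished somewhere, we could subtract the base divisor and get a pencil of smaller degree, contradicting generality). Hence $\phi_L\colon C\to\bP^1$ is a morphism and $L\cong\phi_L^*\cO_{\bP^1}(1)$ for general $L$. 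So far the argument is a routine application of \cite{ACGH}.

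The real content is condition (3): the scattering amplitude map $\bLa\colon\Pic^{g+1}C\dashrightarrow M_{0,n}$, $L\mapsto(\phi_L(p_1),\ldots,\phi_L(p_n))$, is dominant, equivalently generically finite, since both sides have dimension $g=n-3$. The approach I would take is to compute the differential of $\bLa$ at a general $L$ and show it is an isomorphism. The tangent space to $\Pic^{g+1}C$ at $L$ is $H^1(C,\cO_C)$; the tangent space to $M_{0,n}$ at the image point, after quotienting by $\PGL_2$, is identified with $\bigoplus_i T_{\phi_L(p_i)}\bP^1$ modulo the infinitesimal $\PGL_2$-action. Deforming $L$ by a class in $H^1(\cO_C)\cong\Ext^1(L,L)$ deforms the pair $(L,\text{its pencil})$ and hence moves each image point $\phi_L(p_i)$; tracking this through the standard identification, the derivative sends $\xi\in H^1(\cO_C)$ to the collection of its images under the cup-product / evaluation maps $H^1(\cO_C)\to H^1(L(-p_i)^{?})\to \ldots$, concretely to $(\,\mathrm{ev}_{p_i}(\xi)\,)_i$ where one uses the exact sequences relating $L$, $L(-p_i)$ and the skyscraper at $p_i$. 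Injectivity of this differential then amounts to: no nonzero $\xi\in H^1(\cO_C)$ is killed by evaluation at all of $p_1,\ldots,p_n$ modulo $\PGL_2$ — a Serre-duality statement of the form $H^0(C,\omega_C\otimes L^{\pm}\otimes\cO(-p_1-\ldots-p_n))=0$ or similar, which holds by a degree count for general marked points and general $L$ (the relevant line bundle has negative degree once $n=g+3$ is plugged in). The main obstacle is getting this differential computation and the precise twist in the Serre-duality vanishing exactly right — in particular correctly accounting for the $\PGL_2$-quotient on the target and for the fact that we also quotient the source by nothing (dimensions already match), so that "generically finite" really does follow from "differential injective at one point." I would expect to handle this either by the infinitesimal argument above or, perhaps more cleanly, by exhibiting a single smooth curve (e.g. using the genus $0$ and genus $1$ base cases of Examples~\ref{asgsrgr} and~\ref{rbqerhqer} together with a degeneration/semicontinuity argument, or using the hyperelliptic case treated later in~\ref{kwJHEfg}) where $\bLa$ is known to be dominant, and then invoking openness of the dominant-locus in a family to conclude that a general — hence, after a separate irreducibility argument, every — smooth curve is MHV.
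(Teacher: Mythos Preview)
Your overall strategy matches the paper's: verify (1) and (2) by elementary dimension counts, and verify (3) by an infinitesimal computation. For (1) and (2) your arguments are essentially correct, though wobbly in places (the remark that $\deg(\omega_C\otimes L^{-1})=g-3<0$ is false once $g\ge 3$; the clean statement, which the paper uses repeatedly, is simply that a \emph{generic} line bundle of degree less than $g$ is not effective --- this handles $\omega_C\otimes L^{-1}$ of degree $g-3$, then $\omega_C\otimes L^{-1}(p)$ of degree $g-2$ for global generation, and $\omega_C\otimes L^{-1}(p_i+p_j)$ of degree $g-1$ for $\phi_L(p_i)\ne\phi_L(p_j)$).

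The real gap is in (3). You correctly diagnose that a differential computation is needed and that the issue is identifying the right twist, but you do not find it; the guessed vanishing ``$H^0(\omega_C\otimes L^{\pm}\otimes\cO(-P))=0$ or similar'' is not the relevant one. The paper avoids the $H^1(\cO_C)$ framing entirely and instead looks at first-order deformations of the \emph{map} $\phi_L:C\to\bP^1$, which are parametrized by $H^0(C,\phi_L^*T_{\bP^1})\cong H^0(C,L^{\otimes 2})$. Tensoring $0\to\cO(-P)\to\cO\to\bigoplus_i\cO_{p_i}\to 0$ with $L^{\otimes 2}$ shows that the evaluation map
\[
H^0(C,L^{\otimes 2})\longrightarrow\bigoplus_{i=1}^{g+3} T_{\phi_L(p_i)}\bP^1
\]
is an isomorphism precisely when $H^0(C,L^{\otimes 2}(-P))=H^1(C,L^{\otimes 2}(-P))=0$. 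The key numerical coincidence is $\deg L^{\otimes 2}(-P)=2(g+1)-(g+3)=g-1$, so for generic $L$ this bundle is neither effective nor special, and both groups vanish. Since the $(g+3)$-dimensional space of map-deformations decomposes as $g$ dimensions of deformations of $L$ plus $3$ dimensions of $\PGL_2$, and the target likewise is $(g+3)$-dimensional before the $\PGL_2$-quotient, this gives that $d\bLa$ is an isomorphism at $L$. Your alternative suggestion of a degeneration argument would work in principle but is considerably more roundabout than this one-line vanishing.
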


\begin{proof}
Recall that $n=g+3$ and $d=g+1$.
We are going to use repeatedly that a generic line bundle of degree less than $g$
is not effective \cite{ACGH}. Let $L$ be a generic line bundle of degree $g+1$. Then $\omega_C\otimes L^*$ is also a
generic line bundle.
Since its degree is $(2g-2)-(g+1)=g-3<g$,  we have $h^1(L)=h^0(\omega_C\otimes L^*)=0$, i.e. $L$ is not special.
This proves part (1) of Definition~\ref{KSLJDfksjdg}.

Next, we verify part (2).
If $p\in C$ then $h^1(L(-p))=h^0(\omega_C\otimes L^*(p))$.
Since the degree of $\omega_C\otimes L^*(p)$ is $g-2$,
it is effective only if can be written as $\cO(x_1+\ldots+x_{g-2})$ for some points on the curve.
It follows that $L\simeq\omega_C(p-x_1-\ldots-x_{g-2})$. But ~the locus of  these line bundles  is at most $(g-1)$-dimensional,
which contradicts the fact  that $L$ is generic.
Thus $L$ is globally generated.  

We  claim that $\phi_L(p_i)\ne\phi_L(p_j)$ when $i\ne j$, i.e.~$\bLa(L)\in M_{0,n}$ for a generic $L$.
Indeed,
$h^1(L(-p_i-p_j))=h^0(\omega_C\otimes L^*(p_i+p_j))=0$  since its degree is $g-1<g$. 
Thus $\phi_L$ sends $p_i$ and $p_j$ to different points of $\bP^1$.

To show that the scattering amplitude map $\bLa:\,\Pic^{g+1}C\dashrightarrow M_{0,n}$ is generically finite, we
compute its differential at a generic point.
Tensoring an exact sequence
$$0\to\cO(-P)\to\cO\to\bigoplus_{i=1}^{g+3}\cO_{p_i}\to0,$$
where $P=p_1+\ldots+p_{g+3}$,
with $\phi_L^*(T_{\bP^1})\simeq\cO(L^{\otimes2})$ gives an exact sequence
$$H^0(C,L^{\otimes2}(-P))\to H^0(C,\phi_L^*(T_{\bP^1}))\to\bigoplus_{i=1}^{g+3}H^0(p_i,\phi_L^*(T_{\bP^1}))\to
H^1(C,L^{\otimes2}(-P)).$$
Since $L^{\otimes2}(-P)$ has degree $2(g+1)-(g+3)=g-1$, both ends of the sequence vanish for generic $L$
and we have an isomorphism
$$H^0(C,\phi_L^*(T_{\bP^1}))\simeq\bigoplus_{i=1}^{g+3}H^0(p_i,\phi_L^*(T_{\bP^1}))\simeq\bC^{g+3}$$
In other words, every infinitesimal deformation of points $\phi_L(p_1),\ldots,\phi_L(p_{g+3})$ in~$\bP^1$
is induced by an infinitesimal deformation of the map $C\to\bP^1$ which since $H^1(C,L)=0$ 
is given by an infinitesimal deformation of $L$ and infinitesimal $\PGL_2$ action. 
It follows that the differential of $\bLa$ is an isomorphism at  $L$.
\end{proof}

Analyzing the special  loci from the proof of Theorem~\ref{efvwb} gives the following

\begin{Amplification}[planar locus $W$ and special divisors $E, E_{ij}, R\subset \Pic^{g+1}C$]\label{zxbzfn} $\ $
\begin{enumerate}
\item $L$ is not special away from the image $W$ of the map 
$$\Sym^{g-3}C\to\Pic^{g+1}C,\quad (x_1,\ldots,x_{g-3}) \mapsto \omega_C(-x_1-\ldots-x_{g-3}).$$
\item $L$ is base-point-free away from the image $E$ of the map 
$$C\times\Sym^{g-2}C\to\Pic^{g+1}C,\quad (p,x_1,\ldots,x_{g-2}) \mapsto \omega_C(p-x_1-\ldots-x_{g-2}).$$
\item $\bLa(L)\in M_{0,n}$ when $L$ is away from $E$ and  the images $E_{ij}$ for 
$1\le i<j\le n$ of the maps $\Sym^{g-1}C\to\Pic^{g+1}C$,
$$(x_1,\ldots,x_{g-1}) \mapsto \omega_C(-x_1-\ldots-x_{g-1}+p_i+p_j).$$
\item 
Let $\Theta\subset\Pic^{2g+2}$  be the image of the map $\Sym^{g-1}C\to\Pic^{2g+2}C$,
$$(x_1,\ldots,x_{g-1}) \mapsto \cO(x_1+\ldots+x_{g-1}+p_1+\ldots+p_{g+3}).$$
The scattering amplitude map $\bLa$ is unramified away from $E$, $E_{ij}$ and the locus $R=m^{-1}(\Theta)$,
where $m$ is the map 
$$m:\,\Pic^{g+1}C\to\Pic^{2g+2}C,\quad L\mapsto L^{\otimes 2}.$$
\item $W\subset E\cap\bigcap\limits_{i\ne j} E_{ij}$ has codimension $3$ in $\Pic^{g+1} C$. 
\item  
$E_{ij}$ and $\Theta$ are theta divisors.
\item $E$ is a theta divisor if $C$ is hyperelliptic, otherwise $E\equiv (g-1)\Theta$.
\item The divisor $R$ is algebraically equivalent to $4\Theta$.
\end{enumerate}
\end{Amplification}

\begin{proof}
Recall that theta divisors are defined up to translation by an element of $\Pic^0C$ 
and are not preserved by any non-zero translation.
The locus of effective divisors in $\Pic^{g-1}C$ is ``the'' theta-divisor. Thus
everything follows from the proof of Theorem~\ref{efvwb}, except for  the calculation 
of the class of the ramification divisor~$R$, which follows from the theorem of the cube,
and the class of the ``difference divisor''~$E$.
If $C$ is hyperelliptic then 
$\omega_C(p-x_1-\ldots-x_{g-2})\sim K_C-(g-2)h+p+x_1+\ldots+x_{g-2}$,
where $h$ is the hyperelliptic divisor, and so $E$ is the translate of the theta-divisor.
If $C$ is not hyperelliptic then the class of $E$ in the Neron--Severi group was computed in \cite[Prop.~3.7(b)]{FMP}.\footnote{As pointed out by the referee, \cite[Prop.~3.7]{FMP} studies the image of the difference map
$\psi_{b,a}:\,\Sym^bC\times\Sym^aC\to\Pic^{b-a}C,\quad (p_1,\ldots,p_b)\times (x_1,\ldots,x_a) \mapsto \cO_C(p_1+\ldots+p_b-x_{1}-\ldots-x_{a})$, only under the assumption that $1\le b\le a\le {g-1\over 2}$, which is stronger than ours ($b=1$, $a=g-2$.)
However, their argument works verbatim in our case.
We only need to show that $\psi_{1,g-2}$ is birational onto its image if $C$ is not hyperelliptic.
Arguing by contradiction, take two different points in a fiber, 
$p_1-x_1-\ldots-x_{g-2}\sim p'_1-x_1'-\ldots-x'_{g-2}$,
for a general $(g-1)$-tuple $p_1,x_1,\ldots,x_{g-2}$.
Then
$p_1+x'_1+\ldots+x'_{g-2}\sim p'_1+x_1+\ldots+x_{g-2}$.
If these divisors are different then, by Riemann singularity theorem and dimension count,
$\dim\Sing\Theta\ge g-3$, which contradicts Martens theorem since $C$ is not hyperelliptic.
Thus these divisors are the same, and so $p_1=x_i$ for some $i$, which contradicts generality.}
\end{proof}

\begin{Remark}
Divisors $E$, $E_{ij}$ and $R$ all provide natural polarizations of $\Pic^{g+1}C$.
Note especially that $E$ is independent of the marked points $p_1,\ldots,p_n$. 
\end{Remark}

\begin{Review}[Loci $E$ and $E_{ij}$]
After choosing a compactification of $M_{0,n}$, for example $\oM_{0,n}$, 
the scattering amplitude map (as any rational map) 
will extend generically along divisors $E$ and $E_{ij}$. Line bundles $L\in E$ have base locus but (away from $W$) 
still determine meromorphic functions $\phi_L:\,C\to\bP^1$. However, moving base points doesn't change $\phi_L$
but changes $L$ and as a result $\bLa$ contracts $E$ to a locus of smaller dimension, 
most dramatically to a point $o\in M_{0,n}$ in the hyperelliptic case 
(see~Corollary~\ref{adfgadrha}). This produces a singularity in the scattering amplitude probability measure (see \ref{qergarh}).
By contrast, divisors $E_{ij}$ are largely harmless: extension of $\bLa$ maps $E_{ij}$  to the locus in the compactification of $M_{0,n}$
where two marked points in $\bP^1$ come together, for example to the boundary divisor $\Delta_{ij}$ in the case of~$\oM_{0,n}$.
Here is an example of a different and very ergonomic compactification.
\end{Review}

\begin{Lemma}\label{adrhadrhsth}
Fix three indices, for example $g+1,g+2,g+3$.
The scattering amplitude map induces a generically finite rational map
\begin{equation}\label{bSheb}
\bLa:\, \Pic^{g+1}C\dashrightarrow M_{0,n}\xrightarrow{\pi_1,\ldots,\pi_g}(\oM_{0,4})^g\simeq(\bP^1)^g,
\cooltag\end{equation}
where 
$\pi_i:\,M_{0,n}\to \oM_{0,4}$ for $i=1,\ldots,g$
is the forgetful morphism given by the indices $i,g+1,g+2,g+3$.
A general line bundle $L\in\Pic^{g+1}C$ is mapped by $\pi_i$ to the cross-ratio
of points 
$\phi_L(p_i),\phi_L(p_{g+1}),\phi_L(p_{g+2}),\phi_L(p_{g+3})$.
Let $L\in \Pic^{g+1}C\setminus\{R\cup E\}$. If points $\phi_L(p_{g+1}),\phi_L(p_{g+2}),\phi_L(p_{g+3})$
are different then \eqref{bSheb} is regular and unramified at $L$.
\end{Lemma}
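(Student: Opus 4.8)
\emph{Overview.} Write $\Psi\colon\Pic^{g+1}C\dashrightarrow(\bP^1)^g$ for the composite \eqref{bSheb}. Generic finiteness is immediate: $C$ is MHV by Theorem~\ref{efvwb}, so $\bLa\colon\Pic^{g+1}C\dashrightarrow M_{0,n}$ is generically finite by Definition~\ref{KSLJDfksjdg}(3), while $(\pi_1,\dots,\pi_g)\colon M_{0,n}\dashrightarrow(\oM_{0,4})^g\simeq(\bP^1)^g$ is the classical open immersion realising $M_{0,n}$ as the variety of pairwise-distinct $g$-tuples in $\bP^1\setminus\{0,1,\infty\}$ (normalise the markings $g+1,g+2,g+3$ to $0,1,\infty$ and record the remaining cross-ratios); hence $\deg\Psi=\deg\bLa$. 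The substance is regularity and unramifiedness at the prescribed $L$, and the plan is to rerun the differential computation from the proof of Theorem~\ref{efvwb} at a fixed point, arranged so that only the conditions $L\notin E$ and $L\notin R$ are ever used.

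\emph{Regularity.} Let $\mathcal P$ be a Poincar\'e line bundle on $\Pic^{g+1}C\times C$, let $\pi$ be the projection to $\Pic^{g+1}C$, and set $U:=\Pic^{g+1}C\setminus E$. Since $W\subset E$ by Amplification~\ref{zxbzfn}(5), every $L'\in U$ has $h^1(L')=0$, $h^0(L')=2$ and is base-point-free; hence over $U$ the sheaf $\pi_*\mathcal P$ is locally free of rank $2$ (cohomology and base change) and $\pi^*\pi_*\mathcal P\to\mathcal P$ is surjective, defining a morphism $U\times C\to Q$ to a $\bP^1$-bundle $Q\to U$. Composing with the marked-point sections gives sections $s_i\colon U\to Q$ encoding the points $\phi_{L'}(p_i)$. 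On the open subset $U'\subset U$ where $s_{g+1},s_{g+2},s_{g+3}$ are pairwise disjoint — which contains $L$ by hypothesis — these three sections trivialise the bundle, $Q|_{U'}\simeq U'\times\bP^1$ carrying them to the constant sections $0,1,\infty$; under this trivialisation $s_1,\dots,s_g$ become morphisms $U'\to\bP^1$ and $\Psi|_{U'}=(s_1,\dots,s_g)\colon U'\to(\bP^1)^g$ is a morphism. Since cross-ratios are $\PGL_2$-invariant this morphism agrees with the rational map $\Psi$ on a dense open subset of $U'$, and $(\bP^1)^g$ is separated, so $\Psi$ is regular at $L$. Writing $\bar\phi_{L'}\colon C\to\bP^1$ for the induced map (which sends $p_{g+1},p_{g+2},p_{g+3}$ to $0,1,\infty$), one has $\Psi(L')=\bigl(\bar\phi_{L'}(p_1),\dots,\bar\phi_{L'}(p_g)\bigr)$ near $L$.

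\emph{Unramifiedness.} To finish I would show $d\Psi_L$ is an isomorphism of the two $g$-dimensional spaces $T_L\Pic^{g+1}C=H^1(C,\cO_C)$ and $T_{\Psi(L)}(\bP^1)^g=\bigoplus_{i=1}^{g}T_{\bar\phi_L(p_i)}\bP^1$, by factoring it through evaluation of sections at the marked points. Put $\mathfrak g:=H^0(\bP^1,T_{\bP^1})$; it injects into $H^0(C,\bar\phi_L^*T_{\bP^1})$ by pullback ($\bar\phi_L$ is non-constant) and into $\bigoplus_{i=1}^{n}T_{\bar\phi_L(p_i)}\bP^1$ by evaluation (the three points $0,1,\infty$ are distinct and a nonzero section of $T_{\bP^1}$ vanishes at no more than two points). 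Then: \emph{(i)} since $h^0(L)=2$, $h^1(L)=0$ and $L$ is base-point-free — all forced by $L\notin E$ — the deformation theory of maps $C\to\bP^1$ with fixed source and target, exactly as in the proof of Theorem~\ref{efvwb} (``an infinitesimal deformation of the map is an infinitesimal deformation of $L$ and infinitesimal $\PGL_2$ action''), yields a canonical isomorphism $H^0(C,\bar\phi_L^*T_{\bP^1})/\mathfrak g\simeq H^1(C,\cO_C)$, namely (the local inverse of) the differential of $\phi\mapsto\phi^*\cO_{\bP^1}(1)$, which is surjective because $h^1(L)=0$ and has kernel $\mathfrak g$ because $h^0(L)=2$; \emph{(ii)} tensoring $0\to\cO(-P)\to\cO\to\bigoplus_{i=1}^{n}\cO_{p_i}\to0$ (with $P=p_1+\dots+p_n$) by $\bar\phi_L^*T_{\bP^1}\simeq L^{\otimes2}$ and using $H^1(C,L^{\otimes2})=0$ (as $\deg L^{\otimes2}=2g+2$), the evaluation $H^0(C,\bar\phi_L^*T_{\bP^1})\to\bigoplus_{i=1}^{n}T_{\bar\phi_L(p_i)}\bP^1$ is an isomorphism iff $H^0(C,L^{\otimes2}(-P))=H^1(C,L^{\otimes2}(-P))=0$; as $\deg L^{\otimes2}(-P)=g-1$ these vanish together by Riemann--Roch, and exactly when $L^{\otimes2}(-P)$ is not effective, i.e.\ $L^{\otimes2}\notin\Theta$, i.e.\ $L\notin R=m^{-1}(\Theta)$; \emph{(iii)} the cross-ratio map $(\bP^1)^n\dashrightarrow(\bP^1)^g$ relative to the markings $g+1,g+2,g+3$ is a $\PGL_2$-invariant retraction onto the sub-$(\bP^1)^g$ on which those coordinates are $0,1,\infty$, so its differential at $(\bar\phi_L(p_i))_i$ is surjective with kernel precisely the image of $\mathfrak g$. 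Assembling, $d\Psi_L$ is the composite of the isomorphism of (i), the isomorphism induced on the $\mathfrak g$-quotients by the evaluation of (ii) — an isomorphism exactly because $L\notin R$, the evaluation carrying the $\mathfrak g$ in the source onto that in the target — and the isomorphism induced by the differential of the retraction of (iii). Hence $d\Psi_L$ is an isomorphism and $\Psi$ is unramified at $L$.

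\emph{Main obstacle.} The computation is essentially that of Theorem~\ref{efvwb}; the only new demand is to carry it out at a fixed $L$, which is fine because every required vanishing is governed precisely by $L\notin E$ or $L\notin R$, with no residual genericity. The point that takes some care — and the reason the superficially clumsy target $(\oM_{0,4})^g$ is in fact the ergonomic one — is that the whole argument is insensitive to $L$ lying on some $E_{ij}$ with $i\le g$ or $j\le g$: a collision among the moving images $\bar\phi_L(p_1),\dots,\bar\phi_L(p_g)$, or of one of them with a normalisation point, is irrelevant, since these enter only as the values at the \emph{distinct} points $p_1,\dots,p_g\in C$ of a single section of a line bundle on $C$. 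The one genuinely technical item to write carefully is the existence of the algebraic family $\bar\phi_{L'}$ near $L$ used for regularity, together with the verification that the deformation-theoretic isomorphism in (i) holds verbatim under these hypotheses.
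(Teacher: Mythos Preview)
Your proof is correct and follows essentially the same approach as the paper, which simply says ``Identical to the last paragraph of the proof of Theorem~\ref{efvwb}'': the core is the differential computation via tensoring $0\to\cO(-P)\to\cO\to\bigoplus\cO_{p_i}\to0$ with $\phi_L^*T_{\bP^1}\simeq L^{\otimes2}$, observing that the vanishing of $H^0$ and $H^1$ of $L^{\otimes2}(-P)$ is precisely the condition $L\notin R$, while $L\notin E$ guarantees $L$ is non-special and base-point-free. Your write-up is more explicit than the paper's one-line reference---you spell out regularity via the Poincar\'e bundle and separately treat the passage from $M_{0,n}$ to $(\bP^1)^g$ in step~(iii)---and your remark that the $(\oM_{0,4})^g$ target is ergonomic precisely because collisions among $\phi_L(p_1),\ldots,\phi_L(p_g)$ are harmless is exactly the point of the lemma.
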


\begin{proof}
Identical to the last paragraph of the proof of Theorem~\ref{efvwb}.
\end{proof}

\begin{Review}[Locus W]\label{evwevwev}
We call $W$ the {\em planar locus}:
generically along $W$, $\phi_L$ is a morphism 
\begin{equation}\label{sgasgas}
\phi_L:\,C\to\bP^2.
\cooltag\end{equation}
If $C$ is a general curve, $\phi_L$ realizes it as a degree $g+1$ 
plane curve with ${g(g-3)\over 2}$ nodes away from the marked points $p_1,\ldots,p_n$.
Generically along $W$, the scattering amplitude map is resolved by the blow-up 
\begin{equation}\label{agargasr}
\bLa:\,G=\Bl_W\Pic^{g+1}C\dashrightarrow M_{0,n}.
\cooltag\end{equation}
In the language of 
Brill--Noether theory~\cite{ACGH}, $G$ parametrizes 
pencils of divisors on $C$ of degree $g+1$. Let 
$$\hat W\subset G$$
be an exceptional divisor over $W$. The map $\hat W\to W$ is a $\bP^2$-bundle (generically along $W$).
Generically along $\hat W$, the map $\bLa$ of \eqref{agargasr} can be described as follows: 
a~point $(L,p)\in\hat W$ gives both a ``planar realization''
\eqref{sgasgas} and a  point $p\in\bP^2$.
Projecting points 
$$\phi_L(p_1),\ldots,\phi_L(p_n)\in\bP^2$$ from $p$
gives points $q_1,\ldots,q_n\in \bP^1$.
The class of $(q_1,\ldots,q_n)$ in $M_{0,n}$
is the image of $(L,p)$ under~$\bLa$. 
By \cite[Th.~3.1]{CT_Cont}, $\bLa(D_W)\subset\oM_{0,n}$ is a divisor 
covered by surfaces 
\begin{equation}\label{aljskhfgsk}
S=\Bl_{\phi_L(p_1),\ldots,\phi_L(p_n)}\bP^2
\cooltag\end{equation}
for general $L\in W$
unless points $\phi_L(p_1),\ldots,\phi_L(p_n)$ lie on a conic, in which case the conic is contracted to a point.
\end{Review}

\begin{Lemma}
The multi-valued scattering amplitude form $A$ on $M_{0,n}$ has branches that vanish
along the divisor $\bLa(D_W)$ with multiplicity $2$.
\end{Lemma}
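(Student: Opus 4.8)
The plan is to compute the order of vanishing of a branch of $A$ along $\bLa(D_W)$ by pulling everything back to the resolution $G = \Bl_W \Pic^{g+1}C$ of \eqref{agargasr} and tracking how the holomorphic $g$-form $A \in H^0(\Pic^{\vec d}C, \Omega^g)$ behaves under the blow-up together with how $\bLa$ ramifies along the exceptional divisor $\hat W$. The key point is that $A$ is $\Pic^0C$-invariant, hence nowhere vanishing as a section of $\Omega^g$ on $\Pic^{g+1}C$; so its pullback $\sigma^*A$ to $G$ (via $\sigma: G \to \Pic^{g+1}C$) vanishes along $\hat W$ exactly to the order $\codim_{\Pic^{g+1}C} W - 1 = 3 - 1 = 2$, since $W$ has codimension $3$ (Amplification~\ref{zxbzfn}(5)) and $\hat W \to W$ is a $\bP^2$-bundle. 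This already produces the factor of $2$; the remaining task is to check that pushing forward (or rather, comparing via $\bLa$) to $M_{0,n}$ does not change this multiplicity, i.e. that $\bLa: G \dashrightarrow M_{0,n}$ is generically unramified along $\hat W$ onto its image divisor $\bLa(D_W)$.

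First I would set up the local picture: near a general point of $W$, choose local coordinates on $\Pic^{g+1}C$ so that $W = \{u_1 = u_2 = u_3 = 0\}$ and $A = f\, du_1 \wedge \cdots \wedge du_g$ with $f$ a nonvanishing holomorphic function (using $\Pic^0C$-invariance). On the chart of the blow-up where $u_2 = u_1 v_2$, $u_3 = u_1 v_3$, the pullback is $\sigma^*A = u_1^2 \cdot f \cdot du_1 \wedge dv_2 \wedge dv_3 \wedge du_4 \wedge \cdots \wedge du_g$, exhibiting the vanishing order $2$ along $\hat W = \{u_1 = 0\}$ explicitly. Then I would invoke the description of $\bLa|_{\hat W}$ from \ref{evwevwev}: a point $(L,p) \in \hat W$ maps to the class in $M_{0,n}$ of the projection of $\phi_L(p_1), \ldots, \phi_L(p_n) \in \bP^2$ from $p$. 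The differential of $\bLa$ at a general such point is an isomorphism onto $T(\bLa(D_W))$ — this is essentially the content of \cite[Th.~3.1]{CT_Cont} invoked in \ref{evwevwev}, together with the dimension count that $\dim \hat W = \dim \bLa(D_W)$ (a divisor in $M_{0,n}$, dimension $n - 4 = g - 1$, matches $\dim W + 2 = (g-3) + 2 = g - 1$). Since $\bLa$ is generically étale from $\hat W$ onto $\bLa(D_W)$, a local generator of $\Omega^g_{M_{0,n}}$ near $\bLa(D_W)$ pulls back to a local generator of $\Omega^g_G$ near the general point of $\hat W$; comparing with the formula above, the branch of $A$ determined by this sheet of $\bLa$ vanishes along $\bLa(D_W)$ with multiplicity exactly $2$.

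The main obstacle I anticipate is the transversality/genericity bookkeeping at the boundary: one must make sure that the general point of $\hat W$ is chosen to avoid (i) the sublocus of $\hat W$ lying over $W \cap (E_{ij} \cup R \cup \cdots)$, (ii) the degenerate case in \ref{evwevwev} where $\phi_L(p_1), \ldots, \phi_L(p_n)$ lie on a conic (so the image $S$ of \eqref{aljskhfgsk} is contracted rather than mapping finitely), and (iii) points where $\bLa|_{\hat W}$ fails to be immersive onto $\bLa(D_W)$. For a general curve $C$ all of these are proper closed subsets of $\hat W$, so they do not affect the generic multiplicity, but the argument should state this cleanly — in particular that $\bLa$ restricted to the $\bP^2$-fiber $\hat W_L$ over a general $L \in W$ is the standard projection-from-a-point map $\bP^2 \dashrightarrow M_{0,n}$, whose image is the del Pezzo-type surface $S$ of \eqref{aljskhfgsk} and which is generically an immersion. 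Once this is pinned down, the multiplicity computation is the short local calculation above.
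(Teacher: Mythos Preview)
Your proposal is correct and follows the same approach as the paper: both reduce to the fact that a nowhere-vanishing top form on a smooth variety, pulled back under the blow-up of a codimension-$3$ center, vanishes to order $2$ along the exceptional divisor. The paper's proof is even terser---one sentence invoking $\pi^*K_X = K_G(-(\codim W - 1)\hat W)$---and does not spell out the unramifiedness check you add, treating the equivalence with the statement on $M_{0,n}$ as immediate from the setup in \ref{evwevwev}.
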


\begin{proof}
Equivalently, we claim that the pull-back  of the scattering amplitude form $A$ on $\Pic^{g+1}C$ to $G$
vanishes to the order $2$ along $\hat W$.
But  $W$ has codimension $3$ and for a blow-up $\pi:\,G=\Bl_WX\to X$
with exceptional divisor $\hat W$,
one has  $\pi^*K_X=K_{G}(-k\hat W)$, where $k=\codim_WX-1$.
\end{proof}

\begin{Example}
$W$ is empty in genus $1$ and $2$.
For a curve of genus $3$,
$$W=\{K\}\in\Pic^{4}C$$
and $\phi_K:\,C\to\bP^2$ is an embedding of $C$ as a quartic curve if $C$ is not hyperelliptic
and a $2:1$ map to a conic in $\bP^2$ if $C$ is hyperelliptic.
If $C$ is a general quartic (resp. ~ hyperelliptic) curve then 
$S$ as in \eqref{aljskhfgsk} is a general smooth (resp.~nodal) cubic surface.
This is related to the fact that $M_{0,6}$ has another projective model, 
the Segre cubic threefold in $\bP^4$, and $S$ is its hyperplane section.
At the moment little is known
about divisors $\bLa(\hat W)\subset M_{0,n}$ for $g>3$.
\end{Example}

\section{One and two channel factorization}\label{zdfbsdh}

\begin{Review}\label{asrsrh}
Recall that  the on-shell diagram is the dual graph of $C$ decorated with degrees of $L\in\Pic^{\vec d}C$ on its irreducible components.
If~$C$ is an MHV curve then none of these degrees are negative since $L$ is globally generated.
However, some of the degrees can be equal to $0$.
The corresponding components are 
contracted by the map $\phi_L:\,C\to\bP^1$.
The union of irreducible components where $L\in\Pic^{\vec d}C$ has degree $0$ can be written 
as a disjoint union of maximal connected components
$$C^{(0)}_1,\ldots, C^{(0)}_r.$$
The following lemma is essentially from \cite{MHV}:
\end{Review}

\begin{Lemma}\label{SFbSFh}
If $C$ is an MHV curve then each connected component $C^{(0)}_i$ is a curve of arithmetic genus $0$ (i.e.~a tree of $\bP^1$'s) with at most one marked point.
\end{Lemma}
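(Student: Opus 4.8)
The plan is to analyze the restriction of the line bundle $L$ and the map $\phi_L$ to a maximal connected union $C_i^{(0)}$ of degree-zero components. Since $L|_{C_i^{(0)}}$ has degree $0$ on each component, the map $\phi_L$ contracts $C_i^{(0)}$ to a single point $q \in \bP^1$. First I would set up the key normalization exact sequence: write $C = C_i^{(0)} \cup C'$, where $C'$ is the closure of the complement, and let $k$ be the number of nodes connecting $C_i^{(0)}$ to $C'$. Since $C$ is an MHV curve, by Definition~\ref{KSLJDfksjdg} we have $H^1(C,L)=0$ and $h^0(C,L)=2$, and $L$ is globally generated.

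The main argument is a dimension count on global sections combined with the fact that $\phi_L$ is constant on $C_i^{(0)}$. Consider the two-term complex relating $H^0(C,L)$ to $H^0(C',L|_{C'})$ and $H^0(C_i^{(0)}, L|_{C_i^{(0)}})$ via the gluing at the $k$ nodes. Because $L$ is globally generated and $h^0(C,L)=2$ with the image a $\bP^1$, the restriction $H^0(C,L) \to H^0(C_i^{(0)}, L|_{C_i^{(0)}})$ has image of dimension at most $1$ (the two sections become proportional on $C_i^{(0)}$, as they must vanish to cut out the contraction point $q$); in fact it is exactly $1$-dimensional, spanning the constant functions, since a section cannot vanish identically on a connected component of a globally generated bundle's locus without forcing base points. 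Now I would invoke the long exact sequence of
$$0 \to L(-C_i^{(0)})|_{C'} \to L \to L|_{C_i^{(0)}} \to 0$$
(or its companion with the roles reversed) to extract a bound on $h^0$ and $h^1$ of the pieces. The crucial inequality is that the failure of $H^0(C,L)$ to surject onto $H^0(C_i^{(0)},L|_{C_i^{(0)}}) \oplus H^0(C',L|_{C'})$ modulo the $k$-dimensional node-matching space contributes to $H^1(C,L)$, which must vanish. Concretely, one gets
$$h^1(C,L) \ge h^0(C_i^{(0)}, L|_{C_i^{(0)}}) + h^0(C', L|_{C'}) - h^0(C,L) - (\text{node terms}),$$
and from $h^1(C,L)=0$ together with the genus-additivity formula $p_a(C) = p_a(C_i^{(0)}) + p_a(C') + k - 1$, one forces $p_a(C_i^{(0)}) = 0$, i.e., $C_i^{(0)}$ is a tree of $\bP^1$'s.

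For the marked-point claim, suppose $C_i^{(0)}$ contains two marked points $p_a, p_b$. Then $\phi_L(p_a) = \phi_L(p_b) = q$, contradicting part (3) of Definition~\ref{KSLJDfksjdg} as unpacked in the proof of Theorem~\ref{efvwb} (or its stable-curve analogue), which requires $\bLa(L) \in M_{0,n}$ for generic $L$, i.e., all $\phi_L(p_j)$ distinct. So $C_i^{(0)}$ has at most one marked point. The hard part will be handling the node-matching bookkeeping and the degenerate case where $C_i^{(0)}$ meets $C'$ in only one node: there the $2$-connectedness discussion (one-channel factorization, treated later in \S\ref{zdfbsdh}) already shows such a curve is not MHV, so I would either cite that or argue directly that a single connecting node forces $h^1(C,L) > 0$ via a Mayer--Vietoris computation. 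The subtlety is that $L|_{C_i^{(0)}}$ is not the trivial bundle on each component in the naive sense — it has degree $0$ componentwise but the gluing data at internal nodes of $C_i^{(0)}$ matters — yet since $C_i^{(0)}$ is being contracted, only $H^0$ of the structure-sheaf-like restriction enters, and $h^0(C_i^{(0)}, L|_{C_i^{(0)}}) = 1$ exactly when $C_i^{(0)}$ is a tree (if it had a loop, $h^0$ of a degree-$0$ multidegree bundle could still be $1$ but $h^1$ jumps, which is precisely what kills the MHV condition). I expect the cleanest writeup to proceed by induction on the number of components of $C_i^{(0)}$, peeling off leaves of the dual graph.
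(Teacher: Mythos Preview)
Your approach is salvageable but far more complicated than necessary, and you miss the paper's key idea entirely. The paper's proof is essentially two sentences: since $L$ is \emph{generic} in $\Pic^{\vec d}C$, its restriction to $C_i^{(0)}$ is a generic line bundle in $\Pic^{\vec 0}(C_i^{(0)})$; on the other hand, the MHV condition forces this restriction to be globally generated, hence trivial (a degree-$0$ globally generated line bundle on a connected curve is $\cO$). If $C_i^{(0)}$ had positive arithmetic genus, $\Pic^0(C_i^{(0)})$ would be positive-dimensional and a generic point would not be $\cO$ --- contradiction. You never invoke genericity of $L$ at all, which is the whole point of the MHV setup (Definition~\ref{KSLJDfksjdg} asks these conditions only for generic $L$).

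Your cohomological route can also be made to work, but not via the vague inequality and genus-additivity you sketch. The clean version is: global generation plus multidegree zero on the connected subcurve $C_i^{(0)}$ immediately gives $L|_{C_i^{(0)}}\cong\cO_{C_i^{(0)}}$ (no induction, no worry about gluing data --- a nonvanishing section of a multidegree-zero bundle on a connected nodal curve trivializes it); then the partial-normalization sequence shows $H^1(C,L)$ surjects onto $H^1(C_i^{(0)},\cO_{C_i^{(0)}})$, so $H^1(C,L)=0$ forces $p_a(C_i^{(0)})=0$. Your concerns about the $k=1$ case, the proposed induction on components, and the remark that ``$h^0=1$ exactly when $C_i^{(0)}$ is a tree'' are all red herrings once you observe triviality of the restriction. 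Also, the sequence you wrote with $L(-C_i^{(0)})|_{C'}$ is not well-formed since $C_i^{(0)}$ is not Cartier on $C$; use the partial-normalization Mayer--Vietoris sequence instead. Your marked-point argument is fine and matches the paper's: two marked points on $C_i^{(0)}$ would have the same image under $\phi_L$, so $\bLa$ could not be dominant.
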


\begin{proof}
The restriction of a generic line bundle $L$ to each $C^{(0)}_i$ is a globally generated, on the other hand generic, line bundle of degree $0$.
Therefore this restriction is a trivial line bundle and so the genus of each $C^{(0)}_i$ is zero.
Since $\phi_L$ contracts each $C^{(0)}_i$, it~can~contain at most one marked point,
otherwise $\bLa$ is not dominant.
\end{proof}

\begin{Remark}\label{AKHJDbfHSDB}
One can substitute each tree $C^{(0)}_i$ of rational components with any other fixed curve of arithmetic genus zero
with the same number of marked points (i.e.~zero or one) and the same number of ``outbound'' points
(where $C^{(0)}_i$ is connected to the rest of the curve $C$). This  doesn't change the scattering amplitude. 
In the language of on-shell diagrams, Lemma~\ref{SFbSFh} says that 
a~subgraph of degree~$0$ vertices is a disjoint union of $r$ connected trees
of white circles with at most one marked point on each  tree.
Each of these trees can be substituted with one white {\em megacircle}
without changing the scattering amplitude.
\end{Remark}

The following fact is very useful.

\begin{Lemma}\label{jhgkjgfk}
Let $C$ be an MHV curve. Choose two points $x, y\in C$ (not necessarily marked points).
Then either $x$ and $y$ belong to the same rational component $C^{(0)}_i$ of \ref{asrsrh} or 
$$\phi_L(x)\ne\phi_L(y)$$ 
for a general line bundle $L\in\Pic^{\vec d}C$.
\end{Lemma}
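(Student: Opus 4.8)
The plan is to reduce the statement to a dimension count for a suitable special divisor in $\Pic^{g+1}C$, exactly in the spirit of Amplification~\ref{zxbzfn}. First I would observe that the conclusion is vacuous when $x$ and $y$ lie in the same $C^{(0)}_i$, so assume they do not. The locus of line bundles $L\in\Pic^{\vec d}C$ for which $\phi_L(x)=\phi_L(y)$ is closed; to show it is proper it suffices to produce one $L$ in the dense MHV locus with $\phi_L(x)\ne\phi_L(y)$, equivalently to show that the ``bad'' locus
$$B_{xy}=\{L\in\Pic^{\vec d}C\mid h^0(C,L(-x-y))\ge 1\}$$
is not all of $\Pic^{\vec d}C$. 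Indeed, if $L$ is globally generated (which holds off a proper closed subset by Definition~\ref{KSLJDfksjdg}(2) and Amplification~\ref{zxbzfn}(2)) then $\phi_L(x)=\phi_L(y)$ forces a section vanishing at both $x$ and $y$, so $h^0(L(-x-y))\ge 1$; conversely I only need the inclusion ``$\phi_L(x)=\phi_L(y)\Rightarrow L\in B_{xy}$,'' and I should be slightly careful when $x$ or $y$ is a node of $C$ — there I work on the normalization or on the component(s) through that point, using that $\phi_L$ is still defined there generically along the MHV locus.

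The heart of the argument is then the estimate $\dim B_{xy}<\dim\Pic^{\vec d}C=g$. A line bundle $L$ of degree $g+1$ with $h^0(L(-x-y))\ge 1$ can be written as $L=\cO(D+x+y)$ for some effective divisor $D$ of degree $g-1$; equivalently $L(-x-y)$ lies in the image of $\Sym^{g-1}C\to\Pic^{g-1}C$, which is a translate of the theta divisor. Hence $B_{xy}$ is contained in a translate of $\Theta$, so $\dim B_{xy}\le g-1<g$. This is the step I expect to be the main obstacle, not because the count is hard but because one must check it is the \emph{right} count on a possibly singular curve: for a reducible or nodal $C$ the relevant ``theta divisor'' is the image of $\Sym^{g-1}$ of the \emph{whole} stable curve inside the component $\Pic^{\vec d-[x]-[y]}C$, and one needs this image to be a proper subvariety of that Picard component. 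I would handle this by the same normalization-and-induction bookkeeping used implicitly around Lemma~\ref{SFbSFh}: the restriction of a generic $L$ to each component is generic of the prescribed degree, a generic line bundle of degree $<g_s$ on a component of geometric genus $g_s$ is non-effective by \cite{ACGH}, and the hypothesis that $x,y$ are not co-contracted guarantees that imposing $h^0(L(-x-y))\ge 1$ is a genuine (codimension $\ge 1$) condition rather than an automatic one — e.g. it rules out the degenerate case where both points sit on a degree-$0$ component and the vanishing is free.

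Finally I would assemble these: $B_{xy}$ is closed and of dimension $\le g-1$, the MHV/globally-generated locus is open dense, so their complement is nonempty, and any $L$ in it satisfies $\phi_L(x)\ne\phi_L(y)$; since ``being in the complement of a proper closed subset'' is precisely what ``general line bundle'' means, this proves the lemma. I would also remark that this recovers Amplification~\ref{zxbzfn}(3) when $\{x,y\}=\{p_i,p_j\}$ are marked points, with $B_{p_ip_j}$ being exactly the divisor $E_{ij}$, so the lemma is the natural ``un-marked'' generalization of that computation.
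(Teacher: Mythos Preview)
Your approach is genuinely different from the paper's, and it has a real gap in the nodal case.

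\textbf{What the paper does.} The paper gives a direct deformation argument, not a dimension count. Assuming $\phi_L(x)=\phi_L(y)=0$ for a general $L$, it constructs an explicit $L_0\in\Pic^{\vec 0}C$ such that $\phi_{L\otimes L_0}(y)=0$ but $\phi_{L\otimes L_0}(x)\ne 0$. This is done case by case according to whether $x$ is a smooth point on a positive-degree component, a node between two positive-degree branches, or lies on some $C^{(0)}_i$: in each case one passes to a partial normalization $g:C'\to C$, replaces the part of the divisor $\phi_L^*(0)$ sitting ``at $x$'' by a generic effective divisor of the same multidegree, and pulls back. A~concrete rational section then witnesses that the new section of $L\otimes L_0$ no longer vanishes at $x$. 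No theta-divisor estimate is used.

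\textbf{Where your argument breaks.} For a smooth curve your theta-divisor bound is exactly the computation in the proof of Theorem~\ref{efvwb} and in Amplification~\ref{zxbzfn}(3), and it is fine. The problem is the step
\[
h^0(L(-x-y))\ge 1 \ \Longrightarrow\ L=\cO(D+x+y)\ \text{with $D\ge 0$ effective of degree $g-1$},
\]
which you then feed into the Abel map $\Sym^{g-1}C\to\Pic^{g-1}C$. On a nodal curve this implication fails: a nonzero section of $L(-x-y)$ may vanish identically on a union of irreducible components, so it does \emph{not} cut out an effective Cartier divisor of degree $g-1$, and $L(-x-y)$ need not lie in the Abel image. (The same issue already arises if $x$ or $y$ is a node, since Cartier divisors are supported on the smooth locus.) Consequently $B_{xy}$ is not contained in a translate of $\Theta$, and the inequality $\dim B_{xy}\le g-1$ is not established by your argument. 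You correctly flag this as ``the main obstacle,'' but the proposed fix --- ``normalization-and-induction bookkeeping'' plus the component-wise non-effectivity fact from \cite{ACGH} --- does not do the job: effectivity of a line bundle on a reducible curve is not detected component-wise precisely because sections can vanish on whole components, and stratifying $B_{xy}$ by the vanishing subcurve and bounding each stratum is essentially a separate (and longer) argument than the paper's three-case deformation.

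In short: your idea is natural and recovers the smooth case, but the heart of Lemma~\ref{jhgkjgfk} is the singular case, and there your theta-divisor containment is false as stated. The paper sidesteps this entirely with an explicit perturbation of $L$.
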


\begin{proof}
Let $L$ be a general line bundle in $\Pic^{\vec d}C$. It gives a morphism $\phi_L:\,C\to\bP^1$.
Suppose $x$ and $y$ are not in the same  component $C^{(0)}_i$ (clearly contracted by $\phi_L$)
but  $\phi_L(x)=\phi_L(y)=0\in\bP^1$. Conditions (1) and (2)
in the Definition~\ref{KSLJDfksjdg}  of an MHV curve are open in $\Pic^{\vec d}C$.
We claim that $L$ can be deformed to force $\phi_L(x)\ne\phi_L(y)$.
Let $g:\,C'\to C$ be a morphism and let $D,D'\subset C'$ be divisors defined as follows:
\begin{enumerate}
\item If $x$ is a smooth point of $C$ and its irreducible component $A$ containing $x$
is not contracted by $\phi_L$ then $C'=C$, $D=m[x]$, where $\phi_L^*(0)=m[x]+\ldots$ and $D'=m[x']$, where  $x'\in A$ is a general point.
\item If $x$ is a node of $C$ and irreducible components $A$ and $B$ passing through~$x$ are not contracted by $\phi_L$ 
(it could be that $A=B$) then $C'$ is a partial normalization of $C$ obtained by separating the node $x$,
$D$ is a divisor supported at $g^{-1}(x)$ such that $(\phi_L\circ g)^*(0)=D+\ldots$ and $D'$ is a general effective divisor on $C'$
with the same multidegree as $D$.
\item Finally, if $x\in C^{(0)}_i$ then let $C'$ be $\overline{C\setminus C^{(0)}_i}$, 
$D$ is a divisor supported at $g^{-1}(C^{(0)}_i)$ such that $(\phi_L\circ g)^*(0)=D+\ldots$ and $D'$ is a general effective divisor on $C'$
with the same multidegree as $D$.
\end{enumerate} 
Let $L_0\in\Pic^{\vec 0}C$ be a line bundle such that $g^*L_0\simeq\cO_{C'}(D'-D)$, which exists since $g^*: \Pic(C)\rightarrow\Pic(C')$ is surjective in all three cases.
We can assume that $\phi_{L\otimes L_0}(y)=0$.
We claim that $\phi_{L\otimes L_0}(x)\ne0$. Indeed, let $s$ be a global section of $L$ that vanishes at $x$ and $y$
and let $s_0$ be a rational section of $L_0$ such that $(g^*s_0)=D'-D$. Then $g^*(s_0s)$ has no poles and doesn't vanish along $D$, 
and therefore $s_0s$ is a global section of $L_0\otimes L$ that vanishes at $y$ but not at $x$.
\end{proof}

There are two more restrictions on MHV curves:

\begin{Lemma}\label{wefwetb}
Let $(C, \vec d)$ be an MHV curve with a connected subcurve $A$ of arithmetic genus~$p$, 
and $L\in\Pic^{\vec d}C$ a generic line bundle.
\begin{enumerate}
\item If $p>0$ then $\deg L|_A\ge p+1$.
\item If $\deg L|_A=p+1$ then $A$ contains at most $p+3$ marked points.
\end{enumerate}
\end{Lemma}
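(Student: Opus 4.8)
The plan is to push the two numerical hypotheses built into the MHV condition---the vanishing $H^1(C,L)=0$ and the dominance of $\bLa$---down to the subcurve $A$ by restriction. For part~(1), I would tensor $0\to\cI_A\to\cO_C\to\cO_A\to0$ with the line bundle $L$ to get $0\to\cI_A\otimes L\to L\to L|_A\to0$ on $C$; since $C$ is a curve the cohomology sequence gives a surjection $H^1(C,L)\to H^1(A,L|_A)$, so $H^1(A,L|_A)=0$, and Riemann--Roch on the connected curve $A$ of arithmetic genus $p$ gives $h^0(A,L|_A)=\deg(L|_A)+1-p$. On the other hand $L|_A$ is globally generated, since global sections of $L$ restrict to global sections of $L|_A$, so $h^0(A,L|_A)\ge1$ and equality forces $L|_A\cong\cO_A$. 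Hence if $\deg L|_A\le p$ then $h^0(A,L|_A)\le1$, so $h^0(A,L|_A)=1$ and $L|_A\cong\cO_A$; but then $h^1(A,L|_A)=h^1(A,\cO_A)=p$, contradicting $H^1(A,L|_A)=0$ whenever $p>0$. This proves $\deg L|_A\ge p+1$.

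For part~(2), suppose $\deg L|_A=p+1$. The same two facts now give $H^1(A,L|_A)=0$ and $h^0(A,L|_A)=2$. The key step is that the restriction map $H^0(C,L)\to H^0(A,L|_A)$ is injective: if some nonzero section of $L$ vanished identically on $A$, then completing it to a basis of $H^0(C,L)$ and using that $L=\phi_L^*\cO_{\bP^1}(1)$ is base-point-free would force $\phi_L(A)$ to be a single point and hence $L|_A$ to be trivial, contradicting $\deg L|_A=p+1\ge1$. Being an injection of two-dimensional spaces it is an isomorphism, so $\phi_L|_A:A\to\bP^1$ is the morphism given by the complete linear system of the base-point-free bundle $L|_A$, and in particular the positions $\phi_L(p_i)\in\bP^1$ of the marked points lying on $A$ depend only on $L|_A$, a point of the relevant component $\Pic^{\vec d|_A}A$ of $\Pic A$, which has dimension $p$. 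Let $k$ be the number of marked points on $A$; we may assume $k\ge4$. Composing $\bLa$ with the surjective forgetful morphism $M_{0,n}\to M_{0,k}$ that remembers only the marked points lying on $A$ produces a dominant rational map $\Pic^{g+1}C\dashrightarrow M_{0,k}$ which, by the previous sentence, factors through the restriction morphism $\Pic^{g+1}C\to\Pic^{\vec d|_A}A$. Therefore $k-3=\dim M_{0,k}\le\dim\Pic^{\vec d|_A}A=p$, that is, $k\le p+3$.

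I expect the one genuinely delicate point to be the assertion in part~(2) that $\phi_L|_A$, and with it the positions of the marked points lying on $A$, is intrinsic to $L|_A$: this rests on the coincidence $\deg L|_A=p+1$ forcing $h^0(A,L|_A)$ to be exactly $2$, so that the two-dimensional image of $H^0(C,L)$ inside $H^0(A,L|_A)$ has no choice but to be the whole space and no data beyond $L|_A$ enters. What remains is bookkeeping---checking that the argument is unaffected when $A$ contains some of the contracted rational trees $C^{(0)}_i$ of~\ref{asrsrh}, and the degenerate case $A=C$ where $p=g$ and both inequalities become equalities---but no new idea is needed, everything reducing to the restriction sequence on $C$, Riemann--Roch on $A$, and surjectivity of forgetful maps among the spaces $M_{0,m}$.
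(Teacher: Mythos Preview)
Your proof is correct and follows essentially the same strategy as the paper's: restrict to $A$, use global generation plus a cohomological constraint for part~(1), then factor the scattering map through $\Pic^{\vec d|_A}A$ and count dimensions for part~(2).

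The one noteworthy difference is in part~(1). The paper argues that $L|_A$ is both globally generated and \emph{generic} in its Picard component (since restriction $\Pic^{\vec d}C\to\Pic^{\vec d_A}A$ is surjective), and a generic globally generated bundle on a genus~$p>0$ curve has degree at least $p+1$. You instead push down the vanishing $H^1(C,L)=0$ via the ideal-sheaf sequence to get $H^1(A,L|_A)=0$ directly, then combine with Riemann--Roch and global generation. Your route is slightly more self-contained: it avoids appealing to surjectivity of the restriction map on Picard groups and to the Brill--Noether fact about generic bundles. For part~(2) you make explicit a point the paper leaves implicit, namely that $H^0(C,L)\to H^0(A,L|_A)$ is an isomorphism so that $\phi_L|_A=\phi_{L|_A}$ and the factorization through $\Pic^{\vec d|_A}A$ is honest; this is a worthwhile clarification.
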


\begin{proof}
Indeed, $L|_A$  is both globally generated and generic,
thus $\deg L|_A\ge p+1$. This proves (1). 
Let $N_A\subset\{1,\ldots,n\}$ be the subset of marked points that belong to~$A$ and let $n_A$ be its cardinality.
 If $\deg L|_A=p+1$ then $\phi_{L|_A}$ is a map $A\to \bP^1$.
The~scattering amplitude map $\bLa$ is dominant, therefore the induced map $\Pic^{\vec d_A}A\to M_{0,n_A}$
 that sends $L|_A$ to the configuration of points $\phi_{L|_A}(p_i)$ for $i\in N_A$
 must be dominant as well. It follows that $n_A\le p+3$.
 This proves (2). 
\end{proof}

\begin{Corollary}
Figure~\ref{SDgSg} lists all MHV curves with $g=1$  (up to permuting markings).
\end{Corollary}

\begin{proof}
A dual graph of a nodal curve of arithmetic genus $1$ is a graph of genus $1$, i.e.~
a genus 1 vertex or a cycle (perhaps reduced to a loop) with trees attached.
By Lemma~\ref{wefwetb}, the degree of the cycle is $2$, so all trees have degree $0$. By Lemma~\ref{SFbSFh},
there are actually no trees, so the on-shell diagram is a cycle. This leaves diagrams from Figure~\ref{SDgSg}.
\end{proof}

\begin{Example}\label{sDGASG}
Let $C$ be an irreducible nodal curve of arithmetic genus $1$  with $4$ marked points $p_1,\ldots,p_4$.
We~view $C$ as $\bP^1$ with $0$ and $\infty$ identified. Marked points $p_1,\ldots,p_4\in\bC^*\subset \bP^1$.
For $x,y\in\bC^*$, the line bundle $L=\cO([x]+[y])\in\Pic^2C$ determines the map 
$$\phi_L:\,C\arrow^{2:1}\bP^1,\qquad p\mapsto {p\over (p-x)(p-y)}$$ 
(note that $\phi_L(0)=\phi_L(\infty)$).
It is easy to see that $\cO([x]+[y])\simeq \cO([x']+[y'])$ if and only if $xy=x'y'=z\in\bC^*$.
This gives an identification of $\Pic^2C\simeq\bC^*$. The map
$$\bLa:\,\Pic^2C\to\oM_{0,4}\simeq\bP^1,\qquad \bLa(L)=\lambda=
[{\phi_L(p_1):\phi_L(p_2);\phi_L(p_3):\phi_L(p_4)}]$$
is the cross-ratio of points varying with $L$. Using invariance of the cross-ratio,
$$\bLa(L)=\left[{p_1\over (p_1-x)(p_1-y)}:{p_2\over (p_2-x)(p_2-y)};{p_3\over (p_3-x)(p_3-y)}:{p_4\over (p_4-x)(p_4-y)}\right]$$
$$=\left[{ (p_1-x)(p_1-y)\over p_1}:{(p_2-x)(p_2-y)\over p_2};{(p_3-x)(p_3-y)\over p_3}:{(p_4-x)(p_4-y)\over p_1}\right]$$
$$=\left[{ p_1-(x+y)+{z\over p_1}}:{p_2-(x+y)+{z\over p_2}};{p_3-(x+y)+{z\over p_3}}:{p_4-(x+y)+{z\over p_4}}\right]$$
$$=\left[{ p_1+{z\over p_1}}:{p_2+{z\over p_2}};{p_3+{z\over p_3}}:{p_4+{z\over p_4}}\right]=\lambda$$
Thus $\lambda=\bLa(z)$ extends to a map $\bP^1\arrow^{2:1}\bP^1$ that sends $0, \infty$ to $[p_1:p_2;p_3:p_4]$.
The target $\bP^1$ is the $\oM_{0,4}$ and the source $\bP^1$ is a new feature of the stable curve case, the normalization of the
{\em compactified Jacobian} $\overline{\Pic^2C}$,  in this case  a nodal cubic.
The~ $1$-form ${dz\over z}$ of $\overline{\Pic^2C}$ is invariant under the group action of $\Bbb C^* 
\cong\Pic^{\vec0} C$ on $\overline{\Pic^2C}$. As~a~form on $\overline{\Pic^2C}$, the amplitude 
has log poles at its boundary point
but as a form of $\lambda$ the pole is located at $[p_1:p_2;p_3:p_4]\not\in\{0,1,\infty\}$.
To summarize,

\begin{enumerate}
\item The scattering amplitude form is $A(\lambda)={d\lambda\over (\lambda-\lambda_0)\sqrt{f_2(\lambda)}}$,
with the log pole 
$$\lambda_0=[p_1:p_2;p_3:p_4]\not\in\{0,1,\infty\},$$
where $f_2$ is a polynomial of degree $2$ with roots $\bLa\left(\pm\sqrt{p_1p_2p_3p_4}\right)$.
\item The marked points $p_1,\ldots,p_4$ are determined modulo 
the action of the Klein $4$-group that permutes them in pairs.
Specifically, 
$$\bLa^{-1}(0)=\{p_1p_4,p_2p_3\},\quad \bLa^{-1}(1)=\{p_1p_3,p_2p_4\},\quad \bLa^{-1}(\infty)=\{p_1p_2,p_3p_4\}.$$
\end{enumerate}
\end{Example}

\begin{Review}
Next we consider various situations when the curve $C$
can be separated into two connected components $A$ and $B$ by  nodes.
We will use the following notation:
\begin{enumerate}
\item $g_A$ and $g_B$ for the arithmetic genus of components $A$ and $B$.
\item $d_A$ and $d_B$ for the degree of $L_A=L|_A$ and $L_B=L|_B$. We have $d_A+d_B=d$.
\item $N_A$ and $N_B$ for the subsets of marked points on $A$ and $B$ and $n_A$~and~$n_B$
for their cardinalities.
We have $n_A+n_B=n$.
\end{enumerate}
\end{Review}

\begin{Review}[One-channel factorization]
Let $C$ be a nodal curve with a separating node.
In~other words, the on-shell diagram of $C$ is not $2$-connected. This is related to a one-channel factorization 
in physics where virtual particles are are interchanged through the separating node.
The following lemma is well-known in physics.
\end{Review}

\begin{Lemma}\label{qwefv2e2}
A curve with a separating node is never an MHV curve. In particular, the only MHV curves with compact Jacobians are smooth curves.
\end{Lemma}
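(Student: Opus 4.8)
The plan is to show that a curve $C$ with a separating node fails condition (3) of Definition~\ref{KSLJDfksjdg}: the scattering amplitude map $\bLa$ cannot be dominant, because its image is constrained to lie in a proper subvariety of $M_{0,n}$. First I would write $C = A \cup_x B$, where $x$ is the separating node, and set up the notation $g_A, g_B, d_A, d_B, n_A, n_B$ as in the preceding Review, with $g_A + g_B = g$, $d_A + d_B = d = g+1$, and $n_A + n_B = n = g+3$. The key structural fact is that $\Pic^{\vec d} C$ fibers over $\Pic^{d_A} A \times \Pic^{d_B} B$ with compact fibers — in fact, for a curve of compact type, $\Pic^{\vec d} C \simeq \Pic^{d_A} A \times \Pic^{d_B} B$, since gluing data at a single node contributes nothing (a single node glues two one-dimensional fibers along a $\bC^*$, but for compact type there is no extra $\bC^*$; concretely $\Pic^0 C \simeq \Pic^0 A \times \Pic^0 B$). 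So a general $L \in \Pic^{\vec d} C$ is just a pair $(L_A, L_B)$ with $L_A = L|_A$, $L_B = L|_B$ generic.

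Next I would analyze the morphism $\phi_L : C \to \bP^1$. On $A$ it restricts to $\phi_{L_A}$ and on $B$ to $\phi_{L_B}$; these must agree at the node, i.e. $\phi_{L_A}(x) = \phi_{L_B}(x)$. But since we only need to compute the image of $\bLa$, and $\PGL_2$ acts on the target, we may post-compose $\phi_{L_B}$ with the unique element of $\PGL_2$ carrying $\phi_{L_B}(x)$ to $\phi_{L_A}(x)$; thus the configuration $(\phi_L(p_1), \dots, \phi_L(p_n)) \in M_{0,n}$ depends on $L_A$ only through the point configuration $\{\phi_{L_A}(p_i) : i \in N_A\} \cup \{\phi_{L_A}(x)\}$ in $\bP^1$ modulo $\PGL_2$, i.e. through a point of $M_{0, n_A + 1}$, and similarly for $B$. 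More precisely, $\bLa$ factors through the clutching map $M_{0, n_A+1} \times M_{0, n_B+1} \to \oM_{0,n}$, whose image is the boundary divisor corresponding to the partition $(N_A, N_B)$ — a proper closed subvariety of $\oM_{0,n}$, hence $\bLa$ is not dominant. The dimension count makes this transparent: $\dim \Pic^{\vec d} C = g = g_A + g_B$, while $M_{0,n}$ has dimension $n - 3 = g$, so a priori the count is neutral; but the factorization forces the image into the boundary, which has dimension $g - 1$, so $\bLa$ cannot be generically finite.

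The main obstacle is making the "factorization through the clutching map" argument fully rigorous at the level of rational maps: one must check that $\phi_{L_A}$ and $\phi_{L_B}$ are genuinely morphisms on $A$ and $B$ respectively for generic $(L_A, L_B)$ — equivalently that generic $L_A$ of degree $d_A$ is globally generated on $A$ with the expected number of sections, and likewise on $B$ — and that the base locus of $L$ on $C$ does not concentrate at the node in a way that breaks the picture; here I would invoke Lemma~\ref{wefwetb} and the genericity arguments already in the proof of Theorem~\ref{efvwb}, applied componentwise. There is one degenerate case to dispose of separately: if $d_A = 0$ then $A$ is contracted by $\phi_L$, and by Lemma~\ref{SFbSFh} (applied to the maximal degree-zero subcurve containing $A$) $A$ must be a tree of $\bP^1$'s with at most one marked point — but then $A$ has $g_A = 0$ and $\deg L|_A = 0$, which by Lemma~\ref{wefwetb}(1) is only consistent with $g_A = 0$, so in fact the separating-node curve still factors, now through the map forgetting the (at most one) marked point on $A$, landing again in a proper locus. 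For the final sentence: a smooth curve has $\Pic^0 C$ compact (it is an abelian variety), and conversely if $C$ is singular and nodal then either it has a separating node — excluded above — or it has a non-separating node, in which case $\Pic^0 C$ contains a $\bC^*$ factor and is non-compact; so among stable curves, compact Jacobian forces smoothness.
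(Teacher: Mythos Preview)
Your central claim --- that $\bLa$ factors through the clutching map $M_{0,n_A+1}\times M_{0,n_B+1}\to\oM_{0,n}$ and hence lands in a boundary divisor --- is false. The morphism $\phi_L$ sends $C$ to a \emph{single} $\bP^1$; when $\bLa(L)$ is defined, the points $\phi_L(p_i)$ are distinct points of that one $\bP^1$ and $\bLa(L)$ lies in the interior $M_{0,n}$, never in the boundary. The ``gluing'' step you invoke is also wrong: the element of $\PGL_2$ carrying $\phi_{L_B}(x)$ to $\phi_{L_A}(x)$ is not unique (the point-stabilizer is $2$-dimensional), so there is no well-defined map $M_{0,n_A+1}\times M_{0,n_B+1}\dashrightarrow M_{0,n}$ through which to factor, and the $(g-1)$-dimensional bound on the image is unfounded. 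More fundamentally, your setup silently assumes that both $\phi_{L_A}$ and $\phi_{L_B}$ exist, i.e.\ $h^0(A,L_A)\ge 2$ and $h^0(B,L_B)\ge 2$; but the Mayer--Vietoris sequence at the separating node then forces $h^0(C,L)\ge h^0(A,L_A)+h^0(B,L_B)-1\ge 3$, already contradicting MHV condition~(1). So in the regime where condition~(1) holds --- exactly one of $h^0(A,L_A),h^0(B,L_B)$ equals $1$ --- your picture does not apply at all.

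The paper's proof is a two-line numerical contradiction from Lemmas~\ref{wefwetb} and~\ref{SFbSFh}, which you never carry out. If $g_A,g_B>0$ then Lemma~\ref{wefwetb}(1) gives $d_A+d_B\ge(g_A+1)+(g_B+1)=g+2>g+1$; if (say) $g_B=0$ then the same lemma forces $d_A=g+1$, $d_B=0$, while stability of $C$ gives $n_B\ge 2$ and Lemma~\ref{SFbSFh} gives $n_B\le 1$. Your ``degenerate case'' paragraph touches the second branch but does not organize the argument this way, and the first branch is entirely missing from your write-up. The final sentence about compact Jacobians is fine.
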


\begin{proof}
We argue by contradiction.
Let the node separate $C$ into components $A$ and~$B$.
Note that $g_A+g_B=g$. 
If $g_A>0$ and $g_B>0$, then  $d_A\ge g_A+1$, $d_B\ge g_B+1$ by Lemma~\ref{wefwetb}, thus $g+1\ge g+2$, a contradiction.
If $g_A=g$ and $g_B=0$ (or vice versa) then $d_A=g+1$ and $d_B=0$  by Lemma~\ref{wefwetb}.
But $B$ contains at least two marked points, which contradicts Lemma~\ref{SFbSFh}.
\end{proof}

\begin{Definition}[two-channel factorization]\label{FbfhFHF}
Suppose that $C$ can be separated into $A$ and $B$ by two nodes but not by one node.
In other words, the on-shell diagram of $C$ is $2$-connected but not $3$-connected.
We say that $C$ has a {\em two-channel factorization} unless $A$ (or $B$) is a $\bP^1$ with a marked point 
and $L$ has degree $0$ on it.
\end{Definition}

\begin{Example}\label{SfbSfb}
Let $(C;p_1,p_2,p_3,p_4)$ be a curve obtained by gluing two copies of $\bP^1$ at $0$ and $\infty$,
with $p_1,p_2$ on the first component and $p_3,p_4$ on the second. 
This is an example of a two-channel factorization.
The MHV component of the Picard group is $\Pic^{1,1}C$.
Every line bundle $L\in \Pic^{1,1}C$ can be written as $\cO([x]+[x'])$, where $x$ is a point of the first component and $x'$ of the second 
(away from the nodes of $C$). Consider the map
$$\phi_L:\,C\arrow^{2:1}\bP^1,\quad p\mapsto\begin{cases} 
a+{b\over p-x} & \hbox{\rm on the first component,}\cr
a'+{b'\over p-x'} & \hbox{\rm on the second component,}\cr
\end{cases}$$
where $a=a'$ and $bx'=b'x$ so that $\phi_L$ is regular.
It is easy to see that $\cO([x]+[x'])\simeq\cO([y]+[y'])$ if and only if ${x\over x'}={y\over y'}=z$.
This gives an identification $\Pic^{1,1}C\simeq\bC^*$.

We are studying the scattering amplitude map
$\bLa:\,\Pic^{1,1}C\to\oM_{0,4}\simeq\bP^1$ given by 
the cross-ratio of points $\phi_L(p_1),\ldots,\phi_L(p_4)$ varying with $L$. Using invariance of the cross-ratio,
$$\lambda=\left[
a+{b\over p_1-x}:
a+{b\over p_2-x};
a+{b'\over p_3-x'}:
a+{b'\over p_4-x'}
\right]
$$
$$=\left[
{b\over p_1-x}:
{b\over p_2-x};
{b'\over p_3-x'}:
{b'\over p_4-x'}
\right]=\left[
{x\over p_1-x}:
{x\over p_2-x};
{x'\over p_3-x'}:
{x'\over p_4-x'}
\right]
$$
$$=\left[
{p_1\over x}:
{p_2\over x};
{p_3\over x'}:
{p_4\over x'}
\right]
=\left[
{p_1}:
{p_2};
{z p_3}:
{z p_4}
\right]
={z p_4-p_1\over p_2-p_1}\cdot {p_2-z p_3\over z p_4-z p_3}.
$$
As in Example~\ref{sDGASG}, we view the map $\bLa$ as the map from the compactified Jacobian:
$$\overline{\Pic^{1,1}C}=\bP^1_z/_{\{ 0=\infty\}}\arrow^{2:1} \bP^1_\lambda=\oM_{0,4}.$$
The difference from Example~\ref{sDGASG} is that 
$\bLa(0)=\bLa(\infty)=\infty$. In particular, the scattering amplitude form, which  
is the $\Pic^{0,0} C$- invariant form $dz\over z$ 
written as a multivalued form of ~$\lambda$, has a log pole at $\infty$.
Explicitly, from the above, we have
\begin{equation}\label{afbfhafhdf}
\lambda z(p_2-p_1)(p_4-p_3)=(zp_4-p_1)(p_2-zp_3).
\cooltag\end{equation}
Solving \eqref{afbfhafhdf} for $z$ using quadratic formula gives,
after  algebraic manipulations,
$${dz\over z}=(p_1-p_2)(p_4-p_3){d\lambda\over\sqrt{f_2(\lambda)}},$$
where $f_2(\lambda)$ is the discriminant of this quadratic equation.
\end{Example}

\begin{Lemma}\label{aEFaf}
Suppose $(C, \vec d)$ is an MHV curve with a two-channel factorization into  components $A$ and $B$ by nodes $x$ and $y$. 
Let $L\in\Pic^{\vec d}C$ be a generic line bundle. Then 
\begin{enumerate}
\item $d_A=g_A+1$, $d_B=g_B+1$.
\item $n_A=g_A+3$, $n_B=g_B+1$ (Case I) or $n_A=g_A+2$, $n_B=g_B+2$ (Case II).
\item $H^0(A,L_A)=H^0(B,L_B)=2$, \quad $H^1(A,L_A)=H^1(B,L_B)=0$.
\item $L_A$ and $L_B$ are globally generated.  
\item $\phi_L(x)\ne\phi_L(y)$. In particular, neither $A$ nor $B$ contains 
a degree $0$ rational component $C^{(0)}_i$ with both $x$ and $y$ as in Lemma~\ref{jhgkjgfk}.
\end{enumerate}
\end{Lemma}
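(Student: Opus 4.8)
The plan is to check (1)--(5) in the order (4), (1), (3), (2), (5), working symmetrically in $A$ and $B$. Since removing two nodes breaks the cycle, $g=g_A+g_B+1$, so $d_A+d_B=d=g+1=g_A+g_B+2$ and $n_A+n_B=n=g+3=g_A+g_B+4$; also the restriction map $\Pic^{\vec d}C\to\Pic^{\vec d_A}A\times\Pic^{\vec d_B}B$ is surjective (its fibre is the one-dimensional torus of gluing data over the two nodes), so a generic $L$ restricts to generic $L_A:=L|_A$, $L_B:=L|_B$ in the respective fixed-multidegree Picard components. Part (4) is then immediate: $L$ is globally generated because $C$ is an MHV curve, and the restriction of a globally generated sheaf to a closed subcurve is globally generated; in particular $L_A$ and $L_B$ have non-negative degree on every component. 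For (1): if $g_A>0$ then $d_A\ge g_A+1$ by Lemma~\ref{wefwetb}(1), and likewise for $B$; if $g_A=0$ I claim $d_A\ge 1$, for otherwise $d_A=0$, and a globally generated bundle of total degree $0$ on a connected genus-$0$ curve (a tree of $\bP^1$'s) is $\cO_A$, so $\phi_L$ contracts $A$, placing $A$ inside one of the subcurves $C^{(0)}_i$ of~\ref{asrsrh} and hence giving $n_A\le 1$ by Lemma~\ref{SFbSFh}; summing the number of special points over the $m$ genus-$0$ components of $A$, which carries $m-1$ internal nodes, exactly the two outbound nodes $x,y$, and $n_A\le 1$ marked points, forces $2(m-1)+2+n_A\ge 3m$, i.e. $m\le n_A\le 1$, so $A$ is a single $\bP^1$ with one marked point on which $L$ has degree $0$ --- exactly the case excluded in Definition~\ref{FbfhFHF}. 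Hence $d_A\ge g_A+1$, $d_B\ge g_B+1$, and as they sum to $g_A+g_B+2$ both are equalities.

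For (3), normalize $C$ at the two nodes, $\nu\colon A\sqcup B\to C$, and tensor $0\to\cO_C\to\nu_*\cO_{A\sqcup B}\to\bC_x\oplus\bC_y\to 0$ with $L$ to get $0\to L\to L_A\oplus L_B\to\bC_x\oplus\bC_y\to 0$. Using $h^0(C,L)=2$ and $h^1(C,L)=0$, the long exact sequence collapses to $0\to\bC^2\to H^0(A,L_A)\oplus H^0(B,L_B)\to\bC^2\to 0$ together with $H^1(A,L_A)\oplus H^1(B,L_B)=0$; thus $h^1(A,L_A)=h^1(B,L_B)=0$, $h^0(A,L_A)+h^0(B,L_B)=4$, and Riemann--Roch with $d_A=g_A+1$, $d_B=g_B+1$ gives $h^0(A,L_A)=h^0(B,L_B)=2$. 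For (2), Lemma~\ref{wefwetb}(2) applies to each side (we have $\deg L|_A=d_A=g_A+1$) and yields $n_A\le g_A+3$, $n_B\le g_B+3$; combined with $n_A+n_B=g_A+g_B+4$ this gives $g_A+1\le n_A\le g_A+3$ and $g_B+1\le n_B\le g_B+3$, so $(n_A-g_A,n_B-g_B)\in\{(3,1),(2,2),(1,3)\}$, i.e. Case~I or Case~II after possibly swapping $A$ and $B$.

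For (5), by Lemma~\ref{jhgkjgfk} it suffices to show that $x$ and $y$ do not lie on one contracted component $C^{(0)}_i$. Suppose they do. If $C^{(0)}_i\subseteq A$, then $C^{(0)}_i\cup B$ is a connected subcurve of arithmetic genus $g_B+1>0$ with $\deg L|_{C^{(0)}_i\cup B}=\deg L|_B=d_B=g_B+1$, contradicting Lemma~\ref{wefwetb}(1), which demands degree $\ge g_B+2$; the case $C^{(0)}_i\subseteq B$ is symmetric. If $C^{(0)}_i$ straddles the two nodes, then (since $C^{(0)}_i$ is a tree of $\bP^1$'s contracted by $\phi_L$ whose neighbours all have positive degree by maximality) one adjoins to $A$ or to $B$ the part of $C^{(0)}_i$ lying on the other side; using stability of $C$ and the absence of a separating node this again produces a connected positive-genus subcurve whose $L$-degree is smaller by one than what Lemma~\ref{wefwetb}(1) allows, a contradiction. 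The last sentence of (5) is then a restatement via Lemma~\ref{jhgkjgfk}.

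The parts I expect to be routine are (4), (3) and (2): Mayer--Vietoris plus Riemann--Roch plus the already established Lemmas~\ref{wefwetb},~\ref{SFbSFh} and~\ref{jhgkjgfk}. The genuine work is combinatorial, in (1) and (5): in (1) the stability count ruling out $d_A=0$ on a genus-$0$ side, and in (5) the exclusion of a contracted component through both nodes. I expect the straddling sub-case of (5) to be the real obstacle, since there the subcurve one wants to feed into Lemma~\ref{wefwetb}(1) does not automatically acquire the extra cycle that makes the degree bound fail, so one must argue more carefully from stability of $C$ and $2$-connectedness of the on-shell diagram.
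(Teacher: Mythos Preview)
Your treatment of (1)--(4) is correct and essentially parallel to the paper's: the same Mayer--Vietoris sequence, Riemann--Roch, and the stability count ruling out the degree-$0$ rational-tree side. You are more explicit than the paper in the $g_A=0$ case, which is fine.

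The real divergence is in (5). The paper does \emph{not} go through Lemma~\ref{jhgkjgfk}. Instead it argues directly: if the evaluation $H^0(A,L_A)\to L_A|_x\oplus L_A|_y$ had a kernel, a nonzero $s_A$ vanishing at $x$ and $y$ extends by $0$ over $B$ to a section of $L$ on $C$; taking any independent $s_2$, the map $[s_1:s_2]$ collapses $B$ to a point, forcing $d_B=0$, contradicting $d_B=g_B+1$. This one-line cohomological trick simultaneously yields $\phi_L(x)\ne\phi_L(y)$ and the ``in particular'' clause, with no case analysis.

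Your route via Lemma~\ref{jhgkjgfk} is not wrong, but as you yourself flag, the straddling sub-case is unfinished. It can in fact be completed: since $C^{(0)}_i$ is a tree and $x,y$ are the only edges between $A$ and $B$, removing whichever of $x,y$ is internal shows that one of $E_A:=C^{(0)}_i\cap A$ or $E_B:=C^{(0)}_i\cap B$ is a connected tree touching both attachment points; then $E_A\cup B$ (resp.\ $A\cup E_B$) is a connected subcurve of genus $g_B+1$ (resp.\ $g_A+1$) and $L$-degree $g_B+1$ (resp.\ $g_A+1$), contradicting Lemma~\ref{wefwetb}(1). So the gap is fillable with a short tree argument, but you did not supply it. The paper's approach avoids this entirely and is the cleaner choice.
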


\begin{proof}
Let $L\in\Pic^{\vec d}C $ be a generic line bundle.
Consider a partial normalization map 
$$\nu:\,A\coprod B\to C$$
and an exact sequence
$$0\to\cO_C\to\nu_*(\cO_A\oplus\cO_B)\arrow^\alpha\cO_x\oplus\cO_y\to 0,$$
where $\alpha(f,g)=(f(x)-g(x),f(y)-g(y))$. Since $H^1(C,L)=0$, tensoring with~$L$ gives
$H^1(A,L_A)=H^1(B,L_B)=0$. Since restrictions $L_A$ and $L_B$ are globally generated and neither $A$ nor $B$
is a $\bP^1$ with a marked point and degree $0$, we get 
\begin{equation}\label{asrgasr}
d_A=g_A+1,\quad d_B=g_B+1,
\cooltag\end{equation}
$$H^0(A,L_A)=H^0(B,L_B)=2.$$
Next we consider  the restriction map
$H^0(A,L_A)\to \bC^2$ at points $x$ and~$y$. We claim that it is an isomorphism (and the same for $B$)
and in particular  $\phi_L(x)\ne\phi_L(y)$.
If~not, let $s_A\in H^0(A,L_A)$ be a non-zero section that vanishes at $x$ and $y$.
Extension of this section by $0$ on $B$ gives a section $s_1$ in $H^0(C,L)$.
Let $s_2\in H^0(C,L)$ be a linearly independent section.
Then the morphism $\phi_L:\,C\arrow^{[s_1:s_2]}\bP^1$ contracts $B$ to a point.
Thus all components of $B$ have degree $0$, which contradicts \eqref{asrgasr}.

Finally, we study distribution of marked points among components $A$ and $B$. 
By Lemma~\ref{wefwetb}~(2),
$n_A\le g_A+3$,  $n_B\le g_B+3$. Up to symmetry,
there are only  two possible cases, namely (I) and (II).
\end{proof}

\begin{Amplification}\label{adgbarhR}
The proof of the lemma shows how to build the map $\phi_L:\,C\to\bP^1$.
We~start with the maps $\phi_{L_A}:\,A\to\bP^1$ and $\phi_{L_B}:\,B\to\bP^1$ that each map $x$ and $y$ to different points.
Without loss of generality, we  can assume that 
$$\phi_{L_A}(x)=\phi_{L_B}(x)=0\quad\hbox{\rm and}\quad \phi_{L_A}(y)=\phi_{L_B}(y)=\infty.$$
The~maps $\phi_{L_A}$ and $\phi_{L_B}$ then glue and give a map $\phi:\,C\to\bP^1$, which corresponds 
to {\em some} line bundle $L$ that restricts to $L_A$ and $L_B$. 
The restriction map $$\Pic^{\vec d}C\to  \Pic^{\vec d_A}A\times \Pic^{\vec d_B}B$$
has kernel $\bC^*$. Changing $L$ by an element $z\in \bC^*$
gives a map $\phi_{zL}$ such that 
$$\phi_{zL}|_A=\phi_{L}|_A\quad\hbox{\rm and}\quad \phi_{zL}|_B=z\phi_{L}|_B.$$
\end{Amplification}

 \begin{Review}\label{qfvefb}
Curves $A$ and $B$ do not have to be stable.
Suppose that $A$ is not stable\break 
(analysis for $B$ is the same).
Then $A$ contains a rational component that in $C$ is  attached to $x$ or $y$ (or both) which has
less than $3$ special points (marked points or nodes) once $x$ and $y$ are separated.
Since $C$ doesn't have a $1$-channel factorization, 
$A$ can't have a rational component with $1$ special point (in $A$) attached to both $x$ and~$y$.
Thus $A$ is at least semi-stable. We list the remaining possibilities in Figure~\ref{fwgwrg}:
\begin{figure}[htbp]
\includegraphics[width=4.5in]{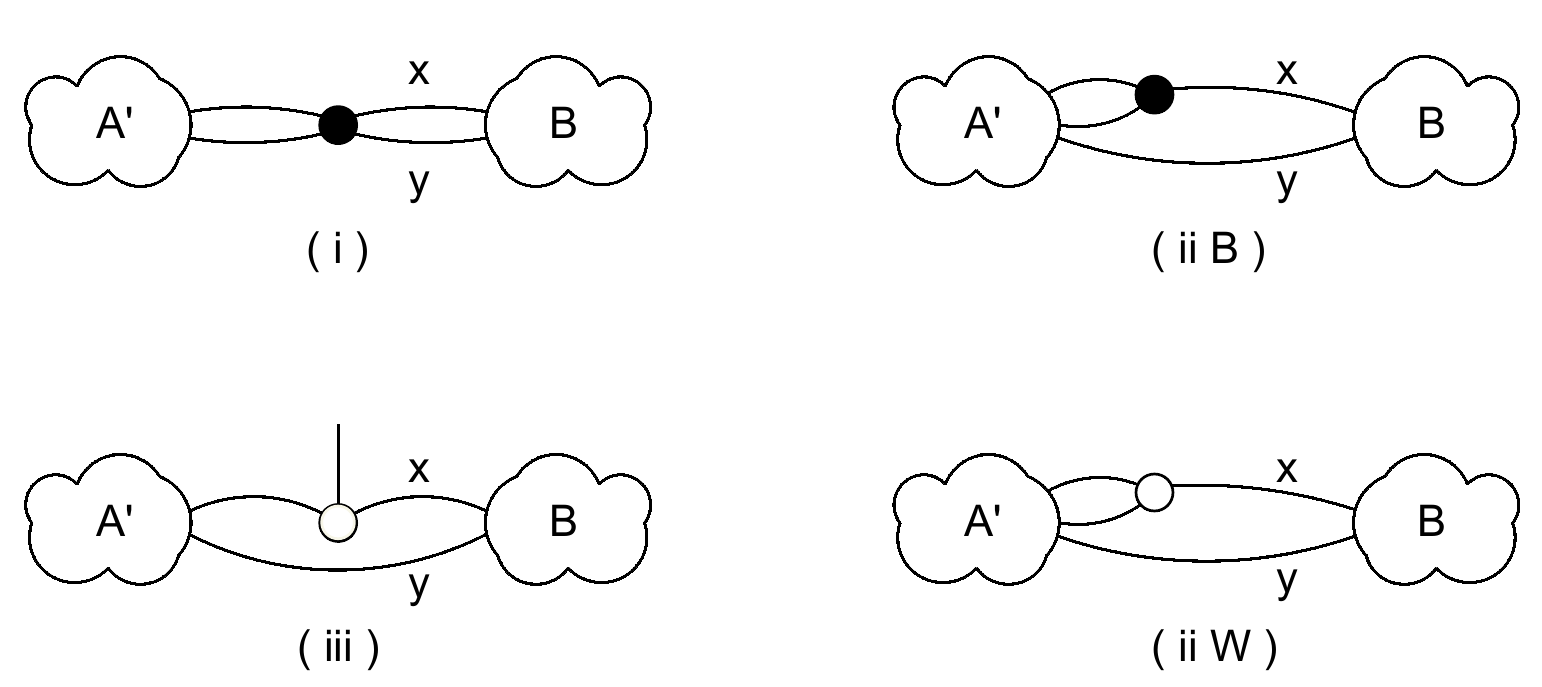}
\caption{}\label{fwgwrg}
\end{figure}
\end{Review}
\begin{enumerate}
\item[(i)] $A$ contains a $4$-pointed $\bP^1$ without marked points attached to $x$ and $y$
and to the rest of the subcurve $A'$ at $2$ points. 
Then  $d_{A'}\ge g_A$ by Lemma~\ref{wefwetb}~(1) if $g_A > 1$ or by Lemma~\ref{SFbSFh} and $n_A=2$ if $g_{A'}=0$. Then $d_{A'} + d_{\bP^1} = g_A+1$ and $d_{\bP^1} \ge 1$ since 
$\phi_L(x) \neq \phi_L(y)$. Thus $\bP^1$ has degree $1$.

\item[(iiW)] $A$ contains a $3$-pointed $\bP^1$ of degree $0$ without marked points attached to the rest of the subcurve $A'$
at $2$ points and also to $x$ (or $y$).
\item[(iiB)] $A$ contains a $3$-pointed $\bP^1$ of degree $1$ without marked points attached to the rest of the subcurve $A'$
at $2$ points and also to $x$ (or $y$).\footnote{Note that this $\bP^1$ can't have degree $>1$ by Lemma~\ref{wefwetb}~(1).}
\item[(iii)] $A$ contains a $3$-pointed $\bP^1$ with a marked point attached to the rest of the subcurve $A'$ at 
one point and also to $x$ (or $y$). This $\bP^1$ must have degree $0$ by Lemma~\ref{wefwetb}~(1).
\end{enumerate}

\begin{Theorem}\label{safbsfhsfn}
A curve $C$ with a two-channel factorization such that $n_A=g_A+3$ and $n_B=g_B+1$
is an MHV curve if and only if 
\begin{enumerate}
\item The stabilization $\tilde A$ of $A$ with its $n_A$ marked points is an MHV  curve that does not contain 
a degree $0$ rational component $C^{(0)}_i$ as in \ref{asrsrh} with both $x$ and $y$.
\item The curve $\tilde B$ obtained by adding to $B$ two extra marked points in addition to its $n_B$ marked points
(at the nodes separating $A$ from $B$) is an MHV curve.
\end{enumerate}
A (multivalued) inverse of the map $\bLa$ at 
a general point $(p_1,\ldots,p_n)\in M_{0,n}$ can be found as follows:
Take $L_A\in\bLa_{A}^{-1}\left((p_i)_{i\in N_A}\right)$. 
Then $p'=\phi_{L_A}(x)\ne \phi_{L_A}(y)=p''$.
Take~$L_B\in\bLa_B^{-1}\left(p', (p_i)_{i\in N_B}, p''\right)$.
Finally, we can find $L$ using Amplification~\ref{adgbarhR}. 
\end{Theorem}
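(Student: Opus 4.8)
The plan is to derive both directions of the equivalence from the factorization of $\bLa$ through the scattering amplitude maps $\bLa_{\tilde A}$ and $\bLa_{\tilde B}$ provided by Amplification~\ref{adgbarhR}, and then to read the inversion recipe straight off that construction. First I would dispose of the possible non-stability of $A$ and $B$ using the classification in~\ref{qfvefb} together with Lemma~\ref{SFbSFh} and Remark~\ref{AKHJDbfHSDB}, which let one pass freely between $A$, $B$ and their stable (resp.\ augmented) models $\tilde A$, $\tilde B$ without changing $\phi_L$ or the scattering amplitude, so that $\bLa_A$, $\bLa_B$ of the statement may be identified with $\bLa_{\tilde A}$, $\bLa_{\tilde B}$. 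Throughout, $L\in\Pic^{\vec d}C$ is generic with restrictions $L_A=L|_A$, $L_B=L|_B$, and the workhorse is the partial normalization sequence
$$0\to\cO_C\to\nu_*(\cO_A\oplus\cO_B)\to\cO_x\oplus\cO_y\to0$$
tensored with $L$, exactly as in the proof of Lemma~\ref{aEFaf}. A preliminary observation worth isolating is that, once $\phi_{L_A}(x)\ne\phi_{L_A}(y)$, the evaluation $H^0(A,L_A)\to L_x\oplus L_y$ is an isomorphism of two-dimensional spaces, and dually the restriction $\pr_B\colon H^0(C,L)\to H^0(B,L_B)$ is an isomorphism, so $\phi_L|_B$ and $\phi_{L_B}$ agree up to $\PGL_2$; symmetrically on $B$. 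Hence $\bLa(L)\in M_{0,n}$ is reconstructed from $\bLa_{\tilde A}(L_A)$, $\bLa_{\tilde B}(L_B)$ and the single gluing parameter $z\in\bC^*$ of Amplification~\ref{adgbarhR}, which is the skeleton of the whole argument.

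For the direction $(\Leftarrow)$, assuming (1) and (2), I would first verify parts (1) and (2) of Definition~\ref{KSLJDfksjdg} for $C$. Since $\tilde A$, $\tilde B$ are MHV curves, $L_A$ and $L_B$ are non-special, globally generated, each with two sections. Condition~(1) together with Lemma~\ref{jhgkjgfk} gives $\phi_{L_A}(x)\ne\phi_{L_A}(y)$, while on $\tilde B$ the points $x$, $y$ are distinct marked points of an MHV curve, so $\phi_{L_B}(x)\ne\phi_{L_B}(y)$ as well; therefore both evaluation maps to $L_x\oplus L_y$ are isomorphisms. Feeding this and $H^1(A,L_A)=H^1(B,L_B)=0$ into the long exact sequence of the tensored normalization sequence yields $H^1(C,L)=0$ and $h^0(C,L)=2$, so $L$ is non-special. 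Global generation of $L$ I would then check component by component: a section of $L_A$ (or of $L_B$) non-vanishing at a prescribed point extends across the two nodes by correcting it on the opposite side using the surjection onto $L_x\oplus L_y$, and at the nodes themselves one uses a section of $L_A$ non-vanishing there. This also identifies $\phi_L$ with the morphism obtained by gluing $\phi_{L_A}$ and $\phi_{L_B}$ as in Amplification~\ref{adgbarhR}.

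The remaining point --- dominance, i.e.\ generic finiteness, of $\bLa$ (part~(3)) --- is the one I expect to be the main obstacle, and I would prove it by the very recipe stated in the theorem, read as a dimension count. For a general $(p_1,\ldots,p_n)\in M_{0,n}$, normalize $\phi_{L_A}$ so that $\phi_{L_A}(p_i)=p_i$ for $i\in N_A$, pick $L_A$ in the nonempty finite set $\bLa_{\tilde A}^{-1}\bigl((p_i)_{i\in N_A}\bigr)$, and read off $p'=\phi_{L_A}(x)\ne\phi_{L_A}(y)=p''$. Because the $N_B$-coordinates of $(p_1,\ldots,p_n)$ are free and independent of the $N_A$-coordinates, the point $\bigl(p',(p_i)_{i\in N_B},p''\bigr)$ is a general point of $M_{0,N_B\cup\{x,y\}}$, so $\bLa_{\tilde B}^{-1}\bigl(p',(p_i)_{i\in N_B},p''\bigr)$ is again nonempty and finite; finally the one-dimensional kernel $\bC^*$ of $\Pic^{\vec d}C\to\Pic^{\vec d_A}A\times\Pic^{\vec d_B}B$ is spent gluing $\phi_{L_A}$ and $\phi_{L_B}$ into one map realizing the prescribed configuration, with $z$ determined. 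Since $g_A+g_B+1=g=\dim M_{0,n}$ (the two separating nodes contribute a single loop), this accounts for all parameters and shows $\bLa$ is dominant with finite fibres; equivalently, one can transplant the differential computation from the last paragraph of the proof of Theorem~\ref{efvwb} to $\tilde A$ and $\tilde B$ separately and reassemble it along the gluing sequence. The genuinely delicate point either way is the genericity of the intermediate datum $\bigl(p',(p_i)_{i\in N_B},p''\bigr)$, which is why the $\pr_B$ observation above was singled out.

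For the converse $(\Rightarrow)$, Lemma~\ref{aEFaf} already supplies $d_A=g_A+1$, $d_B=g_B+1$, $h^0(A,L_A)=h^0(B,L_B)=2$, $h^1=0$, global generation of $L_A$ and $L_B$, and $\phi_L(x)\ne\phi_L(y)$; by Lemma~\ref{jhgkjgfk} this last fact is precisely the exclusion of a degree-$0$ rational component $C^{(0)}_i\subset A$ containing both $x$ and $y$, which is the second clause of~(1). Dominance of $\bLa_{\tilde A}$ follows by composing $\bLa$ with the forgetful morphism $M_{0,n}\to M_{0,n_A}$, which recovers $\bLa_{\tilde A}$ precomposed with the (surjective) restriction $\Pic^{\vec d}C\to\Pic^{\vec d_A}\tilde A$. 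Dominance of $\bLa_{\tilde B}$ is then seen by the dimension argument from the previous paragraph run backwards: if the image of $L\mapsto\bLa_{\tilde B}(L|_B)$ had dimension $b<g_B$, its general fibre would have dimension $g-b>g_A+1$, yet the restriction of $\bLa$ to that fibre would have image of dimension at most $g_A+1$ (only the $A$-configuration and the gluing parameter $z$ vary), contradicting the finiteness of the fibres of $\bLa$. Combined with non-specialness and global generation of $L_A$, $L_B$, this makes $\tilde A$ and $\tilde B$ MHV curves, giving (1) and (2). Finally, the inversion recipe in the last paragraph of the theorem is nothing but Amplification~\ref{adgbarhR} run in reverse --- choose $L_A$, read off $p'\ne p''$, choose $L_B$, glue --- so no argument beyond what was used for dominance is required.
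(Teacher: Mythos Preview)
Your approach is essentially the paper's: Lemma~\ref{aEFaf} plus the commutative square \eqref{asdvfv} (restriction to $A$ composed with the forgetful map) for the $\Rightarrow$ direction, and the gluing recipe of Amplification~\ref{adgbarhR} for the $\Leftarrow$ direction and the inversion. Your contrapositive dimension count for dominance of $\bLa_{\tilde B}$ is a valid variant of what the paper does directly (fix a general $L_A$, normalize $\phi_{L_A}(x)=0$, $\phi_{L_A}(y)=\infty$, and show the $N_B$-images sweep out a dense locus in $(\bP^1)^{n_B}$); and you are more explicit than the paper in checking (1), (2) of Definition~\ref{KSLJDfksjdg} in the $\Leftarrow$ direction via the normalization sequence, which the paper leaves implicit.

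There is one small gap in your reduction to the stable models. You invoke Lemma~\ref{SFbSFh} and Remark~\ref{AKHJDbfHSDB} to ``pass freely'' between $A$ and $\tilde A$ without changing $\phi_L$, but those results only cover degree-$0$ subtrees. In the classification~\ref{qfvefb}, cases (i) and (iiB) feature an unstable $\bP^1$ of degree~$1$; stabilization contracts it, but $\phi_L$ does \emph{not} factor through the contraction, so your identification of $\bLa_A$ with $\bLa_{\tilde A}$ breaks down there. The paper disposes of (i) and (iiB) first: if either occurred, the remainder $A'$ would have genus $g_A-1$, degree $g_A=(g_A-1)+1$, and all $n_A=g_A+3$ marked points, contradicting Lemma~\ref{wefwetb}~(2). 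Only then do cases (iiW) and (iii) remain, where a degree-$0$ $\bP^1$ is contracted and your reduction applies. With that one-line elimination inserted, your argument lines up with the paper's.
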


\begin{proof}
If $A$ is stable then it is an MHV curve. Indeed,  
by Amplification~\ref{adgbarhR} we have a commutative diagram
\begin{equation}\label{asdvfv}
\begin{CD}
\Pic^{\vec d}C  @>\bLa>>  M_{0,n} \\
@V{\Res_A:\,L\mapsto L_A}VV                                                     @VV{\pi_A}V \\
\Pic^{\vec d_A}A                     @>\bLa_A>>         M_{0,n_A}
\end{CD}
\cooltag\end{equation}
where $\pi_A$ is the forgetful map,
and so the rational map $\bLa_A$ is dominant.
If $A$ is not stable then we claim that its stabilization $\tilde A$ is an MHV curve.
Indeed, cases (i) and (iiB) of \ref{qfvefb} are impossible
because the remaining part $A'$ would have genus $g_A-1$, degree $g_A$, but will contain $g_A+3$ points,
in contradiction with Lemma~\ref{wefwetb}~(2).
In~the remaining cases (iiW) and (iii)  of \ref{qfvefb}, $\tilde A$ is obtained by contracting a $\bP^1$ 
(or two~$\bP^1$, one attached to $x$ and another to $y$), both of degree $0$.
It follows that the map $\phi_L|_A$ factors through $\tilde A$ and we can argue as in the case when $A$ is stable.

\begin{Claim}
The curve $\tilde B$ obtained by  adding $x$ and $y$ to $B$ 
as extra marked points
is MHV.
\end{Claim}

\begin{proof}[Proof of the Claim]
Indeed, $\tilde B$ is clearly stable and we have already checked conditions (1) and (2)
in the Definition~\ref{KSLJDfksjdg}  of an MHV curve.\footnote{We will not use this but note that if $B$ is not stable then case (iii) of \ref{qfvefb} is impossible 
because attaching this $3$-pointed $\bP^1$ to $A$ instead of $B$ creates a subcurve of genus $g_A$, degree $g_A+1$ with $g_A+4$
marked points, which contradicts Lemma~\ref{wefwetb}~(2).}
We~need to check that 
$$\bLa_{\tilde B}:\,\Pic^{\vec d_B}B\to M_{0,n_B+2}$$ 
is dominant.
We use the fact that $\bLa$ is dominant and diagram \eqref{asdvfv}.
Fix a general line bundle $L_A$ on $A$ and fix $n_A$ points $\bLa_A(L_A)\subset\bP^1$.
Since these points include three different points in $\bP^1$, no automorphisms of $\PGL_2$ preserve them.
We adjust homogeneous coordinates on $\bP^1$ so that $\phi_{L_A}(x)=0$ and  $\phi_{L_A}(y)=\infty$. 
Thus $\bLa(L)$ for $L\in \Res_A^{-1}(L_A)$ is determined by points $\phi_L(p_i)$ for $i\in N_B$.
Since $\bLa|_{\Res_A^{-1}(L_A)}$ is a dominant map from a semi-abelian variety of dimension $n_B=g_B+1$,
we see that the following locus of points is dense: 
$$\{\phi_L(p_i)\}_{i\in N_B}\subset (\bP^1)^{n_B}.$$
The map $\bLa_{\tilde B}$ sends $L_B$ to an isomorphism class of the $(n_B+2)$-tuple 
of points $(0, \phi_L(p_i)_{i\in N_B},\infty)$ in $\bP^1$, equivalently to the class of 
$\{\phi_L(p_i)\}_{i\in N_B}$ modulo $\bC^*$. Therefore, $\bLa_{\tilde B}$ is dominant.
\end{proof}

It remains to show that if $A$ and $B$ are as in the theorem
then $C$ is an MHV curve and to construct a multivalued inverse of the scattering amplitude map~$\bLa$.
Start with a point $(p_1,\ldots,p_n)\in M_{0,n}$.
Since $\tilde A$ is an MHV curve, we can find a line bundle $L_A$ such that $\bLa_A(L_A)=(p_i)_{i\in N_A}$.
We lift $L_A$ to a line bundle on~$A$. By~Lemma~\ref{jhgkjgfk}, $p'=\phi_{L_A}(x)\ne \phi_{L_A}(y)=p''$.
Since $\tilde B$ is an MHV curve, we can find $L_B$ such that  $\bLa_B(L_B)=(p', (p_i)_{i\in N_A}, p'')$.
We finish using Amplification~\ref{adgbarhR}. 
\end{proof}

When $n_A=g_A+2$, $n_B=g_B+2$ (as in Example~\ref{SfbSfb}),
our result is less precise. 

\begin{Theorem}\label{ADvAeg}
Suppose that $C$ is an MHV curve with a two-channel factorization such that $n_A=g_A+2$ and $n_B=g_B+2$
which does not admit an alternative two-channel factorization as in Theorem~\ref{safbsfhsfn}.
Then  
\begin{enumerate}
\item The curve $\tilde A$ obtained by adding to $A$ an extra marked point 
(at one of the nodes separating $A$ from $B$) and stabilizing is an MHV curve that does not contain a
a degree $0$ rational component $C^{(0)}_i$ as in \ref{asrsrh} with both $x$ and $y$.
\item The same for $B$.
\end{enumerate}
\end{Theorem}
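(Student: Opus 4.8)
The plan is to mirror the argument of Theorem \ref{safbsfhsfn}, using the commutative diagram \eqref{asdvfv} and its analogue for $B$, but with the extra subtlety that in Case II neither piece carries three of the $n$ marked points, so we must feed each side an additional point coming from a node. First I would note that $C$ is an MHV curve by hypothesis, so Lemma \ref{aEFaf} applies: we have $d_A = g_A+1$, $d_B = g_B+1$, both restrictions $L_A, L_B$ are globally generated with $h^0 = 2$, $h^1 = 0$, and crucially $\phi_L(x) \ne \phi_L(y)$ for generic $L$. As in the proof of Theorem \ref{safbsfhsfn}, if $A$ is not already stable then the possibilities of \ref{qfvefb} reduce (cases (i) and (iiB) are excluded by Lemma \ref{qwefv2e2}(2) as before), and in the surviving cases (iiW), (iii) the map $\phi_L|_A$ factors through the stabilization, so we may assume $A$ and $B$ stable.

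Next I would form $\tilde A$ by adjoining one of the two nodes, say $x$, to $A$ as an $(n_A+1)$-st marked point, and stabilizing. Conditions (1) and (2) of Definition \ref{KSLJDfksjdg} for $\tilde A$ follow immediately from Lemma \ref{aEFaf}(3),(4) as in the Claim inside the proof of Theorem \ref{safbsfhsfn}. The real content is dominance of $\bLa_{\tilde A} : \Pic^{\vec d_A} A \dashrightarrow M_{0, n_A+1}$. Here I would argue as in that Claim but with the roles of $A$ and $B$ interchanged: fix a general line bundle $L_B$ on $B$, normalize coordinates on $\bP^1$ so that $\phi_{L_B}(x) = 0$, $\phi_{L_B}(y) = \infty$ (legitimate once we know the images of $x,y$ are distinct), and observe that $\bLa$ restricted to $\Res_B^{-1}(L_B)$ is a dominant map from a semi-abelian variety of dimension $n_A = g_A+2$ onto a dense subset of the configurations $\{\phi_L(p_i)\}_{i \in N_A} \subset (\bP^1)^{n_A}$. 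Since we have pinned $\phi_L(x)=0$ and $\phi_L(y)=\infty$, the class of the $(n_A+1)$-tuple $(0, \phi_L(p_i)_{i\in N_A})$ — or equivalently, after also recording $y \mapsto \infty$, a point of $M_{0,n_A+1}$ obtained by forgetting $\infty$ — sweeps out a dense subset, so $\bLa_{\tilde A}$ is dominant. Finally I must rule out that $\tilde A$ contains a degree-$0$ rational component $C^{(0)}_i$ carrying both $x$ and $y$: this is exactly Lemma \ref{aEFaf}(5), which gives $\phi_L(x) \ne \phi_L(y)$ and hence no contracted component can meet both nodes. The symmetric argument with $A$ and $B$ swapped gives (2).

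The main obstacle I anticipate is the bookkeeping around \emph{which} node to add and how the resulting point of $M_{0,n_A+1}$ is read off — unlike Case I, where $A$ already saw three of the global marked points and the $\PGL_2$-frame was rigid, here in Case II fixing the frame uses the two nodal points $x,y$ themselves, so one has to be careful that after adjoining only one of them the stabilized configuration is still generically determined (the second node's image is recovered as $\infty$ from the $B$-side normalization, not intrinsically from $A$). Making this precise — and checking that the clause "does not admit an alternative two-channel factorization as in Theorem \ref{safbsfhsfn}" is genuinely needed to prevent $\tilde A$ or $\tilde B$ from itself splitting off a $\bP^1$ with a marked point of degree $0$, which would violate the semistability analysis of \ref{qfvefb} — is where the delicate part of the argument lies; everything else is a routine transcription of the $n_A = g_A+3$ case.
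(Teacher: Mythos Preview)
Your proposal has two genuine gaps, one in the instability analysis and one (more serious) in the dimension count for dominance.

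First, the instability cases do \emph{not} reduce ``as before.'' In Theorem~\ref{safbsfhsfn} the component $A$ carried $n_A=g_A+3$ marked points, so in cases~(i) and~(iiB) of~\ref{qfvefb} the residual curve $A'$ had genus $g_A-1$, degree $g_A$, and $g_A+3$ marked points, violating Lemma~\ref{wefwetb}(2). Here $n_A=g_A+2$, so $A'$ carries only $g_A+2=(g_A-1)+3$ marked points --- no contradiction. The paper instead uses the hypothesis ``no alternative two-channel factorization as in Theorem~\ref{safbsfhsfn}'' to exclude cases~(i) and~(iii) (moving the offending $\bP^1$ to $B$ produces a Case~I factorization), and treats case~(iiB) separately by forcing the extra marked point onto that $\bP^1$. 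You never actually invoke this hypothesis.

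Second, and this is where the transcription of the Case~I argument breaks down: $\Res_B^{-1}(L_B)$ has dimension $g_A+1$, not $n_A=g_A+2$. (The fibre is $\Pic^{\vec d_A}A\times\bC^*$.) In Case~I one had $\dim\Res_A^{-1}(L_A)=g_B+1=n_B$, so the map to $(\bP^1)^{n_B}$ could be dominant; here the source is one dimension short of $(\bP^1)^{n_A}$, so your claimed density is impossible. The paper's argument is genuinely different: rather than fixing $L_B$, it studies the map $\Pic^{\vec d_A}A\dashrightarrow M_{0,n_A+2}$ recording \emph{both} nodal images $\phi_{L_A}(x),\phi_{L_A}(y)$ together with the $n_A$ marked points, and argues by contradiction. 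If this map fails to have $g_A$-dimensional image, then $\phi_{L_A}(x)$ and $\phi_{L_A}(y)$ are determined (up to finite ambiguity) by $(\phi_{L_A}(p_i))_{i\in N_A}$; combining with the $g_B+1$ parameters on the $B$-side via Amplification~\ref{adgbarhR} bounds the image of $\bLa$ by dimension $g_A+g_B<g$, contradicting dominance of $\bLa$. From this one extracts that adding at least one of $x,y$ as the extra point yields a dominant $\bLa_{\tilde A}$.
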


\begin{proof}
Suppose that $A$ is not stable. In cases (i) and (iii) of \ref{qfvefb},
the curve admits an alternative two-channel factorization
of type  considered in Theorem~\ref{safbsfhsfn}. Namely, in case (i) (resp.~(iii))
we move the $4$-pointed  (resp.~ the $3$-pointed) $\bP^1$ to~$B$.
This gives 
a two channel factorization such that $n_{A'}=g_{A'}+3$, $n_{B'}=g_{B'}+1$.
So~we~will assume that neither (i) nor (iii)  occurs for either $A$ or $B$.

\begin{Claim}
We can choose extra points $p$ and $q$ for $A$ and $B$ at one of the points $x$ or $y$ such that curves
$\tilde A$ and $\tilde B$ obtained by stabilization are MHV curves.
\end{Claim}

Indeed, if~$A$ or $B$ is not stable and case (iiB) occurs, we would have to add an extra point to that $\bP^1$.
Note that (iiB) can only occur at one point of $A$, $x$~or~$y$, and the same for $B$.
Indeed, otherwise $A'$ has genus $g_A-2$, degree $g_A-1$ and $g_A+2$ marked points, which contradicts Lemma~\ref{qwefv2e2}~(2).

If~(iiB) does not occur, we are going to decide where to add an extra point later in the proof of the claim.
We~stabilize the curves if (iiW) occurs but the extra point~$p$ is not attached to the corresponding 
$3$-pointed $\bP^1$.
The curves $\tilde A$ and $\tilde B$ are of course stable and we have already checked conditions (1) and (2)
in the definition of an MHV curve. We just need to check that $\bLa_{\tilde A}$ and $\bLa_{\tilde B}$ are dominant.
Since the situation is symmetric, we only consider $\tilde A$. Suppose first that (iiB) occurs.
From the commutative diagram \eqref{asdvfv}, the map $\bLa_A$ is dominant. Note that an extra point $p$ doesn't impose
any conditions and can be anywhere in $\bP^1$.  Thus $\bLa_{\tilde A}$ is dominant as well.
Next we suppose that (iiB) does not occur. For a general line bundle $L_A\in\Pic^{\vec d_A}A$,
the $n_A$ points $\phi_{L_A}(p_i)_{i\in N_A}$ are distinct and as we vary $L_A$  span a $(n_A-3)$-dimensional
open locus in $M_{0,n_A}$. It suffices to show that adding $\phi_{L_A}(x)$ (or $\phi_{L_A}(y)$) gives the locus of dimension $n_A-2=g_A$.
Equivalently, we claim that adding both $\phi_{L_A}(x)$ and $\phi_{L_A}(y)$ gives the locus of dimension $g_A$.
We argue by contradiction. If this is not the case then $\phi_{L_A}(x)$ and $\phi_{L_A}(y)$ are determined by positions of points 
$\phi_{L_A}(p_i)_{i\in N_A}$ up to a finite ambiguity. Positions of  $n_B$ points $\phi_{L_B}(p_i)_{i\in N_B}$  are then determined
by $g_B+1$ parameters as in Amplification~\ref{adgbarhR}.
Thus the locus of images of marked points has dimension at most $g_A+g_B<g$, which contradict the fact that $\Lambda$ is dominant.
\end{proof}

\section{Compactified Jacobians of MHV curves}\label{zdadthethfbsdh}

\begin{Review}
In this section we will globalize the scattering amplitude map 
by the diagram
$$\begin{CD}
\cPic^{MHV}_0\cC        @>\bLa>>         M_{0,n}\\
@VVV                                          \\     
\ocM^{\MHV}_{g,n}
\end{CD}$$
where 
$\ocM^{\MHV}_{g,n}\subset\ocM_{g,n}$ is an open
locus of MHV curves,
$\cPic^{MHV}\cC$ is a universal MHV Picard family over it,
$\cPic^{MHV}_0\cC\subset\cPic^{MHV}\cC$ is an open locus and 
$\bLa$
is 
the universal
scattering amplitude map. We also  compactify
$\cPic^{MHV}\cC$ by 
a~projective family $\ocPic^{MHV}\cC$ of compactified Jacobians.
Recall that $n=g+3$ and $d=g+1$.
\end{Review}

\begin{Review}
Consider the stack $\cQ_{g,n,d}$ of {\em quasi-maps} \cite{QM} of $C$ to $\bP^1$.
Its sections over a scheme~$S$ are triples 
$(\cC,\cL,\alpha)$, where $\cC\to S$ is a family of semi-stable curves of genus $g$ with $n$ marked points,
$\cL$ is a line bundle on $\cC$ of relative degree $d$ and $\alpha:\,\cO_\cC^{\oplus 2}\to\cL$
is a homomorphism not vanishing on each fiber. The morphism  
$$\nu:\,\cQ_{g,n,d}\to\ocM_{g,n}$$
sends $\cC$ to its stabilization. The stack $\cQ_{g,n,d}$ (and its higher rank analogues) contains
many useful open proper separated substacks.
\end{Review}

\begin{Definition}
Let $\ocM^{\MHV}_{g,n}\subset\ocM_{g,n}$ be the locus of families of stable curves such that all their  geometric fibers are MHV curves for some choice of $\vec d$.
Let $\cQ_{g,n,d}^{\MHV}\subset \cQ_{g,n,d}$ be the locus of families of quasi-maps such that, for any geometric fiber 
$(C,L,\alpha)$, the curve 
$C$ is stable and $(L,\alpha)$ satisfies Definition~\ref{KSLJDfksjdg} (1)--(3).
\end{Definition}

\begin{Theorem}\label{wgearhstjsrjrj}
$\ $
\begin{enumerate}
\item 
$\ocM^{\MHV}_{g,n}$ is an open substack of 
$\ocM_{g,n}$.
\item 
$\cQ_{g,n,d}^{\MHV}$ is an open substack of the proper and separated Deligne--Mumford stack
$\oQ_{g,n,d}$ of stable quotients as defined in \cite{MOP}.
\item The morphism $\nu:\,\cQ_{g,n,d}^{\MHV}\to\ocM^{\MHV}_{g,n}$ is smooth, of relative dimension $n$.
\item Consider the action of $\GL_2$ 
 on $\cQ_{g,n,d}^{\MHV}$ via its action on $\cO_\cC^{\oplus 2}$.
 The stabilizer of every point is the diagonal torus $\bC^*$.
The quotient stack $\cPic^{MHV}_0\cC:=[\cQ_{g,n,d}^{\MHV}/PGL_2]  $ is 
an open substack in the rigidified Picard stack of pairs $(C,L)$, where $L$ is a degree $d$ line bundle on~$C$.
\item Let $\Delta_{ij} \subset(\bP^1)^n$  be diagonals ($1\le i<j\le n$). We have a commutative diagram
\smallskip
$$\begin{CD}
\cQ_{g,n,d}^{\MHV} @>>>  (\bP^1)^n\setminus\bigcup_{i,j}\Delta_{ij} \\
@VV{/PGL_2}V                                                     @VV{/{PGL_2}}V\\
\cPic^{MHV}_0\cC        @>\bLa>>         M_{0,n}\\
@VVV                                          \\     
\ocM^{\MHV}_{g,n}
\end{CD}$$
\end{enumerate}
\end{Theorem}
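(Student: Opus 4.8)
The plan is to realize $\cQ^{\MHV}_{g,n,d}$ as an open substack of the proper separated Deligne--Mumford stack $\oQ_{g,n,d}$ of stable quotients of \cite{MOP}, deduce (1) and (3) from one deformation computation, and obtain (4) and (5) by descent. First I would check $\cQ^{\MHV}_{g,n,d}\subset\oQ_{g,n,d}$: condition~(2) of Definition~\ref{KSLJDfksjdg} says $\alpha:\cO_C^{\oplus 2}\to L$ is surjective, so the quotient $Q=L$ is a line bundle (in particular locally free everywhere, and $L$ is globally generated, hence nef), while ``$C$ stable'' makes $\omega_C(p_1+\ldots+p_n)$ ample, so that $\omega_C(\sum p_i)\otimes L^{\otimes\epsilon}$ is ample for every $\epsilon\in\bQ_{>0}$ --- the stability condition of \cite{MOP}. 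Openness of $\cQ^{\MHV}_{g,n,d}$ inside the finite--type stack $\oQ_{g,n,d}$ then follows because each defining condition is open: ``$Q$ torsion--free'' (its torsion has length $0$) and ``$C$ stable'' are open; $h^1(C,L)=0$ (condition~(1)) by semicontinuity; and, since $\dim\Pic^{\vec d}C=g=\dim M_{0,n}$, condition~(3) amounts to surjectivity of the differential $d\bLa_L$, an open condition established exactly as in the last paragraph of the proof of Theorem~\ref{efvwb}.

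\textbf{Smoothness of $\nu$.} For (3) I would run relative deformation theory for $\nu:\cQ^{\MHV}_{g,n,d}\hookrightarrow\oQ_{g,n,d}\to\ocM_{g,n}$. Over a point $[C]$ the fibre of $\nu$ parametrizes surjections $\alpha\in H^0(C,L)^{\oplus 2}$ with $L$ ranging over $\Pic^{\vec d}C$, taken modulo the free $\bC^*$ that rescales $\alpha$ (the resulting object has no further automorphisms precisely because $\alpha$ is surjective and $\deg L>0$). Its tangent space is $H^1(C,\cO_C)\oplus\bigl(H^0(C,L)^{\oplus 2}/\bC\bigr)$ and its obstruction space is $H^1(C,L)^{\oplus 2}$, which vanishes by condition~(1); hence $\nu$ is smooth, of relative dimension $g+2h^0(C,L)-1=g+4-1=g+3=n$, using $h^0(C,L)=2$. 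This vanishing of the obstruction space --- which is exactly condition~(1) --- together with the honest bookkeeping of the $\bC^*$ in the dimension count is the one substantive step.

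\textbf{Parts (1) and (4).} Since $\nu$ is smooth it is open, so $\nu(\cQ^{\MHV}_{g,n,d})$ is an open substack of $\ocM_{g,n}$; its geometric points are the $[C]$ admitting some $L$ with properties (1)--(3), and since (1),(2) are open on $\Pic^{\vec d}C$ and ``$d\bLa_L$ surjective at one $L$'' propagates to a general $L$, this is exactly the MHV locus, so $\ocM^{\MHV}_{g,n}=\nu(\cQ^{\MHV}_{g,n,d})$ is open. For (4), conditions (1),(2) make $H^0(\alpha):\bC^2=H^0(C,\cO_C^{\oplus 2})\to H^0(C,L)$ an isomorphism (injective because $\alpha$ is surjective and $\deg L>0$, bijective by dimension count); so if $g\in\GL_2$ fixes $(C,L,\alpha)$ --- i.e.\ $\psi\circ\alpha\circ g^{-1}=\alpha$ for some $\psi\in\Aut(L)=\bC^*$ --- then applying $H^0$ forces $g=\psi\,\mathrm{Id}$, i.e.\ $\GL_2$ acts freely away from the intrinsic $\bG_m$ of line--bundle automorphisms. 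Descent then identifies $[\cQ^{\MHV}_{g,n,d}/\GL_2]$ with the stack of pairs $(\cC,\cL)$ whose geometric fibres satisfy (1)--(3) (one recovers the $\GL_2$--torsor as the frame bundle of $\pi_*\cL$), which by the openness statements above is an open substack of the stack of all pairs $(C,L)$, carrying the evident forgetful map to $\ocM^{\MHV}_{g,n}$.

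\textbf{The diagram, and the main obstacle.} Finally (5): the top arrow sends $(C,L,\alpha)$ to $(\phi_L(p_1),\ldots,\phi_L(p_n))$, whose coordinates are distinct because $L$ lies in the domain of the rational map $\bLa$ (part of condition~(3)), so it lands in $(\bP^1)^n\setminus\bigcup_{i<j}\Delta_{ij}$; it is equivariant for the $\GL_2$--action on $(\bP^1)^n$ through $\PGL_2$, hence descends to the scattering amplitude map $\bLa:\cPic^{MHV}_0\cC\dashrightarrow M_{0,n}$, and commutativity of the whole square is immediate from the constructions. Beyond the deformation computation of the second step, the point that needs the most care is the precise sense of ``$\GL_2$ acts freely'' in (4): one must keep track of the $\bG_m$--gerbe structure so that the quotient comes out as the stack of pairs $(C,L)$ rather than a rigidification of it, consistently with the dimension count $\dim[\cQ^{\MHV}_{g,n,d}/\GL_2]=(3g-3+2n)-4=5g-1=\dim\ocM^{\MHV}_{g,n}+g-1$.
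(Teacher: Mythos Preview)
Your approach is essentially the same as the paper's: embed $\cQ^{\MHV}_{g,n,d}$ into the stable quotients stack $\oQ_{g,n,d}$ of \cite{MOP}, verify openness, and deduce smoothness of $\nu$ from vanishing of the relative obstruction space, which reduces to $H^1(C,L)=0$; then (1) follows because smooth maps are open. The only cosmetic difference is that the paper invokes MOP's packaged relative obstruction theory $\RHom(\cS,\cQ)$ and kills $\Ext^1(S,Q)$ via the short exact sequence $0\to L^*\to\cO_C^{\oplus 2}\to Q\to 0$ and $H^1(C,L)=0$, whereas you compute the tangent and obstruction spaces by hand as $H^1(C,\cO_C)\oplus H^0(C,L)^{\oplus 2}/\bC$ and $H^1(C,L)^{\oplus 2}$; your treatment of (4) and (5), which the paper leaves implicit, is a welcome elaboration, and your flag about the $\bG_m$-gerbe bookkeeping in (4) is exactly the subtlety one has to get right.
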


\begin{proof}
Dualizing the homomorphism $\alpha:\,\cO_C^{\oplus 2}\to L$
gives an exact sequence
\begin{equation}\label{sajkhfbaskjhfb}
0\to L^*\to \cO_C^{\oplus 2}\arrow^q Q\to 0.
\cooltag\end{equation}
The moduli space of stable quotients $\oQ_{g,n,d}$ parametrizes data \eqref{sajkhfbaskjhfb} such that  
$Q$ is locally free at the nodes and at the marked points
and has positive degree along all strictly semistable components of $C$. 
All these conditions are clearly satisfied in our case, in fact $q=\alpha$.
From \cite{MOP} we conclude that 
\begin{enumerate}
\item There is a universal curve $\cC$ over $\oQ_{g,n,d}$ with $n$ sections
and a universal quotient $0\to \cS\to\cO_\cC^{\oplus 2}\to\cQ\to0$
over $\cC$ with an invertible sheaf $\cS$.
\item The  map $\nu:\,\oQ_{g,n,d}\to\ocM_{g,n}$ that sends $\cC$ to its stabilization is proper.
\item Evaluating sections $q(1,0)$ and $q(0,1)$ at the marked points $p_1,\ldots,p_n$
gives evaluation maps $\ev_i:\,\oQ_{g,n,d}\to\bP^1$, which we combine into one morphism
$$\oQ_{g,n,d}\to(\bP^1)^n.$$
\item All structures above are equivariant under $\PGL_2$.
\item $\oQ_{g,n,d}$ has a $2$-term obstruction theory relative to $\nu$ given by $\RHom(\cS,\cQ)$.
\end{enumerate}
In~particular, $\nu$ is smooth at $(C,q)$ if $\Ext^1(S,Q)=0$, which in our case follows from the short exact sequence \eqref{sajkhfbaskjhfb} 
and $H^1(C,L)=0$.
Since $\nu:\,\cQ_{g,n,d}^{\MHV}\to\ocM_{g,n}$ is smooth, its image $\ocM^{\MHV}_{g,n}$ is open.
\end{proof}

\begin{Review}
Next we show that $\cPic^{MHV}_0\cC$ is separated (note that 
the quotient by a free action of an algebraic group is often not separated).
We will introduce a natural polarization $A$  on MHV curves and study slope-stability of line bundles $L$
with respect to it. It is well-known that 
slope stability on reducible curves
boils down to  {\em Gieseker's basic inequality}:
for any proper subcurve $Y\subset C$, we need to show
\begin{equation}\label{Gieseker_Ineq}
\left|d_Y-a_Y{d_C\over a_C}\right|<{1\over2}\#Y,
\cooltag\end{equation}
where 
$$d_Y=\deg L|_Y,\quad a_Y=\deg A|_Y\quad\hbox{\rm and}\quad 
\#Y:=|Y\cap\overline{ X\setminus Y}|.$$
Classically, the most studied case was $d_C=g-1$, $A=\omega_C$, and no marked points.
This is a convenient choice because ${d_C\over a_C}={1\over 2}$.
In the MHV Brill-Noether theory, when $d_C=g+1$ and marked points are present,
an equally convenient choice is a fractional polarization 
$$A=\omega_C+{4\over n}(p_1+\ldots+p_n),$$
which also gives ${d_C\over a_C}={1\over 2}$.
The $\bQ$-line bundle $A$ is ample as long as  $C$ is stable curve without rational tails
(subcurves of arithmetic genus $0$ attached to the rest of the curve at one point),
which doesn't happen for  MHV curves by Lemma~\ref{qwefv2e2}. 
\end{Review}

\begin{Lemma}
A line bundle is $A$-stable if and only if 
\begin{equation}\label{dwcsv}
d_Y>g_Y-1+{2\over n}n_Y,
\cooltag\end{equation}
for every proper connected subcurve $Y\subset C$, 
where $g_Y$ is the arithmetic genus of $Y$ and $n_Y$ is the number of marked points on~it.
For semi-stability, the inequality is not strict . 
\end{Lemma}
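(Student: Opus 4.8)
The plan is to unwind Gieseker's basic inequality \eqref{Gieseker_Ineq} for the specific polarization $A=\omega_C+{4\over n}(p_1+\ldots+p_n)$ and then to cut down the collection of subcurves that must be tested. The first step is the computation of $a_Y=\deg A|_Y$ for a subcurve $Y\subseteq C$. By the adjunction formula on a nodal curve, $\omega_C|_Y\simeq\omega_Y\bigl(Y\cap\overline{C\setminus Y}\bigr)$, hence $\deg(\omega_C|_Y)=2g_Y-2+\#Y$ and
$$a_Y=2g_Y-2+\#Y+{4\over n}\,n_Y,\qquad {a_Y\over 2}=g_Y-1+{1\over2}\#Y+{2\over n}\,n_Y.$$
Since $a_C=2g-2+4=2(g+1)=2d_C$ we have $d_C/a_C={1\over2}$, so the inequality $\bigl|d_Y-{a_Y\over2}\bigr|<{1\over2}\#Y$ of \eqref{Gieseker_Ineq} is equivalent to the two-sided bound
$$g_Y-1+{2\over n}\,n_Y\ <\ d_Y\ <\ g_Y-1+\#Y+{2\over n}\,n_Y.$$
The left half is exactly \eqref{dwcsv}; it then remains to show that, once \eqref{dwcsv} is imposed on all proper subcurves, the right half is automatic, and that it suffices to impose \eqref{dwcsv} on connected subcurves only.

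For the first of these I would use the complementary-subcurve trick. Given a proper subcurve $Y$, set $Y'=\overline{C\setminus Y}$; then $d_{Y'}=(g+1)-d_Y$, $n_{Y'}=n-n_Y$, $\#Y'=\#Y$, and the genus formula for a nodal gluing gives $g=g_Y+g_{Y'}+\#Y-1$, i.e. $g_{Y'}=g-g_Y-\#Y+1$. Substituting these into \eqref{dwcsv} written for $Y'$ and simplifying (using ${2\over n}\cdot n=2$) turns $d_{Y'}>g_{Y'}-1+{2\over n}n_{Y'}$ into $d_Y<g_Y-1+\#Y+{2\over n}n_Y$, which is precisely the missing right-hand inequality for $Y$. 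Since $Y\mapsto Y'$ is an involution on proper subcurves, $A$-stability --- i.e. \eqref{Gieseker_Ineq} for every proper subcurve --- is equivalent to \eqref{dwcsv} for every proper subcurve.

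For the reduction to connected subcurves, let $Y$ be a proper subcurve with connected-component decomposition $Y=Y_1\sqcup\ldots\sqcup Y_k$; the $Y_i$ are genuinely disjoint inside $C$ (a shared node would connect them), so $d_Y=\sum_i d_{Y_i}$, $n_Y=\sum_i n_{Y_i}$, and additivity of Euler characteristics, $1-g_Y=\sum_i(1-g_{Y_i})$, gives $g_Y-1=\sum_i(g_{Y_i}-1)$. Hence \eqref{dwcsv} for $Y$ is literally the sum over $i$ of \eqref{dwcsv} for $Y_i$, so it follows from the connected case; the reverse implication is trivial. Putting the three steps together proves the $A$-stability criterion, and carrying everything through with non-strict inequalities in place of strict ones gives the $A$-semistability statement.

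I do not expect a genuine obstacle here: the whole argument is linear bookkeeping in the single variable $d_Y$. The only points that call for a little care are the adjunction identity $\deg(\omega_C|_Y)=2g_Y-2+\#Y$ for a (possibly disconnected) subcurve and keeping straight the genus relation $g=g_Y+g_{Y'}+\#Y-1$ --- equivalently, additivity of $\chi(\mathcal O)$ --- when passing between a subcurve and its complement or its connected components.
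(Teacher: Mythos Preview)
Your proof is correct and follows the same approach as the paper: compute $a_Y$ via adjunction, use the complementary-subcurve involution to reduce the two-sided Gieseker inequality to the single inequality \eqref{dwcsv}, and reduce to connected subcurves by additivity. The paper's version is terser --- it packages your complementary-subcurve computation into the single identity $d_Y-{1\over2}a_Y=-(d_{Y^c}-{1\over2}a_{Y^c})$ (immediate from $d_Y+d_{Y^c}=d_C$, $a_Y+a_{Y^c}=a_C$, and $d_C/a_C={1\over2}$) and leaves the adjunction calculation and the connected-component reduction implicit --- but the content is identical.
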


\begin{proof}
For every proper subcurve $Y\subset C$, we have to check that 
$$
\left|d_Y-{1\over 2}a_Y\right|<{1\over2}\#Y.
$$
For the complementary subcurve $Y^c:=\overline{ X\setminus Y}$,
we have $d_Y-{1\over 2}a_Y=-d_{Y^c}+{1\over 2}a_{Y^c}$.
Thus we just have to show that 
${1\over 2}a_Y-d_Y<{1\over2}\#Y$
for every proper subcurve $Y$, in~fact for every proper connected subcurve,
which is equivalent to \eqref{dwcsv}.
\end{proof}

\begin{Lemma}
\item For an MHV curve $C$,
every line bundle $L\in\Pic^{MHV}C$ is $A$-stable.
\end{Lemma}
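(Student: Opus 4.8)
The plan is to invoke the criterion \eqref{dwcsv} from the preceding lemma: $L$ is $A$-stable precisely when $d_Y>g_Y-1+{2\over n}n_Y$ for every proper connected subcurve $Y\subset C$. Since the right-hand side sees $L$ only through the multidegree $\vec d$, it is enough to verify this inequality for the MHV multidegree, and for that I would collect the structural facts already available: $d_s\ge0$ on every component (global generation, see~\ref{asrsrh}); $d_Y\ge g_Y+1$ whenever $g_Y>0$, with $n_Y\le g_Y+3$ in case of equality (Lemma~\ref{wefwetb}); and the connected components $C^{(0)}_1,\ldots,C^{(0)}_r$ of the degree-$0$ locus are trees of $\bP^1$'s carrying at most one marked point each (Lemma~\ref{SFbSFh}).

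The one genuinely geometric input beyond these is the bound $g_Y\le g-1$ for every proper connected $Y$. I would deduce it from the genus formula: write $Y^c$ as a disjoint union of connected pieces $Z_1,\ldots,Z_c$ (with $c\ge1$), attached to $Y$ along $e_i=|Y\cap Z_i|\ge1$ nodes, so that $g=g_Y+\sum g_{Z_i}+(\sum e_i-c)$. If $g_Y$ equalled $g$, then $\sum g_{Z_i}=0$ and every $e_i=1$; but an $e_i=1$ node separates $Z_i$ from the rest of $C$, contradicting Lemma~\ref{qwefv2e2}. (The same remark gives $\#Y\ge2$, i.e.\ there are no separating nodes at all, but only $g_Y\le g-1$ enters the estimate.)

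With these in hand the verification is a short case analysis on $Y$ that only ever uses $n_Y\le n=g+3$. If $g_Y\ge1$: either $d_Y\ge g_Y+2$, and then $d_Y-g_Y+1\ge3>2\ge{2\over n}n_Y$; or $d_Y=g_Y+1$, and then $n_Y\le g_Y+3\le g+2<n$ by Lemma~\ref{wefwetb} and the genus bound, so ${2\over n}n_Y<2=d_Y-g_Y+1$. If $g_Y=0$ and $d_Y\ge1$: for $d_Y\ge2$ one has $n_Y\le n<{n\over2}(d_Y+1)$, while for $d_Y=1$ Lemma~\ref{wefwetb} gives $n_Y\le3<n$ (note that $C$ reducible forces $g\ge1$, hence $n\ge4$, by Example~\ref{asgsrgr}). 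If $g_Y=0$ and $d_Y=0$: each component of $Y$ has nonnegative degree summing to $0$, so $Y$ lies inside one $C^{(0)}_i$, whence $n_Y\le1<{n\over2}$ and ${2\over n}n_Y<1$. Smooth $C$ is vacuous, so this exhausts all cases. The only step requiring genuine thought is the genus bound $g_Y\le g-1$ paired with Lemma~\ref{wefwetb}(2); everything else is bookkeeping against $n_Y\le n$.
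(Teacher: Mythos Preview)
Your proof is correct and follows essentially the same approach as the paper's: both verify the criterion \eqref{dwcsv} by a case analysis governed by Lemma~\ref{wefwetb}, Lemma~\ref{SFbSFh}, and the absence of separating nodes (Lemma~\ref{qwefv2e2}). The paper organizes the cases by the value of $d_Y-g_Y$ (namely $d_Y\ge g_Y+2$, $d_Y=g_Y+1$, $d_Y\le g_Y$) rather than by first splitting on $g_Y$, and in the borderline case $d_Y=g_Y+1$ it argues by contradiction (if $n_Y=n$ then $g_Y=g$, forcing $Y=C$) rather than stating your genus bound $g_Y\le g-1$ up front; but these are contrapositives of the same reasoning, and your explicit derivation of $g_Y\le g-1$ from the no-separating-node lemma is exactly what the paper's appeal to Lemma~\ref{qwefv2e2} is doing implicitly.
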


\begin{proof}
To verify \eqref{dwcsv},
we consider several cases.
\begin{enumerate}
\item $d_Y\ge g_Y+2$. The equation \eqref{dwcsv} is automatic.
\item $d_Y=g_Y+1$. The equation \eqref{dwcsv} holds unless $n_Y=n$, in which case
$g_Y=g$ by Lemma~\ref{wefwetb}
and therefore $Y=C$ by Lemma~\ref{qwefv2e2}, contradiction.
\item $d_Y\le g_Y$. Then $d_Y=g_Y=0$ by Lemma~\ref{wefwetb} and thus $n_Y\le 1$ by 
Lemma~\ref{SFbSFh}. Equation \eqref{dwcsv} follows.
\end{enumerate}
This finishes the proof.
\end{proof}

\begin{Corollary}
The family $\cPic^{MHV}_0\cC$ over $\ocM^{\MHV}_{g,n}$ is compactified by a projective 
(and hence separated) family
$\ocPic^{MHV}\cC$ of compactified Jacobians.  
Its geometric points represent $\gr$-equivalence
classes of $A$-semistable admissible sheaves on an MHV curve $C$. 
\end{Corollary}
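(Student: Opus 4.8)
The plan is to deduce this corollary directly from the theory of compactified Jacobians for families of stable curves equipped with a polarization, once we have established that $A$ (or rather its rescaling to an honest $\bQ$-polarization on each fiber) is a relatively ample $\bQ$-line bundle over $\ocM^{\MHV}_{g,n}$ for which the previous two lemmas hold fiberwise. First I would invoke the existence result — in the form proved by Caporaso, Pandharipande, Simpson, or in the relative setting of Esteves for families over an arbitrary base — which produces, for any relatively ample polarization on a family of genus $g$ nodal curves, a projective (hence separated and proper over the base) moduli space $\ocPic^{MHV}\cC \to \ocM^{\MHV}_{g,n}$ whose geometric points parametrize $\gr$-equivalence (S-equivalence) classes of $A$-semistable rank $1$ torsion-free (equivalently, admissible depth $1$) sheaves of the appropriate degree on the fibers. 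The projectivity is the key output: it gives separatedness and properness for free, resolving the concern flagged at the start of the Review — namely that $\cPic^{MHV}_0\cC$, built as a quotient by a free $\GL_2$-action, might a priori fail to be separated.

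The second step is to identify $\cPic^{MHV}_0\cC$ with an open substack of $\ocPic^{MHV}\cC$. By the preceding Lemma, for every MHV curve $C$ every line bundle $L \in \Pic^{MHV}C$ satisfies the strict inequality \eqref{dwcsv}, hence is $A$-stable; since stable sheaves have no automorphisms beyond scalars and form an open locus in the moduli functor, the locus of honest line bundles inside $\ocPic^{MHV}\cC$ is open and its points carry exactly the data $(C,L)$ with $C$ an MHV curve — which is precisely the description of $\cPic^{MHV}_0\cC$ obtained in part (4) of the preceding Theorem. So $\ocPic^{MHV}\cC$ is the desired compactification; the boundary consists of $A$-semistable admissible sheaves that are not locally free, exactly as asserted.

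The main obstacle I anticipate is bookkeeping about the polarization rather than any deep geometric difficulty: the polarization $A = \omega_C + \tfrac{4}{n}(p_1+\ldots+p_n)$ is only a $\bQ$-divisor, so one must either clear denominators (replace $A$ by $nA = n\omega_C + 4\sum p_i$, an honest ample line bundle since $A$ is ample by the Review's remark, using Lemma~\ref{qwefv2e2} to rule out rational tails) and check that the numerical stability condition is unchanged, or appeal to a version of the GIT/moduli construction that accepts $\bQ$-polarizations directly. One also needs the polarization to be \emph{generic} in the sense that semistability equals stability on the relevant strata, so that no genuinely strictly-semistable sheaves force the moduli space to be a coarse space with extra identifications on the locus we care about; but over $\ocM^{\MHV}_{g,n}$ the Lemma shows line bundles are always stable, so $\gr$-equivalence is trivial on $\cPic^{MHV}_0\cC$ and the only S-equivalence collapsing happens on the added boundary. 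A secondary point to verify is that the degree $d = g+1$ and the multidegree constraints coming from $\Pic^{\vec d}$ are compatible with the chosen component of the compactified Picard scheme; this is immediate from $d_C = g+1$ and $a_C = 2g-2+4 = 2g+2$, giving $d_C/a_C = 1/2$ as recorded, which is the normalization that makes Gieseker's basic inequality \eqref{Gieseker_Ineq} symmetric and keeps the construction within the standard framework. Assembling these checks, the corollary follows.
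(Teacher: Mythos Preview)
Your proposal is correct and takes essentially the same approach as the paper, which simply writes ``Follows from~\cite{Sim} and \cite{Oda}, see also \cite{Alex}.'' Your version is more detailed in spelling out the bookkeeping (the $\bQ$-polarization issue, the open-immersion check via the preceding Lemma), but the underlying argument---invoke the general construction of compactified Jacobians for a given polarization and observe that the previous lemma places all MHV line bundles in the stable locus---is identical.
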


\begin{Amplification}
If all irreducible components of $C$ are rational then 
$\ocPic^{MHV}\cC$ is a stable toric variety of an algebraic torus $\Pic^0C\simeq(\bC^*)^g$
and can be described as follows.
Choose one on-shell diagram for $C$, i.e.~endow each vertex $i$ of the dual graph
$\Gamma$  of $C$ with degree $d_i$ such that $\sum d_i=d$. 
For every vertex $i\in\Gamma$, 
\begin{enumerate}
\item 
$e_i$ is the number of edges (count each loop twice);
\item 
$l_i$ is the number of legs, i.e.~
the number of marked points on the corresponding irreducible 
component of~$C$;
\item
$\theta_i$ is the Oda--Seshadri number \cite{Oda}, in our case 
$$\theta_i=-d_i+{4l_i\over dn}+{n(e_i-2)\over 2d}.$$
\end{enumerate}
Let $C_0$ and $C_1$ be spaces of chains of the graph $\Gamma$ (with arbitrary coefficients).
Let $$\partial:\,C_1\to C_0$$ be the differential and let $H_1$ be the first homology group. 
We can view $\theta=(\theta_i)$ as a vector in $C_0(\Gamma,\bQ)$. 
Consider the tiling of the affine space $\partial^{-1}(\theta)\subset C_1(\Gamma,\bQ)$ by the hyperplanes $x_i={1\over2}+\bZ$, where $x_i$
are the natural coordinates on $C_1$ (one coordinate for each edge of the graph). 
This affine space is parallel to the homology group $H_1(\Gamma,\bQ)$ and the tiling has to be viewed modulo the action of $H_1(\Gamma,\bZ)$.
Irreducible components
of the compactified Jacobian are given by the
$g$-dimensional polytopes of the tiling and the rest of the tiling determines how they are glued.
\end{Amplification}

\begin{proof}
Follows from~\cite{Sim} and \cite{Oda}, see also \cite{Alex}.
\end{proof}

\begin{Example}\label{sdfvwfv}
Let 
$C=C_1\cup\ldots\cup C_4$
be the following curve of arithmetic genus~$1$: the wheel of four $\bP^1$'s with marked points $p_i\in C_i$ for $i=1,\ldots,4$.
The~Picard group has two MHV components:
$$\Pic^{MHV}C=\Pic^{1,0,1,0}C\bigcup \Pic^{0,1,0,1}C.$$
Consider the first component. 
Since $L$ has degree $0$ on components $C_2$ and $C_4$,
the map $\phi_L:\,C\to\bP^1$ contracts  them to points, say $\phi_L(C_2)=0$
and 
$\phi_L(C_4)=\infty$. The restriction of $\phi_L$ to $C_1$ and $C_3$ is an isomorphism. Thus $\phi_L$ is completely determined by points $x=\phi_L(p_1)$
and $y=\phi_L(p_3)$. We have an identification
$$\Pic^{1,0,1,0}C\simeq\bC^*,\quad z=y/x.$$ 
The scattering amplitude map
$\bLa:\,\Pic^{1,0,1,0}C\to\oM_{0,4}\simeq\bP^1$ is given by 
the cross-ratio of points $\phi_L(p_1),\ldots,\phi_L(p_4)$ varying with $L$.
This map is an isomorphism:
$$\lambda=[x:0;y:\infty]=
{\infty-x\over 0-x}\cdot {0-y\over \infty-y}=z.
$$
The scattering amplitude form is 
${dz\over z}={d\lambda\over\lambda}$
with log poles at $0$ and $\infty$. 
Next we compute the compactified Jacobian. There are $2$ types of stable line bundles
and $4$ types of semi-stable line bundles (in two $\gr$-equivalence classes),
see Figure~\ref{adfbadfh}.
\begin{figure}[htbp]
\includegraphics[width=\textwidth]{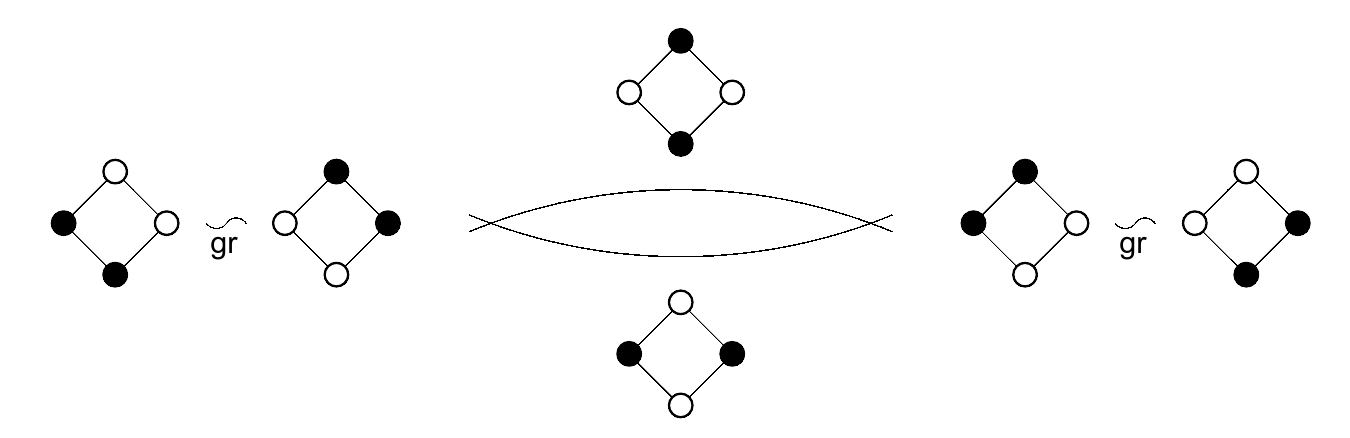}
\caption{MHV compactified Jacobian in genus $1$}
\label{adfbadfh}
\end{figure}
Stable components are $\bP^1$'s and $\gr$-equivalence classes of semi-stable line bundles correspond to their intersection 
points in the compactified Jacobian
$$\oPic^{MHV}C=\oPic^{1,0,1,0}C\bigcup \oPic^{0,1,0,1}C=\bP^1\cup\bP^1.$$
One can visualize topology on $\oPic^{MHV}C$ through ``chip-firing'' of black chips.
\end{Example}

\begin{Remark}
The MHV condition implies $A$-stability but the converse 
is not  true. For example, curves with $1$-channel factorization
can have $A$-stable  line bundles. 
Moreover, even the compactifiied Jacobian of an MHV curve
will typically contain non-MHV components
contracted by the scattering amplitude map $\bLa$ to loci of smaller dimension either in $M_{0,n}$
or in its boundary (for some compactification of~$M_{0,n}$). 
Nevertheless, we view $A$-stability and MHV conditions as close.
\end{Remark}

\begin{Example}\label{asfgasfgafh}
Consider a genus~$2$ curve with $5$ rational components,
each with a marked point, see the left of Figure~\ref{adfbadSbfsfh} for on-shell diagrams
of all possible stable line bundles.
\begin{figure}[htbp]
\includegraphics[height=2.45in]{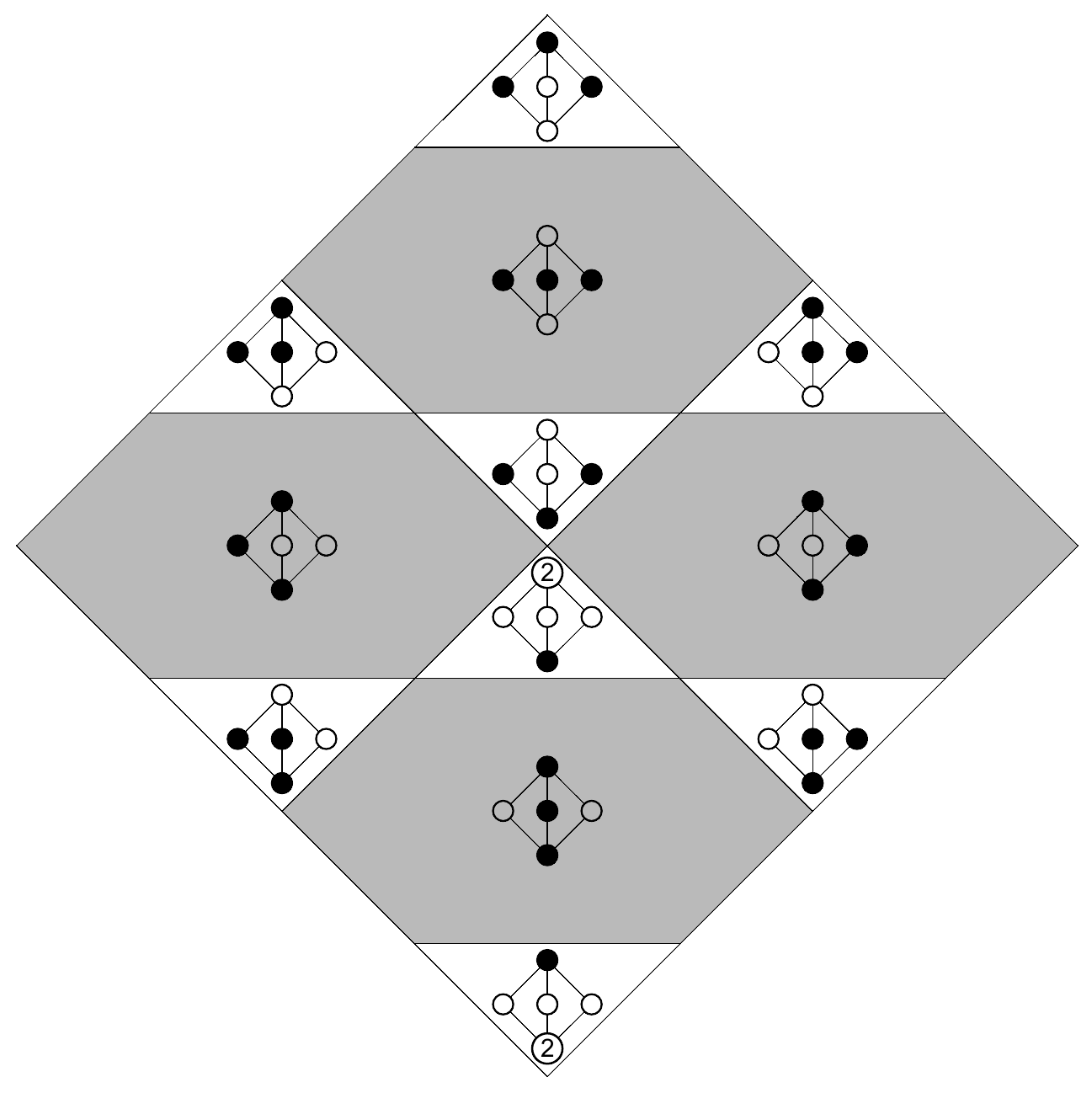}
\includegraphics[height=2.45in]{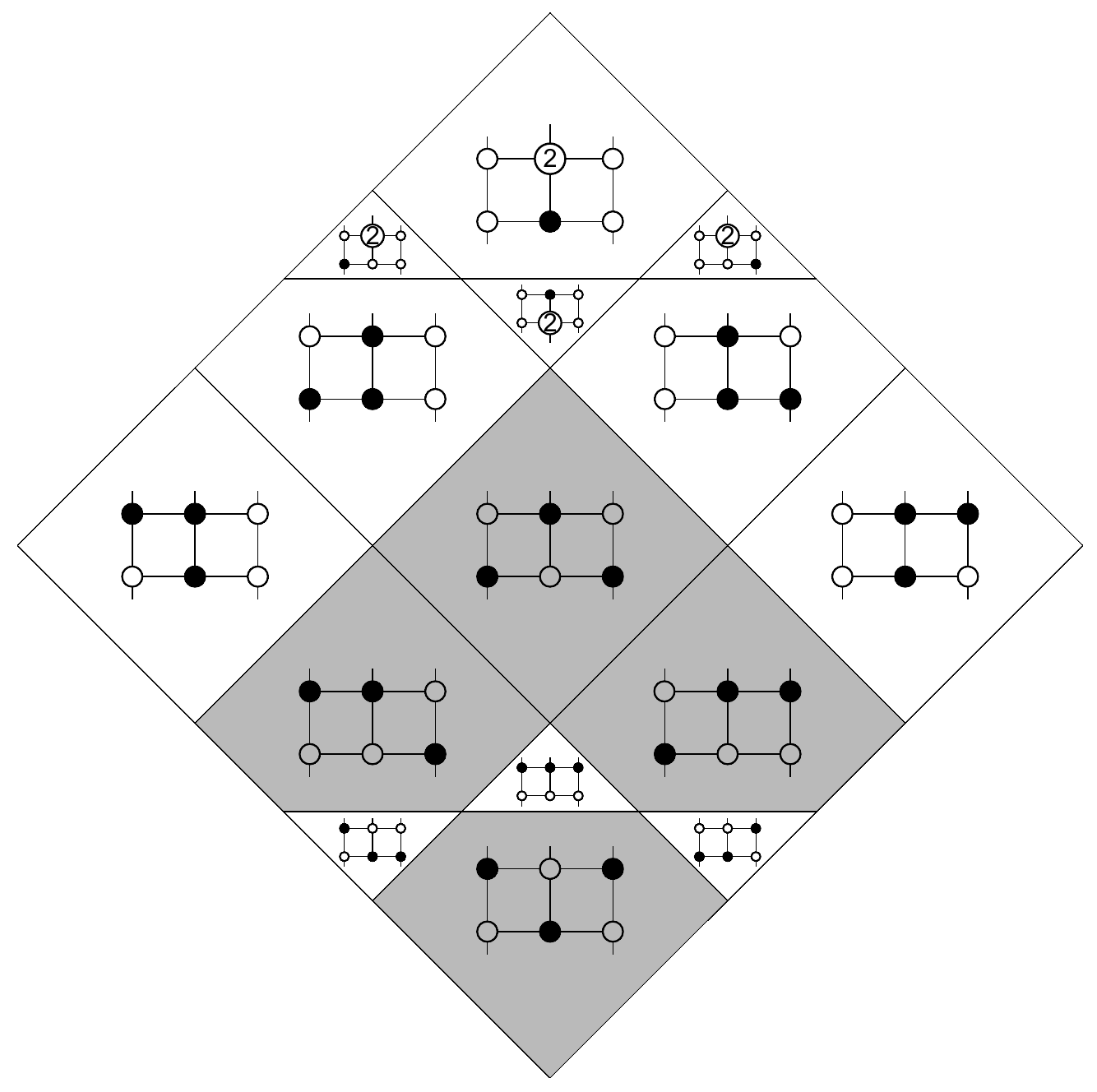}
\caption{MHV compactified Jacobians in genus $2$.}
\label{adfbadSbfsfh}
\end{figure}
There are no strictly semistable line bundles in this case. 
The compactified Jacobian contains $4$ MHV  (shaded gray)  and $8$ non-MHV irreducible components.
They are glued in an alternating fashion, MHV to non-MHV, according to Figure~\ref{adfbadSbfsfh}, which is a fundamental parallelogram of a tiling of a $2$-torus: 
components adjacent to a side of the parallelogram intersect components adjacent to the opposite side.
The compactified Jacobian is a stable toric surface of the algebraic torus $\Pic^0C\simeq(\bC^*)^2$
with polygons representing its irreducible components, which are toric surfaces.
MHV components are isomorphic to $\Bl_3\bP^2$ (hexagons) 
and non-MHV components are isomorphic to~$\bP^2$ (triangles).
Crossing each of the six walls corresponds to firing a black chip across the corresponding edge of the on-shell diagram. 
There are two types of non-MHV components, with or without an irreducible component where $L$ 
has degree $2$.
Under the scattering amplitude map, surfaces of the first type contract to curves intersecting the interior of $M_{0,5}$
and surfaces of the second type contract to curves in the boundary.
\end{Example}

\begin{Example}
Another genus~$2$ example is on the right of Figure~\ref{adfbadSbfsfh}.
There are $4$ MHV components (shaded gray) and $11$ non-MHV components, which are all
mapped
to the boundary of $M_{0,5}$ by the scattering amplitude map.
Three of the MHV components are isomorphic to $\Bl_2\bP^2$ (pentagons) and one to $\bP^1\times\bP^1$.
\end{Example}

\section{Scattering via moduli of parabolic bundles}\label{sdvqefv}

\begin{Review}
Let $C$ be a smooth  hyperelliptic  curve of genus $g\ge2$ with a hyperelliptic involution 
$p\mapsto \tau(p)$ (whenever it exists, the hyperelliptic involution is unique).
The quotient by $\tau$ is the double cover $\phi_h:\,C\arrow^{2:1}\bP^1$
associated with the line bundle
$$h=\cO(p+\tau(p))\in\Pic^2C,\quad p\in C.$$
 Let $p_1,\ldots,p_n\in C$ be distinct marked points, where $n=g+3$. 
In~our approach the marked points are decoupled from $2g+2$ Weierstrass points (fixed points
of the hyperelliptic involution). 
To simplify the analysis, we  make an assumption:
\end{Review}

\begin{Assumption}\label{Sfbzfdhadfh}
Points $\phi_h(p_1),\ldots,\phi_h(p_n)$ are different, i.e. 
 $p_i\ne\tau(p_j)$ for $i\ne j$.
Thus the special feature of the hyperelliptic case is existence of a well-defined point
\begin{equation}\label{mjhvjhv}
o(p_1,\ldots,p_n)=(z_1,\ldots,z_n)=(\phi_h(p_1),\ldots,\phi_h(p_n))\in M_{0,n}
\cooltag\end{equation}
that shouldn't be confused with  the scattering amplitude map 
$$\bLa:\,\Pic^{g+1}C\dashrightarrow M_{0,n},\quad
L\mapsto (\phi_L(p_1),\ldots,\phi_L(p_n)).$$
\end{Assumption}

\begin{Review}
Let $\Bun(\bP^1;z_1,\ldots,z_n)$ be the smooth algebraic stack of 
{\em quasi-parabolic vector bundles} $F$ on $\bP^1$ of rank $2$ with trivial determinant \cite{Pauly}.
Recall that a quasi-parabolic structure on a vector bundle is  a choice of a line $V_i\subset F|_{z_i}$ over each marked point. This data determines a ruled surface $\bP(F)\to\bP^1$ of even degree and points
$$q_i=\bP(V_i)\subset\bP(F|_{z_i})\quad\hbox{\rm for $i=1,\ldots,n$}.$$
\end{Review}

\begin{Definition}
There is a standard morphism of stacks 
$$\bbLa:\,\Pic^{g+1}C\to \Bun(\bP^1;z_1,\ldots,z_n),\quad
L\mapsto F=(\phi_h)_*L.$$
To see that $(\phi_h)_*L$  gives an object of  $\Bun(\bP^1;z_1,\ldots,z_n)$, 
we check that its determinant vanishes.
It suffices to check
that $(\phi_h)_*\cO$ has determinant of degree $-(g+1)$. As with any double cover, $(\phi_h)_*\cO\simeq\cO\oplus \cM^{-1}$, where 
$\cM^{\otimes 2}\simeq\cO(B)$, where $B$ is a branch divisor. But the number of branch points is $2g+2$
(c.f.~\cite[Ex.~IV.2.6]{Hartshorne}.)

To define parabolic lines, note that 
$F|_{z_i}=L|_{p_i}\oplus L|_{\tau(p_i)}$ 
(or $L_{p_i}/\cm^2_{p_i}$ if $p_i$ is a Weierstrass point).
The line $V_i\subset F|_{z_i}$ is the kernel of the surjection $F|_{z_i}\to L|_{p_i}$.
\end{Definition}

\begin{Review}
A generic parabolic bundle $F$ is a trivial bundle $\bP^1\times\bC^2$ with lines $\{z_i\}\times V_i$ for  $V_i\subset\bC^2$\footnote{Indeed, the trivial vector bundle on $\bP^1$ does not deform, therefore
these parabolic bundles are parametrized by a non-empty open substack of $\Bun(\bP^1;z_1,\ldots,z_n)$. On the other hand,
all parabolic bundles of rank $2$ and degree $0$ can be deformed to a parabolic bundle with trivial $F$.} .
The corresponding ruled surface is the product $\bP^1\times\bP^1$ with points $(z_1,q_1),\ldots,(z_n,q_n)\in\bP^1\times\bP^1$,
where $q_i=\bP(V_i)$.
Thus we have a birational map $\Xi:\,\Bun(\bP^1;z_1,\ldots,z_n)\dashrightarrow M_{0,n}$
defined as in Figure~\ref{shasrhar}.
\end{Review}

\begin{Lemma}\label{SSRHS}
The scattering amplitude map of a hyperelliptic curve $C$ is a composition 
$$\bLa:\,\Pic^{g+1}C\mathop{\arrow}^{\bbLa} \Bun(\bP^1;z_1,\ldots,z_n)\mathop{\dashrightarrow}^\Xi M_{0,n}.$$
The bundle $\bbLa(L)$ has splitting type $\cO\oplus\cO$ away from the theta-divisor $E$ of Corollary~\ref{zxbzfn}.
\end{Lemma}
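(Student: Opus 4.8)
The plan is to verify two things: first, that the composition $\Xi \circ \bbLa$ coincides with the scattering amplitude map $\bLa$, and second, that for $L$ away from $E$ the pushforward $\bbLa(L) = (\phi_h)_*L$ has the split type $\cO \oplus \cO$ (so that $\Xi$ is actually defined at $\bbLa(L)$). The second point is where the real content lies, and I would address it first, since the factorization statement is then essentially a formal unwinding of definitions.

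For the splitting type, set $E = (\phi_h)_* L$, a rank $2$ bundle on $\bP^1$ with $\det E \cong \cO(-(g+1))$ by the previous lemma. Write $E \cong \cO(a) \oplus \cO(b)$ with $a + b = -(g+1)$ and $a \ge b$. The split type is $\cO\oplus\cO$ --- no wait, the degrees don't work out to that; the relevant ``balanced'' type is $\cO(-\lceil(g+1)/2\rceil) \oplus \cO(-\lfloor(g+1)/2\rfloor)$, but what the lemma must really be asserting (given the ambient normalization, e.g.\ after the twist implicit in ``trivial determinant'' via $E \otimes \cM$) is that $E$ is as balanced as possible. So the claim to prove is that $a - b$ is minimal, equivalently $h^0(\bP^1, E(k))$ is as small as possible for each $k$, equivalently $h^0(C, L \otimes \phi_h^*\cO(k))$ is minimal. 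By the projection formula $h^0(\bP^1, E(k)) = h^0(C, L(k \cdot h))$ where $h = \phi_h^*\cO(1)$. The instability of $E$ --- a jump in the split type --- is detected by $h^0$ of $L$ twisted down by $h$: concretely $E$ fails to be balanced precisely when $h^0(C, L \otimes h^{-1})$ or its analogues are larger than expected. Now $L \otimes h^{-1}$ has degree $g-1$, and by Riemann--Roch $h^0(L \otimes h^{-1}) - h^1(L \otimes h^{-1}) = 0$, so $h^0 > 0$ iff $L \otimes h^{-1}$ is effective iff $L \otimes h^{-1} = \cO(x_1 + \cdots + x_{g-1})$ for some points, i.e.\ $L \cong h(x_1 + \cdots + x_{g-1})$. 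The key step is to identify this locus of $L$ with (a translate of) the divisor $E \subset \Pic^{g+1}C$ from Amplification~\ref{zxbzfn}. This uses the hyperelliptic identity $h^{g-1} \cong \omega_C$ (true precisely because $C$ is hyperelliptic): then $h(x_1+\cdots+x_{g-1}) = \omega_C(x_1+\cdots+x_{g-1} - (g-2)\cdot(\text{a fiber of }h))$, and expanding a fiber of $h$ as $p + \tau(p)$ one matches this with the description $E = \{\omega_C(p - x_1 - \cdots - x_{g-2})\}$ after renaming points. (This is also consistent with part (7) of Amplification~\ref{zxbzfn}, which says $E$ is a theta divisor exactly when $C$ is hyperelliptic --- the unbalanced locus of $E$ is a theta divisor, being a translate of the image of $\Sym^{g-1}C$.) So for $L \notin E$ all these twisted $h^0$'s are minimal, forcing $E$ to have balanced split type, which in the normalization of the lemma is $\cO \oplus \cO$.

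For the factorization $\bLa = \Xi \circ \bbLa$: take a generic $L$, so $E = (\phi_h)_*L \cong \cO \oplus \cO$ (after the determinant twist), i.e.\ $\bP(E) = \bP^1 \times \bP^1$. The two global sections of $E$ span $H^0(C,L) = \bC^2$ via pullback along $\phi_h$ (projection formula again), and the map $\phi_L : C \to \bP^1$ is the composite of $\phi_h : C \to \bP^1$ with the second projection $\bP^1 \times \bP^1 = \bP(E) \to \bP^1$ --- this is exactly what it means for $E$ to be the pushforward and for $V_i$ to be the kernel of $E|_{z_i} \to L|_{p_i}$. Hence $\phi_L(p_i)$ equals the second coordinate $q_i$ of the point $(z_i, q_i) = (z_i, \bP(V_i)) \in \bP^1 \times \bP^1$ determined by the parabolic structure, and $\Xi$ by definition sends $(E; V_1, \ldots, V_n)$ to the class of $(q_1, \ldots, q_n)$ in $M_{0,n}$. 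Thus $\Xi(\bbLa(L)) = [\phi_L(p_1), \ldots, \phi_L(p_n)] = \bLa(L)$, as rational maps.

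The main obstacle is the identification of the ``unbalanced locus'' of $\bbLa$ with the divisor $E$ of Amplification~\ref{zxbzfn} --- more precisely, getting the bookkeeping of twists right so that the split type really reads $\cO \oplus \cO$ in the chosen normalization (the ``trivial determinant'' convention forces a twist by a half of the branch divisor, and one must check this twist is consistent with $h^{g-1} \cong \omega_C$ and with Assumption~\ref{Sfbzfdhadfh} so that no $z_i$ is a branch point creating a jump). Everything else is the projection formula plus Riemann--Roch on the hyperelliptic curve; the genuinely hyperelliptic input is the identity $\omega_C \cong h^{\otimes(g-1)}$, which is what makes the exceptional locus of $\bbLa$ a theta divisor rather than something of higher codimension.
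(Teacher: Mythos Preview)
Your approach is essentially the same as the paper's, and the identification of the unbalanced locus with $E$ via $\omega_C\cong h^{\otimes(g-1)}$ is exactly what the paper does. One correction: you misread the previous lemma. It says $\det\bbLa(L)=0$, i.e.\ $\det(\phi_h)_*L\cong\cO_{\bP^1}$ already has degree~$0$; the quantity $-(g+1)$ in its proof is $\deg\det(\phi_h)_*\cO_C$, not $\deg\det(\phi_h)_*L$. So no twist is needed, and Grothendieck's theorem gives $(\phi_h)_*L\cong\cO(-s)\oplus\cO(s)$ outright; the task is to show $s=0$. Your criterion $h^0(L(-h))=0$ is correct and is Serre-dual (on~$C$) to the paper's criterion $R\Gamma(L^*(K+h))=0$, which the paper obtains from $\RHom_{\bP^1}(E,\cO(-1))=0$ via Grothendieck--Serre duality for $\phi_h$. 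These are the same condition written two ways.

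For the factorization, your description of $\phi_L$ as ``the composite of $\phi_h$ with the second projection'' is not literally correct (there is no map $\bP^1\to\bP(E)$); what you mean is that $C$ embeds in $\bP(E)$ via the surjection $\phi_h^*E\to L$, and the second projection then recovers $\phi_L$. With this fixed, your computation that $q_i=\bP(V_i)$ equals $\phi_L(p_i)$ is right up to a global $\PGL_2$ (the kernel direction is $[s_2(p_i):-s_1(p_i)]$ rather than $[s_1(p_i):s_2(p_i)]$), which of course disappears in $M_{0,n}$. The paper says this more directly: choosing a splitting $E\cong\cO\oplus\cO$ is the same as choosing a basis $s_1,s_2$ of $H^0(C,L)$, and the slopes of the $V_i$ are the $[s_1(p_i):s_2(p_i)]$.
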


\begin{proof}
By Grothendieck's theorem, $(\phi_h)_*L\simeq\cO(-s)\oplus\cO(s)$ for some~$s$. We~require that $s=0$,
which is equivalent to the following conditions (i) and (ii):
\begin{enumerate}
\item[(i)]$H^0(\bP^1, F)=H^0(C,L)=2$, i.e.~$L$ is not special. This happens outside of the locus $W$ of
Corollary~\ref{zxbzfn}.
\item[(ii)] $\RHom(F,\cO(-1))=0$. By Grothendieck--Serre duality, this is equivalent to vanishing of $\RHom(L,\cO(-h+K+2h))=R\Gamma(L^*(K+h))$.
This means that $L$ has to be away from the  theta-divisor
$$K+h-x_1-\ldots-x_{g-1}=h+\tau(x_1)+\ldots+\tau(x_{g-1})\in\Pic^{g-1}C,$$
where we use that $K\sim(g-1)h$. In fact this theta-divisor is 
the divisor $E$ of Corollary~\ref{zxbzfn}, which contains $W$.
\end{enumerate}
Choosing a basis $s_1,s_2$ of $H^0(C,L)$ is equivalent to choosing a splitting $F\simeq\cO\oplus\cO$.
The scattering amplitude map $\bLa$ is given by $([s_1(p_1):s_2(p_1)],\ldots,[s_1(p_n):s_2(p_n)])$,
which is equal to the slopes of $V_1,\ldots,V_n$ inside $\bC^2$. Thus $\bLa=\Xi\circ\bbLa$.
\end{proof}

\begin{Review}[stability]
In view of Lemma~\ref{SSRHS}, in the hyperelliptic case the scattering amplitude map
combines effects of the birational 
morphism $\Xi$, which as we will see only depends on the point $o=(z_1,\ldots,z_n)\in M_{0,n}$
and the map~$\bbLa$, which is a finer invariant in the hyperelliptic case.
Moreover,  the divisor $E$ of $\Pic^dC$ is mapped to a divisor by $\bbLa$ but is collapsed 
to the point $o$ by $\bLa$,
which creates a singularity for the probability measure (see discussion of the real case in Section~\ref{sdcqwv}).
However, at the moment the target of $\bbLa$ is the stack of quasi-parabolic bundles.
To make $\bbLa$ more concrete, we will choose a projective model for the stack
(at the price of making $\bbLa$ a rational~map).
There is a notion of slope-stability that depends on a choice of a parabolic weight.
Concretely, the weight $\vec\alpha$ is a sequence $\alpha_1,\ldots,\alpha_n$ of real numbers with $0<\alpha_i\le {1\over 2}$.
A quasi-parabolic bundle $(F;V_1,\ldots,V_n)$ is $\vec\alpha$-stable (resp.~semistable) parabolic bundle
if and only if every line sub-bundle $L\subset F$ satisfies the  {\em slope inequality}
\begin{equation}\label{mb,hjb,hb}
k+\sum_{i\in I}\alpha_i<{1\over 2}\sum\limits_{i=1}^n\alpha_i\quad (\hbox{\rm resp.}\quad \le),
\cooltag\end{equation}
where $k=\deg L$ and $I\subset\{1,\ldots,n\}$ is a subset of indices such that $L|_{z_i}=V_i$.
We denote the corresponding moduli space by $\Bun_{\vec\alpha}(\bP^1;z_1,\ldots,z_n)$ or simply
by  $\Bun_\alpha(\bP^1;z_1,\ldots,z_n)$ if $\alpha_1=\ldots=\alpha_n=\alpha$.
Here is a summary of  wall-crossing with chambers given by inequalities \eqref{mb,hjb,hb}. 
\end{Review}

\begin{Review}
First we consider the {\em standard case}
$\sum\limits_{i=1}^n\alpha_i>2$ as in 
\cite{Bauer, Mukai,Casagrande,Kumar,Araujo}. 
If $\vec\alpha$ is strictly semistable
then
$$\Pic\Bun_{\vec\alpha}(\bP^1;z_1,\ldots,z_n)=\bZ^{n+1}.$$
The Fano model (i.e.~ the model with $-K$ ample) is 
\begin{equation}\label{asdvqfv}
\Bun_{1\over2}(\bP^1;z_1,\ldots,z_n),
\cooltag\end{equation}
which is a smooth variety if $n$ is odd 
and has isolated singularities
if $n$ is even. 
As~$\alpha$ increases from $2\over n$ to $1\over 2$, $\Bun_{\alpha}(\bP^1;z_1,\ldots,z_n)$
undergoes  the anticanonical minimal model program, i.e.~
the sequence of birational transformations that makes $-K$ more and more positive at every step. It proceeds as follows~\cite{Bauer}:
\begin{enumerate}
\item $\Bun_{2\over n}=\bP^{n-3}$ with $(\bP^1;z_1,\ldots,z_n)$ embedded into $\bP^{n-3}$ as a rational normal curve of degree $n-3$.
The MMP starts with
$\Bun_{{2\over n}+\eps}=\Bl_{z_1,\ldots,z_n}\bP^{n-3}$. 
\item The first birational transformation ``antiflips'' several $\bP^1$'s, namely lines connecting points $z_1,\ldots,z_n$ pairwise and the rational normal curve.
Each of these $\bP^1$'s is  blown-up and the exceptional divisor contracted onto~$\bP^{n-5}$.
\item On the following steps, we antiflip certain $\bP^k$'s analogously.
\item MMP stops when $-K$ becomes big and nef on $\Bun_{{1\over 2}-\eps}$, in fact even ample when $n$~is odd.
If~$n$~is even, the anticanonical model
$\Bun_{1\over 2}$ is obtained by contracting certain $K$-trivial $\bP^{n-4\over 2}$'s in $\Bun_{{1\over 2}-\eps}$ to  
singular points.
\end{enumerate}
\end{Review}

\begin{Review}
The {\em special case} $\sum\limits_{i=1}^n\alpha_i<2$ was studied in \cite{Moon}, in which case
$$\Pic\Bun_{\vec\alpha}(\bP^1;z_1,\ldots,z_n)=\bZ^n$$ 
if $\vec\alpha$ is strictly semistable.
The Fano model  is the symmetric GIT quotient 
$$(\bP^1)^n//\PGL_2,$$
which is smooth if $n$ is odd and with isolated singularities
if $n$ is even. More generally, in the special case
$\Bun_{\vec\alpha}(\bP^1;z_1,\ldots,z_n)$ is the GIT quotient $(\bP^1)^n//_{\vec\alpha}\PGL_2$
with respect to fractional polarization $\vec\alpha$.
The map   
$$\Xi^{-1}:\,M_{0,n}\dashrightarrow \Bun(\bP^1;z_1,\ldots,z_n)$$
is equivalent to the natural embedding 
$\Xi^{-1}:\,M_{0,n}\hookrightarrow (\bP^1)^n//_{\vec\alpha}\PGL_2$.
\end{Review}

\begin{Review}
The transition from the special (2)  to the standard (1) cases goes as follows.
\begin{enumerate}
\item[(2)] All stable parabolic bundles have splitting type $\cO\oplus\cO$. 
The product $\bP^1\times\bP^1$ with points $(z_1,z_1),\ldots,(z_n,z_n)$ is stable
giving the point $o\in M_{0,n}$. 
\item[(1)] The product $\bP^1\times\bP^1$ above is 
destabilized by $\cO(-1)$ embedded by the 
sequence $0\to \cO(-1)\to\cO\oplus\cO\to\cO(1)\to0$. The~point $o$ is blown-up.
The 
exceptional divisor parametrizes
parabolic bundles of   type $\cO(-1)\oplus\cO(1)$.
\end{enumerate}
We summarize the discussion by the following diagram:
\end{Review}

\bigskip
$$
\begin{matrix}
{1\over 2}   & \ldots\ldots\ldots\ldots  & {2\over n}+\varepsilon &  & {2\over n}    & & {2\over n}-\varepsilon\\[0.3cm]
\framebox{\rm\Small Fano}&  \framebox{\rm\Small  MMP} &\framebox{\rm\Small  log Fano}&  \framebox{\rm\Small  Big Bang} &\framebox{\rm\Small  Fano}&  \framebox{\rm\Small  Bigger Bang}&\framebox{\rm\Small  Fano}\\[0.3cm]
\Bun_F               &   \dashrightarrow\ldots\dashrightarrow         &     \Bl_n\bP^{n-3}                    &         \arrow   & \bP^{n-3}   &    \dashrightarrow      & (\bP^1)^n\!/\!/\!\PGL_2\\[0.3cm]
& 
&
&&&&\rotatebox{90}{$\hookrightarrow$}\\[0.3cm]
&
\Bun(\bP^1;z_1,\ldots,z_n)
&&&\mathop{\dashrightarrow}\limits^{\Xi}&&M_{0,n}\cr
\end{matrix}
$$
\medskip

\begin{Example} Suppose $g=2$, $n=5$. Apart from $\bP^2$ there are only two models:
$$\Bun_{{2\over5}+}=\Bun_{1\over2}\simeq\Bl_{z_1,\ldots,z_5}\bP^2=\dP\qquad\hbox{\rm (standard)}$$
and 
$$\Bun_{{2\over5}-}\simeq\Bl_4\bP^2=\dPf\qquad\hbox{\rm (special),}$$
the quartic and quintic del Pezzo surfaces.
The morphism 
$$\Xi:\,\dP\to\dPf\simeq\oM_{0,5}$$ 
contracts the conic through $z_1,\ldots,z_5$ to the point $o=(z_1,\ldots,z_5)\in M_{0,5}$.
This morphism can also be described as projection of 
$z_1,\ldots,z_5\in\bP^2$ to $\bP^1$ from a varying point of $\bP^2$ (cf.~\cite{CT_Cont}).
We will further study this example in Section~\ref{sfgasgsRH}.
\end{Example}

Enhancing $\bLa$ by $\bbLa$ corresponds to the transition
from the special (2)  to the standard (1) case of projective moduli of parabolic bundles.
In the case of  the Fano model, we have the following description of the indeterminancy locus of~$\bbLa$.

\begin{Theorem}\label{wefvev}
Consider the induced  map $\bbLa:\,\Pic^{g+1}C\dashrightarrow \Bun_{1\over2}(\bP^1;z_1,\ldots,z_n)$.
\begin{enumerate}
\item $\bbLa$ is regular away from   loci $U(k,I)$ (of codimension at least~$2$) of line bundles
$$\cO(kh+\sum_{i\in I} p_i+x_1+\ldots+x_j)\subset\Pic^{g+1}C\quad\hbox{\rm for}\quad x_1,\ldots,x_j\in C,$$
$$g+1=2k+|I|+j,\qquad j<{g-1\over 2}.$$
Loci $U(k,I)$ with equality $j = {g-1\over 2}$ give strictly semistable parabolic bundles.
\item $\bbLa^*(-K)\equiv 4\Theta$.
\end{enumerate}
\end{Theorem}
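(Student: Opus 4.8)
The plan is to read both parts off the analysis of line sub-bundles of $E=(\phi_h)_*L$ through the slope inequality \eqref{mb,hjb,hb} at weight $\tfrac12$, and then, for the second part, to exploit that $\Pic^{g+1}C$ has trivial canonical bundle.

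For part (1) I would begin by classifying line sub-bundles of $E=(\phi_h)_*L$ by adjunction: for $k\in\bZ$ one has $\Hom_{\bP^1}(\cO(k),E)=H^0(C,L\otimes h^{-k})$, so a nonzero sub-line-bundle of degree $k$ is the same as a nonzero section of $L\otimes h^{-k}$, i.e.\ an effective divisor $D$ of degree $g+1-2k$ with $\cO_C(D)\simeq L\otimes h^{-k}$. The sub-bundle $M=\cO(k)\hookrightarrow E$ meets the parabolic line $V_i=\ker(E|_{z_i}\to L|_{p_i})$ precisely when $p_i\in D$; so if $I=\{i:M|_{z_i}=V_i\}$ then $D=\sum_{i\in I}p_i+x_1+\ldots+x_j$ with $j=g+1-2k-|I|\ge0$, that is, $L\in U(k,I)$. (One must be slightly careful about saturation: if $D$ contains a full hyperelliptic fibre, $M$ is not saturated and enlarges to a sub-bundle of degree $k+1$; since every fibre carries at most one marked point under Assumption~\ref{Sfbzfdhadfh}, each such step strictly increases $4(\deg M)+2|I|$, so it only moves $L$ into another $U(k',I')$ with $k'>k$, and $\bigcup U(k,I)$ is unchanged.) Substituting $k$ and $|I|$ into \eqref{mb,hjb,hb} with all $\alpha_i=\tfrac12$, the sub-bundle $M$ destabilises $(E;V_1,\ldots,V_n)$ exactly when $4k+2|I|>n=g+3$, i.e.\ $2(g+1-j)>g+3$, i.e.\ $j<\tfrac{g-1}{2}$, and gives equality exactly when $j=\tfrac{g-1}{2}$ (possible only for odd $g$, i.e.\ even $n$). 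Hence away from $\bigcup\{U(k,I):j<\tfrac{g-1}{2}\}$ every $(\phi_h)_*L$ is $\tfrac12$-semistable and $\bbLa$ is a morphism to $\Bun_{1/2}(\bP^1;z_1,\ldots,z_n)$, while the loci with $j=\tfrac{g-1}{2}$ consist of strictly semistable bundles, landing in the singular locus of $\Bun_{1/2}$; and $U(k,I)$, being the image of $\Sym^jC\to\Pic^{g+1}C$, has $\dim\le j\le g-2$, hence codimension $\ge2$.

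For part (2) the inputs are $K_{\Pic^{g+1}C}=0$ ($\Pic^{g+1}C$ is an abelian variety) and that $\bbLa$ is generically finite ($\bLa=\Xi\circ\bbLa$ is generically finite by Theorem~\ref{efvwb} and $\Xi$ is birational, of the same degree). By part (1), $\bbLa$ is a morphism in codimension one; resolving it by $p\colon W\to\Pic^{g+1}C$ with induced $\beta\colon W\to\Bun_{1/2}$ and comparing $K_W=p^*K_{\Pic^{g+1}C}+(p\text{-exceptional})=(p\text{-exceptional})$ with $K_W=\beta^*K_{\Bun_{1/2}}+R_\beta$, then pushing forward by $p$, yields $\bbLa^*(-K_{\Bun_{1/2}})\equiv R_{\bbLa}$, the ramification divisor of $\bbLa$. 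Since the N\'eron--Severi group of $\Pic^{g+1}C$ is $\bZ\Theta$ for general $C$, $R_{\bbLa}\equiv c\Theta$ and the claim is $c=4$. To pin $c$ down I would compare with $\bLa$: the birational map $\Xi$ is a local isomorphism away from the divisor of $\cO(-1)\oplus\cO(1)$-bundles -- the image of the theta divisor $E$ of Amplification~\ref{zxbzfn} -- together with loci of codimension $\ge2$, so $\bbLa$ and $\bLa$ have the same ramification away from $E$; hence $R_{\bbLa}=R+mE$ for some $m\ge0$, where $R\equiv4\Theta$ is the ramification divisor of $\bLa$ and $E\equiv\Theta$ in the hyperelliptic case (Amplification~\ref{zxbzfn}(7),(8)). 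Thus $R_{\bbLa}\equiv(4+m)\Theta$.

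The main obstacle is to show $m=0$, i.e.\ that $\bbLa$ is unramified along $E$ -- equivalently, that $\bbLa$ is generically \'etale over the divisor of $\cO(-1)\oplus\cO(1)$-bundles. I expect this to follow from the explicit description of $(\phi_h)_*L$ for $L$ near $E$ (the splitting type jumps to $\cO(-1)\oplus\cO(1)$ exactly on $E$ by Lemma~\ref{SSRHS}), which should exhibit $\bbLa$ as transverse to that divisor, so that $\bbLa^{-1}$ of a general $\cO(-1)\oplus\cO(1)$-bundle is a reduced fibre of cardinality $\deg\bbLa$ lying on $E$. Alternatively one can bypass $\bLa$ and compute $\bbLa^*(-K_{\Bun_{1/2}})$ directly by pulling back the generators of $\Pic\Bun_{1/2}$ in the Fano chamber: the parabolic theta line bundle pulls back to $\Theta$ by a determinant-of-cohomology argument, the $n$ parabolic boundary divisors pull back to the theta divisors $E_{ij}$ of Amplification~\ref{zxbzfn}(6), and one checks the resulting combination for $-K$ equals $4\Theta$.
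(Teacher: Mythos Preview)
Your argument for part~(1) is correct and essentially identical to the paper's: adjunction $\Hom(\cO(k),(\phi_h)_*L)=H^0(C,L(-kh))$ classifies sub-line-bundles, the slope inequality at weight $\tfrac12$ gives exactly $j<\tfrac{g-1}2$, and these loci are images of $\Sym^jC$. Your extra care about saturation is a nice touch the paper omits.

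For part~(2), your strategy matches the paper's: Riemann--Hurwitz on the abelian variety $\Pic^{g+1}C$ gives $\bbLa^*(-K)\equiv R_{\bbLa}$, and one must identify $R_{\bbLa}$ with the divisor $R\equiv4\Theta$ of Amplification~\ref{zxbzfn}. The paper is terser than you here: it simply asserts ``the ramification locus of $\bbLa$ and $\bLa$ is the same'' citing part~(1) (regularity in codimension one), and moves on. You have correctly identified that this assertion needs justification along the divisor $E$, where $\Xi$ is \emph{not} a local isomorphism---it contracts $\bbLa(E)$ to the point $o$---so comparing ramification of $\bbLa$ and $\bLa$ via $\Xi$ breaks down there. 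Your ``main obstacle'' $m=0$ is thus a genuine step that the paper leaves implicit.

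Your first suggested fix---show $\bbLa$ is transverse to the $\cO(-1)\oplus\cO(1)$ divisor $D\subset\Bun_{1/2}$---is the right one and can be made precise. Set-theoretically $\bbLa^{-1}(D)=E$, and this is scheme-theoretically reduced because $E=\{L:h^0(L(-h))>0\}$ is the (reduced) theta-divisor in $\Pic^{g+1}C$; so $\bbLa^*D=E$ with multiplicity one. Since $\bbLa|_E$ maps the $(g{-}1)$-dimensional theta-divisor to the $(g{-}1)$-dimensional divisor $D$ and is generically finite there, $d\bbLa$ is generically an isomorphism along $E$, giving $m=0$. Your alternative route via direct pullback of generators of $\Pic\Bun_{1/2}$ would also work but involves more bookkeeping. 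In short, you have reconstructed the paper's proof and been more explicit about the one step it glosses over.
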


\begin{proof}
First we check (1) using an argument from \cite{Kumar}. 
Suppose $F=\bbLa(L)$ is unstable (resp.~strictly semi-stable) and take its
destabilizing line sub-bundle on $\bP^1$ of degree $k$ that contains  $V_i$ for 
$i\in I$. Pulling this line bundle back to $C$ and using adjointness gives an injection
$$f:\,\cO_C(kh)\mathop{\hookrightarrow} L$$
 such that $f(\cO_C(kh))$ vanishes at each point $p_i$ for $i\in I$. Thus we can write
$$L=\cO\left(kh+\sum_{i\in I}p_i+x_1+\ldots+x_j\right)$$
for some points $x_1,\ldots,x_j\in C$. Equating degrees gives
$$g+1=2k+|I|+j.$$
The slope inequality gives $j<{g-1\over 2}$ (equal for strictly semistable).
This gives loci listed in the theorem.
To show (2), notice that the ramification locus of $\bbLa$ and $\bLa$ is the same by (1), namely the divisor $R$.
By Riemann--Hurwitz and Corollary~\ref{zxbzfn}, 
$$\bbLa^*(-K)\sim -K_{\Pic^{g+1}C}+R\equiv 4\Theta.$$
This finishes the proof\footnote{
The papers \cite{Kumar,Araujo} contain the formula equivalent to $\bbLa^*(-K)\equiv 4^g\Theta$,
which is different from our formula.
But there is a mistake in the last line of the proof of Lemma~3.1 in \cite{Kumar}. Formulas (3.19) and (3.20) there are 
both correct but the conclusion  is wrong, 
in fact the correct conclusion is exactly that $\bbLa^*(-K)\equiv 4\Theta$.
This mistake does not affect any of the main results in these papers.}.
\end{proof}

\begin{Corollary}\label{adfgadrha}
The image of the  divisor $E\subset\Pic^{g+1}C$ of Corollary~\ref{zxbzfn}
under $\bar\Lambda$ is a divisor 
that parametrizes parabolic bundles of splitting type $\cO(-1)\oplus\cO(1)$.
This divisor is contracted by $\Xi$ to the point $o\in M_{0,n}$.
\end{Corollary}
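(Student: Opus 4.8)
The plan is to compute the Grothendieck splitting type of the rank-two bundle $\bbLa(L)=(\phi_h)_*L$ on $\bP^1$ for a general $L\in E$, and then to match the resulting locus with the divisor of $\cO(-1)\oplus\cO(1)$ parabolic bundles in the Fano model $\Bun_{1/2}(\bP^1;z_1,\ldots,z_n)$ that was identified in the wall-crossing discussion of this section.

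First I would make $E$ explicit. By part (ii) of the proof of Lemma~\ref{SSRHS}, in the hyperelliptic case $E$ equals the theta divisor $\{\,h+\tau(x_1)+\ldots+\tau(x_{g-1})\mid x_j\in C\,\}\subset\Pic^{g+1}C$, so a general $L\in E$ is $L\simeq h\otimes\cO_C(D)$ for a general effective divisor $D$ of degree $g-1$. Write $\bbLa(L)\simeq\cO(a)\oplus\cO(b)$ with $a\le b$; since $\det\bbLa(L)$ has degree $0$, we have $b=-a=:s\ge 0$, whence $h^0(\bP^1,\bbLa(L)(-1))=h^0(\cO(a-1))+h^0(\cO(b-1))=s$. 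On the other hand $\phi_h^*\cO_{\bP^1}(-1)\simeq h^{-1}$, so the projection formula gives
\[
H^0\bigl(\bP^1,\bbLa(L)(-1)\bigr)=H^0\bigl(\bP^1,(\phi_h)_*(L\otimes h^{-1})\bigr)=H^0(C,L\otimes h^{-1})=H^0(C,\cO_C(D)),
\]
and $h^0(C,\cO_C(D))=1$ because a general effective divisor of degree $\le g$ moves in no linear pencil. Therefore $s=1$: for a general $L\in E$ the bundle $\bbLa(L)$ has splitting type $\cO(-1)\oplus\cO(1)$, equivalently $\bP(\bbLa(L))\simeq\bF_2$. Moreover, since the indeterminacy locus of $\bbLa$ and the strictly semistable loci $U(k,I)$ of Theorem~\ref{wefvev}(1) have codimension $\ge 2$ in $\Pic^{g+1}C$ while $E$ is a divisor, $\bbLa$ is regular at a general $L\in E$ and $\bbLa(L)$ is a stable parabolic bundle there.

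Second I would identify $\bbLa(E)$. By the previous step, $\bbLa(E)$ is the closure of the image under $\bbLa$ of the dense open subset of general points of $E$, hence is contained in the closure of the locus of $\cO(-1)\oplus\cO(1)$ parabolic bundles in $\Bun_{1/2}(\bP^1;z_1,\ldots,z_n)$; the latter is an irreducible divisor---the exceptional divisor of the blow-up at $o$ in the passage between the special and standard models (cf.\ the $g=2$ case discussed above)---and it is contracted by $\Xi$ to the point $o$. By Theorem~\ref{wefvev}(2) the class $\bbLa^*(-K)\equiv 4\Theta$ is ample on the abelian variety $\Pic^{g+1}C$, so $\bbLa$ contracts no subvariety to lower dimension; in particular $\dim\bbLa(E)=\dim E=g-1$, so $\bbLa(E)$ is a divisor, and it is irreducible because $E$ is. An irreducible divisor contained in an irreducible divisor of the same dimension equals it, so $\bbLa(E)$ is precisely the divisor of $\cO(-1)\oplus\cO(1)$ parabolic bundles, i.e.\ of ruled surfaces of type $\bF_2$, and it is contracted by $\Xi$ to $o$, as claimed.

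The main obstacle is the splitting-type computation, and precisely the claim that $s$ equals $1$ rather than some larger integer: this is exactly the point where one must use that the divisor $D$ (equivalently the points $x_j$) presenting a general $L\in E$ can be chosen general, so that $h^0(C,\cO_C(D))=1$. Everything after that---the containment of $\bbLa(E)$ in the $\bF_2$-divisor, the fact that $\bbLa(E)$ is itself a divisor, and its contraction by $\Xi$---follows immediately from the splitting-type statement together with Theorem~\ref{wefvev} and the wall-crossing analysis already in place in this section.
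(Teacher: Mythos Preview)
Your proof is correct. The splitting-type computation via the projection formula and $h^0(C,\cO_C(D))=1$ for general effective $D$ of degree $g-1$ is a clean way to pin down $s=1$ exactly; the paper instead refers back to the proof of Lemma~\ref{SSRHS}, where conditions (i) and (ii) together give $s\le 1$ off $W$ (since $h^0(L)=2$) and $s\ge 1$ on $E$, hence $s=1$ generically on $E$. For the contraction to $o$, the two arguments diverge: you route through the wall-crossing picture, identifying the $\bF_2$ locus with the exceptional divisor over $o$ and invoking its contraction under $\Xi$; the paper argues more directly that for generic $L=h\otimes\cO_C(x_1+\cdots+x_{g-1})\in E$ the base locus of $|L|$ is exactly $\{x_1,\ldots,x_{g-1}\}$, so $\phi_L=\phi_h$ and therefore $\bLa(L)=(\phi_h(p_1),\ldots,\phi_h(p_n))=o$. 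The paper's route is shorter and self-contained; yours has the virtue of explicitly verifying that $\bbLa(E)$ is the \emph{full} $\bF_2$ divisor (via the ampleness of $\bbLa^*(-K)\equiv 4\Theta$ to rule out contraction), a point the paper leaves implicit.
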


\begin{proof}
The first statement is clear from the proof of Lemma~\ref{SSRHS}. For the second,
notice that $L\in F$
if and only if $L=h\otimes\cO(x_1+\ldots+x_{g-1})$ for some points on~$C$. Generically
along $E$, $\{x_1,\ldots,x_{g-1}\}$ is the base locus of $L$ and therefore $\phi_L=\phi_h$.
Thus $\Lambda(E)=(\phi_h(p_1),\ldots,\phi_h(p_n))=o$.
\end{proof}

Next we study the dependence of scattering amplitude on marked points.

\begin{Notation}
Consider the Weil group $W(D_n)=S_n\ltimes(\bZ_2)^{n-1}$, where we identify  $(\bZ_2)^{n-1}$ with the set 
of subsets of $\{1,\ldots,n\}$ of even cardinality. The Weil group acts on the stack $\Bun(\bP^1,z_1,\ldots,z_n)$ as follows:
the symmetric group acts by permuting marked points and the group $(\bZ_2)^{n-1}$ acts by {\em elementary transformations}:
for every subset $I\subset\{1,\ldots,n\}$ of even cardinality, consider an exact sequence
\begin{equation}\label{SdvSDbd}
0\to F'\arrow^\alpha F\to \bigoplus_{i\in I} (F|_{z_i})/V_i\to 0.
\cooltag\end{equation}
$F'$ is a rank $2$ vector bundle of degree $-|I|$ with parabolic lines defined as follows: 
$$
V'_i=\begin{cases}
\alpha^{-1}V_i & \hbox{\rm if}\quad i\not\in I\cr
\Ker\alpha|_{z_i} & \hbox{\rm if}\quad i\in I.
\end{cases}$$
To force degree to be $0$, the elementary transformation is defined as
$$\el_I(F)=F'\left({|I|\over 2}\right).$$
In the language of ruled surfaces, elementary transformations are given by blowing up an even number of parabolic points in fibers
and blowing down proper transforms of fibers. 
$W(D_n)$ acts on the Fano model $\Bun_F(\bP^1;z_1,\ldots,z_n)$ by elementary transformations, in fact it is its full automorphism group \cite{Araujo}.
\end{Notation}

\begin{Proposition}\label{sadvfb}
Let $C^n_0\subset C^n$ be the configuration space of points 
$p_1,\ldots,p_n\in C$ such that $p_i\ne p_j$ and $p_i\ne \tau(p_j)$ for $i\ne j$. Consider a commutative diagram
$$\begin{CD}
\Pic^{g+1}C\times C^n_0   @>\bbLa>>  \cBun_F(\bP^1)  @>\Xi>> M_{0,n} \\
@VVV                                                     @VV{\zeta}V \\
C^n_0                                 @>o>>         M_{0,n}
\end{CD}$$
where top arrows are rational maps and where 
$$\cBun_F(\bP^1)\arrow^\zeta M_{0,n}$$ 
is the universal moduli space of parabolic vector bundles:
$$\zeta^{-1}(z_1,\ldots,z_n)=\Bun_F(\bP^1,z_1,\ldots,z_n).$$
The map $o$ is defined in \eqref{mjhvjhv}.
The action of $W(D_n)$ has the following compatibility:
\begin{enumerate}
\item
The map
$\Xi:\,\Bun_F(\bP^1;z_1,\ldots,z_n)\dashrightarrow M_{0,n}$ 
is $S_n$ equvariant. Elementary transformations $\el_I$ give birational involutions of $M_{0,n}$ that depend on $z_1,\ldots,z_n$.
\item
The square of the diagram is $W(D_n)$ equivariant. The action is defined as follows: 
\begin{enumerate}
\item $S_n$ acts everywhere by permuting marked points.
\item $(\bZ_2)^{n-1}$ acts trivially\footnote{Warning: this $M_{0,n}$ is different from $M_{0,n}$ in the top right corner!} on $M_{0,n}$. 
\item Every $I\in (\bZ_2)^{n-1}$ acts on $(D;p_1,\ldots,p_n)\in \Pic^{g+1}C\times C^n_0$ is as follows:
\begin{equation}\label{sfvsfevwefvw}
p_i\mapsto\begin{cases} 
\tau(p_i)& \hbox{\rm if}\  i\in I\cr
p_i & \hbox{\rm if}\ i\not\in I\cr
\end{cases}\qquad\hbox{\rm and}\quad D\mapsto D-\sum_{i\in I}p_i+{|I|\over 2}h.
\cooltag\end{equation}
\end{enumerate} 
\end{enumerate}
\end{Proposition}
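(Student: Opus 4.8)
The plan is to verify equivariance one arrow at a time, treating the reflection subgroup $(\bZ_2)^{n-1}$ separately from $S_n$. The $S_n$-part of both (1) and (2) is purely formal: the pushforward construction $\bbLa$, the birational map $\Xi$ of Figure~\ref{shasrhar}, the point $o$ of \eqref{mjhvjhv} and the forgetful map $\zeta$ are all natural under relabeling the indices $1,\dots,n$ (equivalently, the pairs $(z_i,q_i)$, resp.\ the points $p_i$), so permuting indices commutes with every map in the diagram. I would dispatch this in a sentence and concentrate on the elementary transformations.

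Next I would check that \eqref{sfvsfevwefvw} really defines an action on $\Pic^{g+1}C\times C^n_0$. Degrees are preserved because $\deg h=2$, so $D-\sum_{i\in I}p_i+\tfrac{|I|}2h$ again has degree $g+1$. For the composition law $(I)\circ(J)=(I\triangle J)$, on the curve coordinates $p_i\mapsto\tau(p_i)\mapsto\tau(\tau(p_i))=p_i$ shows the net effect applies $\tau$ exactly over $I\triangle J$; on the divisor coordinate one expands $(J)\bigl((I)(D)\bigr)$, groups the terms $p_i+\tau(p_i)$ over $i\in I\cap J$ and replaces each by $h$ using $p+\tau(p)\sim h$, and uses the scalar identity $\tfrac{|I|+|J|}2-|I\cap J|=\tfrac{|I\triangle J|}2$. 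Triviality of this action on the bottom $M_{0,n}$ is immediate from $\phi_h\circ\tau=\phi_h$: the point $o(p_1,\dots,p_n)=(\phi_h(p_1),\dots,\phi_h(p_n))$ does not see the swap. Symmetrically, $\el_I$ leaves the fiber positions $z_1,\dots,z_n$ untouched since an elementary transformation only modifies a parabolic bundle inside its fibers, so $\zeta\circ\el_I=\zeta$.

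The heart of the matter, and the step I expect to be the main obstacle, is the intertwining identity $\bbLa\circ(I)=\el_I\circ\bbLa$ as rational maps to $\cBun_F(\bP^1)$; everything else is bookkeeping. I would prove it by a local computation at the fibers $z_i$. Set $L=\cO(D)$ and $L'=\cO\bigl(D-\sum_{i\in I}p_i\bigr)\otimes h^{\otimes|I|/2}$, and write $E=(\phi_h)_*L$. Pushing forward the exact sequence $0\to\cO(D-\sum_{i\in I}p_i)\to\cO(D)\to\bigoplus_{i\in I}L|_{p_i}\to0$ along the finite map $\phi_h$, and using the projection formula $(\phi_h)_*(\cO(D-\sum p_i)\otimes h^{\otimes|I|/2})=\bigl((\phi_h)_*\cO(D-\sum p_i)\bigr)\bigl(\tfrac{|I|}2\bigr)$, gives $0\to(\phi_h)_*L'\bigl(-\tfrac{|I|}2\bigr)\to E\to\bigoplus_{i\in I}L|_{p_i}\to0$; since $L|_{p_i}=(E|_{z_i})/V_i$ by the very definition of the parabolic line $V_i$, this is exactly the defining sequence \eqref{SdvSDbd}, so $(\phi_h)_*L'=\el_I(E)$ as bundles. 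It remains to match parabolic lines. Over $z_i$ one has $(\phi_h)_*L'|_{z_i}=L'|_{p_i}\oplus L'|_{\tau(p_i)}$ (here one uses Assumption~\ref{Sfbzfdhadfh}, so that over $z_i$ the points $p_j$, $j\in I$, do not collide and $z_i$ is unramified). For $i\notin I$ the marked point is still $p_i$, the $\bbLa$-line is $\ker(\ev_{p_i})=L'|_{\tau(p_i)}$, and since $\alpha|_{z_i}$ is an isomorphism there, $\alpha^{-1}V_i$ is again the summand $L'|_{\tau(p_i)}$. For $i\in I$ the marked point becomes $\tau(p_i)$, so the $\bbLa$-line is $\ker(\ev_{\tau(p_i)})=L'|_{p_i}$; on the other side the inclusion $\cO(D-\sum p_j)\hookrightarrow\cO(D)$ vanishes at $p_i$, so $\alpha|_{z_i}$ kills exactly the summand $L'|_{p_i}$ and $\Ker\alpha|_{z_i}=L'|_{p_i}$ too. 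Thus the two parabolic structures agree. The one genuinely delicate point is keeping straight which of $p_i$, $\tau(p_i)$ labels which summand after the swap: it is precisely the relation $\tau(\tau(p_i))=p_i$ that makes the new $\bbLa$-line $\ker(\ev_{\tau(p_i)})$ coincide with the $\el_I$-line $\Ker\alpha|_{z_i}$.

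Finally, for (1): $S_n$-equivariance of $\Xi$ is formal, and $\el_I$ is an involution of the Fano model $\Bun_F(\bP^1;z_1,\dots,z_n)$ — iterating \eqref{SdvSDbd} at each $z_i$, $i\in I$, restores the original bundle together with its lines $V_i$ (equivalently, blowing up the $|I|$ parabolic points and blowing down the proper transforms of the corresponding fibers is an involution on ruled surfaces). Hence the conjugate $\Xi\circ\el_I\circ\Xi^{-1}$ is a birational involution of $M_{0,n}$, depending on $z_1,\dots,z_n$ precisely because $\Xi$ does. Assembling: $\bbLa$ is equivariant by the local computation above, $\zeta$ and $o$ are equivariant with trivial action on their common target $M_{0,n}$, and the left-hand projection is tautologically equivariant; this yields $W(D_n)$-equivariance of the square and completes the argument.
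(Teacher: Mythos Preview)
Your proof is correct and the core of part~(2) --- pushing forward the exact sequence $0\to\cO(D-\sum_{i\in I}p_i)\to\cO(D)\to\bigoplus_{i\in I}\cO_{p_i}\to0$ along $\phi_h$ and identifying the result with \eqref{SdvSDbd} --- is exactly the paper's argument; you additionally verify the group law for \eqref{sfvsfevwefvw} and match the parabolic lines case by case, which the paper leaves implicit.

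The only genuine difference is in part~(1): you argue abstractly that $\el_I$ is a biregular involution of $\Bun_F$ and hence $\Xi\circ\el_I\circ\Xi^{-1}$ is a birational involution of $M_{0,n}$ depending on the $z_i$, whereas the paper gives an explicit coordinate computation. Reducing to $I=\{1,n\}$ and normalizing $z_1=q_1=0$, $z_n=q_n=\infty$, the paper shows that proper transforms of the lines $q=\lambda z$ become horizontal rulings after the elementary transformation, yielding the concrete formula $(0,q_2,\dots,q_{n-1},\infty)\mapsto(0,z_2/q_2,\dots,z_{n-1}/q_{n-1},\infty)$, visibly a Cremona transformation with center determined by the $z_i$. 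Your abstract argument is cleaner and suffices for the stated claim; the paper's computation buys an explicit formula, which makes the $z$-dependence manifest rather than inferred.
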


\begin{proof}
For (1), everything follows from definitions except for the remark about birational involutions on $M_{0,n}$.
Note that $\bP(F)=\bP^1_z\times\bP_q^1$ and $\el(I)$ amounts to an elementary transformation of this $\bP^1$ bundle (over $\bP^1_z$)
in points $(z_i,q_i)$ for $i\in I$.
It suffices to consider the  case when $I=\{1,n\}$.  We change coordinates so that $z_1=q_1=0$, $z_n=q_n=\infty$.
In these coordinates, proper transforms of lines $q=\lambda z$ after the elementary transformation become
horizontal rulings. 
\begin{figure}[htbp]
\includegraphics[width=\textwidth]{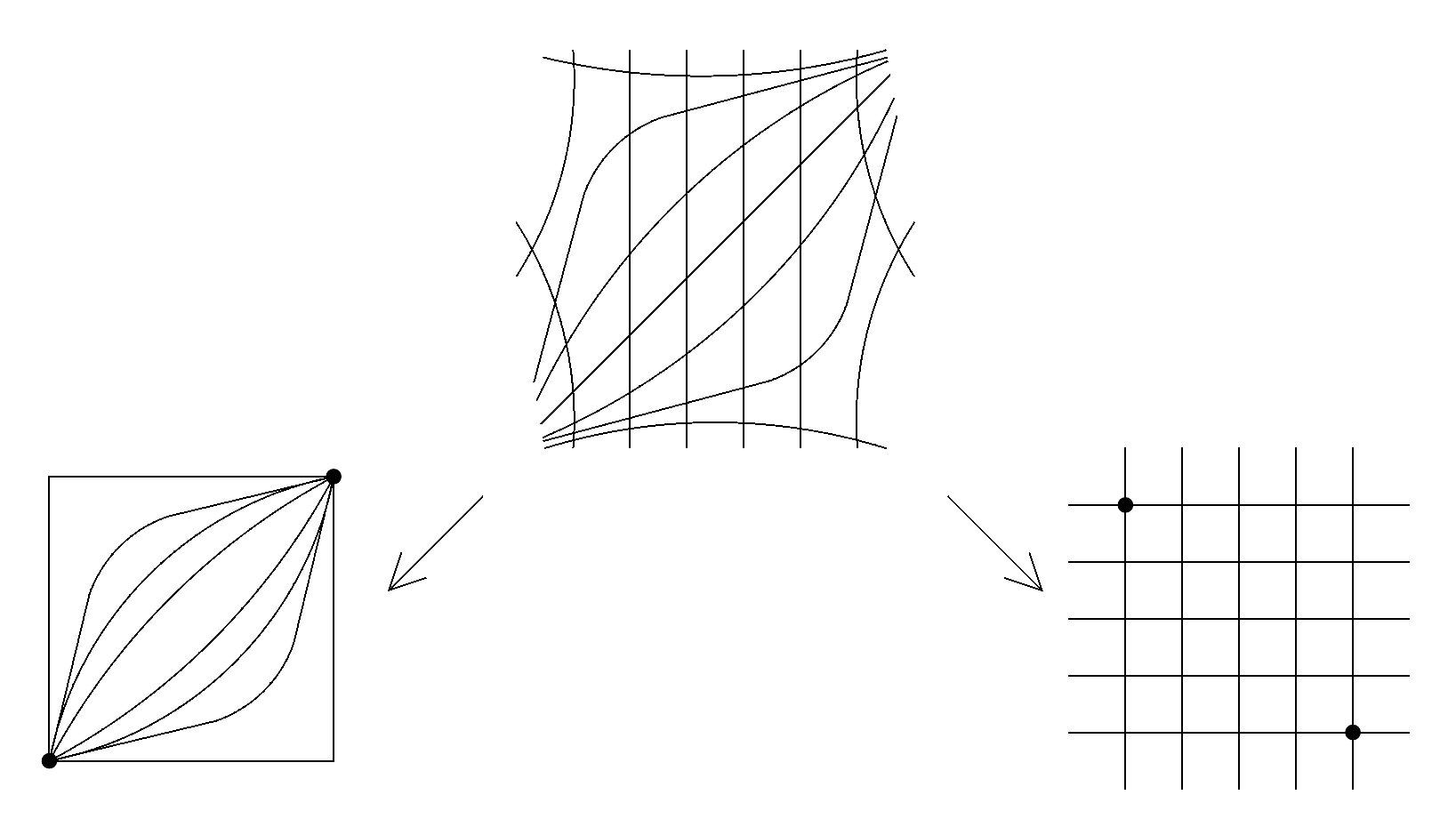}
\end{figure}
Thus 
$$(0,q_2,\ldots,q_{n-1},\infty)\mapsto \left(\infty ,{q_2\over z_2},\ldots,{q_{n-1}\over z_{n-1}},0\right)\sim
\left(0 ,{z_2\over q_2},\ldots,{z_{n-1}\over q_{n-1}},\infty\right).$$
This is a Cremona transformation with center that depends on $z_1,\ldots,z_n$.

For (2), notice that the sequence \eqref{SdvSDbd} is the push-forward of the sequence
$$0\to \cO\left(D-\sum_{i\in I}p_i\right)\to \cO(D)\to \bigoplus_{i\in I} \cO_{p_i}\to 0,$$
which gives the formula for $F'$ after tensoring with a multiple of $\cO(h)$.
\end{proof}

\section{Scattering using the matrix model}\label{kwJHEfg}

We continue to use notation of Section~\ref{sdvqefv}.
In particular, $C$ is a smooth  hyperelliptic  curve of genus $g\ge2$ that satisfies Assumption~\ref{Sfbzfdhadfh},
i.e.~ points $p_1,\ldots,p_n\in C$ are different and  no pair is in hyperelliptic involution.
Let $z_1,\ldots,z_n\in\bP^1$ be their projections
and $\bbLa:\,\Pic^{g+1}C\to \Bun(\bP^1;z_1,\ldots,z_n)$ is the standard morphism
to the stack of quasi-parabolic vector bundles.
We first review the Jacobi's description of the Jacobian of a hyperelliptic curve as the space 
of conjugacy classes of $2\times 2$ matrices following \cite{Mumford} and \cite{Beauville}.

\begin{Review}
The curve $C$ has equation $y^2=f(z)$, where $f\in\cO_{\bP^1}(2g+2)$ is a polynomial without multiple roots.
The map $\pi:=\phi_h:\,C\to\bP^1$ is the projection $(z,y)\mapsto z$.
The points $(z_0, 0)$ with $z_0$ a root of $f(z)$ are the Weierstrass points.
As the sheaf of $\cO_{\bP^1}$-modules, we have 
$$\pi_*\cO_C=\cO_{\bP^1}(-g-1)\oplus\cO_{\bP^1}.$$
The sheaf of algebras structure of $\pi_*\cO_C$ is completely determined  by the
map $\Sym^2\cO_{\bP^1}(-g-1)\to\cO_{\bP^1}$, which is simply
multiplication by $f\in\cO_{\bP^1}(2g+2)$.
By Lemma~\ref{SSRHS},
when $L \in \Pic^{g+1}(C)$ is away from the theta-divisor~$E$, we have
$$\pi_* L=\cO_{\bP^1}\oplus\cO_{\bP^1}.$$
The action of $\pi_*\cO_C$ on $\pi_* L$ is determined by a $2\times 2$ matrix 
\begin{equation}\label{sGSGsrg}
M=\left[\begin{matrix} V&U\cr W& V'\cr\end{matrix}\right]\in \Hom(\cO_{\bP^1}(-g-1)\otimes 
(\cO_{\bP^1}\oplus\cO_{\bP^1}), \cO_{\bP^1}\oplus\cO_{\bP^1}).
\cooltag\end{equation}
of polynomials $U,V,V',W$ in $z$ of degree at most $g+1$. Applying $M$ twice should be a multiplication by the polynomial $f(z)$, which gives
equations on polynomials
$$V'=-V$$
and
\begin{equation}\label{dfbzdfhz}
-\det(M)=V^2+UW=f(z).
\cooltag\end{equation}
Let $S(f)\subset \bA^{3(g+2)}$ be the affine subvariety given by equations \eqref{dfbzdfhz},
where $\bA^{3(g+2)}$ parametrizes $U,V,W$.
Then $S(f)$ is smooth  for any  
polynomial $f(z)$ without multiple roots. Moreover, the group
$\PGL_2$ acts on $S(f)$ (with elements viewed as $2\times2$ matrices) by conjugation freely and the quotient 
is, by \cite[Theoreme 1.4]{Beauville},
\begin{equation}\label{SFhDFh}
S(f)/\PGL_2\simeq \Pic^{g+1}C\setminus E.
\cooltag\end{equation}
\end{Review}

\begin{Theorem}\label{asfgbsfgasfh}
The map $\bbLa$ (and thus  the scattering amplitude map $\bLa$) has degree~$2^g$.
\end{Theorem}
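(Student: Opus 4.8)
The plan is to carry out the whole computation inside the matrix model \eqref{SFhDFh}, $\Pic^{g+1}C\setminus E\cong S(f)/\PGL_2$, after making $\bbLa$ explicit on matrices. Given $M\in S(f)$ with entry polynomials $U,V,W$ (traceless, $V^2+UW=f$, degrees $\le g+1$), the bundle $\pi_*L\cong\cO\oplus\cO$ carries at each $z_i$ the parabolic line $V_i=\Ker(E|_{z_i}\to L|_{p_i})$, which is exactly the eigenline of $M(z_i)$ for the eigenvalue belonging to the point $\tau(p_i)$, i.e.\ for $-y_i$ where $p_i=(z_i,y_i)$. That eigenline has slope $\bigl(-y_i-V(z_i)\bigr)/U(z_i)$, so after trivializing $\cO\oplus\cO$ the map $\bbLa$ becomes the $\PGL_2$-equivariant rational map
$$q:\ S(f)\dashrightarrow(\bP^1)^n,\qquad M\longmapsto\Bigl(\tfrac{-y_1-V(z_1)}{U(z_1)},\ldots,\tfrac{-y_n-V(z_n)}{U(z_n)}\Bigr),$$
with $\PGL_2$ acting diagonally on $(\bP^1)^n$; note that both $S(f)$ and $(\bP^1)^n$ have dimension $g+3=n$.

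First I would reduce $\deg\bbLa$ to $\deg q$. Since $\bbLa(E)$ is a divisor (Corollary~\ref{adfgadrha}) and the indeterminacy of $\bbLa$ has codimension $\ge 2$ (Theorem~\ref{wefvev}), the generic fibre of $\bbLa$ lies entirely inside $\Pic^{g+1}C\setminus E=S(f)/\PGL_2$; moreover a generic parabolic bundle is a trivial bundle with $n$ parabolic lines, two being isomorphic iff $\PGL_2$-equivalent. Because $\PGL_2$ acts freely on $S(f)$ and (for $v$ generic) freely on $v\in(\bP^1)^n$, equivariance of $q$ gives a bijection $q^{-1}(v)\arrow^{\sim}\bbLa^{-1}([v])$; hence $\deg\bbLa=\deg q$, and it suffices to count $q^{-1}(v)$ for generic $v$. (Dominance of $q$ follows from dominance of $\bbLa$, which is part of the MHV condition via Lemma~\ref{SSRHS}.)

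The count is the computational heart. The $n=g+3$ equations $V(z_i)+q_iU(z_i)=-y_i$ are linear in the $2g+4$ coefficients of $(U,V)$ and, for generic $v$, cut out a $(g+1)$-dimensional affine subspace $\mathcal L$; over a point of $\mathcal L$ the entry $W$ — hence membership in $S(f)$ — is determined by the single requirement $U\mid f-V^2$, equivalently $V(r)^2=f(r)$ at each of the $g+1$ roots $r$ of $U$. Normalizing by $\PGL_2$ so that $q_1=0,\ q_2=\infty,\ q_3=1$ and $q_4,\ldots,q_n$ generic, I would parametrize candidate solutions on $\mathcal L$ by the roots of $U$ together with a sign choice $V(r)=\pm y(r)$ at each root, check that for each admissible sign vector there are finitely many — generically one — solution, and thereby bound $\#q^{-1}(v)$ by $2^{g+1}$. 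The crux is then to show that exactly half of the $2^{g+1}$ sign vectors are admissible, so that $\#q^{-1}(v)=2^g$; the expected mechanism is a discrete constraint — a reciprocity pinning $\prod_r\operatorname{sign}(r)$, coming from comparing $\prod_r V(r)=\pm\Res(U,V)$ with $\prod_r y(r)$ via $\Res(U,f)$, equivalently from the behaviour of $M$ over $z=\infty$. Organizing this sign bookkeeping, and confirming the generic fibre is reduced (so the set-theoretic count is the degree), is precisely where the translation-invariant vector fields on $\Pic^{g+1}C$ written as Lax flows $\dot M=[M,B]$ enter. Getting the constant to be exactly $2^g$ and not $2^{g\pm 1}$ is the step I expect to fight with; it can be cross-checked against the genus~$1$ case (degree~$2$, verifiable by hand) and the genus~$2$ case (degree~$4$, Theorem~\ref{vasdgsgG}), and against the class identity $\bbLa^*(-K)\equiv 4\Theta$ of Theorem~\ref{wefvev}.

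Finally the parenthetical claim about $\bLa$ is immediate: by Lemma~\ref{SSRHS} one has $\bLa=\Xi\circ\bbLa$ with $\Xi$ birational, so $\deg\bLa=\deg\bbLa=2^g$.
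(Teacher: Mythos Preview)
Your setup is sound and matches the paper's: work in the matrix model $S(f)/\PGL_2$, identify the parabolic line at $z_i$ with an eigenline of $M(z_i)$, and reduce $\deg\bbLa$ to a count of matrices. The reduction $\deg\bbLa=\deg q$ via free $\PGL_2$-equivariance is fine, and the final sentence about $\bLa=\Xi\circ\bbLa$ is correct.

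The genuine gap is the count itself, and you have identified it honestly: your parametrization by ``roots of $U$ plus $g+1$ signs, then kill half the signs by a reciprocity'' is not set up rigorously and you say so. On $\mathcal L$ the roots of $U$ are not free parameters (both $U$ and $V$ are constrained), so the proposed bijection between solutions and sign vectors is not established; nor is the reduced-fibre claim. As written, the proof stops exactly at the point that determines the answer.

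The paper avoids this entirely by a different and cleaner trick. Instead of fixing the eigenvalue at each $z_i$ (your affine equation $V(z_i)+q_iU(z_i)=-y_i$ in $(U,V)$), it only demands that $M(z_i)$ have \emph{some} eigenvector of slope $q_i$. Eliminating the eigenvalue gives a single \emph{homogeneous} linear equation in $(U,V,W)$, namely $W(z_i)-2q_iV(z_i)-q_i^2U(z_i)=0$. Now the count is pure intersection theory: $S(f)\subset\bA^{3(g+2)}$ is a smooth complete intersection of $2g+3$ quadrics, the $n=g+3$ slope conditions cut a linear subspace of dimension $2g+3$, and after checking (via Bertini and an explicit analysis of the cone $V^2=UW$ at infinity) that the intersection is transversal with no solutions at infinity, B\'ezout gives $2^{2g+3}$ points. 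The overcount is then removed in one stroke: by Proposition~\ref{sadvfb}, swapping $p_i\leftrightarrow\tau(p_i)$ (i.e.\ choosing the other eigenspace at $z_i$) corresponds to an elementary transformation in $W(D_n)$ and does not change $\deg\bbLa$, so one divides by $2^n$ to get $2^{2g+3}/2^{g+3}=2^g$. The Weierstrass case is handled separately: there $M(z_i)$ is a nonzero nilpotent, the slope condition becomes a tangent-plane section of a quadric cone counted with multiplicity~$2$, and the missing factor of $2$ in the division is exactly compensated.

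In short: your eigenvalue-fixed approach could in principle be pushed through, but the sign reciprocity you are groping for is precisely what the paper's ``count both eigenspaces, then divide by $2^n$ via $W(D_n)$'' replaces with a clean B\'ezout argument. That is the missing idea.
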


\begin{proof}
To~determine the degree of $\bbLa$, we  fix a general point in $\Bun_F(\bP^1,z_1,\ldots,z_n)$
and count the number of preimages of this point in $\Pic^{g+1}C$. 
A general parabolic bundle has splitting type $\cO\oplus\cO$ and the 
parabolic structure is the set of points $(z_1,q_1),\ldots,(z_n,q_n)$, so essentially we
choose general points $q_1,\ldots,q_n\in\bP^1$.
The corresponding bundle $L\in\Pic^{g+1}C$ is away from the theta-divisor $E$, so we can locate it using~\eqref{SFhDFh}.

\begin{Review}\label{sfgqwrgqwrg}
Suppose  that neither of the points $p_1,\ldots,p_n$ is a Weierstrass point.
Then $(\pi_*L)|_{z_i}=L|_{p_i}\oplus L|_{\tau(p_i)}$ and thus the $2\times 2$ matrix $M(z_i)$ has two distinct eigenspaces, one corresponds to the parabolic structure given by $p_i$
and another by $\tau(p_i)$. The point $q_i\in\bP^1$ gives the slope of this eigenspace. 
Thus we need to do the following: 
\begin{enumerate}
\item Count the number of solutions
in $S(f)$ such that each matrix $M(z_i)$ has an eigenspace with a fixed general 
slope $q_i$.
\item Divide this number by $2^n$. Indeed, 
using $p_i$ or $\tau(p_i)$
gives the scattering amplitude of the same degree by Proposition~\ref{sadvfb}.
\end{enumerate}
Note that taking conjugacy classes of matrices by the $\PGL_2$ action is not necessary: 
fixing three  different slopes of eigenspaces eliminates the conjugacy action.
\end{Review}

\begin{Review}\label{srfqwrg}
The smooth solution set $S(f)\subset\bC^{3g+6}$ of dimension $g+3$ 
is the intersection of $2g+3$ affine quadrics, one for each coefficient of the degree $2g+2$ polynomial~$f(z)$.
Solutions at infinity $\bP^{3g+5}$ are given by the homogeneous equation $V^2=UW$, which has expected dimension $g+2$.
Indeed, if $V=0$ then either $U=0$ or $W=0$, which gives a union of two projective subspaces of dimension $g+1$ each.
If $V\ne0$ then the solution is determined by $V$ up to reordering of terms in the polynomial factorization and rescaling $U$ by $\lambda$
and $W$ by $\lambda^{-1}$.
Imposing the slope $q_i$ at $z_i$ is a linear equation on $U,V,W$. Since $\det M(z_i)\ne0$ but $\Tr M(z_i)=0$,
no matrix $M(z_i)$ has more than two eigenspaces, and so these linear equations have no base locus on $S(f)$.
By Bertini Theorem, the intersection is transversal and has expected dimension.
We claim that there are no solutions at infinity. Indeed, the solution set of the homogeneous system of equations
$V^2=UW$ can be thought of as $2\times 2$ matrices and is covered by $n$ open charts where $M(z_i)\ne0$.
Since $\det M(z_i)=\Tr M(z_i)=0$, $M(z_i)$ is a non-zero nilpotent matrix and thus have only one eigenspace.
Thus the base locus of the solution set is empty and so it has an expected dimension by Bertini Theorem, 
which means it is empty.
\end{Review}

\begin{Review}
The space of matrices of polynomials with fixed slopes $q_i$  is a linear space of dimension $3(g+2)-n=2g+3$ and we are counting
intersection points of $2g+3$ quadrics under the assumption that intersection is transersal and does not run away to infinity.
Thus we have $2^{2g+3}$ intersection points
and therefore 
$$\deg\bbLa={2^{2g+3}\over 2^n}=2^g.$$
\end{Review}

\begin{Review}
If some of the points $z_i$ are Weierstrass points, i.e.~$p_i=\tau(p_i)$, the argument goes as before, except for two issues:
\begin{enumerate}
\item We don't have to divide the number of solutions by $2$ as in \ref{sfgqwrgqwrg} (2).
\item By Claim~\ref{wef	wef}, $M(z_i)$ is a non-zero nilpotent matrix. 
The subvariety of nilpotent $2\times 2$ matrices is a quadric cone
and fixing the slope $q_i$ gives a ruling of this cone (intersection with the tangent plane of multiplicity~$2$).
Thus fixing the slope is not equivalent to pulling back a general
hyperplane by a  morphism to projective space as in \ref{srfqwrg}. 
The correct application of Bertini Theorem is to project this cone onto a conic (isomorphic to $\bP^1$)
and count this solution only once.
\end{enumerate}
Thus issues (1) and (2) cancel each other and we get the same count.
\end{Review}

\begin{Claim}\label{wef	wef}
$M(z_i)$ is a non-zero matrix.
\end{Claim}

Indeed, if $M(z_i)=0$ then $U$, $V$ and $W$ have a root at $z_i$.
But then $f$ has a double root at $z_i$, contradiction.
\end{proof}

\begin{Review}[\cite{Mumford}]\label{asrgarh}
In the model \eqref{SFhDFh}, one can eliminate 
the $\PGL_2$-action by making
\begin{itemize}
\item $f$ a monic polynomial of degree $2g+1$ (one of the roots is moved to $\infty$),
\item $U$ a monic polynomial of degree $g$,
\item $V$ a polynomial of degree $g-1$,
\item $W$ a monic polynomial of degree $g+1$.
\end{itemize}
Under these conditions, the solution set $M$ of equations \eqref{dfbzdfhz} in $\bA^{3g+1}$
is isomorphic to $\Pic^{g+1}C\setminus E$. Solutions look as follows.
Suppose $U(z)=(z-t_1)\ldots(z-t_g)$ has no multiple roots. Choose $s_i=\pm\sqrt{f(t_i)}$ for $i=1,\ldots,g$
and use Lagrange interpolation to find $V(z)$ such that $V(t_i)=s_i$  for $i=1,\ldots,g$.
Then $U(z)$ divides $f(z)-V(z)^2$ and we can define $W(z)={\displaystyle f(z)-V(z)^2\over \displaystyle U(z)}$.

For any $c\in\bP^1$, the Lax pair differential equation
gives a translation-invariant vector field $\dot F=(\dot U(z),\dot V(z),\dot W(z))$ on $M$ with components
$$\dot U(z)={V(c)U(z)-U(c)V(z)\over z-c};$$
$$\dot V(z)={1\over 2}{U(c)W(z)-W(c)U(z)\over z-c}-U(c)U(z);$$
$$\dot W(z)={W(c)V(z)-V(c)W(z)\over z-c}+U(c)V(z).$$
General points $c_1,\ldots,c_g$ gives linearly independent translation-invariant vector fields
$\dot F_1,\ldots,\dot F_g$ and thus a translation-invariant polyvector field
$A^\vee=\dot F_1\wedge\ldots\wedge\dot F_g$, which is  dual to the translation-invariant volume form $A$ on $\Pic^{g+1}C$.
Applying $d\bLa$ to $A^\vee$ and dualizing gives the value of the branch of the scattering amplitude form 
that corresponds to the point $(U,V,W)$. More concretely, we can factor the scattering amplitude map
$\bLa:\,M\to M_{0,n}$ into maps
$$E:\, M\dashrightarrow (\bP^1)^n\quad\hbox{\rm and}\quad Q:\,(\bP^1)^n\dashrightarrow M_{0,n},$$
where $Q$ is the quotient by the $\PGL_2$ action and $E$ is the map that assigns to a matrix of polynomials \eqref{sGSGsrg}
the slopes of its eigenspaces 
$$
{y_1-V(z_1)\over U(z_1)},\ldots, {y_n-V(z_n)\over U(z_n)}
$$
at marked points $p_i=(z_i,y_i)$ for $i=1,\ldots,n$.
\end{Review}


\section{Bypassing the Kummer surface}\label{sfgasgsRH}

We will
make results of the previous section more explicit for genus~$2$ curves. 

\begin{Notation} Fix a smooth pointed genus~$2$ curve $(C;p_1,\ldots,p_5)$.
\begin{enumerate}
\item Let $P:=p_1+\ldots+p_5$.
\item We view $C$ as a degree $5$ curve in $\bP^3$ using the embedding
$\phi_P:\,C\hookrightarrow\bP^3$.
\item $K$ is a canonical divisor, $\phi_K:\,C\arrow^{2:1}\bP^1$ is a hyperelliptic double cover.
\item We introduce $16$ points in $\Pic^3C$, 
$$\delta=P-K,\quad \delta_i=K+p_i, \quad \delta_{ij}=P-p_i-p_j,$$ 
where the indices are $1 \le  i \le 5$ and $1 \le i < j \le 5$, respectively.
\item Special loci in $\Pic^3C$ defined in Corollary~\ref{zxbzfn} are as follows: 
$$W=\emptyset, \quad R=\{D\ |\ 2D\in P+C\}\equiv 4\Theta,$$
$$E=K+C,\quad E_{ij}=C+p_i+p_j.$$
\item We introduce another useful theta divisor on $\Pic^3C$ for $i=1,\ldots,5$:
$$E_i=P-p_i-C.$$
\end{enumerate}
Here and elsewhere we don't  distinguish between line bundles and linear equivalence classes of divisors,
for example $E=K+C$ denotes the locus in $\Pic^3C$ of line bundles of the form $\cO(K+p)$ for $p\in C$.
Hopefully this won't cause confusion.
\end{Notation}

\begin{Lemma}
In genus~$2$, Assumption~\ref{Sfbzfdhadfh} is equivalent to any of the following:
\begin{enumerate}
\item No two points of $p_1,\ldots,p_5$ are related by the hyperelliptic involution.
\item No three points of $p_1,\ldots,p_5\in\bP^3$ are collinear. 
\item $16$ divisors $E$, $E_i$, $E_{ij}$ are pairwise different.
\item $16$ points $\delta$, $\delta_i$, $\delta_{ij}$ are pairwise different.
\item $\delta\not\in E_{ij}$ for any $i\ne j$.
\end{enumerate}
\end{Lemma}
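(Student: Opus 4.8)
The plan is as follows. Since condition $(1)$ is literally a restatement of Assumption~\ref{Sfbzfdhadfh}, it suffices to prove $(1)\Leftrightarrow(2)$, $(3)\Leftrightarrow(4)$, and the cycle $(1)\Rightarrow(4)\Rightarrow(5)\Rightarrow(1)$. Three elementary facts about the genus~$2$ curve $C$ will be used throughout. (a) $\phi_P$ embeds $C\subset\bP^3$ by the complete non-special system $|P|$ (with $h^0(P)=4$), so three distinct points $p_i,p_j,p_k$ are collinear exactly when $h^0(P-p_i-p_j-p_k)\ge 2$. (b) A degree-$2$ line bundle has $h^0\ge 2$ only if it is $K$; hence $p_a+p_b\sim K$ iff $p_b=\tau(p_a)$, and two \emph{distinct} effective degree-$2$ divisors are linearly equivalent only if both are $\sim K$. (c) (``the engine'') $\deg 2K=4>2g-2$ gives $h^0(2K)=3$, and $\phi_K^{*}$ identifies $H^0(\bP^1,\cO(2))$ with $H^0(C,2K)$; thus every effective divisor in $|2K|$ has the form $\phi_K^{*}(u+v)$ for some $u,v\in\bP^1$, so a degree-$4$ effective divisor is $\sim 2K$ only if it meets each $\phi_K$-fibre in even order with support over at most two points --- in particular, among any three of its points two are hyperelliptic conjugate (or one is a Weierstrass point and the other two are conjugate).

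\emph{$(1)\Leftrightarrow(2)$ and $(3)\Leftrightarrow(4)$.} By (a), $p_i,p_j,p_k$ are collinear iff $P-p_i-p_j-p_k\sim K$, i.e.\ iff the complementary pair satisfies $p_l+p_m\sim K$, i.e.\ $p_m=\tau(p_l)$; so $(2)$ fails iff two of the $p$'s are conjugate, which is $\neg(1)$. For $(3)\Leftrightarrow(4)$, the identity $\cO_C(K-x)\simeq\cO_C(\tau(x))$ shows that the Abel--Jacobi-type maps of Corollary~\ref{zxbzfn} have images $E=C+K$, $E_i=C+(P-p_i-K)$ and $E_{ij}=C+(p_i+p_j)$; that is, $E_\bullet=C+(P-\delta_\bullet)$ for the matching one of the $16$ points $\delta_\bullet\in\{\delta,\delta_i,\delta_{ij}\}$. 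Since a nonzero translation does not preserve the theta divisor $C\subset\Pic^1C$ (used already in the proof of Corollary~\ref{zxbzfn}), the $16$ divisors $E_\bullet$ are pairwise distinct iff the $16$ classes $P-\delta_\bullet$ are, iff the $16$ points $\delta_\bullet$ are.

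\emph{The cycle $(1)\Rightarrow(4)\Rightarrow(5)\Rightarrow(1)$.} Assume $(1)$ and suppose two of the $16$ points coincide. The relations $\delta=\delta_{ij}$, $\delta_i=\delta_{jk}$ with $i\notin\{j,k\}$, and $\delta_{ij}=\delta_{kl}$ with $\{i,j\}\cap\{k,l\}=\emptyset$ all reduce, via (b), to ``two of the $p$'s are conjugate'', hence $\neg(1)$; the relation $\delta_i=\delta_j$ and the overlapping subcases of $\delta_i=\delta_{jk}$ and $\delta_{ij}=\delta_{kl}$ reduce to ``two of the $p$'s are equal'', impossible. The relation $\delta=\delta_i$ reads $\sum_{j\ne i}p_j\sim 2K$, and (c) forces two of those four points to be conjugate, again $\neg(1)$. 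Finally $\delta_i=\delta_{jk}$ with $i=j$ reads $K+p_i\sim p_a+p_b+p_c$ (the three remaining points), whence $h^0(2K-p_a-p_b-p_c)=h^0(p_a+p_b+p_c-K)\ge 1$ by Riemann--Roch, so $p_a+p_b+p_c+x\sim 2K$ for some $x\in C$, contradicting $(1)$ by (c). Thus $(1)\Rightarrow(4)$. For $(4)\Rightarrow(5)$: using $E_{ij}=\{K-x+p_i+p_j\}$ one gets $\delta\in E_{ij}$ iff $2K-p_a-p_b-p_c$ is effective of degree $1$ (with $\{a,b,c\}$ complementary to $\{i,j\}$), i.e.\ iff $p_a+p_b+p_c+x\sim 2K$ for some $x\in C$; by (c) --- including the degenerate case $x\in\{p_a,p_b,p_c\}$, which forces a Weierstrass point and still a conjugate pair --- this happens iff two of $p_a,p_b,p_c$ are conjugate, say $p_a+p_b\sim K$, and then $\delta=P-K=P-p_a-p_b=\delta_{ab}$, so $(4)$ fails. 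Conversely, if $p_\alpha=\tau(p_\beta)$, pick $\{i,j\}$ among the three indices $\ne\alpha,\beta$, let $\gamma$ be the third such index; then $p_\alpha+p_\beta+p_\gamma+\tau(p_\gamma)=\phi_K^{*}\bigl(\phi_K(p_\alpha)+\phi_K(p_\gamma)\bigr)\sim 2K$, so $2K-p_\alpha-p_\beta-p_\gamma$ is effective and $\delta\in E_{ij}$, i.e.\ $(5)$ fails. This closes the cycle, and all five equivalences follow.

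The substance --- and the only place anything can slip --- is the bookkeeping in $(1)\Rightarrow(4)$ together with making ``the engine'' (c) airtight on degenerate bicanonical divisors (repeated points, Weierstrass points). Each individual reduction is a one-line linear-system computation, but one must verify that no coincidence among the $16$ points has been overlooked and that the $|2K|$-argument genuinely handles non-reduced divisors.
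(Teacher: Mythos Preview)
Your proof is correct and complete; the paper itself leaves this lemma as ``a fun exercise for the reader'' with only the hint $C=K-C\subset\Pic^1C$, so there is no proof to compare against. Your engine (c) on the structure of $|2K|$ and the translate description $E_\bullet=C+(P-\delta_\bullet)$ are exactly what the hint is pointing at, and your case analysis for $(1)\Rightarrow(4)$ covers everything.

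One small expository glitch: you write that ``the overlapping subcases of $\delta_i=\delta_{jk}$ \ldots\ reduce to `two of the $p$'s are equal','' but the case $i\in\{j,k\}$ does \emph{not} reduce that way---it needs the $|2K|$ argument, which you correctly supply two sentences later under ``Finally $\delta_i=\delta_{jk}$ with $i=j$.'' So the substance is right, but that earlier clause should simply read ``the overlapping subcase of $\delta_{ij}=\delta_{kl}$'' to avoid the apparent contradiction.
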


\begin{proof}
Left as a fun exercise for the reader. The hint is  $C=K-C\subset\Pic^1 C$.
\end{proof}

A special feature of the genus $2$ case is that divisors of degree $2$ are effective.
This can be used to study divisors $D\in\Pic^3C$ and their linear systems by
associating to $D$ a residual divisor $P-D$ of degree $2$. This gives the following proposition.

\begin{Proposition}\label{sfgFhRh}
Let $\bP^2\subset\bP^3$ be the plane passing  through $p_1,\ldots,p_5$.
Let 
$$\dP=\Bl_{p_1,\ldots,p_5}\bP^2,$$ 
be the del Pezzo surface of degree $4$.
Recall that  $\oM_{0,5}$ is the del Pezzo surface of degree $5$.
The~scattering amplitude map can be extended to 
a commutative diagram 
$$\begin{CD}
\Sym^2C      @>\bbLa>>  \dP\\
@V{a}VV         @VV{\Xi}V \\
\Pic^3C   @>\bLa>>  \oM_{0,5}
\end{CD}$$
where horizontal arrows are rational maps.
The maps can be described as follows:
\begin{enumerate}
\item For any $(x,y)\in\Sym^2C$, consider the secant line $\ell_{xy}\subset\bP^3$ 
connecting $x$ and $y$ (or the tangent line to $C$ at $x$  if $x=y$). Then $\bbLa(x,y)=\ell_{xy}\cap \bP^2$.
\item  The map $\Xi$ is given by projecting $p_1,\ldots,p_5$ from a varying point of $\bP^2$ (cf.~\cite{CT_Cont}).
It  blows down
 the conic passing through $p_1,\ldots,p_5$ to the point  
$$o:=\left(\phi_K(p_1),\ldots,\phi_K(p_5)\right)\in M_{0,5}.$$
\item $a(x,y)=\cO(P-x-y)$. The map $a$ gives $\Sym^2C$ as the blow-up of $\Pic^3C$ at $\delta$.
\end{enumerate}
\end{Proposition}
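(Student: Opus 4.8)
The plan is to first pin down the three geometric objects in play (the plane $H_P=\bP^2$, the del Pezzo $\dP$, and the secant map), then identify the composite $\bLa\circ a$ with projection from a moving point, and finally recognize the contracted curve of $\Xi$. Since $\deg P=5=2g+1$, the system $|P|$ is very ample, so $\phi_P\colon C\hookrightarrow\bP^3$ is a closed embedding with $h^0(C,P)=4$; the restriction $H^0(\bP^3,\cO(1))\to H^0(C,P)$ is an injection of $4$-dimensional spaces, hence an isomorphism, so there is a unique hyperplane $H_P\subset\bP^3$ with $H_P\cap C=p_1+\ldots+p_5$ (reduced). In particular all five points lie on the plane $\bP^2:=H_P$, and by the preceding Lemma Assumption~\ref{Sfbzfdhadfh} forces no three of them to be collinear, so $\dP:=\Bl_{p_1,\ldots,p_5}\bP^2$ is a genuine degree-$4$ del Pezzo surface. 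For generic $(x,y)\in\Sym^2C$ the secant line $\ell_{xy}$ (the tangent line to $C$ at $x$ if $x=y$) meets $C$ exactly in $x+y$ and is not contained in $H_P$, so $\ell_{xy}\cap H_P$ is a single point away from $p_1,\ldots,p_5$; this defines the rational map $\bbLa\colon\Sym^2C\dashrightarrow\dP$, $(x,y)\mapsto\ell_{xy}\cap H_P$, with base locus contained in $\{(p_i,p_j)\}$. This is part (1).

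For part (3) I would invoke the classical genus-$2$ structure of $\Sym^2C$: the Abel--Jacobi map $\Sym^2C\to\Pic^2C$, $(x,y)\mapsto\cO(x+y)$, contracts the hyperelliptic pencil $|K|\cong\bP^1$ to the single point $K$ (the unique place where $h^0$ of a degree-$2$ divisor jumps from $1$ to $2$) and is an isomorphism elsewhere, i.e.\ it is the blow-up of $\Pic^2C$ at $K$. Composing with the translation automorphism $\Pic^2C\xrightarrow{\sim}\Pic^3C$, $M\mapsto\cO(P)\otimes M^{-1}$, which sends $K$ to $P-K=\delta$, shows that $a\colon(x,y)\mapsto\cO(P-x-y)$ realizes $\Sym^2C$ as $\Bl_\delta\Pic^3C$ with exceptional curve $a^{-1}(\delta)=|K|$, which is part (3).

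Next I would prove commutativity of the square. For generic $(x,y)$ the divisor $D:=\cO(P-x-y)\in\Pic^3C$ avoids the loci $W,E,E_{ij},R$ of Corollary~\ref{zxbzfn} (with $W=\emptyset$), so $\phi_D\colon C\to\bP^1$ is a degree-$3$ morphism and $\bLa(a(x,y))=(\phi_D(p_1),\ldots,\phi_D(p_5))$. The crucial observation is that $\phi_D$ is the restriction to $C$ of the projection $\pi_{\ell_{xy}}\colon\bP^3\dashrightarrow\bP^1$ away from the line $\ell_{xy}$: planes through $\ell_{xy}$ cut $C$ in divisors linearly equivalent to $P$ containing $x+y$, so their residuals sweep out the pencil $|P-x-y|=|D|$, and $\pi_{\ell_{xy}}|_C$ has degree $\deg C-\#(C\cap\ell_{xy})=5-2=3$. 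I would then use the elementary projective fact that, with $w:=\ell_{xy}\cap H_P$ and $\ell_{xy}\not\subset H_P$, the restriction $\pi_{\ell_{xy}}|_{H_P}$ equals the projection $\pi_w\colon H_P\dashrightarrow\bP^1$ from $w$ (a plane $\Pi\supset\ell_{xy}$ meets $H_P$ in a line through $w$, and $\Pi\mapsto\Pi\cap H_P$ is a bijection onto lines through $w$). Since $p_i\in H_P$, this gives $\phi_D(p_i)=\pi_w(p_i)$, hence
$$\bLa(a(x,y))=\bigl(\pi_w(p_1),\ldots,\pi_w(p_5)\bigr)=\Xi(w)=\Xi(\bbLa(x,y)),$$
where $\Xi\colon\dP\dashrightarrow\oM_{0,5}$ is projection of $p_1,\ldots,p_5$ from the varying point $w\in H_P$ (cf.\ \cite{CT_Cont}). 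This establishes both the commutative square and the description of $\Xi$ in part (2), and shows that $\Xi$ is the birational morphism contracting the proper transform of the conic $\mathcal C\subset H_P$ through $p_1,\ldots,p_5$ (smooth by general position), which has self-intersection $4-5=-1$ on $\dP$.

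It remains to identify the image point of $\mathcal C$ as $o=(\phi_K(p_1),\ldots,\phi_K(p_5))$, and this is the step I expect to be the main obstacle. By a dimension count $h^0(C,2P)=9$ while $h^0(\bP^3,\cO(2))=10$, so $C$ lies on a quadric $Q$; it is unique, since a curve of degree $5$ cannot lie on the degree-$4$ intersection of two distinct quadrics and, being non-degenerate, on no reducible quadric. The curve $C$ carries on $Q$ a projection of degree $2$ to a $\bP^1$ (a ruling if $Q$ is smooth, projection from the vertex if $Q$ is a cone; the alternative, a degree-$1$ projection, would rationalize $C$), and this degree-$2$ map must be the hyperelliptic cover $\phi_K$. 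The plane section $\mathcal C=H_P\cap Q$ maps isomorphically onto that $\bP^1$, identifying $(\mathcal C;p_1,\ldots,p_5)\cong(\bP^1;\phi_K(p_1),\ldots,\phi_K(p_5))$; hence $\Xi$ contracts $\mathcal C$ to $o$, which completes part (2). (Alternatively, this last identification follows from Corollary~\ref{adfgadrha} together with the genus-$2$ discussion preceding the Proposition.)
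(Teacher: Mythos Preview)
Your proof is correct and follows the same core idea as the paper: the pencil $|P-x-y|$ is cut out by planes through the secant line $\ell_{xy}$, and projecting $p_1,\ldots,p_5$ from $\ell_{xy}$ is the same as projecting them from the point $\ell_{xy}\cap H_P$. The paper's own proof is two sentences recording exactly this observation; you have simply filled in the surrounding details (very ampleness of $|P|$, the Abel--Jacobi description of $a$, the quadric argument identifying the contracted point with $o$) that the paper either treats as standard or defers to Corollary~\ref{adfgadrha}.
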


\begin{proof}
Indeed, for $x,y\in C$, $|P-x-y|$ is the pencil of planes in $\bP^3$ through $\ell_{xy}$.
Projecting $p_1,\ldots,p_5$ from $\ell$ is equivalent to projecting them from $\ell\cap\bP^2$.
\end{proof}

\begin{Corollary}
The scattering amplitude map $\bLa:\,\Pic^3C\dashrightarrow M_{0,5}$ has degree $4$.
\end{Corollary}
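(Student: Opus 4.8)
The plan is to read off $\deg\bLa$ from the commutative square of Proposition~\ref{sfgFhRh}. In that square the map $a\colon\Sym^2C\to\Pic^3C$, $(x,y)\mapsto\cO(P-x-y)$, is birational: its only positive-dimensional fibre is the pencil $|K|\simeq\bP^1$ over $\delta=P-K$, so $a$ is the blow-up of $\Pic^3C$ at $\delta$, as stated in part (3). The other vertical map $\Xi\colon\dP\to\oM_{0,5}\simeq\dPf$ is birational as well: by part (2) it contracts the proper transform of the conic through $p_1,\dots,p_5$ (a curve of class $2H-E_1-\cdots-E_5$, self-intersection $4-5=-1$) to the point $o$. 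Since $\Xi\circ\bbLa=\bLa\circ a$ and both $a$ and $\Xi$ are birational, the two sides have equal degree, so $\deg\bLa=\deg\bbLa$.

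Next I would compute $\deg\bbLa$ directly in $\bP^3$. A general point of $\dP$ is a general point $p$ of the plane $\bP^2$ of Proposition~\ref{sfgFhRh} (away from $p_1,\dots,p_5$), and by part (1) its fibre under $\bbLa$ is the set of $(x,y)\in\Sym^2C$ whose secant line $\ell_{xy}$ passes through $p$. Hence $\deg\bbLa$ equals the number of secant lines of the smooth, nondegenerate, degree-$5$ curve $C\subset\bP^3$ through a general point of $\bP^3$ --- i.e.\ the number of apparent double points of $C$. Projecting $C$ from such a point gives a plane quintic whose only singularities are ordinary nodes, one for each such secant, so the genus formula $g=\binom{d-1}{2}-\delta$ with $d=5$, $g=2$ gives $\delta=\binom{4}{2}-2=4$; therefore $\deg\bbLa=\deg\bLa=4$.

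The only point requiring justification --- and the closest thing to an obstacle --- is that a general point of the \emph{particular} plane $\bP^2$ (the one cutting out the divisor $P$ on $C$) is general enough for the apparent-double-point count, i.e.\ that $\bP^2$ is not contained in the proper closed subset of $\bP^3$ (the tangent developable of $C$, the union of its trisecant lines, etc.) over which the generic projection acquires a non-nodal singularity or loses a node. This is immediate: none of those loci is a plane, since $C$ is irreducible and spans $\bP^3$ --- its tangent developable is not a plane, and any plane meets $C$ in just five points and so contains only finitely many of its trisecants --- so $\bP^2$ meets the locus of ``good'' points in a dense open subset, and the count $4$ holds. (Alternatively, one may simply observe that every genus-$2$ curve is hyperelliptic, so the Corollary is the case $g=2$ of Theorem~\ref{asfgbsfgasfh}; the argument above is the concrete incarnation of that statement in the space model $\phi_P\colon C\hookrightarrow\bP^3$.)
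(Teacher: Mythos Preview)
Your proof is correct and follows essentially the same route as the paper: reduce to $\bbLa$ via the birational verticals in the square of Proposition~\ref{sfgFhRh}, then count secants of the degree-$5$ space curve through a general point of the hyperplane $\bP^2$ by the genus formula for the projected plane quintic ($\binom{4}{2}-2=4$). The paper dispatches the reduction in one line (``It suffices to check this for $\bbLa$'') and handles the generality issue with the phrase ``by dimension count''; you spell out both steps, in particular arguing that the tangent developable and the trisecant scroll cannot coincide with the distinguished plane since each is ruled by lines of which $\bP^2$ contains only finitely many.
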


If course this follows from Theorem~\ref{asfgbsfgasfh}, but here is an independent proof.

\begin{proof}
It suffices to check this for $\bbLa$.
We have to count how many secant lines $\ell_{xy}$ pass through a general point of $\bP^2$.
By dimension count, projection of $C$ from a general point of $\bP^2$ is a nodal curve of degree $5$
and therefore arithmetic genus $6$. Thus there will be $6-2=4$ nodes, each produced by a secant line. 
\end{proof}

\begin{Lemma}\label{SFgSFhF} 
We have divisors $\cD,\cD_i,\cD_{ij}\subset C^5\setminus\bigcup\limits_{i<j}\Delta_{ij}$ that
parametrize configurations of  marked points $p_1,\ldots,p_5\in C$ satisfying any of the following equivalent conditions:
\begin{enumerate}
\item $\delta\in E$ or $\delta_i\in E_i$ or $\delta_{ij}\in E_{ij}$, respectively.
\item $\delta\in R$ or $\delta_i\in R$ or $\delta_{ij}\in R$, respectively.
\item $\delta-K$ or $P-K-2p_i$ or $P-2p_i-2p_j$, respectively, is an effective divisor.
\end{enumerate}
Divisors $\cD,\cD_i,\cD_{ij}$ can also be characterized using geometric conditions:
\begin{enumerate}
\item[($\cD$)]  the unique quadric surface in $\bP^3$ containing $C$ is singular.
\item[($\cD_i$)] the tangent line at $p_i$ to $C\subset \bP^3$ intersects $C$ at another point.
\item[($\cD_{ij})$] tangent lines at $p_i$ and $p_j$ intersect.
\end{enumerate}
\end{Lemma}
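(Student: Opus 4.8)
The plan is to prove the lemma in two parts: (a) that conditions (1)--(3) defining each of $\cD,\cD_i,\cD_{ij}$ are equivalent, and (b) that each of them matches the stated geometric description in terms of $\phi_P\colon C\hookrightarrow\bP^3$. Part (a) is Riemann--Roch bookkeeping on the genus $2$ curve $C$. Using the Notation's descriptions $E=K+C$, $E_i=P-p_i-C$, $E_{ij}=C+p_i+p_j$ and $R=\{D\mid 2D-P\text{ effective}\}$, every membership unwinds to an effectivity statement about a degree $1$ class: $\delta=P-K\in E$ and $\delta\in R$ both say $P-2K$ is effective, which is (3) for $\cD$; $\delta_{ij}\in E_{ij}$, $\delta_{ij}\in R$ and (3) for $\cD_{ij}$ all say $P-2p_i-2p_j$ is effective; and for $\cD_i$, $\delta_i\in E_i$ and (3) both say $P-K-2p_i$ is effective, while the one step needing an argument, $\delta_i\in R$, i.e.\ $2K+2p_i-P$ effective, follows from the identity $h^0(\alpha)=h^0(K-\alpha)$ for a degree $1$ class $\alpha$ on $C$ (equivalently $K-C=C$ in $\Pic^1C$). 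I would also note in passing that each of the three loci is a genuine divisor: the relevant sum-type map $C^5\to\Pic^1C$ is dominant with $3$-dimensional fibres, so the preimage of the (codimension one) effective curve inside $\Pic^1C$ has pure codimension one.

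For part (b) I would first use that $\deg P=5=2g+1$: then $P$ is very ample, $C\subset\bP^3$ is nondegenerate, and $H^0(\cO_{\bP^3}(1))\xrightarrow{\sim}H^0(C,P)$, so $H^0(\cI_{Z/\bP^3}(1))\cong H^0(C,P-Z)$ for every subscheme $Z\subset C$. Taking $Z=2p_i+q$: the length $3$ scheme $2p_i+q$ lies on a line iff $h^0(P-2p_i-q)\ge2$, i.e.\ iff $P-2p_i-q\sim K$; since $2p_i$ already spans the tangent line $T_{p_i}C$, the containing line must be $T_{p_i}C$, so this says $q\in T_{p_i}C$. Hence $T_{p_i}C$ meets $C$ at a further point iff $P-K-2p_i$ is effective, matching (3) on the dense open of $\cD_i$ where this point differs from $p_i$ (the complement, where $T_{p_i}C$ meets $C$ at $p_i$ with multiplicity $3$, is codimension $2$ in $C^5$). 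Taking $Z=2p_i+2p_j$: the lines $T_{p_i}C$ and $T_{p_j}C$ are coplanar -- equivalently they meet -- iff $h^0(\cI_{Z/\bP^3}(1))=h^0(P-2p_i-2p_j)\ge1$, i.e.\ iff $P-2p_i-2p_j$ is effective, matching (3).

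The quadric case $\cD$ is where the real work lies, and I expect it to be the main obstacle. A nondegenerate space curve of degree $>4$ lies on at most one quadric, and $C$ lies on at least one since $h^0(\cI_C(2))\ge h^0(\cO_{\bP^3}(2))-h^0(C,2P)=10-9=1$; so there is a unique quadric $Q\supset C$, and the claim is that $Q$ is singular iff $P-2K$ is effective -- which by part (a) is (3) for $\cD$, as $\delta-K=P-2K$. If $Q$ is smooth then $Q\cong\bP^1\times\bP^1$, adjunction forces $C$ to have bidegree $(2,3)$, so the two rulings cut out a base-point-free $g^1_2=|K|$ and a base-point-free $g^1_3=|P-K|$, and base-point-freeness of $|P-K|$ is equivalent to non-effectivity of $P-2K$. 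Conversely, if $P-2K$ is not effective then $(\phi_K,\phi_{P-K})\colon C\to\bP^1\times\bP^1$ is a closed embedding -- injectivity along each fibre of $\phi_K$ is exactly non-effectivity of $P-2K$ since $p+\tau(p)\sim K$, and the same computation gives an immersion at the Weierstrass points -- and composing with the Segre embedding exhibits $C$ on a smooth quadric via the complete linear system $|\cO(1,1)|_C=|K+(P-K)|=|P|$, i.e.\ as $\phi_P(C)$ up to $\PGL_4$. Hence $Q$ is singular iff $P-2K$ is effective. The crux is precisely this last equivalence -- recognizing ``$Q$ smooth'' as ``$(\phi_K,\phi_{P-K})$ embeds $C$ into $\bP^1\times\bP^1$ with $\cO(1,1)$ restricting to $P$'' and pinning down the sole obstruction as the degree $1$ class $P-2K$.
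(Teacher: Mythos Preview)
The paper leaves this lemma ``as a fun exercise for the reader,'' so there is no proof to compare against. Your argument is correct and complete. Part~(a) is exactly the Riemann--Roch/hyperelliptic bookkeeping you describe; the only step requiring thought is $\delta_i\in R\Leftrightarrow P-K-2p_i$ effective, and your use of $K-C=C$ in $\Pic^1C$ is the right way to see it. Part~(b) correctly translates collinearity and coplanarity of tangent schemes via $H^0(\cI_{Z/\bP^3}(1))\cong H^0(C,P-Z)$, and your handling of the inflection locus for $\cD_i$ (codimension~$2$, hence irrelevant at the level of divisors) is fine.

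Your treatment of the quadric case --- identifying ``$Q$ smooth'' with ``$(\phi_K,\phi_{P-K})$ embeds $C$ in $\bP^1\times\bP^1$'' and isolating effectivity of $P-2K$ as the sole obstruction --- is exactly right, and in fact anticipates what the paper uses later in the proof of Theorem~\ref{vasdgsgG}, where the two rulings of the smooth quadric are identified with $|K|$ and $|P-K|$.
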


\begin{proof}
Left as a  fun exercise for the reader.
\end{proof}

\begin{Theorem}\label{vasdgsgG}
Let $X$ be the blow-up of $\Pic^3C$ in $16$ points $\delta$, $\delta_i$ and $\delta_{ij}$
with exceptional divisors $\Delta$, $\Delta_i$, $\Delta_{ij}$.
Let $\bE$, $\bE_i$, $\bE_{ij}$ be the proper transforms of divisors $E$, $E_i$, $E_{ij}$.

The scattering amplitude rational map $\bLa$ induces a finite $4:1$ morphism $X\arrow^{\bbLa}\dP$.
The~preimages of sixteen $(-1)$-curves of $\dP$ are the following pairs:
$$\bbLa^{-1}(\hbox{\rm conic through $p_1,\ldots,p_5$})=\Delta\cup\bE;$$
$$\bbLa^{-1}(\hbox{\rm exceptional divisor over $p_i$})=\Delta_{i}\cup\bE_{i};$$
$$\bbLa^{-1}(\hbox{\rm line through $p_i$, $p_j$})=\Delta_{ij}\cup\bE_{ij}.$$
The restriction of $\bbLa$ to each  ``$\Delta\cup E$'' pair depends on on whether
$(p_1,\ldots,p_5)$ is contained in the corresponding configuration divisor ``$\cD$''  in $C^5$:
\begin{enumerate}
\item[(yes)]       $\bbLa$ is ramified (of order $2$) along $``\Delta''$ and has degree $2$ along ``$\bE$''.
\item[(no)] $\bbLa$ is an isomorphism along ``$\Delta$'' and has degree $3$ along ``$\bE$''.
\end{enumerate}
\end{Theorem}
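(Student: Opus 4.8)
The plan is to leverage the commutative diagram of Proposition~\ref{sfgFhRh} and the explicit geometry of secant lines, combined with the degree count already established. First I would analyze the indeterminacy of the rational map $\bLa:\,\Pic^3C\dashrightarrow\oM_{0,5}$ and its lift $\bbLa:\,\Sym^2C\dashrightarrow\dP$. By Proposition~\ref{sfgFhRh}(1), $\bbLa$ sends $(x,y)$ to $\ell_{xy}\cap\bP^2$; this is undefined precisely when $\ell_{xy}\subset\bP^2$, i.e.~when the secant line lies in the plane through $p_1,\ldots,p_5$. Since $C$ meets a general plane in $5$ points and the plane $\bP^2$ through the $p_i$ meets $C$ in at least those five points (and, for general marked points, exactly those), the secant lines $\ell_{p_ip_j}$ are the ones lying in $\bP^2$. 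So the base locus of $\bbLa$ on $\Sym^2C$ consists of the $\binom{5}{2}=10$ points $p_i+p_j$, together with whatever locus $a$ contracts. Recall $a:\,\Sym^2C\to\Pic^3C$ is the blow-up at $\delta=P-K$, so on $\Pic^3C$ the relevant special points are $\delta$, the images $\delta_{ij}=a(p_i+p_j)=P-p_i-p_j$, and—accounting for the tangent-line degenerations $x=y$—the points $\delta_i=K+p_i$ (a tangent line at $p_i$ lies in $\bP^2$ iff $p_i$ is a ramification point of the projection, which one checks is the $\delta_i\in E_i$ condition). This identifies exactly the $16$ points to be blown up.

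Next I would show that after blowing up these $16$ points we obtain a morphism. The key input is Theorem~\ref{asfgbsfgasfh} (or its genus-$2$ incarnation, the Corollary following Proposition~\ref{sfgFhRh}): $\bLa$ has degree $4$. Set $X=\Bl_{16}\Pic^3C$. I would argue that $\bbLa:\,X\to\dP$ is a morphism by checking that the pullback of an ample class on $\dP$ separates points and tangent vectors along each exceptional divisor; concretely, using $\bbLa^*(-K_{\dP})\equiv 4\Theta$ (Theorem~\ref{wefvev}(2)/(3) specialized, or Corollary~\ref{zxbzfn}(8) via $R\equiv 4\Theta$), one computes the class of the total transform and verifies the exceptional divisors $\Delta,\Delta_i,\Delta_{ij}$ are not contracted but map finitely. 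Since $\dP$ is a del Pezzo surface with Picard rank $5$ while $X$ has Picard rank $1+16=17$, the map must contract $12$ independent curve classes; these are exactly the proper transforms $\bE,\bE_i,\bE_{ij}$ of the sixteen theta-divisors minus the four that are "already seen", wait—more carefully, a finite morphism does not contract curves, so instead $\bbLa$ is finite of degree $4$ and the sixteen $(-1)$-curves of $\dP$ pull back to the sixteen pairs listed. To pin down the pairing, I would use Proposition~\ref{sfgFhRh}: the conic through $p_1,\ldots,p_5$ is $\Xi$-contracted to $o$, and $\bLa^{-1}(o)$ is precisely $E$ (Corollary~\ref{adfgadrha}); together with the exceptional divisor $\Delta$ over $\delta$ (since $a$ blows up $\delta$ and $\delta\in E$ is the degenerate case), this gives $\bbLa^{-1}(\text{conic})=\Delta\cup\bE$. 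Similarly, the $(-1)$-curve over $p_i$ in $\dP$ corresponds under $\Xi$ to the boundary where two marked points collide appropriately, matching $E_i$; and the line through $p_i,p_j$ matches $E_{ij}$ (these are the $E_{ij}$ of Corollary~\ref{zxbzfn}, which map to boundary divisors of $\oM_{0,5}$).

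Finally, for the ramification dichotomy along each $\Delta\cup\bE$ pair, I would do a local computation at a general point of the relevant exceptional divisor $\Delta$ (say $\Delta$ over $\delta$). The degree-$4$ cover $\bbLa$ restricted to a transversal disk meeting $\Delta$ decomposes according to whether $\Delta$ itself maps with ramification. The condition "$(p_1,\ldots,p_5)\in\cD$" is, by Lemma~\ref{SFgSFhF}, equivalent to $\delta\in E$, i.e.~the center $\delta$ of the blow-up lies on the divisor $E$; geometrically ($\cD$) says the unique quadric containing $C\subset\bP^3$ is singular. When $\delta\in E$ (the "yes" case), the exceptional divisor $\Delta$ over $\delta$ and the proper transform $\bE$ both map to the conic, and since the $4=2+2$ sheets must split as $2$ (doubled, from $\Delta$) plus $2$ (simple, from $\bE$): along $\Delta$ the map is $2:1$ ramified, along $\bE$ it has degree $3$—no wait, the four sheets over a point of the conic distribute as: $\Delta$ contributes $2$ with multiplicity, so if $\Delta\to\text{conic}$ is degree $2$ ramified it accounts for sheets with total multiplicity $2$, leaving $\bE$ to contribute degree... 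I expect the honest bookkeeping, via the local equation of the quadric's singularity (an $A_1$ versus smooth point distinction), to yield ramification $2$ along $\Delta$ and degree $2$ along $\bE$ in the "yes" case, and an isomorphism along $\Delta$ with degree $3$ along $\bE$ in the "no" case, exactly as stated. \textbf{The main obstacle} will be precisely this last local analysis: showing cleanly that the singular-versus-smooth quadric dichotomy controls how the four sheets redistribute between $\Delta$ and $\bE$, and that the numerics come out as $(2,2)$ versus $(1,3)$ rather than, say, $(2,3)$ with an extra component—this requires carefully identifying the local model of $\bbLa$ near $\Delta$, most naturally by writing the secant-line-meets-$\bP^2$ construction in coordinates adapted to the quadric through $C$ and reading off multiplicities from Proposition~\ref{sfgFhRh}(1).
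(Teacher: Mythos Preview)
Your overall architecture---resolve $\bbLa$ via Proposition~\ref{sfgFhRh}, identify the sixteen centers, then read off the fibres over the $(-1)$-curves---matches the paper's, but two concrete pieces go wrong and one key shortcut is missing.

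First, the identification of the $\delta_i$ is incorrect. They are \emph{not} base points of $\Sym^2C\dashrightarrow\bP^2$ coming from tangent-line degenerations. The point of $\Sym^2C$ over $\delta_i=K+p_i$ is the unique $(x,y)$ with $x+y\sim P-K-p_i$; then $|P-x-y|=|K+p_i|$ is the canonical pencil with base point $p_i$, so every plane through $\ell_{xy}$ passes through $p_i$, hence $\ell_{xy}\cap\bP^2=p_i$. Thus $\bbLa$ is \emph{regular} there as a map to $\bP^2$ but becomes indeterminate only after passing to $\dP=\Bl_5\bP^2$. This is how the paper finds the preimage of the exceptional divisor over $p_i$: it is $E_i=P-p_i-C$ (the curve of pairs $(p_i,y)$ in $\Sym^2C$) together with the single point $\delta_i$.

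Second, you are missing the paper's main labor-saving device. Rather than analyse all sixteen $(-1)$-curves separately, the paper checks regularity and the fibre description only over the conic and over $p_i$, then invokes the $W(D_5)$-equivariance of Proposition~\ref{sadvfb}: the Weyl group permutes the sixteen ``$\Delta\cup\bE$'' pairs transitively (via elementary transformations of $\dP$ and the corresponding twists~\eqref{sfvsfevwefvw} on the $\Pic^3C$ side), so regularity along one pair propagates to all. Without this, your plan to verify regularity via ample-class pullbacks and Picard-rank bookkeeping is both harder and less informative.

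Third, for the dichotomy $(2,2)$ versus $(1,3)$, the paper's argument is more geometric than the local computation you sketch. The conic in $\bP^2$ is cut out by the unique quadric $\bQ\supset C$, so a secant $\ell_{xy}$ hits the conic iff $\ell_{xy}\subset\bQ$. When $\bQ$ is smooth, $C$ has bidegree $(2,3)$; one ruling is traced by hyperelliptic-conjugate pairs (this is the exceptional curve $\Delta$ over $\delta$), the other by pairs with $P-x-y-K$ effective (this is $\bE$). When $\bQ$ is singular---precisely the condition $(p_1,\ldots,p_5)\in\cD$ of Lemma~\ref{SFgSFhF}---the two rulings coincide, forcing $\Delta$ and $\bE$ to map to the conic with the stated multiplicities. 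Your instinct that the smooth/singular quadric governs the split is correct, but the mechanism is this ruling dichotomy, not a generic local multiplicity calculation.
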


\begin{proof}
We draw special curves on $X$ and how they intersect, see Figure~\ref{SFgFhFD}.
\begin{figure}[htbp]
\includegraphics[width=\textwidth]{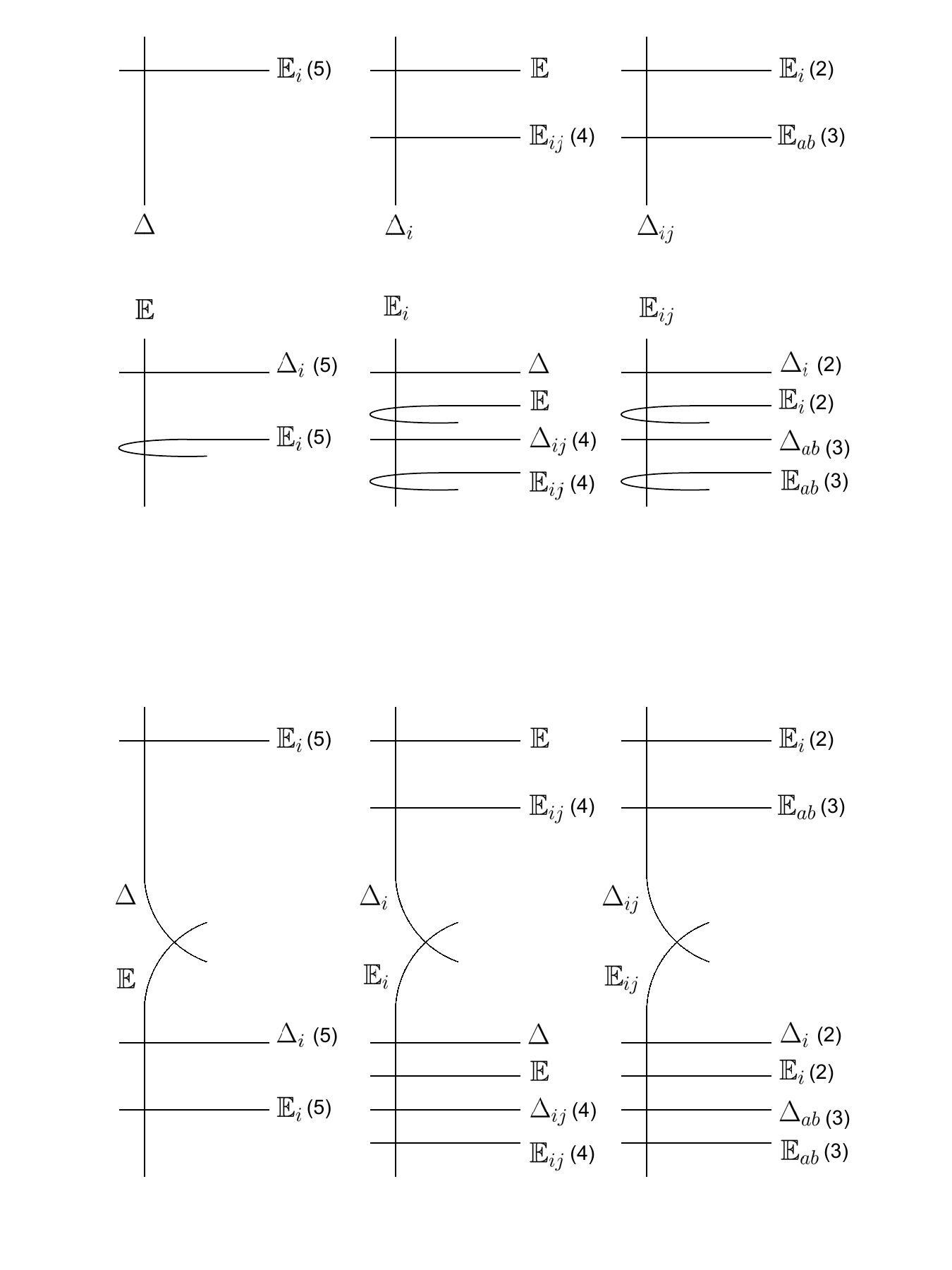}
\caption{``Double sixteen'' configuration on $X$ away from (top) and along (bottom) the corresponding $\cD$ divisors}\label{SFgFhFD}\label{kjashfg}
\end{figure}
We use that $\Theta^2=2$ and check how special curves pass through the special points.
In Figure~\ref{SFgFhFD} we draw the special curves when the configuration $(p_1,\ldots,p_5)$ is away from (top) and along (bottom) the corresponding 
$\cD$ divisors. 
 In the latter case, the inverse image of an ``$E$'' divisor in $\Pic^3C$ is the union ``$\Delta\cup\bE$''
and the ramification divisor on $X$  is the union of the proper transform of $R$ and the ``$\Delta$'' divisor.
Note that the configuration $(p_1,\ldots,p_5)$ can be on some $\cD$ divisors and not on the others,
so the actual picture could be a mixture of the top and the bottom of Figure~\ref{SFgFhFD}.

Next we study the map $\Sym^2C\to\bP^2$, $(x,y)\mapsto \ell_{xy}\cap\bP^2$ of Proposition~\ref{sfgFhRh}.
It is well-defined unless $\ell_{xy}\subset\bP^2$, which happens  at points $\delta_{ij}$.
So the map $X\to\bP^2$ induced by $\bbLa$ is regular away from $\Delta_{ij}$,
in particular it is regular along $\Delta\cup\bE$. 

What is the preimage of $p_i$? In $\Sym^2C$, there are two possibilities, one is the curve $(p_i,C)$, i.e.~$E_i$.
Another is a point $(x,y)$ such that $P-(p_i+x+y)$ is a degree $2$ pencil, which should be~$K$.
So another option is the point $P-p_i-K=\delta_i$. Thus on $X$ away from $\Delta_{ij}$'s the preimage of $p_i$ is
$\bE_i\cup\Delta_i$.

What is the preimage of the conic $Q$ through $5$ points? 
This conic is the intersection of $\bP^2$ with the unique quadric $\bQ$ that contains~$C$.
Thus a secant $\ell_{xy}$ must be contained in $\bQ$. If $\bQ$ is smooth, $C$ has bi-degree $(2,3)$.
One ruling is cut by points in the hyperelliptic involution, which is the curve $\Delta$.
Another ruling is cut by triples of points contained in the line, i.e.~such that $P-x-y-K$ is effective, which is the curve $\bE$.
If $\bQ$ is singular then these two rulings are the same, given by the vertex of the quadratic cone $\sim P-2K$
and pairs of points in the hyperelliptic involution. This shows that the rational map 
$X\dashrightarrow \dP$ is regular along $\Delta\cup\bE$ and maps this divisor to the proper transform
of the conic passing through five points. 

Analysis of other ``$\Delta\cup\bE$'' pairs can be done  similarly.
However, this is not necessary. By Proposition~\ref{sadvfb}, the map $X\dashrightarrow \dP$ 
can be extended to a $W(D_5)$ equivariant map
$$\coprod_{I\subset\{1,\ldots,5\}\atop |I|\equiv 0\!\!\!\!\mod 2}X_I\dashrightarrow \dP,$$
where $X_I$ 
corresponds to a $5$-tuple \eqref{sfvsfevwefvw}.
Since the action permutes ``$\Delta\cup\bE''$ pairs, if the map is regular  along one pair (for all $X_I$), it is regular along all pairs.
\end{proof}

When does  $(p_1,\ldots,p_5)$ belong to all configuration divisors $\cD,\cD_i,\cD_{ij}$?

\begin{Theorem}\label{sDGSG}
We have $(p_1,\ldots,p_5)\in\cD\cap\bigcap_i\cD_i\cap\bigcap_{ij}\cD_{ij}$ if and only if 
$p_1,\ldots,p_5$ are Weierstrass points of~$C$, i.e.~ramification points of $\phi_K$.
Suppose that this is the case.
\begin{enumerate}
\item Let $p_6$ be the remaining Weierstrass point.
The $16$ special points are
$$\delta=K+p_6, \quad \delta_i=K+p_i,\quad \delta_{ij}=p_i+p_j+p_6.$$
Equivalently, this is the set of $2$-torsion points in $\Pic^0C$ shifted by $\delta$.
\item
Let $\tau$ be the involution of $\Pic^3C$ induced by the hyperelliptic involution of $C$.
Let~$\Ku=\Pic^3/\tau$ be the Kummer surface with $16$ double points and let $\Kt$ be its minimal resolution.
Then $\tau$ lifts to $X=\Bl_{\delta,\delta_i,\delta_{ij}}\Pic^3$ and $\Kt=X/\tau$. 
\item
The scattering amplitude $4:1$ cover $\bbLa:\,X\to\dP$ factors as
$$X\arrow^{2:1}\Kt\arrow^{2:1}\dP.$$
\item The images of ``$\Delta$'' and ``$\bE$'' type divisors give two configuration of $16$ rational curves in $\Kt$.
Their images in $\Ku$ are the $16$~nodes and $16$ conics called ``tropes''.
The image of $R$ is a genus $5$ ``Humbert'' curve. 
\end{enumerate}
\end{Theorem}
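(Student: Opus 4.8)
The plan is to establish the equivalence first and then read off items (1)--(4) from the involution $\tau$ together with the $4:1$ morphism of Theorem~\ref{vasdgsgG}.

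For the equivalence I would argue as follows. Throughout write $w_1,\dots,w_6$ for the Weierstrass points and use repeatedly that $2w_i\sim K$ and $\sum_{i=1}^6 w_i\sim 3K$ (the ramification formula for $\phi_K$). If $p_i=w_i$ for $i\le 5$, a one-line computation with these relations gives $\delta=P-K\sim 2K-w_6\sim K+w_6$, $\delta_i=K+w_i$ and $\delta_{ij}=P-w_i-w_j\sim w_i+w_j+w_6$; then the sixteen incidences required by Lemma~\ref{SFgSFhF}(1) are checked directly, e.g.\ $\delta_i\in E_i=P-w_i-C$ because $P-w_i-(K+w_i)=P-K-2w_i\sim w_6$ is effective. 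Conversely, suppose $(p_1,\dots,p_5)$ lies on $\cD$, on all $\cD_i$ and on all $\cD_{ij}$; by Lemma~\ref{SFgSFhF}(3) this means $P-2K$, each $P-K-2p_i$ and each $P-2p_i-2p_j$ is effective. Fixing $i$ and writing $P\sim K+2p_i+r_i$ and $P\sim 2p_i+2p_j+s_{ij}$ with $r_i,s_{ij}\in C$, subtraction yields $2p_j+s_{ij}\sim K+r_i$ for every $j\ne i$. Since $h^0(K+r_i)=2$ the linear system $|K+r_i|$ is the pencil $|K|+r_i$, so the effective divisor $2p_j+s_{ij}$ has the form $D_K+r_i$ with $D_K\sim K$; hence $r_i$ lies in the support $\{p_j,s_{ij}\}$, and if $p_j$ is not Weierstrass then necessarily $r_i=p_j$ (the other option forces $D_K=2p_j\sim K$). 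As $r_i$ is a single point, at most one of the four points $p_j$ ($j\ne i$) can fail to be Weierstrass; letting $i$ vary, if some $p_i$ were not Weierstrass then all the remaining four would be. Finally, assuming $p_2,\dots,p_5$ Weierstrass and $p_1$ not, I would use the sixth condition: writing $P\sim p_1+3K-w_e-w_f$ with $\{w_e,w_f\}$ the two Weierstrass points distinct from $p_2,\dots,p_5$, effectivity of $P-2K$ gives $p_1+K\sim w_e+w_f+[\mathrm{pt}]$, and since $|p_1+K|=|K|+p_1$ this forces $p_1\in\{w_e,w_f\}$ — a contradiction (the option $w_e+w_f\sim K$ is excluded as $e\ne f$). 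So all $p_i$ are Weierstrass.

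Granting this, item (1) is exactly the computation above, and the sixteen classes $\delta,\delta_i,\delta_{ij}$ equal $\delta$ plus the $16$ two-torsion points of $\Pic^0C$ since $\delta_i-\delta=w_i-w_6$ and $\delta_{ij}-\delta=(w_i-w_6)+(w_j-w_6)$ run over all of them. For (2), taking the $\tau$-fixed class $\delta$ as origin identifies $\Pic^3C$ with $\Pic^0C$ so that $\tau$ acts by $-1$, whose fixed locus is precisely the $16$ points of (1); since $d\tau\equiv-\mathrm{id}$ on every tangent space, $\tau$ lifts to $X=\Bl_{16}\Pic^3C$ acting trivially on each exceptional $\bP^1$, and the local picture $\cO_{\bP^1}(-1)/\pm1\cong\cO_{\bP^1}(-2)$ shows $X/\tau$ is smooth and is the minimal resolution $\Kt$ of $\Ku=\Pic^3C/\tau$, with $X\to\Kt$ a degree-$2$ cover branched on the $16$ exceptional curves. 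For (3), the parabolic bundle $\bbLa(L)=((\phi_K)_*L;(V_i))$ satisfies $\bbLa(\tau^*L)\cong\bbLa(L)$: indeed $\phi_K\circ\tau=\phi_K$ gives $(\phi_K)_*\tau^*L\cong(\phi_K)_*L$, and because every $p_i$ is now $\tau$-fixed the parabolic line at $z_i$ — the kernel of the evaluation $E|_{z_i}\to L|_{p_i}$, an intrinsic submodule — is $\tau$-stable. Hence $\bbLa$ descends to $\Kt$, and since $\deg\bbLa=4$ by Theorem~\ref{vasdgsgG} while $X\to\Kt$ has degree $2$, the induced map $\Kt\to\dP$ has degree $2$. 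For (4), the ``$\Delta$''-type curves are the exceptional divisors of $X\to\Pic^3C$, i.e.\ the ramification of $X\to\Kt$, so they map isomorphically to the sixteen $(-2)$-curves of $\Kt$ and then to the sixteen nodes of $\Ku$; the ``$\bE$''-type curves are the proper transforms of the theta divisors $E,E_i,E_{ij}$, which are now $\tau$-symmetric, each isomorphic to $C$ with $\tau$ acting as the hyperelliptic involution, hence each maps $2:1$ onto a smooth rational curve of $\Kt$ whose image in $\Ku$ is a conic — the sixteen tropes. Lastly, Theorem~\ref{vasdgsgG} identifies the ramification of $\bbLa$ as the proper transform $\hat R$ of $R$ together with the sixteen ``$\Delta$''-curves; subtracting the ramification of $X\to\Kt$, the curve $\hat R$ is the pullback of the ramification of the double cover $\Kt\to\dP$, whose branch curve $B\subset\dP$ therefore satisfies $B\equiv-2K_{\dP}$ (as $K_{\Kt}\equiv0$), so by adjunction $2p_a(B)-2=B^2+B\cdot K_{\dP}=16-8=8$ and $p_a(B)=5$: this is the Humbert curve.

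The step I expect to be the real obstacle is the ``only if'' direction of the equivalence — turning the sixteen effectivity conditions into the rigid conclusion that all five points are Weierstrass. The key device is that a degree-$3$ effective divisor of the form $2p_j+s_{ij}$ lying in the pencil $|K|+r_i$ must contain $r_i$ in its support, which quickly pins down the Weierstrass condition; but organizing the indices $i,j$ and closing the last gap with the divisor $P-2K$ requires care. Everything in items (1)--(4) is then a matter of combining standard Kummer-surface geometry (the $16_6$ configuration of nodes and tropes, symmetric theta divisors, the local model of a resolved $A_1$ point) with the structural results already proved in Sections~\ref{zdadthethfbsdh}--\ref{kwJHEfg}.
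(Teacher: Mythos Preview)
Your proposal is correct. The approaches differ mainly in two places, and the comparison is instructive.

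For the ``only if'' direction you use all sixteen conditions $\cD,\cD_i,\cD_{ij}$ and the base-point trick $|K+r_i|=|K|+r_i$ to force $r_i\in\{p_j,s_{ij}\}$, then count. The paper instead uses only the six conditions $\cD\cap\bigcap_i\cD_i$: it sets $P-2K\sim z$ and $P-K-2p_i\sim z_i$, subtracts to get $z+\tau(p_i)\sim p_i+z_i$, and observes that two linearly equivalent effective degree-$2$ divisors on a genus~$2$ curve either coincide or both lie in $|K|$; in each case either $p_i$ is Weierstrass or $z=p_i$. This is a bit slicker and in fact proves a slightly stronger statement (the $\cD_{ij}$ are redundant), whereas your route makes the role of the $\cD_{ij}$'s explicit and is perhaps more mechanical to follow. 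Both close the last case the same way (if four of the $p_i$ are Weierstrass, the fifth is forced by $P-2K$ effective).

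For items (2)--(4) the paper simply writes ``classical and well-known'' with references to Dolgachev and Skorobogatov, checking only that $\bbLa$ is $\tau$-invariant once the $p_i$ are Weierstrass. Your self-contained treatment --- the local model $\cO_{\bP^1}(-1)/\!\pm\!1\cong\cO_{\bP^1}(-2)$ for (2), the Riemann--Hurwitz bookkeeping $R_{\bbLa}=R_{X\to\Kt}+(X\to\Kt)^*R_{\Kt\to\dP}$ and the adjunction computation $p_a(B)=5$ for (4) --- is a genuine addition and makes the geometry transparent without appealing to the classical $16_6$ theory.
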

 
\begin{proof}
Suppose that $p_1,\ldots,p_5$ are Weierstrass points.
Assumption~\ref{Sfbzfdhadfh} is satisfied. 
We have $P\sim 3K-p_6$ by the Riemann--Hurwitz formula. Thus 
$$P-2K\sim P-K-2p_i\sim P-2p_i-2p_j\sim p_6$$
is effective and the configuration belongs to all divisors.

In the opposite direction, let $(p_1,\ldots,p_5)\in\cD\cap\bigcap_i\cD_i$.
Define 
$$P-2K\sim z\in C,\quad P-K-2p_i\sim z_i\in C.$$
Then $z-z_i\sim 2p_i-K$, i.e.~$z+\tau(p_i)\sim p_i+z_i$ for every $i$, where $\tau$
denotes the hyperelliptic involution. There are two cases, either $p_i$ is a Weierstrass point 
or $z=p_i$ and $z_i=\tau(p_i)$. Thus either all five points are Weierstrass points, which is what we are trying to prove,
or $4$ of them are, let's say $p_2,p_3,p_4,p_5$, and $P-2K\sim p_1$. 
But then $p_2+p_3+p_4+p_5\sim 2K$, i.e.~$p_2+p_3\sim p_4+p_5$, which is impossible.

The rest is also easy: (1) is immediate to verify, 
(2) and (4) are classical and well-known~\cite{Dol,Sk}.
To prove (3) it suffices to notice that $\bbLa$ is $\tau$-invariant if all marked points are Weierstrass points.
\end{proof}

\section{Scattering measures of M-curves}\label{sdcqwv}

\begin{Review}
We refer to \cite{GH} for the basic theory of  real algebraic curves and  Jacobians.
Let $C$ be a smooth projective complex algebraic curve of genus $g$ . 
We view the set of complex points $C(\bC)$ as a Riemann surface. 
The curve $C$ endowed with a real structure (equivalently, an anti-holomorphic involution) 
is called an {\em  M-curve}\footnote{``M'' stands for ``maximal''.} if the set of real points $C(\bR)\subset C(\bC)$ has $g+1$ (the maximal possible number)
connected components $C_1,\ldots, C_{g+1}$. 
These ovals 
separate $C(\bC)$ into two connected subsets interchanged by the complex conjugation $p\mapsto\bar p$.
Recall from Theorem~\ref{efvwb} that all smooth curves with $n=g+3$ marked points are MHV curves.
In this section we will study the scattering amplitude  of M-curves.
\end{Review}

\begin{Definition}\label{SfbXfbSDb}
A smooth {\em MHV M-curve} is a pointed M-curve such that 
its divisor of marked points $p_1+\ldots+p_n$, $n=g+3$, is preserved by the complex 
conjugation. Moreover, we assume that we have one of the following two cases:
\begin{enumerate}
\item[(A)] All marked points are real and all components of $C(\bR)$ contain one marked point each except for one component, which contains three marked points.
\item[(B)] All but two marked points are real, one on each component of $C(\bR)$. 
The two remaining marked points are complex-conjugate.
\end{enumerate}
\end{Definition}

\begin{Review}\label{awrgawrg}
It is well-known \cite{GH} that the set of real points  
$$\Pic^{d}(\bR)\subset \Pic^{d}(\bC)$$ 
of every component of the Picard group of an M-curve~$C$
is a union of 
$2^g$ connected components denoted by $\Pic_I^{d}(\bR)$ and indexed by subsets $I\subset \{1,\ldots,g+1\}$ such that $|I|\equiv d\mod 2$.
Namely, if a divisor $D=\bar D$ is preserved by the complex conjugation  then the following conditions are equivalent:
\begin{enumerate}
\item $\cO(D)\in \Pic_I^{d}(\bR)$;
\item the divisor $D\cap C_i$ has odd degree if and only if $i\in I$.
\end{enumerate}
Each component $\Pic_I^{d}(\bR)\subset\Pic^{d}(\bR)$ is a torsor of the real Lie group 
$$\Pic^0_\emptyset(\bR)\simeq \U(1)^g\simeq(\bR/\bZ)^g.$$
In what follows we mostly focus on the MHV component $\Pic^dC$, so that $d=g+1$.
\end{Review}

\begin{Definition}
Let $M^{(k,l)}_{0,n}(\bR)$ be the moduli space of $n$-tuples of distinct points in $\bP^1(\bC)$ such that there are $k$ real points 
and $l$ pairs of complex conjugate points. 
We will need  the cases $(k,l) = (n,0)$ and $(n-2,1)$:
\begin{enumerate}
\item[(A)] $M^A_{0,n}(\bR)$,  or simply $M_{0,n}(\bR)$, the moduli space of $n$ real points in $\bP^1(\bC)$;
\item[(B)] 
$M^B_{0,n}(\bR)$, the moduli space of $n-2$ real and two complex-conjugate points.
\end{enumerate}
See~\cite{Ceyhan} for compactifications of  real forms of $M_{0,n}$.
\end{Definition}

\begin{Review}
By Amplification~\ref{zxbzfn},
 the scattering amplitude map $\bLa:\,\Pic^{g+1}C\dashrightarrow M_{0,n}$ is well-defined
away from the divisors $E$, $E_{ij}$ and is unramified away from the divisor $R$.
Since all components $\Pic_I^{g+1}(\bR)$ of an MHV M-curve are Zariski dense in $\Pic^{g+1}(\bC)$, 
$\bLa$ is defined and unramified generically along them.
Moreover, 
$$\bLa(\Pic^{g+1}(\bR))\subset M^?_{0,n}(\bR),$$
where $?=A$ or $B$ depending on the type of the curve $C$. 
Indeed, if $L\in\Pic^{g+1}(\bR)$ then we can choose $\phi_L\in\bR(C)$ a real  
meromorphic function
and the marked points will be mapped to points of the Riemann sphere according of type $A$ or~$B$.
\end{Review}

\begin{Theorem}\label{dfvwevev} 
Let $?=A$ or $B$ depending on the type of the MHV M-curve $C$. Then
$$\bLa^{-1}(M^{?}_{0,n}(\bR))\subset \Pic^{g+1}(\bR).$$
Moreover, restriction of $\bLa$ to any of the $2^g$ connected components 
$\Pic_I^{g+1}(\bR)$ is injective.
\end{Theorem}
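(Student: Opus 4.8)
The plan is to play the anti-holomorphic involution of $\Pic^{g+1}C$ off against a $\bmod\,2$ degree count. Write $\sigma$ for that involution, so that $\Pic^{g+1}(\bR)=\mathrm{Fix}(\sigma)=\coprod_I\Pic^{g+1}_I(\bR)$ is a disjoint union of $2^g$ open-and-closed pieces, and write $\iota$ for the involution of $M_{0,n}$ whose fixed locus is $M^?_{0,n}(\bR)$, with $?=A$ or $B$ according to the type of $C$. Since the marked divisor $p_1+\ldots+p_n$ is conjugation invariant and $\bLa$ is defined by a $\PGL_2$-class depending algebraically on $(L;p_1,\ldots,p_n)$ (Definition~\ref{KSLJDfksjdg}), one has $\bLa\circ\sigma=\iota\circ\bLa$ as rational maps: in type $A$ one checks $\bar\phi_L(p_i)=\overline{\phi_L(p_i)}$, while in type $B$ the conjugate pair is exchanged, $\bar\phi_L(p_{n-1})=\overline{\phi_L(p_n)}$, and in both cases $\iota$ is exactly the conjugation cutting out $M^?_{0,n}(\bR)$. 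Consequently, over a point $m$ in the dense open subset of $M^?_{0,n}(\bR)$ where the fibre $F:=\bLa^{-1}(m)$ consists of $2^g$ reduced points off the discriminant (this subset is dense open since $\deg\bLa=2^g$ by Theorem~\ref{2^gTheorem}; for M-curves this degree is in any case a consequence of $(\star)$ below together with the bound $\deg\bLa\le 2^g$ obtained by specialising to the hyperelliptic case), $\sigma$ acts on $F$, and both assertions of the theorem reduce to the claim $(\star)$ that $F$ meets every one of the $2^g$ components $\Pic^{g+1}_I(\bR)$. Indeed, granting $(\star)$, the $2^g$ points of $F$ lie among the $2^g$ pairwise disjoint pieces $\Pic^{g+1}_I(\bR)$ with at least one in each, hence exactly one in each; so $F\subset\Pic^{g+1}(\bR)$, which is part~(1), and $\bLa$ is dominant and generically one-to-one on each $\Pic^{g+1}_I(\bR)$.

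To prove $(\star)$ I would compute a $\bmod\,2$ degree in the compact model of Lemma~\ref{adrhadrhsth}: the composite $\Pic^{g+1}C\dashrightarrow(\oM_{0,4})^g\simeq(\bP^1)^g$ is generically finite of degree $2^g$ and is unramified away from $R\cup E$ and the locus where the three distinguished marked points collide. Restricting to real points and resolving the indeterminacy by a sequence of real blow-ups along $R$, $E$ and the $E_{ij}$ produces a smooth compact real $g$-manifold $X_I$ birational to $\Pic^{g+1}_I(\bR)$ together with a smooth map $\overline\bLa_I\colon X_I\to(\bP^1(\bR))^g$ between compact boundaryless $g$-manifolds, so $\deg_2\overline\bLa_I\in\bZ/2\bZ$ is defined; a regular value avoiding the lower-dimensional image of the exceptional divisors has preimage consisting of honest points of $\Pic^{g+1}_I(\bR)$, whence $\deg_2\overline\bLa_I=1$ gives $(\star)$ for the corresponding $m$. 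Since $\deg_2\overline\bLa_I$ is unchanged as $C$ varies in the connected family of pointed M-curves of the fixed type, I would evaluate it on a convenient member: either a hyperelliptic M-curve satisfying Assumption~\ref{Sfbzfdhadfh}, where the matrix model of \S\ref{kwJHEfg} exhibits the preimages as the intersection of the $2g+3$ real quadrics \eqref{dfbzdfhz} with prescribed real eigenslopes, from which one reads off both their number and the component $I$ each one occupies; or the maximally degenerate real MHV M-curve of \S\ref{SDgSfh}, where by Theorem~\ref{sVSGsrg} the $2^g$ branches of $\bLa^{-1}$ are precisely Tutte's arborescences of the on-shell diagram and $I$ is determined combinatorially. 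In either model the count is exactly $1$ in each $I$.

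It remains to upgrade ``generically one-to-one'' to the stated injectivity of $\bLa$ on each $\Pic^{g+1}_I(\bR)$. Over the unramified locus $U\subset M^?_{0,n}(\bR)$, the set $\bLa^{-1}(U)\cap\Pic^{g+1}(\bR)$ is a $2^g$-sheeted covering of $U$ whose monodromy must preserve the partition into the closed pieces $\Pic^{g+1}_I(\bR)$ and, by the previous paragraph, has exactly one sheet in each, hence is trivial; therefore $\bLa_I$ restricts to a real-analytic isomorphism $\bLa_I^{-1}(U)\xrightarrow{\ \sim\ }U$, which is the content of the statement (refined later in Theorem~\ref{DSvSDb}). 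Cases $A$ and $B$ run in parallel throughout, differing only in the bookkeeping of the involution $\iota$ in the first step.

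The step I expect to be the main obstacle is the $\bmod\,2$ degree computation, i.e.\ producing one M-curve of each combinatorial type on which the $2^g$ real branches over a general real configuration are simultaneously visible and seen to be localised one per component $\Pic^{g+1}_I(\bR)$. The two candidate models each carry a technical cost: the hyperelliptic route needs the existence of a hyperelliptic M-curve of the required type satisfying Assumption~\ref{Sfbzfdhadfh}, together with a careful count of real intersection points of the quadrics \eqref{dfbzdfhz}; the maximally degenerate route needs control of the compactified Jacobian of \S\ref{zdadthethfbsdh} at the boundary point and the input of Tutte's theorem on arborescences. Everything else --- the conjugation equivariance, the pigeonhole turning $(\star)$ into an equality, and the monodromy argument for injectivity --- is routine.
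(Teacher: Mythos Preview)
Your strategy is genuinely different from the paper's, but it has a real gap: the pigeonhole step is circular. You need the complex fibre over a general real point to have \emph{exactly} $2^g$ points in order to conclude from $(\star)$ that each component $\Pic_I^{g+1}(\bR)$ contains exactly one (and that there are no non-real preimages). But in the paper's logic, the statement $\deg\bLa=2^g$ for an M-curve is a \emph{consequence} of Theorem~\ref{dfvwevev} (via Corollary~\ref{dfvwefv}), not an input to it. Your parenthetical escape route does not close the loop: Theorem~\ref{asfgbsfgasfh} gives degree exactly $2^g$ for hyperelliptic curves, but hyperelliptic curves are special, not generic, so semicontinuity of fibre cardinality runs the wrong way --- it tells you the degree for more general curves is at least as large, not at most. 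Your $\bmod\,2$ argument, even if carried out, would only give an \emph{odd} number of preimages in each $\Pic_I^{g+1}(\bR)$, hence $\ge 2^g$ real preimages; you still need an independent upper bound of $2^g$ on the complex degree, and that is precisely what this theorem supplies. Separately, your monodromy argument only yields injectivity of $\bLa_I$ over the unramified locus, which is weaker than what is claimed.

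By contrast, the paper's proof is short, direct, and uses no external input about the degree. Both parts rest on the same elementary trick: if $f\in\bC(C)$ has degree $g+1$ with $f(p_i)=q_i$ real (or conjugate-paired) and $f$ is not real, then $g(z)=if(z)-i\overline{f(\bar z)}$ is a nonzero real rational function vanishing at every $p_i$; since a real rational function has an even number of zeros minus poles on each oval $C_i$, one forces at least one extra zero per oval, giving $\ge 2g+4$ zeros, while the graph of $(f,\bar f)$ in $\bP^1\times\bP^1$ has bidegree $(g+1,g+1)$ and meets the diagonal in at most $2g+2$ points. The same parity argument applied to $f-f'$ (with $L,L'$ in the same component $\Pic_I^{g+1}(\bR)$ so that pole parities match on each oval) gives injectivity. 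This avoids all appeal to compactifications, $\bmod\,2$ degrees, or deformation to special curves.
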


\begin{proof}
Let $(q_1,\ldots,q_n)\in M^?_{0,n}(\bR)$.
Let $f\in\bC(C)$ be a rational function of degree $g+1$ such that $f(p_i)=q_i$ for every $i=1,\ldots,n$.
First we claim that $f\in\bR(C)$. We~argue by contradiction and suppose that this is not the case.
Then 
$$g(z)=if(z)-i\overline{f(\bar z)}\in\bR(C)$$
is a non-zero real rational function.
Thus $g(z)$ has finitely many zeros on $C(\bR)$, and so $f(C(\bR))\cap\bP^1(\bR)$ is a finite union of points. By~applying a transformation from $\PGL_2(\bR)$, we can assume that
$\infty\not\in f(C(\bR))$. Thus $g$ also doesn't have any poles on $C(\bR)$. On the other hand, $g(p_i)=0$ for every $i$
and $\div(g)\cap C_i$ is even for every~$i$, which is true for any real rational function, see \cite[Lemma~4.1]{GH}. Thus $g$ has at least one additional zero 
on each $C_i$ and therefore has degree at least $2g+4$.
This is a contradiction: zeros of $g$ are intersection points of the diagonal in $\bP^1\times\bP^1$ with the image of $C$ under the map $(f(z),\overline{f(\bar z)})$
which has homology class $(g+1,g+1)$. Thus we have at most $2g+2$ intersection points.

Next we show injectivity of the restriction of $\bLa$ to any connected component $\Pic^{g+1}_I(\bR)$.
We argue by contradiction and suppose that $\bLa(L)=\bLa(L')$ for two different line bundles in the same component.
We can find rational functions
$f,f'\in\bR(C)$ in the corresponding linear systems such that $f(p_i)=f'(p_i)=q_i$ for every $i=1,\ldots,n$ 
(because $\bLa(L)=\bLa(L')$)
and such that $f^{-1}(\infty)\cap C_i$ and $f'^{-1}(\infty)\cap C_i$ have the same parity for every $i$ (because they are in the same component $\Pic_I$).
We can also apply a projective transformation from $\PGL_2(\bR)$ so that $f$ and $f'$ have disjoint poles.
Then $f-f'$ has an even number of poles on every~$C_i$. Therefore $f-f'$ has an even number of zeros on every $C_i$.
One of these zeros is $p_i$, so there must be at least one additional zero on each $C_i$.
Thus $f-f'$ has at least $2g+4$ zeros total and we finish as in the first part.
\end{proof}

\begin{Corollary}\label{dfvwefv}
The scattering amplitude map of a generic MHV curve  has degree $2^g$.
\end{Corollary}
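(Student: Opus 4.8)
The plan is to determine once and for all the generic value of $\deg\bLa$ and show it equals $2^g$, by sandwiching it between the M-curve bound of Theorem~\ref{dfvwevev} from above and the hyperelliptic computation of Theorem~\ref{asfgbsfgasfh} from below. First I would fix notation for the generic degree. The stack $\ocM^{\MHV}_{g,n}$ is irreducible (it is a nonempty open substack of $\ocM_{g,n}$, nonempty by Theorem~\ref{efvwb}), and on every MHV curve the map $\bLa$ is dominant and generically finite; hence there is a dense open $U\subset\ocM^{\MHV}_{g,n}$ on which $[C]\mapsto\deg\bLa_C$ takes a constant value $D$. I would also record the standard fact about families of dominant rational maps that $\deg\bLa_C\le D$ for \emph{every} MHV curve $C$ (this ultimately reduces to the one-variable statement that clearing common factors in a family of rational functions can only lower the fibre degree). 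So it remains to prove $D\le 2^g$ and $D\ge 2^g$.

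For the upper bound I would evaluate $D$ on a generic smooth M-curve. Smooth M-curves carrying a complex-conjugation-invariant divisor of $n=g+3$ marked points arranged as in type (A) of Definition~\ref{SfbXfbSDb} form a real-analytically full-dimensional family whose Zariski closure is all of $\ocM^{\MHV}_{g,n}$; thus a generic such curve $C_0$ lies in $U$, so $\deg\bLa_{C_0}=D$. By Theorem~\ref{efvwb}, $C_0$ is an MHV curve, and by~\ref{awrgawrg} the real locus $\Pic^{g+1}(\bR)$ of its Jacobian has exactly $2^g$ connected components $\Pic^{g+1}_I(\bR)$. The branch and indeterminacy loci of $\bLa_{C_0}$ are contained in $E\cup E_{ij}\cup R$ by Amplification~\ref{zxbzfn}, and the locus in $M_{0,n}$ over which $\bLa_{C_0}$ fails to have exactly $D$ preimages is a proper closed subvariety; since $M^{A}_{0,n}(\bR)$ is Zariski dense in $M_{0,n}$, I can choose a real point $q\in M^{A}_{0,n}(\bR)$ avoiding all of these. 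Then $\bLa_{C_0}^{-1}(q)$ consists of exactly $D$ points; by Theorem~\ref{dfvwevev} every one of them lies in $\Pic^{g+1}(\bR)$, and the injectivity of $\bLa_{C_0}$ on each component $\Pic^{g+1}_I(\bR)$ forces at most one of them into each of the $2^g$ components. Hence $D\le 2^g$.

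For the lower bound I would invoke Theorem~\ref{asfgbsfgasfh}: there exist hyperelliptic curves $C_1$ of genus $g$ (every curve when $g\le 2$, a general member of the hyperelliptic locus when $g\ge 3$) with marked points satisfying Assumption~\ref{Sfbzfdhadfh}, for which $\deg\bLa_{C_1}=2^g$. Combined with $\deg\bLa_{C_1}\le D$ from the first paragraph this gives $D\ge 2^g$, hence $D=2^g$, which is exactly the assertion that the scattering amplitude map of a generic MHV curve has degree $2^g$.

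The main obstacle here is not any new geometric input but the bookkeeping needed to connect the real-geometric statement of Theorem~\ref{dfvwevev} to the complex generic degree: one must be sure that a generic M-curve really is a generic point of $\ocM^{\MHV}_{g,n}$ (so that it computes $D$), and that a generic real point of $M^{A}_{0,n}(\bR)$ is ``generic enough'' for $\bLa_{C_0}$ — i.e.\ that the constructible locus where the fibre has fewer than $D$ points, being a proper complex subvariety of $M_{0,n}$, meets the totally real $M^{A}_{0,n}(\bR)$ in a proper subset. Everything substantive — that the $2^g$ real components of the Picard locus separate the fibres, and that hyperelliptic curves have degree exactly $2^g$ — is precisely the content of Theorems~\ref{dfvwevev} and~\ref{asfgbsfgasfh}.
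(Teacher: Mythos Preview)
Your proposal is correct and follows essentially the same route as the paper: sandwich the generic degree between the upper bound from Theorem~\ref{dfvwevev} (M-curves) and the lower bound from Theorem~\ref{asfgbsfgasfh} (hyperelliptic curves), using that both M-curves and the real locus $M^{A}_{0,n}(\bR)$ are Zariski dense. The only organizational difference is that you take $D$ to be the generic degree from the outset and invoke semicontinuity to compare every fibre to it, whereas the paper first sets $D$ to be the maximum over M-curves and then uses Stein factorization of the universal $\bLa$ over $M_{g,n}\times M_{0,n}$ to propagate the bound to all smooth curves; these are two phrasings of the same constructibility/normality argument.
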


\begin{proof}
By Theorem~\ref{dfvwevev}, every point in $M^?_{0,n}(\bR)$ has at most $2^g$ preimages by $\bLa$.
Since $M^?_{0,n}(\bR)$ is a real form of $M_{0,n}$ of real dimension {$n-3$}, it is Zariski dense in~it.
Thus the scattering amplitude map $\bLa$ of any MHV M-curve has degree at most~$2^g$
in both types (A) and (B).
Let~$D\le 2^g$~be the maximum
of these degrees.

Since the locus of MHV $M$-curves $(C;p_1,\ldots,p_{n})$ of either of these types is a connected component of the real form of the moduli space $M_{g,n}$
of real dimension $3g-3+n$  (see \cite{SS}), it is Zariski dense in $M_{g,n}$. It follows that the degree of the scattering amplitude is bounded above by $D$
for a Zariski dense subset in $M_{g,n}$. 

We claim that the degree of $\bLa$ is bounded above by $D$ for all smooth  curves.
Indeed, let $\cPic^{g+1}\to M_{g,n}$ be the universal Jacobian and let 
$$\bLa:\,\cPic^{g+1}\dashrightarrow M_{g,n}\times M_{0,n}$$
be the universal scattering amplitude map
(the reader uncomfortable with stacks can use a $1$-parameter family of curves connecting two  curves instead of $M_{g,n}$). 
Since $\bLa$ is generically finite 
and $M_{g,n}\times M_{0,n}$ is normal, the Stein factorization of $\bLa$ implies that
cardinality of finite fibers can't be more than $D$. 

Finally, by Theorem~\ref{asfgbsfgasfh}, 
the scattering amplitude map of a hyperelliptic curve has degree $2^g$. 
Therefore, $D=2^g$, which completes the proof.
\end{proof}

\begin{Definition}\label{jhsbdchjs,bdv}
There exists a distinguished connected component of $\Pic^{g+1}(\bR)$, 
$$\Pic^{g+1}_H(\bR):=\Pic_{\{1,\ldots,g+1\}}^{g+1}(\bR),$$ 
which was studied in~\cite{Huisman}.
We call it the {\em Huisman component}.
Every effective divisor from $\Pic^{g+1}_H(\bR)$ is a union of $g+1$ points, one in each connected component 
$C_1,\ldots,C_{g+1}\subset C(\bR)$.
Here are some  nice properties of $\Pic^{g+1}_H(\bR)$:
\end{Definition}

\begin{Proposition}[\cite{Huisman}]\label{adfhadrhadrha}$\ $
\begin{enumerate}
\item Every $L\in\Pic^{g+1}_H(\bR)$ is non-special and globally generated. 
\item $\phi_L$ is unramified along $C(\bR)=C_1\cup\ldots\cup C_{g+1}$.
\item $\phi_L|_{C_i}:\,C_i\to\bP^1(\bR)$ is a real-analytic isomorphism for any~$i=1,\ldots,g+1$.
\item Fix $z_{g+1}\in C_{g+1}$. The map $C_1\times\ldots\times C_g\to\Pic^{g+1}_H(\bR)$ that sends $z_1,\ldots,z_g$ to
$\cO(z_1+\ldots+z_g+z_{g+1})$ is 
a real-analytic isomorphism.
\end{enumerate}
\end{Proposition}

Here's the second main result of this section.

\begin{Theorem}\label{DSvSDb}
In the notation of Amplification~\ref{zxbzfn},
$\Pic^{g+1}(\bR)$ is disjoint from the ramification divisor $R$
in both types (A) and (B).
In particular, the scattering amplitude map 
$$\bLa_I:\,\Pic_I^{g+1}(\bR)\setminus(E\cup\bigcup_{i<j} E_{ij})\to M^?_{0,n}(\bR)$$
is a real analytic isomorphism onto its image for every component $\Pic_I^{g+1}(\bR)$.
\end{Theorem}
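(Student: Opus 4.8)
The heart of the matter is to show that a real line bundle $L\in\Pic^{g+1}(\bR)$ on an MHV M-curve (of type A or B) can never lie on the ramification divisor $R$; once that is known, the rest follows from Theorem~\ref{dfvwevev} together with the inverse function theorem for real-analytic maps. So the plan is: (i) reduce the statement to $\Pic^{g+1}(\bR)\cap R=\emptyset$; (ii) prove this disjointness by a parity argument on ovals, analogous to the arguments already used in the proof of Theorem~\ref{dfvwevev}; (iii) conclude real-analyticity of $\bLa_I$ on its image.

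For step (i): by Amplification~\ref{zxbzfn}(4), $\bLa$ is unramified away from $E$, the $E_{ij}$, and $R$. On the domain $\Pic_I^{g+1}(\bR)\setminus(E\cup\bigcup E_{ij})$ the map $\bLa_I$ is defined and, granting step (ii), unramified; it lands in $M^?_{0,n}(\bR)$ as noted in the Review preceding Theorem~\ref{dfvwevev}; and it is injective by Theorem~\ref{dfvwevev}. Since source and target both have real dimension $n-3=g$, an injective unramified real-analytic map between real-analytic manifolds of equal dimension is an open immersion, hence a real-analytic isomorphism onto its (open) image. This is routine.

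The crux is step (ii). Recall $R=m^{-1}(\Theta)$ where $m(L)=L^{\otimes2}$ and $\Theta\subset\Pic^{2g+2}C$ is the image of $\Sym^{g-1}C\to\Pic^{2g+2}C$, $(x_1,\dots,x_{g-1})\mapsto\cO(x_1+\dots+x_{g-1}+p_1+\dots+p_n)$. So $L\in R$ means $L^{\otimes2}\simeq\cO(x_1+\dots+x_{g-1}+p_1+\dots+p_n)$ for some effective divisor $x_1+\dots+x_{g-1}$ on $C$. Equivalently, writing $\phi_L:C\to\bP^1$, this says that the ramification divisor of $\phi_L$ (which has degree $2d-2=2g$... wait, more precisely $\phi_L^*(\text{point})$ moving) — the cleanest formulation is: $L$ is a ramification point of $\bLa$ exactly when some fiber $\phi_L^{-1}(q)$ is non-reduced, i.e. $\phi_L$ has a branch point, at a point whose configuration forces $\phi_L(p_1),\dots,\phi_L(p_n)$ to be a critical value of $\bLa$; geometrically the condition $2L\sim \sum x_i + P$ says that $\phi_L$ together with the marked points degenerates. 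The plan is to suppose $L\in\Pic^{g+1}_I(\bR)\cap R$, pick a real meromorphic function $f=\phi_L\in\bR(C)$, and derive a contradiction via a Bézout/homology count identical in spirit to the last paragraphs of the proof of Theorem~\ref{dfvwevev}: the relation $f^2\cdot(\text{section of }\cO(-P))$ being a square of a meromorphic function, restricted to each real oval $C_i$, combined with the fact that $f$ restricted to each $C_i$ is (generically) a real map to $\bP^1(\bR)$ and that $\div$ of a real rational function has even intersection with each oval, forces too many real zeros/branch points on the ovals and overruns the homology-class bound $(g+1,g+1)$ on $\bP^1\times\bP^1$. The type A versus type B cases are handled by the same count, keeping track of where the complex-conjugate pair of marked points sits.

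The main obstacle I expect is formulating the membership $L\in R$ in a way that feeds cleanly into the oval-counting machinery: one must translate "$2L\sim\sum_{i=1}^{g-1}x_i+P$" into a statement about a single real rational function having a forced branch point on the real locus, and then verify that, on an M-curve with marked points distributed as in type A or B, such a branch point cannot occur without violating the $(g+1,g+1)$ intersection bound — the bookkeeping of which oval absorbs which zero (and the asymmetry caused by the oval carrying three marked points in type A, or by the complex-conjugate pair in type B) is where the delicacy lies. Once that count is set up correctly, it should mirror the two displayed contradictions already in the proof of Theorem~\ref{dfvwevev} almost verbatim.
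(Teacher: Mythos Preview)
Your reduction (i) and conclusion (iii) are correct and match the paper. The gap is in step (ii): you propose to recycle the bidegree $(g+1,g+1)$ count from Theorem~\ref{dfvwevev}, but that argument compared two rational functions of degree $g+1$ (either $f$ and $\bar f$, or $f$ and $f'$) via their graph in $\bP^1\times\bP^1$. The condition $L\in R$, i.e.\ $2L\sim x_1+\cdots+x_{g-1}+P$, is a condition on the \emph{square} $L^{\otimes2}$ and does not naturally produce such a pair of pencils; your own remark that ``formulating the membership $L\in R$ in a way that feeds cleanly into the oval-counting machinery \ldots\ is the main obstacle'' correctly flags that you have not found the translation. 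I do not see how to make the bidegree count go through here.

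The paper's argument is different and shorter, and the missing idea is to route through the \emph{Huisman component}. Given real $D\in\Pic^{g+1}(\bR)\cap R$ with $2D\sim x_1+\cdots+x_{g-1}+p_1+\cdots+p_{g+3}$, set
\[
G=x_1+\cdots+x_{g-1}+p_{g+2}+p_{g+3}\ \sim\ 2D-p_1-\cdots-p_{g+1}.
\]
Since $2D$ has even degree on each oval and $p_1,\ldots,p_{g+1}$ sit one per oval, $\cO(G)\in\Pic^{g+1}_H(\bR)$. By Huisman's results (Definition~\ref{jhsbdchjs,bdv}), $\cO(G)$ is base-point-free with $h^0(G)=2$. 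The conjugate divisor $\bar G=\bar x_1+\cdots+\bar x_{g-1}+p_{g+2}+p_{g+3}$ is another member of $|\cO(G)|$ through $p_{g+2}$ and $p_{g+3}$; if $\bar G\ne G$ these two points would be base points, a contradiction. Hence $G$ is conjugation-invariant \emph{as a divisor}. But then $G$ cannot have odd degree on every oval: $p_{g+2}+p_{g+3}$ contributes even degree to each $C_i$ (either $0$ or $2$ in type A, $0$ in type B), and the $g-1$ remaining points $x_j$ cannot give odd parity on all $g+1$ ovals. This contradicts $\cO(G)\in\Pic^{g+1}_H(\bR)$. So the key step is the substitution $G=2D-\sum_{i\le g+1}p_i$, which replaces your hoped-for Bezout count by a direct appeal to Huisman's base-point-freeness.
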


\begin{proof}
Suppose $D\in \Pic^{g+1}(\bR)\cap R$. By definition of $R$, this means that 
$$2D\sim x_1+\ldots+x_{g-1} +p_1 +\ldots +p_{g+3}$$
for some $x_i\in C$. Since $C$ has type (A) or (B), 
we can assume without loss of generality that either $p_{g+2}$ and $p_{g+3}$ 
are complex-conjugate points (type B) or additional points on one of the connected components of $C(\bR)$ (type A). 
Let 
$$G=x_1 +...+x_{g-1} +p_{g+2} +p_{g+3}\sim 2D-p_1-\ldots-p_{g+1}.$$
Since $O(2D)\in \Pic_{\emptyset}^{2g+2}(\bR)$ and $O(p_1 + ... + p_{g+1})\in \Pic^{g+1}_H(\bR)$,
it follows that 
$\cO(G)\in\Pic^{g+1}_H(\bR)$, and therefore it is base-point-free and $h^0(G) = 2$ by~Proposition~\ref{adfhadrhadrha}.
On~the other hand, it has a complex-conjugate section $\bar x_1+\ldots+\bar x_{g-1}+p_{g+2}+p_{g+3}$, which also
contains $p_{g+2}$ and $p_{g+3}$. So this must be the same section, otherwise $p_{g+2}$ and $p_{g+3}$
are in the base locus of $\cO(G)$. It follows that $G$ is invariant under complex conjugation.
But then it can't be in $\Pic^{g+1}_H(\bR)$ since either $p_{g+2}$ and $p_{g+3}$ are complex-conjugate
or belong to the same connected component of $C(\bR)$, in either case the degree of $G\cap C_i$ can't be odd for all $i$.
Since $\bLa_I$ is injective by Theorem~\ref{dfvwevev} 
and its domain is disjoint from $R$,
it is an analytic immersion where it is defined.
\end{proof}

\begin{Amplification}
By Corollary~\ref{dfvwefv}, for smooth {\em complex} curves, the scattering amplitude map $\bLa:\,\Pic^{g+1}C\to M_{0,n}$
 has large degree $2^g$. However, Theorems~\ref{dfvwevev} and \ref{DSvSDb} show that  
 for smooth {\em real} M-curves line bundles in every fiber ``localize''
 into different connected components $\Pic_I^{g+1}(\bR)$ of $\Pic^{g+1}(\bR)$.
In other words, $\bLa$
 is  determined by open immersions (in the domain of $\bLa$)
$$\bLa_I:\,\Pic_I^{g+1}(\bR)\dashrightarrow M^?_{0,n}(\bR).$$
Each component $\Pic_I^{g+1}(\bR)$ carries a uniform probability measure $|A_I|$ given 
by the real-valued volume form $A_I$ translation-invariant under the real $g$-dimensional
torus $\Pic^0_\emptyset(\bR)$. Viewed on $M^?_{0,n}(\bR)$, this gives a differential form $A_I$
and a probability measure $|A_I|$ that we call a {\em scattering amplitude probability measure}.
We will typically compactify $M^?_{0,n}(\bR)$ in some way and 
extend $\Lambda$ to its domain of definition
(so that in particular it will be determined at least generically along $E$ and $E_{ij}$).
Since not every reasonable compactification of $M^?_{0,n}(\bR)$ is 
an orientable manifold, one has to remember (from calculus) that in this case the integral of a
form is not well-defined, however one can always integrate probability measures.
The scattering  measure of the Huisman's component is especially well-behaved.
\end{Amplification}

\begin{Theorem}\label{sGARSGARHA}
Let $C$ be a smooth MHV M-curve with ovals $C(\bR)=C_1\cup\ldots\cup C_{g+1}$. Furthermore, we assume that 
$p_i\in C_i$ for $i=1,\ldots,g+1$ and that
$p_{g+2},p_{g+3}\in C_{g+1}$ (type A) or $p_{g+2}=\bar p_{g+3}$ (type B).
We compactify $M_{0,n}$ by $(\bP^1)^g\simeq(\oM_{0,4})^g$
using the product of forgetful maps $$\pi_{i,g+1,g+2,g+3}:\,M_{0,n}\to M_{0,4}$$
for $i=1,\ldots,g$. The scattering amplitude map induces a real-analytic isomorphism
$$\bR^g/\bZ^g\simeq\Pic^{g+1}_H(\bR)\arrow^\bLa(\bR\bP^1)^g$$
and gives a positive real-analytic scattering amplitude probability measure on $(\bR\bP^1)^g$.
\end{Theorem}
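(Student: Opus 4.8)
The plan is to establish the isomorphism in three stages: first that $\bLa$ restricted to $\Pic^{g+1}_H(\bR)$ is well-defined (no indeterminacy, no ramification) and injective; second that it lands in and surjects onto $(\bR\bP^1)^g$; third that the inverse is real-analytic, so the pushed-forward volume form is a genuine positive real-analytic density. Stages one and two are essentially assembled from results already in the excerpt. By Definition~\ref{jhsbdchjs,bdv}(1), every $L\in\Pic^{g+1}_H(\bR)$ is non-special and globally generated, hence away from the loci $W$ and $E$ of Amplification~\ref{zxbzfn}; by property~(2)--(3) there, $\phi_L$ is unramified on $C(\bR)$ and restricts to an isomorphism on each oval, so $\phi_L(p_i)\ne\phi_L(p_j)$ whenever $p_i,p_j$ lie on distinct ovals, and in particular $\bLa(L)\in M_{0,n}(\bR)$, i.e.\ $L$ avoids all $E_{ij}$ relevant to the four chosen indices on each factor. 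Theorem~\ref{DSvSDb} gives that $\Pic^{g+1}(\bR)$, hence $\Pic^{g+1}_H(\bR)$, is disjoint from the ramification divisor $R$, so $\bLa$ is unramified there, and Theorem~\ref{dfvwevev} gives injectivity of the restriction of $\bLa$ to the single component $\Pic^{g+1}_H(\bR)$. Combining these: $\bLa_H$ is a real-analytic injective immersion on all of $\Pic^{g+1}_H(\bR)$.

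Next I would identify the target under the chosen compactification. Using the forgetful maps $\pi_{i,g+1,g+2,g+3}$, the $i$-th coordinate of $\bLa(L)$ in $\oM_{0,4}\simeq\bP^1$ is the cross-ratio of $\phi_L(p_i),\phi_L(p_{g+1}),\phi_L(p_{g+2}),\phi_L(p_{g+3})$. Since $L$ is real and the four points are real (type A) or the last two are complex-conjugate with the first two real (type B), the cross-ratio is real in case A and — by the standard fact that the cross-ratio of two real points and a conjugate pair is real — also real in case B; either way each coordinate lies in $\bR\bP^1$. So $\bLa_H$ maps $\Pic^{g+1}_H(\bR)$ into $(\bR\bP^1)^g$. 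For surjectivity I would use the explicit real-analytic parametrization of Definition~\ref{jhsbdchjs,bdv}(4): fix $z_{g+1}\in C_{g+1}$ and identify $\Pic^{g+1}_H(\bR)\simeq C_1\times\cdots\times C_g\simeq(\bR/\bZ)^g$ via $(z_1,\dots,z_g)\mapsto\cO(z_1+\cdots+z_g+z_{g+1})$. Then $\bLa_H$ becomes a real-analytic map $(\bR/\bZ)^g\to(\bR\bP^1)^g\simeq(\bR/\bZ)^g$; it is a local diffeomorphism (immersion between equidimensional manifolds) and injective, hence an open embedding of a compact manifold into a connected compact manifold of the same dimension, hence a diffeomorphism — this forces surjectivity. (Equivalently one can invoke invariance of domain plus compactness.)

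The last step is to observe that, since $\bLa_H$ is a real-analytic diffeomorphism, its inverse is real-analytic, so the pullback to $(\bR\bP^1)^g$ of the translation-invariant volume form $A_H$ on $\Pic^{g+1}_H(\bR)\simeq(\bR/\bZ)^g$ is a real-analytic volume form; normalizing its absolute value to total mass $1$ gives the claimed positive real-analytic scattering amplitude probability measure on $(\bR\bP^1)^g$. The only genuinely delicate point — and the one I expect to be the main obstacle — is verifying that $\bLa_H$ is everywhere an immersion and everywhere defined on the \emph{compact} torus $\Pic^{g+1}_H(\bR)$, including along the divisors $E$ and $E_{ij}$: a priori Theorem~\ref{DSvSDb} only gives an immersion on the complement of $E\cup\bigcup E_{ij}$. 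Here one must use that, for the \emph{specific} distribution of marked points in the hypothesis ($p_i\in C_i$, and $p_{g+2},p_{g+3}$ on $C_{g+1}$ or conjugate), the Huisman component is actually \emph{disjoint} from $E$ and from all $E_{ij}$: a line bundle in $E$ has the form $h$-type base locus forcing a Weierstrass-like coincidence incompatible with being globally generated and unramified on every oval (Definition~\ref{jhsbdchjs,bdv}(1)--(2)), and membership in $E_{ij}$ would force $\phi_L(p_i)=\phi_L(p_j)$ with $p_i,p_j$ on distinct ovals, again contradicting~(3). Once that disjointness is in hand the diffeomorphism assertion and the probability-measure statement follow formally.
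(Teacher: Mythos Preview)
Your overall strategy matches the paper's, but there is a genuine gap in the step you yourself flag as delicate: the treatment of the $E_{ij}$ loci. You claim that Huisman's property~(3) forces $\phi_L(p_i)\ne\phi_L(p_j)$ whenever $p_i,p_j$ lie on distinct ovals. This is false. Property~(3) says each restriction $\phi_L|_{C_k}:C_k\to\bP^1(\bR)$ is an isomorphism, but both restrictions surject onto the same $\bP^1(\bR)$, so nothing prevents a point of $C_i$ and a point of $C_j$ from having the same image. In fact $\Pic^{g+1}_H(\bR)$ \emph{does} meet $E_{i,g+1}$ for $i\le g$, so your proposed disjointness claim cannot be salvaged.

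The paper's resolution is different: one does not need to avoid all $E_{ij}$, because the map being analyzed is not $\bLa$ to $M_{0,n}$ but the composed map to $(\bP^1)^g$. By Lemma~\ref{adrhadrhsth}, this composed map is regular and unramified at any $L\notin R\cup E$ for which the three anchor points $\phi_L(p_{g+1}),\phi_L(p_{g+2}),\phi_L(p_{g+3})$ are distinct. That distinctness is what one actually proves: in type~A all three lie on $C_{g+1}$ and Huisman~(3) applies; in type~B one argues that $\phi_L(p_{g+2}),\phi_L(p_{g+3})$ are complex-conjugate and non-real (every fiber of $\phi_L$ over a real point consists of real points, one per oval), while $\phi_L(p_{g+1})$ is real. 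Once this is in place, your compactness-plus-injective-immersion argument finishes the proof.

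A minor point: in type~B the cross-ratio of two real points and a conjugate pair is \emph{not} real; with the paper's convention it satisfies $\bar\lambda=1-\lambda$, so it lies on the line $\Re\lambda=\tfrac12$. The target $(\bR\bP^1)^g$ should be read as the real locus of the appropriate real form $\oM^B_{0,4}(\bR)$, which is still a circle.
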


\begin{proof}
By Theorem~\ref{DSvSDb}, $\Pic^{g+1}_H(\bR)$ does not intersect the ramification divisor $R$.
As an easy reformulation of  \ref{jhsbdchjs,bdv}(1), $\Pic^{g+1}_H(\bR)$ does not intersect~$E$ either.
Indeed, let $D=z_1+\ldots+z_{g+1}$ with $z_i\in C_i$.
Arguing by contradiction, 
suppose $D$ is linearly equivalent 
to a divisor in $E$, i.e.
$D\sim K+x_0-x_1-\ldots-x_{g-2}$ for some $x_i\in C$. Then $h^0(D)=2$ by \ref{jhsbdchjs,bdv}
but since $h^0(x_1+\ldots+x_{g-2})\ge 1$,
$$h^0(D-x_0)=h^0(K-x_1-\ldots-x_{g-2})=h^1(x_1+\ldots+x_{g-2})\ge2$$
by Riemann--Roch formula, 
which contradicts global generation of $D$.

For every $L\in \Pic^{g+1}_H(\bR)$, $\phi_L(p_{g+1}),\phi_L(p_{g+3}),\phi_L(p_{g+3})$  are different.
In type (A) this follows from  \ref{jhsbdchjs,bdv}(3). In type (B), this follows because points 
$\phi_L(p_{g+2})$ and $\phi_L(p_{g+3})$ are not real (the preimage of every real point under $\phi_L$ is real).
By Lemma~\ref{adrhadrhsth}, the map $\Pic^{g+1}_H(\bR)\dashrightarrow(\bR\bP^1)^g$
is a regular real-analytic immersion at every point. On the other hand, it is injective by Theorem~\ref{DSvSDb}.
\end{proof}

\begin{Example}[genus $1$]\label{sdcwefvw}
Let $C$ be an elliptic M-curve with two ovals, $X$ and~$Y$. 
\begin{figure}[htbp]
\includegraphics[height=1.5in]{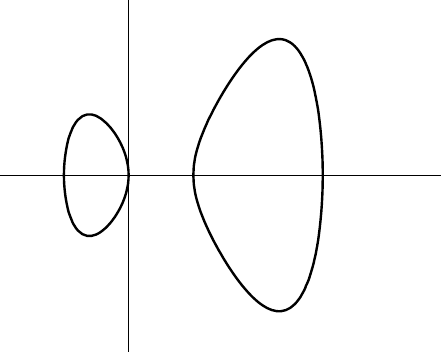}\qquad
\includegraphics[height=1.5in]{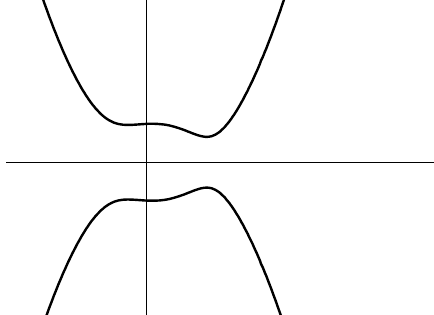}
\caption{\small Two types of double covers $C(\bR)\to\bP^1(\bR)$ of an elliptic $M$-curve $C$. A double cover from the Huisman component is on the right.}\label{wfegb}
\end{figure}
We can use a point $x\in X$
to identify $C(\bR)$ with $\Pic^2(\bR)$  by tensoring with~$\cO(x)$. We  denote the corresponding ovals of $\Pic^2(\bR)$ by 
$X^{(2)}$ and~$Y^{(2)}$.
It is clear that $X+X=X^{(2)}$,
where the left hand side denotes the locus of line bundles $\cO(x+x')$ for $x,x'\in X$.
It is also clear that $X+Y=Y^{(2)}$.
We claim that $Y+Y=X^{(2)}$. Indeed,  
we have a continuous map $C\to\Pic^2(\bR)$ which sends $z$ to $\cO(z+\bar z)$.
By~continuity, the image of this map has to be equal to $X^{(2)}$. In the notation of \ref{awrgawrg},
$$X^{(2)}=\Pic^2_\emptyset(\bR)\quad\hbox{\rm and}\quad Y^{(2)}=\Pic^2_{\{1,2\}}(\bR)=\Pic^2_{H}(\bR)$$
is the Huisman component of $\Pic^2(\bR)$.
If $L\in X^{(2)}$ then the map
$$\phi_L:\,C(\bR)\to\bP^1(\bR)$$ 
represents $C(\bR)$ in the  form $y^2=f_4(x)$,
where $f_4(x)$ is a real polynomial with four real roots  (the left  side of Figure~\ref{wfegb}).
By tradition, one of these roots is usually moved to infinity.
Here $\phi_L$ is projection onto the $x$-axis.
Note that restriction of $\phi_L$ to either  $X$ or $Y$ is neither surjective nor injective.
By contrast, if~$L\in Y^{(2)}$, the map
$\phi_L$ represents the same curve in the form  $y^2=f_4(x)$,
where $f_4(x)$ is a real polynomial with two pairs of  complex conjugate roots (the right side of Figure~\ref{wfegb}).
In accordance with \ref{jhsbdchjs,bdv}, the restriction of $\phi_L$ to both $X$ and $Y$  is a real analytic isomorphism
with inverses given by 
$$x\mapsto \left(x,\sqrt{f_4(x)}\right),\quad x\mapsto \left(x,-\sqrt{f_4(x)}\right).$$
Next we suppose that $C$ is an MHV elliptic M-curve of type A or B.
\begin{figure}[htbp]
\includegraphics[width=\textwidth]{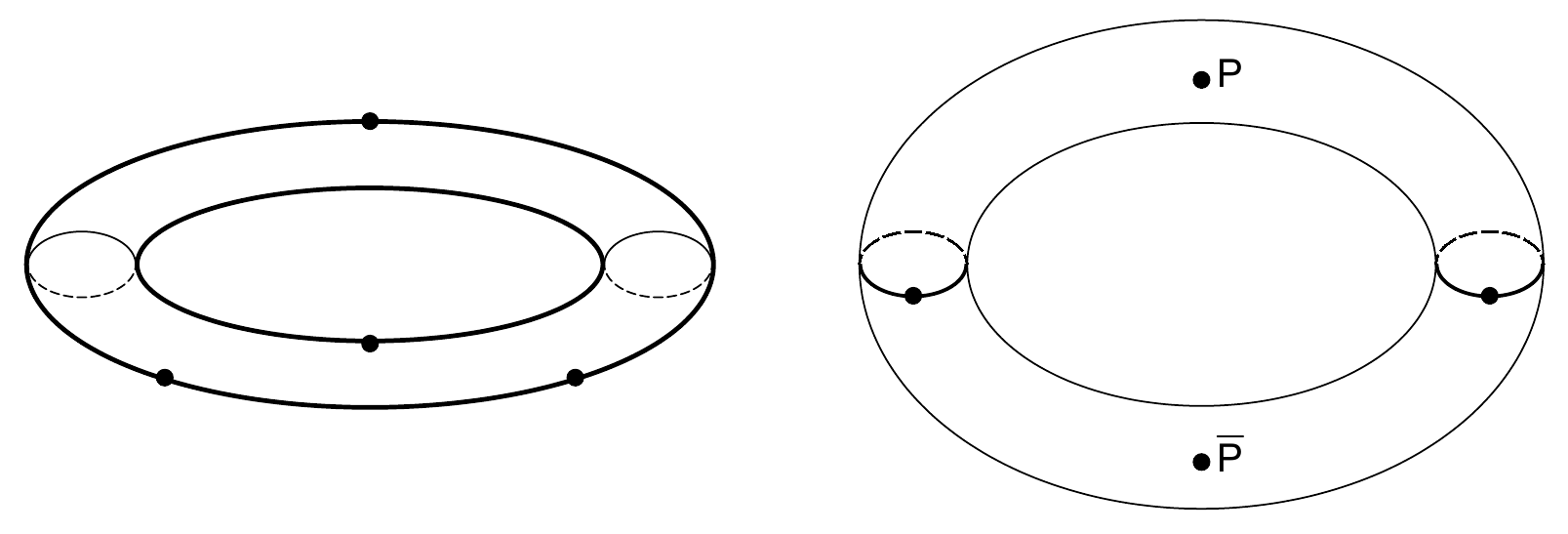}
\caption{\small MHV elliptic M curves of Type A (left) and type B (right).}
\end{figure}
In other words, we add $4$ marked points so that, depending on the type,
\begin{enumerate}
\item[(A)] $p_1,p_2,p_3\in X$ and $p_4\in Y$
\item[(B)] $p_1\in X$, $p_2\in Y$ and $p_3=\bar p_4$ are complex-conjugate points.
\end{enumerate}
We have six special points $p_{ij}=\cO(p_i+p_j)\in\Pic^2 C$. Depending on the type,
\begin{enumerate}
\item[(A)] $p_{12},p_{13},p_{23}\in X^{(2)}$ and $p_{14},p_{24},p_{34}\in Y^{(2)}$.
\item[(B)] $p_{34}\in X^{(2)}$, $p_{12}\in Y^{(2)}$ and $p_{13}=\bar p_{14}$, $p_{23}=\bar p_{24}$
are complex-conjugate.
\end{enumerate}
The scattering amplitude map $\bLa:\,\Pic^2C\to\bP^1$
is given by the line bundle
$$\cL=\cO(p_{12}+p_{34})\simeq\cO(p_{13}+p_{24})\simeq\cO(p_{14}+p_{23}).$$
In both cases (A) and (B), $\cL\in Y^{(4)}$, the Huisman component of $\Pic^2(\Pic^2 C)$.
The~restriction of $\bLa$ to both connected components $X^{(2)}$ and $Y^{(2)}$ of $\Pic^2(\bR)$
is an analytic isomorphism with $\bP^1(\bR)$. In other words, the scattering amplitude map $\bLa$ 
of an elliptic M-curve of type (A) or (B) looks like 
the projection onto the $x$-axis as on the right  side of Figure~\ref{wfegb}.
We get a $4$-parameter family of smooth scattering amplitude probability measures
given (up to normalization) 
by 
$$A={dx\over\sqrt{f_4(x)}},$$
where $f_4$ is a real polynomial with two pairs of  complex conjugate roots.
A familiar $2$-parameter subfamily is given by
\begin{center}
\begin{tabular}{ c c }
${M(a,b)\over\pi}{dx\over \sqrt{(x^2+a^2)(x^2+b^2)}}$
&\raisebox{-40pt}{{\includegraphics[height=1.2in]{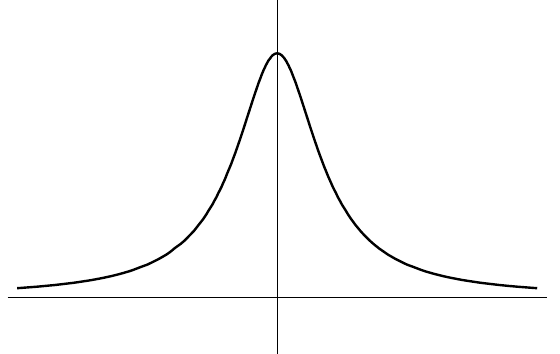}}}\cr
\end{tabular}
\end{center}
where $a\ne b$ and $M(a,b)$ is the arithmetic-geometric mean of $a$ and $b$. 
\end{Example}

\begin{Remark}\label{argewrgrg}
Notice that the scattering probability measure $\rho(x)\,|dx|$ is a smooth measure on $\oM_{0,4}(\bR)\simeq\bR\bP^1$
including the point at $\infty$, i.e.~ $\rho(1/x)\over x^2$ is smooth at $0$.
In~fact presenting it as a probability measure on $\bR$
depends on a specific choice of the identification $\oM_{0,4}(\bR)\simeq\bR\bP^1$ and it is well-known that
there are $|S_3|=6$ possibilities, which can result in  differently-looking density graphs:
\begin{center}
\begin{tabular}{c }
\includegraphics[width=\textwidth]{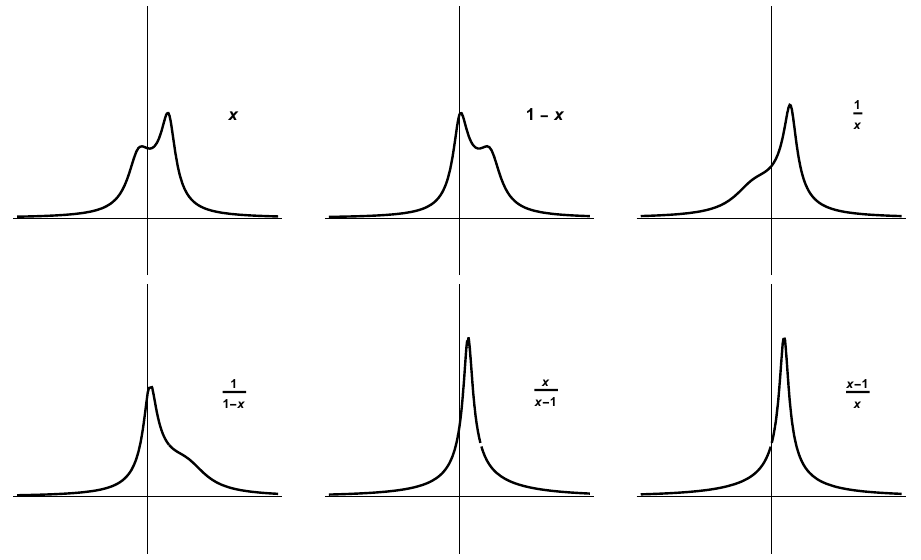}\cr
\end{tabular}
\end{center}
\end{Remark}

\begin{Review}[genus $1$ -- continued]
In type (A), the real form of $M_{0,4}$ is $M^A_{0,4}(\bR)=M_{0,4}(\bR)$, the configuration space of $4$ points in $\bP^1(\bR)$.
Viewing $M_{0,4}(\bC)$ as a Riemann sphere punctured at $0,1,\infty$ via the cross-ratio map,
$M_{0,4}(\bR)$ is identified with the real axis without $0,1,\infty$ and $\oM_{0,4}(\bR)$
with $\bP^1(\bR)$, the equator of the Riemann sphere.
The~scattering amplitude map $\bLa:\,\Pic^2(\bR)\to \oM_{0,4}(\bR)$ has the property that
\begin{equation}\label{sgshrHR}
\bLa(p_{14})=\bLa(p_{23})=0,\quad \bLa(p_{13})=\bLa(p_{24})=1,\quad \bLa(p_{12})=\bLa(p_{34})=\infty.
\cooltag\end{equation}
In type (B), the real form is $M^B_{0,4}(\bR)$, the configuration space of two real and two complex-conjugate points in $\bP^1(\bC)$.
If $\lambda=(p_1,p_2;p_3,p_4)$ is their cross-ratio then
$$\overline\lambda=(p_1,p_2;p_4,p_3)=1-\lambda.$$
Therefore,
$$M^{2,1}_{0,4}(\bR)=\left\{\Re(z)={1\over2}\right\}\subset\bC$$
and the compactification $\oM^{2,1}_{0,4}(\bR)$ is obtained by adding a single point (at $\infty$).
The~scattering amplitude map $\bLa:\,\Pic^2(\bR)\to \oM^{2,1}_{0,4}(\bR)$ still has property \eqref{sgshrHR}
but now only $\infty$ is part of the real locus $\oM^{2,1}_{0,4}(\bR)$.
This corresponds to the fact that points $p_{13}=\bar p_{14}, p_{23}=\bar p_{24}$ are not in $\Pic^2(\bR)$.
\end{Review}

\begin{Review}[Real planar locus W]
Recall from \ref{evwevwev} that the planar locus 
$W\subset \Pic^{g+1}C$ has codimension $3$ and (for a general curve)
parametrizes  realizations of $C$ as a degree $g+1$ 
plane curve with ${g(g-3)\over 2}$ nodes away from the marked points $p_1,\ldots,p_n$.
Generically along $W$, the scattering amplitude map is resolved by the blow-up \eqref{agargasr} of $W$
with exceptional divisor $\hat W$ and
the scattering amplitude form vanishes
along $\hat W$ with multiplicity $2$. 
Suppose that $C$ is a generic real MHV M-curve. Then line bundles in $W(\bR)$
give realizations of $C$ as {\em Harnack curves}, more precisely real plane curves of degree $g+1$ and genus $g$ with $g+1$ real components.
These curves have ${g(g-3)\over 2}$ acnodes, i.e.~isolated real points (where two complex-conjugate branches 
of the complex plane curve intersect transversally).
If $g$ is odd then all $g+1$ components are ovals (separate $\bR\bP^2$ into a disk and a M\"obius strip).
If $g$ is even, there are $g$ ovals and one pseudo-line (a generator of $\pi_1(\bR\bP^2)$), see~\cite{GH}.
It follows that a general real line in  $\bR\bP^2$ has even degree on all components of $C(\bR)$ if $g$ is odd
and on all but one if $g$ is even. So  for a general real MHV M-curve
$$W(\bR)\subset\Pic_\emptyset^{g+1}(\bR)$$ if $g$ is odd. If $g$ is even,
$W(\bR)$ is contained in the union of $\Pic_I^{g+1}(\bR)$ for $|I|=1$. 
It~follows that the scattering amplitude probability measures 
$|A_\emptyset|$ ($g$ odd) and $|A_I|$ for $|I|=1$ ($g$ even) are the
only  ones 
that vanish to the order $2$ along $\hat W(\bR)$.
\end{Review}

In the rest of the section  we 
study scattering  measures of genus~$2$ curves. 

\begin{Review}\label{qfvqev}
Let $C$ be a smooth MHV M-curve of genus $2$ given by the equation $y^2=f(x)$,
where $f$ is a real polynomial of degree $5$ with $5$ distinct real roots.
We denote the ovals of $C(\bR)$ by $C_1$, $C_2$ and $C_3$. Like in Theorem~\ref{sGARSGARHA},
we assume that 
$$p_i\in C_i\quad  \hbox{\rm for}\quad i=1,2,3$$
and that
$$p_4,p_5\in C_3\quad\hbox{\rm (type A)\quad or}\quad p_4=\bar p_5\quad\hbox{\rm    (type B)}.$$
See Figure~\ref{wEGsg}. We~follow notation of \ref{sfgasgsRH} for special points and divisors. In particular, 
$$K\in\Pic_{\emptyset}^2(\bR)\quad  \hbox{\rm and}\quad P=p_1+\ldots+p_5\in\Pic_{\{1,2,3\}}^5(\bR).$$
The special points $\delta=P-K$,\quad  $\delta_{i}=K+p_i$ and $\delta_{ij}=P-p_i-p_j$ are distributed among four 
components of 
$\Pic^3(\bR)$ as follows:
\medskip
\begin{center}
\begin{tabular}{ |c||c|c|c|c|} 
 \hline
 Type  & $\Pic_H^3(\bR)$ & $\Pic_{\{1\}}^3(\bR)$ & $\Pic_{\{2\}}^3(\bR)$ & $\Pic_{\{3\}}^3(\bR)$\\ 
 \hline
 A & $\delta$, $\delta_{45}$, $\delta_{34}$, $\delta_{35}$  & $\delta_1$, $\delta_{23}$, $\delta_{24}$, $\delta_{25}$  & $\delta_2$, $\delta_{13}$, $\delta_{14}$, $\delta_{15}$  & $\delta_3$, $\delta_4$, $\delta_5$, $\delta_{12}$ \\ 
 B & $\delta$, $\delta_{45}$ & $\delta_1$, $\delta_{23}$ & $\delta_2$, $\delta_{13}$ & $\delta_3$, $\delta_{12}$\\
 \hline
\end{tabular}
\end{center}
\medskip
The map $\phi_P$ embeds $C(\bR)$ into $\bP^3(\bR)$.  
Let $\bP^2\subset\bP^3$ be the plane passing  through $p_1,\ldots,p_5$.
Let $\dP=\Bl_{p_1,\ldots,p_5}\bP^2$ be the quartic  del Pezzo surface.
Depending on the type of the MHV M-curve $C$, there are two possibilities for its real form:
$$\dP^A(\bR)=\Bl_{p_1,\ldots,p_5}\bP^2(\bR)\quad \hbox{\rm or}\quad \dP^B(\bR)=\Bl_{p_1,p_2,p_3}\bP^2(\bR).$$
\end{Review}

\begin{Review}
By Theorem~\ref{DSvSDb}, 
$\Pic^{3}(\bR)$ is disjoint from the ramification divisor $R$.\footnote{
Since $\delta\in\Pic^{3}(\bR)$, this has a curious geometric consequence:
the unique quadric surface in $\bP^3$ containing $C$ is smooth (by Lemma~\ref{SFgSFhF}).}
This~implies by Theorem~\ref{vasdgsgG} that the
degree $4$ morphism
$$\bbLa:\,\Bl_{16}\Pic^3C(\bR)\to\Bl_{p_1,\ldots,p_5}\bP^2(\bR)\quad \hbox{\rm in type (A)},$$
$$\bbLa:\,\Bl_8\Pic^3C(\bR)\to\Bl_{p_1,p_2,p_3}\bP^2(\bR)\quad \hbox{\rm in type (B)}$$
induced by the scattering amplitude map 
is a real-analytic isomorphism on each connected component $\Pic^3_IC(\bR)$
blown up at $4$ (in type A) or $2$ (in type B) points.
Note that topologically each of these components 
is a connected sum of a torus $T^2$
with $4$ (in type A) or $2$ (in type B) $\bR\bP^2$'s while
$\dP(\bR)$ is a connected sum of $6$ (in type A) or $4$ (in type B) $\bR\bP^2$'s.
The scattering amplitude map $\bbLa$ provides a real-analytic isomorphism
between these (non-orientable) surfaces.
\end{Review}

\begin{Review}\label{asgasrghar}
By Theorem~\ref{vasdgsgG}, the preimage of each $(-1)$-curve in $\dP$
is the union of the ``$\Delta$'' and ``$E$''-type divisors, where the former is a $\bP^1$
and the latter is a copy of~$C$.  
In the following table we give the preimages under $\bbLa$ of real $(-1)$-curves in $\dP(\bR)$.
Recall that the ``$E$''-type divisors are 
$$E=K+C,\quad E_i=P-p_i-C,\quad E_{ij}=C+p_i+p_j.$$
The range of indices $k, l$ in the table is $3,4,5$.
We use notation 
$$E(C_i)=K+C_i,\quad E_i(C_j)=P-p_i-C_j,\quad E_{ij}(C_k)=C_k+p_i+p_j.$$

\medskip
\begin{center}
\begin{tabular}{ |c|c||c|c|c|c|} 
 \hline
  &  curve in $\dP(\bR)$  & $\Pic_{H}^3(\bR)$ & $\Pic_{\{1\}}^3(\bR)$ & $\Pic_{\{2\}}^3(\bR)$ & $\Pic_{\{3\}}^3(\bR)$\\ 
 \hline
 A & conic                        & $\Delta$       &$E(C_1)$      &$E(C_2)$      &$E(C_3)$       \\
    & exceptional               &$E_1(C_1)$    &$\Delta_1$    &$E_1(C_3)$  &$E_1(C_2)$    \\
    &                                  &$E_2(C_2)$    &$E_2(C_3)$  &$\Delta_2$    & $E_2(C_1)$   \\
    &                                  &$E_k(C_3)$    &$E_k(C_2)$   &$E_k(C_1)$  & $\Delta_k$     \\
    & line                           &$\Delta_{kl}$  &$E_{kl}(C_1)$&$E_{kl}(C_2)$&$E_{kl}(C_3)$\\
    &                                  &$E_{2k}(C_1)$&$\Delta_{2k}$ &$E_{2k}(C_3)$ & $E_{2k}(C_2)$\\
    &                                  &$E_{1k}(C_2)$&$E_{1k}(C_3)$&$\Delta_{1k}$&$E_{1k}(C_1)$\\
    &                                  &$E_{12}(C_3)$&$E_{12}(C_2)$&$E_{12}(C_1)$&$\Delta_{12}$\\
  \hline
 B & conic            & $\Delta$       &$E(C_1)$      &$E(C_2)$      &$E(C_3)$       \\
    & exceptional               &$E_1(C_1)$    &$\Delta_1$    &$E_1(C_3)$  &$E_1(C_2)$    \\
    &                                  &$E_2(C_2)$    &$E_2(C_3)$  &$\Delta_2$    & $E_2(C_1)$   \\
    &                                  &$E_3(C_3)$    &$E_3(C_2)$   &$E_3(C_1)$  & $\Delta_3$     \\
    & line                           &$\Delta_{45}$  &$E_{45}(C_1)$&$E_{45}(C_2)$&$E_{45}(C_3)$\\
    &                                  &$E_{23}(C_1)$&$\Delta_{23}$ &$E_{23}(C_3)$ & $E_{23}(C_2)$\\
    &                                  &$E_{13}(C_2)$&$E_{13}(C_3)$&$\Delta_{13}$&$E_{13}(C_1)$\\
    &                                  &$E_{12}(C_3)$&$E_{12}(C_2)$&$E_{12}(C_1)$&$\Delta_{12}$\\

 \hline
\end{tabular}
\end{center}
\end{Review}

\medskip

\begin{Remark}\label{argarsga}
The scattering amplitude  measure on each connected component $\Pic^3_I(\bR)$ is a probability measure
on a real algebraic surface that in any real-analytic chart $U$ has form $\rho(x,y)|dxdy|$, where $\rho(x,y)>0$ is a smooth, in fact real-analytic function.
The behavior of probability measure under the blow-up is as follows. Locally, we blow-up the origin $(0,0)$ of the chart,
$$\pi:\,\Bl_{(0,0)}U\to U.$$
The surface $\Bl_{(0,0)}U$ is non-orientable and contains an exceptional  curve
$\bR\bP^1$, the~preimage of $(0,0)$
(see the cover artwork of \cite{Sh}). A tubular neighborhood of the curve $\bR\bP^1$ is covered by two charts that correspond
to standard charts of $\bR\bP^1$. In each chart, $\pi$ has the form $(u,v)\mapsto (u,uv)$
and thus the probability measure is
\begin{equation}\label{sGsgav}
\rho(u,uv)|u||dudv|.
\cooltag\end{equation}
Note that $u=0$ is the local equation of the exceptional curve.
The probability  measure \eqref{sGsgav} vanishes along it to the order of $|u|$.
More generally, if $|A|$ is a probability measure on a real-analytic manifold $X$ smooth and non-vanishing generically along
a  submanifold $Y$ of codimension $c$ then the same measure on the blow-up $\Bl_YX$
vanishes along the exceptional divisor generically to the order of $|u|^{c-1}$.
\end{Remark}

\begin{Review}
Back in our situation,
the scattering amplitude measure on $\dP(\bR)$ is smooth and positive away from
four (in type A) or two (in type B)  projective lines that correspond to ``$\Delta$'' divisors in the Table~\ref{asgasrghar}.
Along these lines, the scattering amplitude form vanishes like in \eqref{sGsgav}.
These disjoint (-1)-curves can always be contracted to points giving morphisms $\dP(\bR)\to (\bR\bP^1)^2$.
Combining everything together gives a commutative diagram
$$\begin{CD}
\Bl \Pic^3_I(\bR) @>\bbLa>> \dP(\bR)\\
@VVV                                                  @VVV \\
\Pic^3_I(\bR) @>\simeq>>      (\bR\bP^1)^2              
\end{CD}$$
where vertical arrows are blow-downs of four (type A) or two (type B) projective lines and the bottom arrow is a real-analytic isomorphism
(note that algebraically these varieties can't be more different!).
It follows that we can view the scattering amplitude measure on $\dP(\bR)$
as a smooth positive probability measure on $(\bR\bP^1)^2$ (subject to M\"obius transformations of coordinates  as in Remark~\ref{argewrgrg}).
\end{Review}

\begin{Review}
So far we have treated the Huisman component and the other connected components of $\Pic^3(\bR)$ 
on the equal footing, but there is an important difference.
\begin{figure}[htbp]
\begin{center}
\begin{tabular}{ l c r}
\raisebox{10pt}{\includegraphics[height=0.9in]{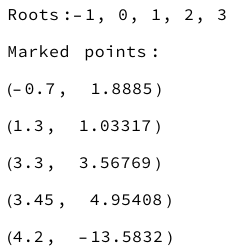}}\quad$\ $&\includegraphics[height=1.1in]{results1B.pdf}&\includegraphics[width=1.4in]{results1C.pdf}\cr
\raisebox{10pt}{\includegraphics[height=0.9in]{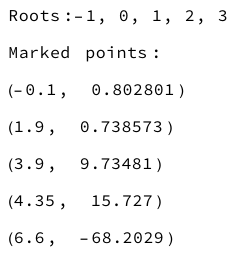}}\quad$\ $&\includegraphics[height=1.2in]{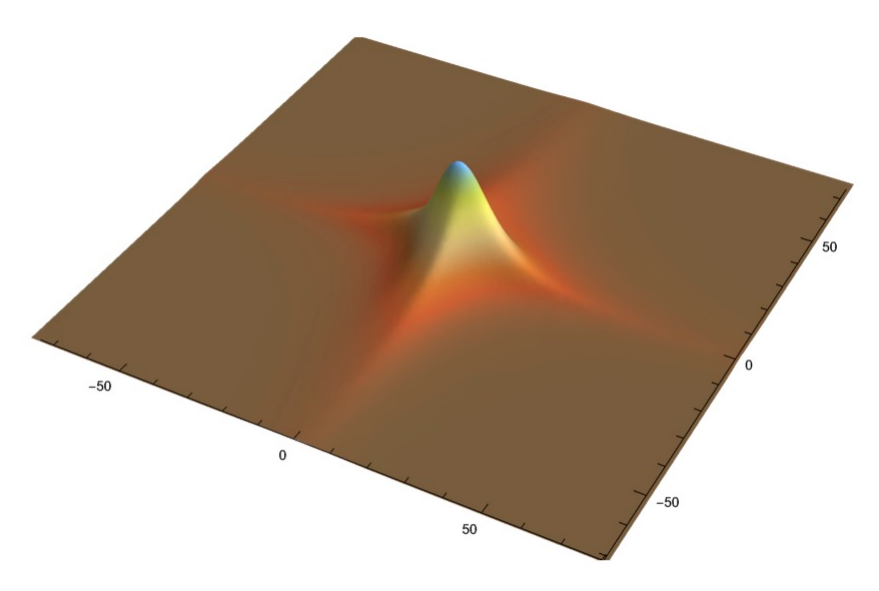}&\includegraphics[width=1.4in]{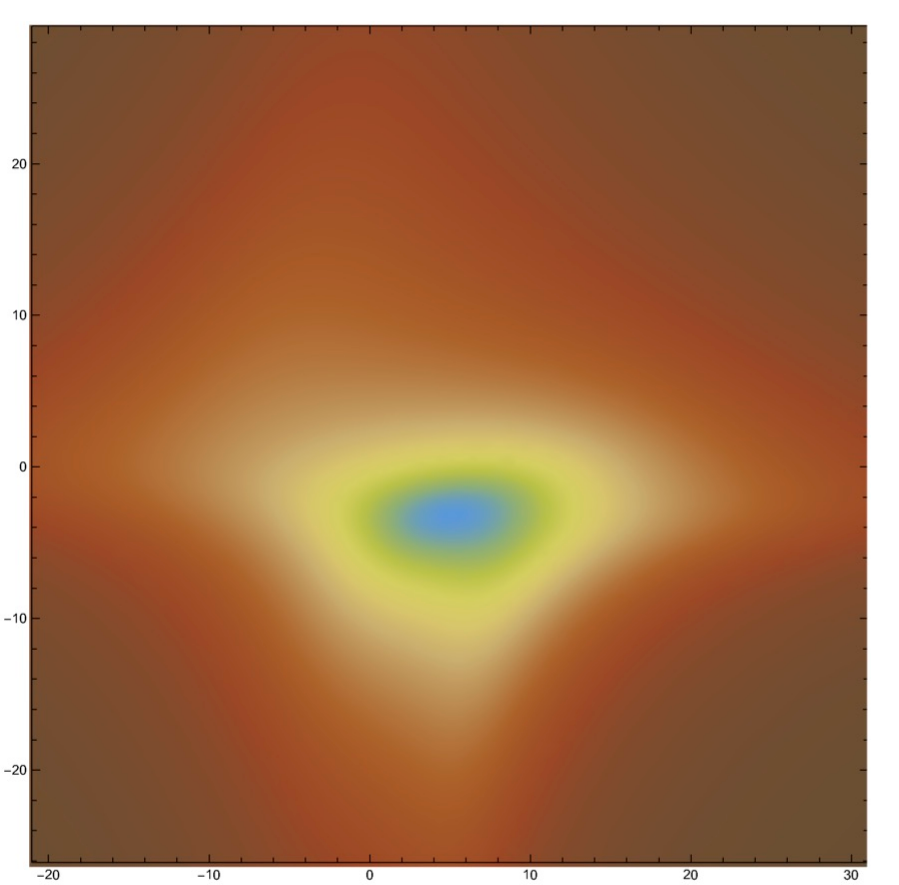}\cr
\raisebox{10pt}{\includegraphics[height=0.9in]{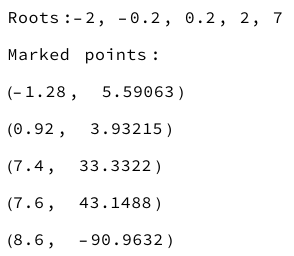}}\quad$\ $&\includegraphics[height=1.1in]{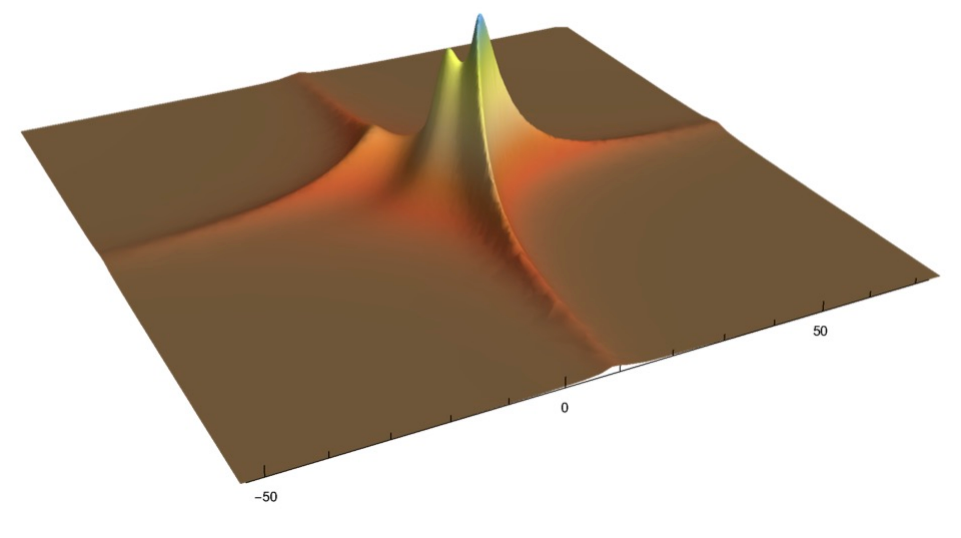}&\includegraphics[width=1.4in]{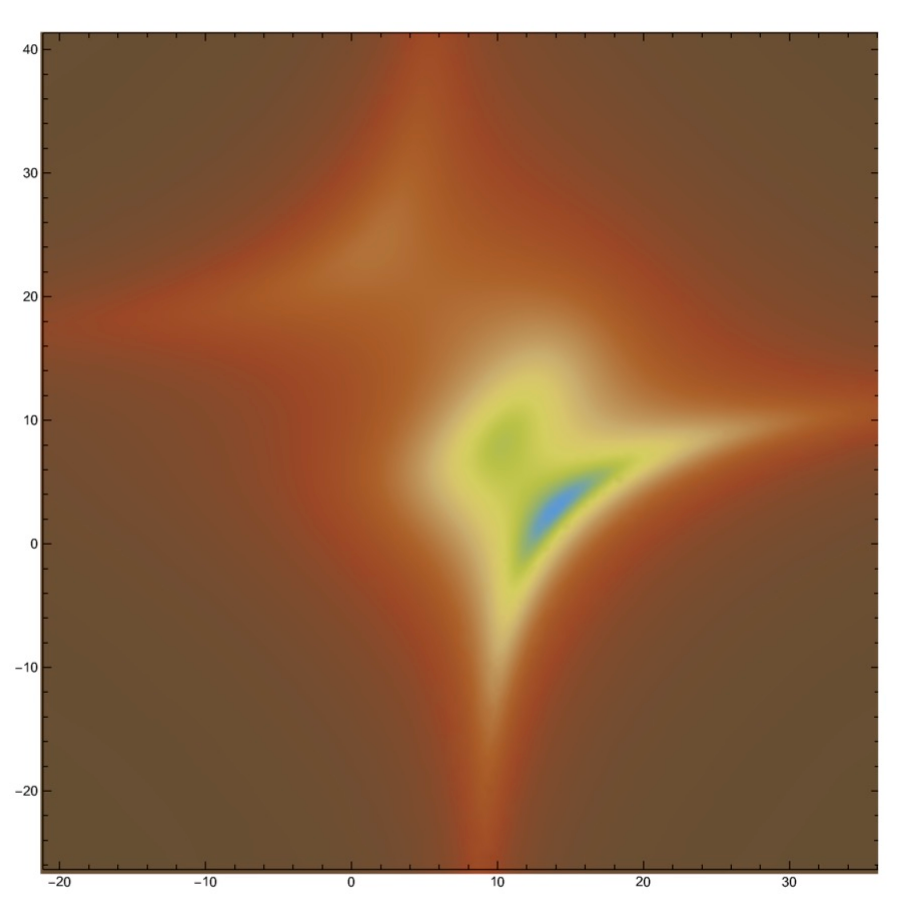}\cr
\end{tabular}
\end{center}
\caption{Scattering amplitudes  in genus $2$, Huisman's component}\label{argerhqer}
\end{figure}
For simplicity, we consider Type A only.
Recall that the space $\dP$ is an artifact of the hyperelliptic case, the actual observable is not this surface but $\dPf=\oM_{0,5}$.
The map $\dP\to \dPf$ is a contraction of the conic that corresponds to the exceptional divisor $\Delta$
only for the Huisman's component (see \ref{asgasrghar}), in which case
the scattering amplitude
map gives a real-analytic isomorphism
$$\bLa:\,\Bl_{\delta_{45}, \delta_{34}, \delta_{35}}\Pic^3_H(\bR)\to \dPf=\oM_{0,5}(\bR).$$
The $10$ exceptional divisors of $\dPf$ (the ``Petersen graph'') are images of the $10$ proper transforms $l_{ij}$ in $\dP$ of lines in $\bP^2$ 
connecting points 
$p_i$ and $p_j$ of the conic pairwise. The lines $l_{34}$, $l_{35}$, $l_{45}$ are
images of the exceptional divisors $\Delta_{34}$, $\Delta_{35}$, $\Delta_{45}$ on $\Bl_{\delta_{45}, \delta_{34}, \delta_{35}}\Pic^3_H$.
These three lines on $\dPf$ are contracted by the map 
$$\oM_{0,5}\to(\bP^1)^2=(\oM_{0,4})^2$$ 
given by cross-ratios $1345$ and $2345$.
This gives a  diagram of Theorem~\ref{sGARSGARHA}
$$\begin{CD}
\Bl_{\delta_{45}, \delta_{34}, \delta_{35}}\Pic^3_H(\bR) @>\bLa>\simeq> \oM_{0,5}(\bR)\\
@VVV                                                  @VV{1345\atop 2345}V \\
\Pic^3_H(\bR) @>\simeq>>      (\oM_{0,4}(\bR))^2              
\end{CD}$$
where horizontal arrows are real-analytic isomorphisms.
To summarize, for the Huisman component we have a preferred choice of cross-ratios
that gives a smooth positive scattering amplitude probability measure on $ (\bR\bP^1)^2$.
For the reader's amusement, we include a few pretty examples of these probability density functions 
in Figure~\ref{argerhqer}, rendered using Algorithm~\ref{asrgarh}.
\end{Review}

\begin{Review}\label{qergarh}
For a different choice of two cross-ratios, the scattering amplitude measure of the Huisman's component, smooth on $\oM_{0,5}(\bR)$,
acquires a singularity in $(\bR\bP^1)^2$ of the same type as the scattering amplitude of a non-Huisman component
has already in the interior of $\oM_{0,5}(\bR)$. 
Consider the local picture as in Remark~\ref{argarsga},
the real blow-up $\pi:\,\Bl_{(0,0)}U\to U$, where $U$ is a real-analytic chart.
Then $\pi$ has the form $(u,v)\mapsto (u,uv)$.
Suppose the probability measure $\rho(u,v)|dudv|$ on the blow-up
is continuos and positive generically along the exceptional curve $u=0$,
with the limit function $\bar\rho(v)=\lim\limits_{u\to 0}\rho(u,v)$.
In coordinates $(x,y)$ of $U$, 
$$\lim_{x\to 0}{\rho(x,vx) |x|}=\bar\rho(v)$$
for every  slope $v$. So the restriction of $\rho(x,y)$ to every line $y=vx$ grows as $\bar\rho(v)\over |x|$
when $|x|\to 0$.
In Figure~\ref{adfadfbadf}, we show how the probability density function
of the smooth scattering amplitude from the top of Figure~\ref{argerhqer}
looks like in $(\bR\bP^1)^2$ for two different choices of cross-ratios.
In both pictures notice the line of zero probability density running through singularities.
\begin{figure}[htbp]
\begin{center}
\begin{tabular}{c c c c}
\includegraphics[height=1in]{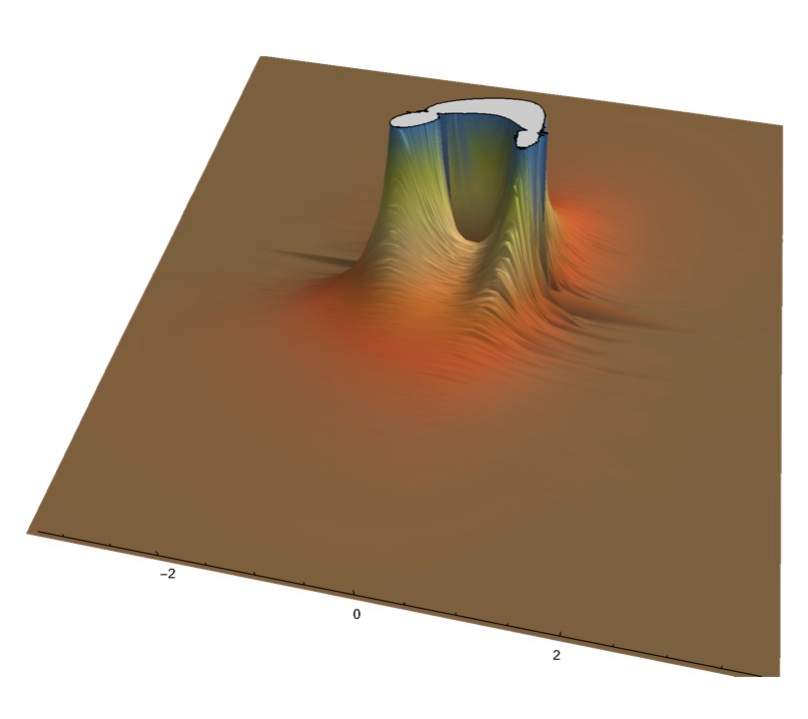}&\includegraphics[width=1in]{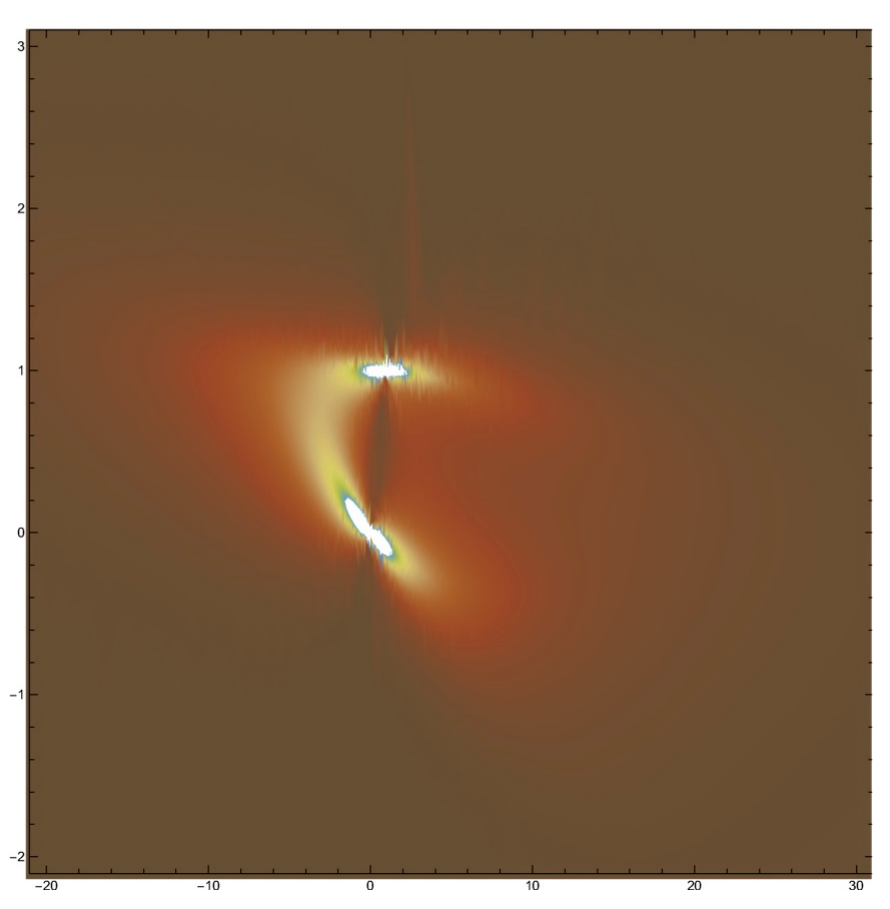}&
\includegraphics[height=1in]{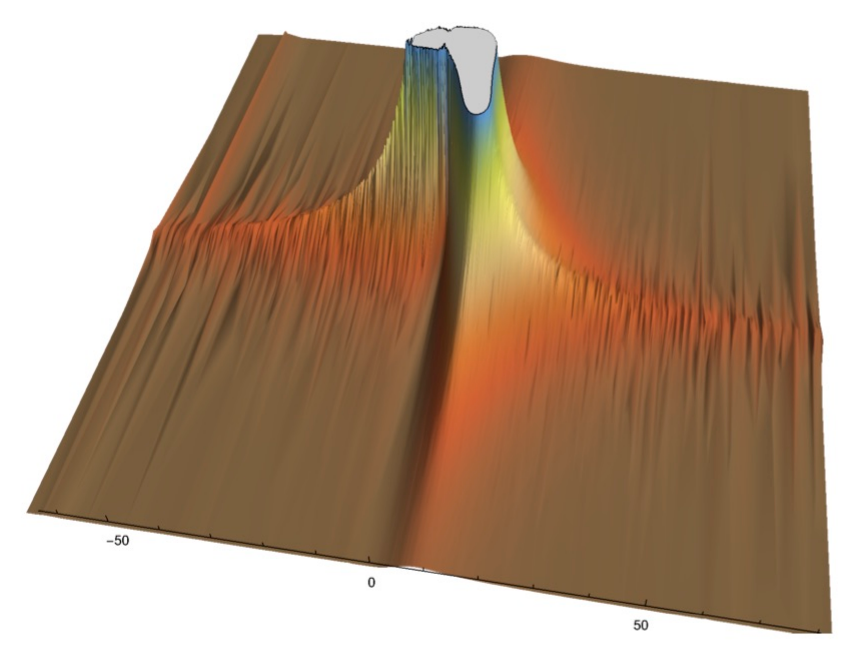}&\includegraphics[width=1in]{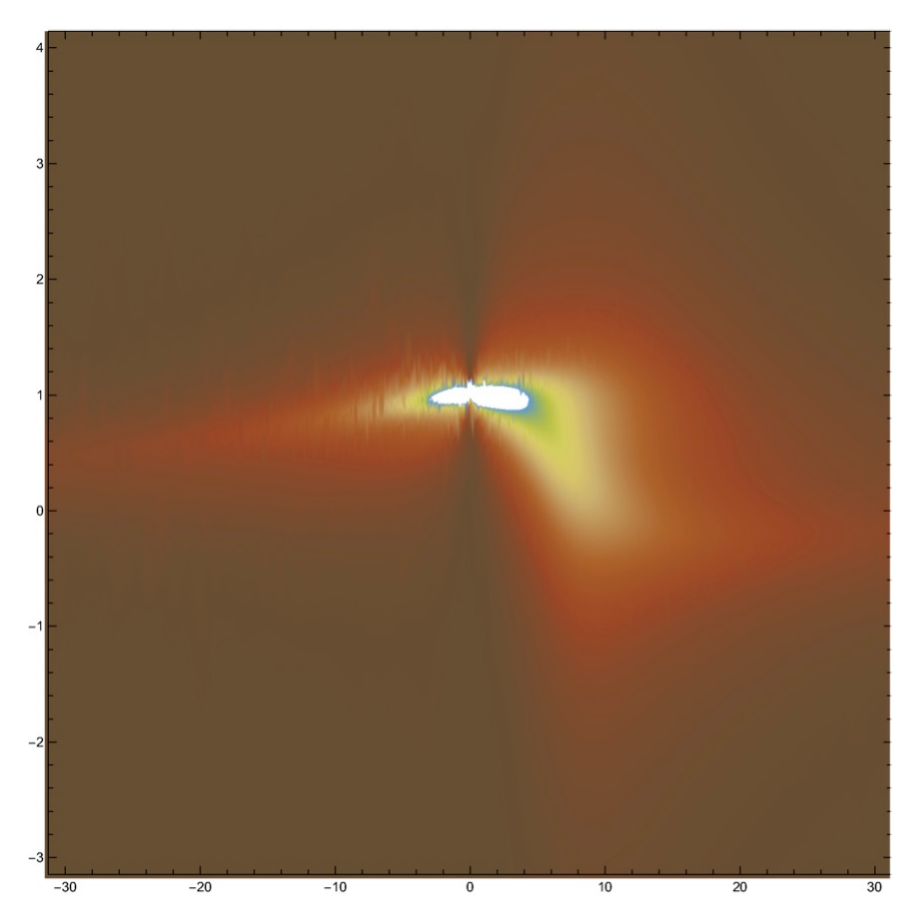}\cr
\end{tabular}
\end{center}
\caption{Rendering of singularities. Left:  $4123,5123$;\quad right: $1345,5123$.}\label{adfadfbadf}
\end{figure}
\end{Review}

\section{Forest behind the hypertrees}\label{SDgSfh}

\begin{Review}
Let  $C$ be a  {\em maximally degenerate} stable  curve:
every irreducible component is a $\bP^1$ with three special points (either nodes or marked points).
Equivalently, an~on-shell diagram is trivalent.
We will characterize MHV curves of this type.
\end{Review}

\begin{Lemma}
If $C$ is an MHV curve then $L$ has degree $0$ or $1$ on every component of $C$.
The on-shell diagram
is a trivalent graph with $d$ black circles (where $L$ has degree~$1$)
and the remaining white circles (where $L$ has degree~$0$). 
\end{Lemma}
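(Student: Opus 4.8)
The plan is to combine the genus bound from Lemma~\ref{wefwetb}~(1), the contracted-component description of Lemma~\ref{SFbSFh}, and the fact that every component of a maximally degenerate curve is a genus-zero curve with exactly three special points. First I would recall that each irreducible component $C_v\cong\bP^1$ is in particular a connected subcurve of arithmetic genus $p=0$. Taking $A=C_v$ in Lemma~\ref{wefwetb}~(1) gives no information directly (the hypothesis there is $p>0$), so instead I would use global generation of $L$ on $C_v$: since $L$ is globally generated on the MHV curve $C$, its restriction $L|_{C_v}$ is globally generated, hence of non-negative degree. Thus $d_v:=\deg L|_{C_v}\ge 0$ for every vertex $v$.

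Next I would rule out $d_v\ge 2$. Suppose $d_v\ge 2$ for some component $C_v$. I would apply Lemma~\ref{wefwetb}~(1) to a carefully chosen connected subcurve containing $C_v$. Concretely, let $A$ be a maximal connected subcurve on which $L$ has positive degree on every component and which contains $C_v$; then the complementary closure $B=\overline{C\setminus A}$ consists of components where $L$ has degree $0$, so by Lemma~\ref{SFbSFh} each connected component of $B$ is a tree of $\bP^1$'s carrying at most one marked point. Counting arithmetic genus and degree: $g_A=g-g_B$ and $\deg L|_A=d-\deg L|_B=d=g+1$ (since $\deg L|_B=0$), while each genus-zero component $C^{(0)}_i$ of $B$ is attached to $A$ along at least one node. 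If any $C^{(0)}_i$ met $A$ in only one node it would be a rational tail, contradicting $A$-stability (or directly Lemma~\ref{qwefv2e2} applied to the separating node), and if it met $A$ in two nodes it would create a one- or two-channel factorization whose analysis via Theorem~\ref{safbsfhsfn}/Lemma~\ref{wefwetb} forces $d_v\le 1$; I would work through these cases to derive a contradiction with $d_v\ge 2$. The cleanest formulation: apply Lemma~\ref{wefwetb}~(1) to the subcurve $A$, which has $g_A = g - g_B \ge \deg L|_A - 1 = g$, forcing $g_B = 0$, i.e. $B$ is a tree; then $B$ having a component of positive degree is impossible, and the degree-$2$ component forces the component-count in $A$ to overflow the bound $\deg L|_A = g_A+1$, contradiction with Lemma~\ref{wefwetb}~(1) again since a subcurve of $A$ omitting $C_v$ would then have degree $< $ (its genus) $+1$.

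Having established $d_v\in\{0,1\}$ for every vertex $v$, the count $\sum_v d_v = \deg L = d$ shows there are exactly $d$ black circles (degree $1$) and the rest are white circles (degree $0$), and the on-shell diagram is trivalent by the standing assumption that $C$ is maximally degenerate. The main obstacle I expect is the case analysis ruling out $d_v\ge 2$: one must be careful that the maximal positive-degree subcurve $A$ is genuinely connected and that peeling off the degree-$0$ trees does not accidentally disconnect things or interact badly with marked points, so that the arithmetic-genus bookkeeping in Lemma~\ref{wefwetb}~(1) applies cleanly. Everything else is a direct consequence of global generation plus the two cited lemmas.
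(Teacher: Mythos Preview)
Your argument has a real gap in the step where you ``apply Lemma~\ref{wefwetb}~(1) to the subcurve $A$, which has $g_A = g - g_B \ge \deg L|_A - 1 = g$.'' Two things go wrong here. First, Lemma~\ref{wefwetb}~(1) gives the inequality $\deg L|_A \ge g_A+1$, i.e.\ $g_A \le \deg L|_A - 1$, not $g_A \ge \deg L|_A - 1$; with your choice of $A$ (the maximal positive-degree subcurve) you have $\deg L|_A = d = g+1$, so the lemma only yields the vacuous $g_A \le g$. Second, the formula $g_A = g - g_B$ is not correct for a nodal curve: if $A$ and $B$ meet in $k$ nodes then $g = g_A + g_B + (k-1)$, and in your setup each degree-$0$ tree meets $A$ in at least two nodes, so the discrepancy matters. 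As a result the chain of inequalities forcing $g_B=0$ collapses, and the subsequent ``overflow'' claim about omitting $C_v$ is never actually established.

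The paper's route avoids all of this by choosing the complementary subcurve the other way around: set $B=C_v$ (the single component with $d_v\ge 2$) and $A=\overline{C\setminus C_v}$. Since no node of an MHV curve is separating (Lemma~\ref{qwefv2e2}), $A$ is connected; since $B$ is a trivalent $\bP^1$, one has $g_A\ge g-2$ while $d_A\le d-2=g-1$. Now Lemma~\ref{wefwetb}~(1) in the correct direction forces $d_A=g_A+1$, and then part~(2) bounds the marked points on $A$ by $g_A+3\le n-2$, so $B$ carries at least two marked points---but then $B$ has at most one node, which would be separating, contradicting Lemma~\ref{qwefv2e2}. Your instinct to use Lemma~\ref{wefwetb} on a complementary subcurve is right; you just need to complement at the single offending component rather than at the whole positive-degree locus.
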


\begin{proof}
We argue by contradiction and suppose that the degree of $L$ on some irreducible component $B$ is at least $2$.
By Lemma~\ref{qwefv2e2}, the remaining part of the curve $A$ is connected.
Let $g_A$ be its genus and let $d_A$ be the degree of $L$ on $A$.
Then $d_A\le d-2=g-1$. On the other hand, $g_A\ge g-2$.
If $g_A>0$ then, by Lemma~\ref{wefwetb}~(1), $d_A=g-1$ and $d_A=g_A+1$. 
If $g_A=0$ then $g=2$ and so again $d_A=g_A+1$.
In~both cases, by Lemma~\ref{wefwetb} (2), $A$ contains at most $g_A+3=n-2$
marked points. Thus $B$ contains at least $2$ marked points. This contradicts Lemma~\ref{qwefv2e2}.
\end{proof}

\begin{Assumption}\label{zxfbzxfzf}
By Lemma~\ref{SFbSFh} and Remark~\ref{AKHJDbfHSDB},
if $C$ is an MHV curve then every connected component of a subgraph of white circles of the on-shell diagram
is a tree with at most one marked point. In addition, 
by Lemma~\ref{wefwetb}, a connected subcurve $A\subset C$ of arithmetic genus~$p$ should satisfy the following conditions:
\begin{enumerate}
\item If $p>0$ then $\deg L|_A\ge p+1$.
\item If $\deg L|_A=p+1$ then $A$ contains at most $p+3$ marked points.
\end{enumerate}
From now on we will assume that all these  conditions hold.
\end{Assumption} 

The precise shape of these subtrees is not important for the calculation of the scattering amplitude map and form
and there are two ways to ignore them.

\begin{Definition} Let $C$ be a maximally degenerate curve satisfying Assumption~\ref{zxfbzxfzf}.
\begin{enumerate}
\item 
For each maximal connected subtree of white circles of the on-shell diagram, contract all interior edges of the tree so that it shrinks to a point. We call this point a white {\em megacircle}.
It is possible for it to have  more than three outgoing edges. A similar operation appears in \cite{MHV}.
Geometrically, this corresponds to a curve $C_s$ obtained by smoothing some nodes of $C$.
\item
Contract  each component of $C$ where $L$ has degree~$0$ to a singular point
of a {\em hypertree curve} $\Sigma$ \cite[Def.~1.8]{CT_Crelle}.
$\Sigma$ is not a stable curve: it has singularities worse than nodes
and marked points at  singularities. But it is  convenient: every morphism to $\bP^1$
given by a line bundle in $\Pic^{\vec d}C$ factors through~$\Sigma$.
\end{enumerate}
\end{Definition}

The following lemma was essentially proved in \cite{MHV}.

\begin{Lemma}\label{wefvev4tbg2} Let $C$ be a maximally degenerate curve satisfying Assumption~\ref{zxfbzxfzf}. Then
its on-shell diagram has the following properties:
\begin{enumerate}
\item
Black circles 
are only connected to white megacircles and marked points.
\item
Each white megacircle is connected to exactly one marked point.
\item
The data of the on-shell diagram is equivalent to the data of triples in $\{1,\ldots,n\}$:
$$\Gamma=\{\Gamma_1,\ldots,\Gamma_d\}.$$
Each $\Gamma_i$ is associated to one of the $d$ black circles of the on-shell diagram. For such a circle, the associated $\Gamma_i$ 
consists of the markings that are either attached directly at that black circle, or sit on a white megacircle connected to it.
\end{enumerate}
\end{Lemma}

\begin{proof}
The first step is to place a dummy white megacircle with two edges
between every marked point and  a black circle directly connected to it.
After this operation, no black circle is connected directly to a marked point.
Let $r$ be the total number of white megacircles.
The on-shell diagram can be built step-by-step as follows: start with $r$ white megacircles
connected to marked points. The Euler characteristic of this graph is $r$. 
Now add black circles one-by-one. Every time
we add a new black (trivalent!) circle, the Euler characteristic goes down by at most $2$ and exactly by $2$ only
if the black circle is connected to an already constructed graph at three points.
It follows that at the end of the construction the Euler characteristic will be at least $r-2(g+1)$, on the other hand it should be equal to $1-g$.
Thus $r\le g+3=n$. On~the other hand, $n\le r$ because each white megacircle is connected to  at most one marked point.
Thus we have all the conclusions:  $r=n$, all white megacircles are connected to marked points and black vertices
are not connected to each other. It remains to note that every black vertex is connected to three different marked points.
Indeed, otherwise a black vertex is connected to a white vertex by two paths, producing a subcurve of arithmetic genus $1$ and degree $1$. which contradicts  Assumption~\ref{zxfbzxfzf}.
\end{proof}

Not every  maximally degenerate curve 
satisfying satisfying Assumption~\ref{zxfbzxfzf} is an MHV curve.
The~precise condition was found in \cite[Theorems 2.4 and ~3.2]{CT_Crelle}:

\begin{Theorem}[\cite{CT_Crelle}]\label{hjgcjghcj}
A maximally degenerate stable curve $C$  is an MHV curve
if and only if it is give by triples $\Gamma$ 
as in Lemma~\ref{wefvev4tbg2} that form a CT hypertree\footnote{See \cite{CT_Crelle} for a motivation 
and a more general definition for subsets other than triples.
There are  other notions of hypertrees in literature, so to distinguish our case we  use terminology  ``CT hypertree".}, 
i.e.~satisfy~\eqref{CondS}:
$$\bigl|\bigcup_{j\in S}\Gamma_j\bigr|\ge |S|+2
\quad\hbox{\rm for every $S\subset\{1,\ldots,d\}$}.$$
The scattering amplitude map is birational with the inverse map (called $v$ in \cite{CT_Crelle})
\begin{equation}\label{nbcxjhfx}
\bLa^{-1}:\,M_{0,n}\to\Pic^{\vec d}C,
\cooltag\end{equation}
$$\bLa^{-1}(q_1,\ldots,q_n)=v^*\cO_{\bP^1}(1),$$
where $v:\,C\to\bP^1$
is a unique morphism such that $v(p_i)=q_i$ and $v^*\cO_{\bP^1}(1)\in \Pic^{\vec d}C$.
\end{Theorem}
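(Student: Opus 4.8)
The plan is to prove the two assertions of Theorem~\ref{hjgcjghcj} --- the characterization of maximally degenerate MHV curves via the hypertree condition~\eqref{CondS}, and the birationality of $\bLa$ with the stated inverse --- by combining the combinatorial reduction already carried out in Lemma~\ref{wefvev4tbg2} with the Brill--Noether-type estimates of Section~\ref{zdfbsdh}. After Lemma~\ref{wefvev4tbg2}, a maximally degenerate curve satisfying Assumption~\ref{zxfbzxfzf} is encoded by a collection $\Gamma=\{\Gamma_1,\ldots,\Gamma_d\}$ of triples in $\{1,\ldots,n\}$; what remains is to decide when such a curve satisfies condition~(3) of Definition~\ref{KSLJDfksjdg}, i.e.~when $\bLa$ is dominant, and in fact to show it is then birational.

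\textbf{Step 1: the hypertree inequality is necessary.} First I would show that if $C$ is an MHV curve then $\Gamma$ satisfies~\eqref{CondS}. Given a subset $S\subset\{1,\ldots,d\}$, the black circles indexed by $S$ together with the white megacircles they meet and the marked points in $\bigcup_{j\in S}\Gamma_j$ span a connected subcurve $A\subset C$ (after possibly adding the dummy megacircles of the proof of Lemma~\ref{wefvev4tbg2}). By construction $\deg L|_A=|S|$ and $A$ has arithmetic genus $p_A$ determined by its Euler characteristic; a count as in the proof of Lemma~\ref{wefvev4tbg2} gives $p_A\le |S|-1$, and the number of marked points on $A$ is $|\bigcup_{j\in S}\Gamma_j|$. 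Applying Lemma~\ref{wefwetb} (and Lemma~\ref{qwefv2e2} to rule out the separating case) forces $|\bigcup_{j\in S}\Gamma_j|\le p_A+3\le |S|+2$ to fail unless~\eqref{CondS} holds --- more precisely, if~\eqref{CondS} is violated for some $S$ then $A$ is a connected subcurve with $d_A=|S|\le p_A+1$ carrying too many marked points, contradicting Lemma~\ref{wefwetb}(2) or Lemma~\ref{qwefv2e2}. This is essentially bookkeeping on the dual graph.

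\textbf{Step 2: the hypertree inequality is sufficient, and $\bLa$ is birational.} This is the substantive direction and the main obstacle. The strategy is to exhibit the inverse map~\eqref{nbcxjhfx} directly: given a generic $(q_1,\ldots,q_n)\in M_{0,n}$, I claim there is a \emph{unique} morphism $v\colon C\to\bP^1$ with $v(p_i)=q_i$ for all $i$ and with $v^*\cO_{\bP^1}(1)$ of the prescribed multidegree $\vec d$. On each white megacircle $L$ has degree $0$, so $v$ contracts it to a single point; on each black circle $v$ is an isomorphism onto $\bP^1$, so $v$ is determined on that $\bP^1$ by the images of its three special points. Condition~\eqref{CondS}, via Hall's marriage theorem (this is where the hypertree combinatorics of \cite{CT_Crelle} enters), guarantees one can consistently and uniquely propagate the values $q_i$ at the marked legs through the tree of megacircles and black circles: \eqref{CondS} is exactly the deficiency version of Hall's condition that makes the system of ``a black triple determines the three image points, a megacircle forces its incident black circles to share a value'' have a unique solution for generic $q_i$. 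I would phrase this as: the incidence structure $\Gamma$ admits, for generic target values, a unique compatible labelling of all vertices of the on-shell diagram, and this labelling is precisely the morphism $v$. Then $\bLa^{-1}(q_1,\ldots,q_n):=v^*\cO_{\bP^1}(1)$ is a well-defined rational map $M_{0,n}\dashrightarrow\Pic^{\vec d}C$, and $\bLa\circ\bLa^{-1}=\mathrm{id}$ by construction, so $\bLa$ is dominant (hence $C$ is MHV) and birational with the stated inverse. I expect the delicate point to be checking that the propagation is genuinely \emph{unique} (no free parameters) exactly when~\eqref{CondS} holds with the ``$+2$'' on the right, and that no two distinct $\Gamma$-compatible labellings exist generically --- this is the heart of the argument and is where I would lean most heavily on \cite{CT_Crelle}, citing their analysis of hypertree curves and their rational parametrization of the hypertree divisor rather than redoing it.

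\textbf{Step 3: cleanup.} Finally I would note that the indeterminacy of $\bLa^{-1}$ lies in the locus where the generic labelling degenerates (two of the $q_i$ forced to collide, or a black circle's three images becoming non-distinct), which is a proper closed subset of $M_{0,n}$, so $\bLa^{-1}$ is a genuine rational inverse and $\bLa$ has degree $1$. Together with Step 1 and Step 2 this gives both halves of the theorem. The only real work is Step 2; Steps 1 and 3 are dual-graph and genericity arguments of the kind already used repeatedly in Sections~\ref{zdfbsdh} and~\ref{zdadthethfbsdh}.
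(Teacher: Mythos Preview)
The paper does not prove this theorem: it is stated with the attribution \cite{CT_Crelle} in the header and is immediately followed by a Definition, with no proof environment in between. There is therefore no ``paper's own proof'' to compare your proposal against; the present paper simply quotes the result from \cite{CT_Crelle}.

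That said, your outline is a reasonable sketch of how the argument in \cite{CT_Crelle} goes, and you correctly identify where the weight lies. A small correction to Step~1: your use of Lemma~\ref{wefwetb}(2) is not the clean route. The direct argument uses Lemma~\ref{wefwetb}(1). For connected $A$ built from the $|S|$ black circles and the $|\bigcup\Gamma_j|$ adjacent white megacircles, the dual graph has $|S|+|\bigcup\Gamma_j|$ vertices and $3|S|$ edges, so $p_A=2|S|-|\bigcup\Gamma_j|+1$. If $p_A>0$, Lemma~\ref{wefwetb}(1) gives $|S|=\deg L|_A\ge p_A+1=2|S|-|\bigcup\Gamma_j|+2$, i.e.\ exactly \eqref{CondS}; if $p_A=0$ then $|\bigcup\Gamma_j|=2|S|+1\ge|S|+2$ automatically. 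The disconnected case reduces to this componentwise. Your Step~2 is indeed where the substance is, and the uniqueness-of-propagation statement you isolate (that \eqref{CondS} is precisely what makes the labelling exist and be unique for generic $q_i$) is the content of the hypertree-curve analysis in \cite{CT_Crelle}; the paper under review does not reproduce it.
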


\begin{Definition}[\cite{CT_Crelle}]
A CT hypertree $\Gamma$ is called irreducible if \eqref{CondS} is a strict inequality for every $S$ such that $1<|S|<d$.
\end{Definition}

\begin{Example}
In genus $0$ and $1$, irreducible CT hypertrees are Examples~\ref{asgsrgr} and~\ref{sdfvwfv}, respectively.
In genus 2, there are none. See Figure~\ref{SGDFHADFH} for irreducible hypertrees in genus $3,4,5,6$.
\begin{figure}[htbp]
\centerline{\includegraphics[width=\textwidth]{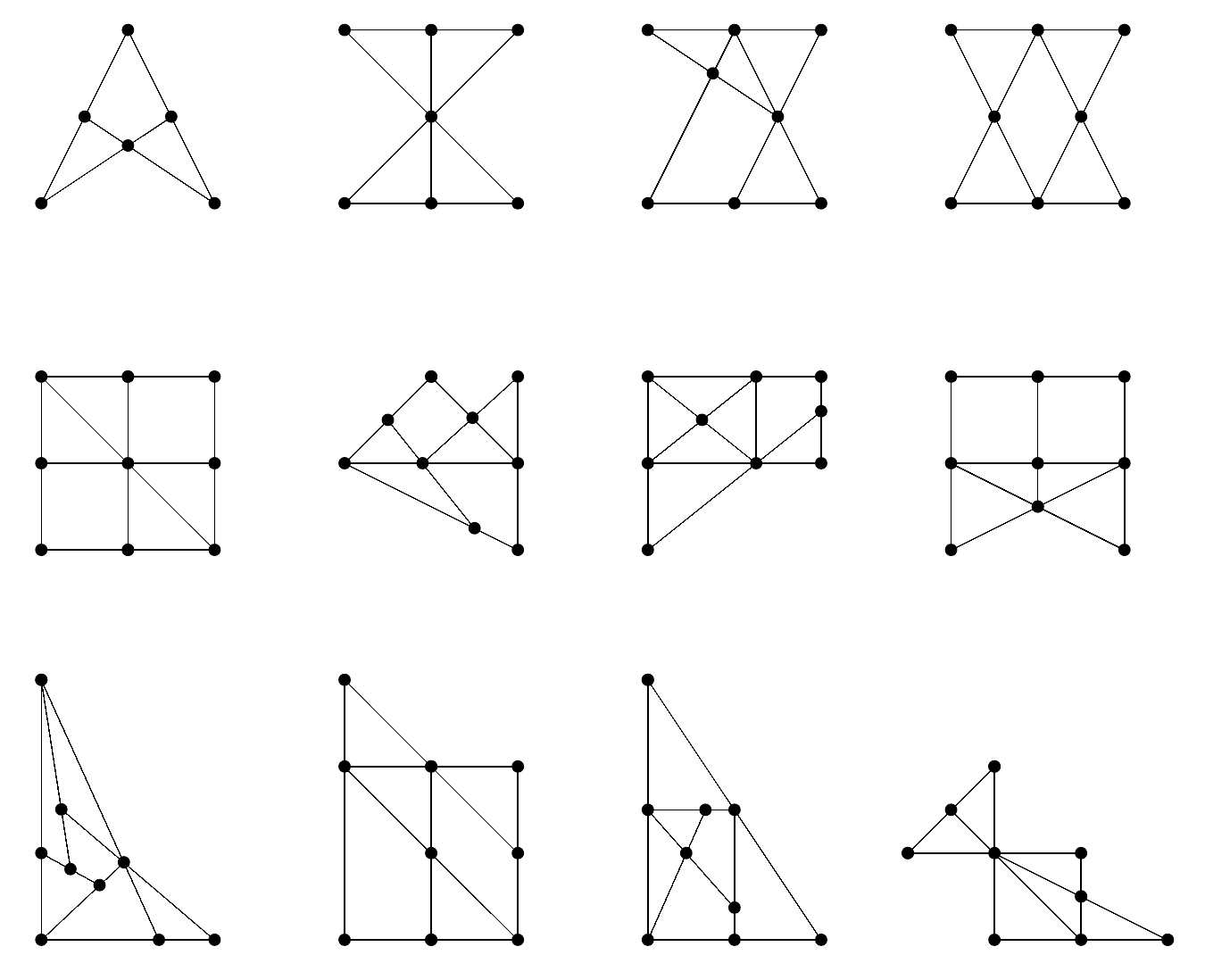}}
\caption{}\label{SGDFHADFH}
\end{figure}
The database of irreducible CT hypertrees in genus at most $8$ up to symmetries
along with equations and classes of the corresponding hypertree divisors
was created by Opie and Scheidwasser \cite{OS}.
\end{Example}

\begin{Review}
The main goal of \cite{CT_Crelle} was 
to construct  {\em hypertree divisors} $D_\Gamma\subset M_{0,n}$ with  good properties.
Suppose that $\Gamma$ is an irreducible CT hypertree and $g\ge3$.
Then $\bLa^{-1}$ contracts a unique divisor $D_\Gamma\subset M_{0,n}$ 
given~by
\begin{itemize}
\item choosing a configuration of different points $p_1,\ldots,p_n\in\bP^2$ such that different points $p_i,p_j,p_k$
are collinear if and only if $\{i,j,k\}\in\Gamma$,
\item projecting points $p_1,\ldots,p_n$ from a point 
$p\in\bP^2$ to points $q_1,\ldots,q_n\in\bP^1$,
\item and representing 
the datum $(\bP^1;q_1,\ldots,q_n)$ by a point of $M_{0,n}$.
\end{itemize}
\centerline{\includegraphics[width=0.8\textwidth]{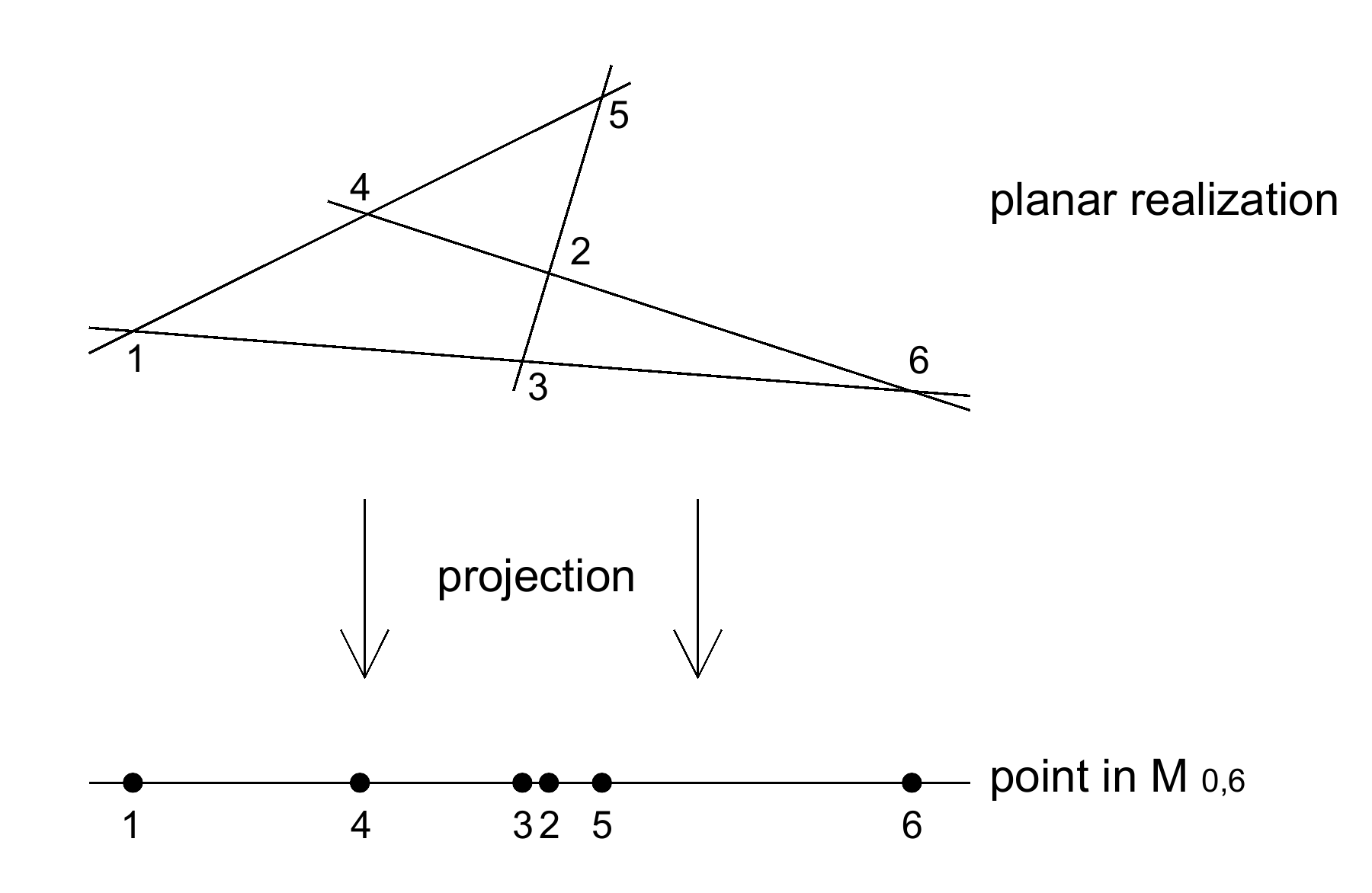}}
The reader will  notice a parallel with \ref{evwevwev}: the image $W_\Gamma:=\bLa^{-1}(D_\Gamma)\subset\Pic^{\vec d}C$
is  the analogue of the planar locus $W\subset\Pic^{g+1}C$ of a smooth MHV curve $C$ that parametrizes its presentations
as a plane curve of degree $g+1$. Like in the smooth case, the scattering amplitude form on $\Pic^{\vec d}C$
(when pulled back to $M_{0,n}$) vanishes along $D_\Gamma$ with multiplicity $2$ because
$W_\Gamma$ has codimension~$3$. 
This was noticed in  \cite{CT_Crelle} and \cite{MHV},
which both contain (the same) determinantal equation for $D_\Gamma$.
\end{Review}

\begin{Remark}
The same stable curve can give many hypertrees by changing a vector $\vec d$ of multidegrees.
It is a non-trivial question to understand 
all MHV curves with the same underlying stable maximally degenerate curve $C$.
\end{Remark}

We finish this section by studying geometry of spherical CT hypertrees.
Recall that  every checkerboard triangulation of a $2$-sphere as in Figure~\ref{triangpic}
gives a CT hypertree:
vertices of the triangulation give the indexing set $\{1,\ldots,n\}$ 
and black triangles give triples (another CT hypertree is given by  white triangles.)

\begin{Theorem}\label{sVSGsrg}
Let $\Gamma$ be a CT hypertree and let $C$ be the corresponding maximally degenerate stable MHV curve.
Then $\Gamma$ is spherical if and only if
$C$ does not have a $2$-channel factorization (see Definition~\ref{FbfhFHF}) and
admits a real algebraic  structure such that
\begin{enumerate}
\item $C$ is a stable limit of smooth pointed M-curves $C_t$ of type A (see Definition~\ref{SfbXfbSDb}).
Let ~$C(\bR)=W_1\cup\ldots\cup W_{g+1}$ be the union of images of $g+1$ ovals of $C_t(\bR)$.
\item No $W_i$ is an acnode, i.e.~a real node which is a  connected component of $C(\bR)$.
\item No $W_i$ is  contained in a degree $0$ component of $C$ (white circle of the on-shell diagram). Equivalently,
$W_i$ is not contracted to a point by the morphism $C\to\Sigma$.
\end{enumerate}
\end{Theorem}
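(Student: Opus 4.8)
The statement characterizes spherical CT hypertrees among all CT hypertrees in terms of the real-algebraic geometry of the associated maximally degenerate curve $C$. Both directions reduce to combinatorics of the on-shell diagram together with the deformation theory of stable curves into smooth M-curves. The plan is to first translate the three conditions (1)--(3) into purely combinatorial conditions on the on-shell diagram (equivalently, on the hypertree $\Gamma$), and then to show these match the spherical condition.

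First I would set up the dictionary. Recall that by Lemma~\ref{wefvev4tbg2} the on-shell diagram of $C$ consists of $d=g+1$ black circles (one per triple $\Gamma_j$) and $n=g+3$ white megacircles (one per marked point), with each black circle trivalent and each white megacircle attached to exactly one marked point. A real structure on $C$ that arises as a stable limit of smooth M-curves $C_t$ of type A is, by the theory of real degenerations (cf.~\cite{SS, GH}), encoded by a choice of ``real shape'': the $g+1$ ovals $W_1,\dots,W_{g+1}$ of $C_t(\bR)$ limit onto cycles/paths in the dual graph, and for the curve to actually have $g+1$ surviving ovals (the M-property must persist in the limit in the appropriate sense), these cycles must be ``independent'' and exhaust the first homology. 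This is exactly the data of a system of $g+1$ loops in the dual graph $\Gamma$ of $C$ whose homology classes, together with the relation coming from ``the outer oval,'' span $H_1$; concretely, since the dual graph of a maximally degenerate genus $g$ curve has first Betti number $g$, one needs $g$ of the ovals to give a basis of $H_1(\Gamma,\bZ)$ and the last to be (homologous to) their sum. I would make precise that the existence of such a real structure is equivalent to a planarity-type condition on $\Gamma$, and then conditions (2) and (3) say that each oval genuinely passes through at least one \emph{black} circle (not contracted by $C\to\Sigma$) and is not a single node (not an isolated real point). The key point is that this bundle of conditions is precisely the statement that the dual graph $\Gamma$ embeds in the $2$-sphere in such a way that the $g+1$ oval cycles become the boundaries of the $g+1$ complementary faces other than one distinguished outer face --- i.e., $\Gamma$ is the $1$-skeleton of a polyhedral decomposition of $S^2$, which by \cite{CT_Crelle} is exactly the definition of a spherical CT hypertree. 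The exclusion of $2$-channel factorizations corresponds, on the sphere side, to $3$-connectivity of the associated triangulation (Steinitz-type: a polytopal $2$-sphere decomposition with faces corresponding to hypertree triples forces $3$-edge-connectivity), matching Definition~\ref{FbfhFHF}.

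Next I would prove the two implications. For ($\Rightarrow$): assuming $\Gamma$ spherical, I use the checkerboard picture from Figure~\ref{triangpic} --- the triangulation of $S^2$ with vertices $\{1,\dots,n\}$, black triangles $=$ the triples $\Gamma_j$. The dual graph of $C$ is built from this: black circles at black triangles, white megacircles at vertices, edges along triangle sides. One then constructs the real structure explicitly: the $g+1$ ovals are the boundaries of the $g+1$ white (non-triple) faces, and $C$ is realized as a stable limit of M-curves by the standard recipe of ``doubling'' the triangulated sphere (glue two copies along $C(\bR)$, a Harnack-type construction; here the results of Tutte on arborescences enter, to count/orient the way the ovals thread the graph and to verify the homological independence). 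Conditions (2) and (3) hold because every white face of a genuine triangulation has a boundary edge incident to a black triangle, so each $W_i$ meets a black circle, hence is neither an acnode nor contracted to a point in $\Sigma$; and the absence of $2$-channel factorization follows from polytopality of the sphere decomposition. For ($\Leftarrow$): given a real structure satisfying (1)--(3) and no $2$-channel factorization, I reconstruct the sphere. Conditions (1)--(2) give $g+1$ disjoint simple closed curves $W_1,\dots,W_{g+1}$ on the (topological) curve $C$; cutting along them, or rather using them to build a CW structure, yields a genus-$0$ ``doubling'' which is $S^2$; the marked points and the black circles become vertices and faces. Condition (3) guarantees no face degenerates, so this is a bona fide polyhedral $2$-sphere; and since the faces indexed by $\Gamma$ are the triangles, $\Gamma$ is spherical by definition.

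The main obstacle I expect is \emph{the homological bookkeeping in the ($\Leftarrow$) direction} --- showing that a real structure making $C$ a limit of type-A M-curves genuinely forces the dual graph to be planar (spherically embeddable) with the oval cycles as face boundaries. A priori the ovals could be homologically dependent in weird ways, or thread the graph so as to produce higher genus when doubled; one must use the maximality (M-property) of the smooth curves $C_t$ together with the maximal degeneration of $C$ to pin down that the combinatorial ``M-datum'' is exactly a spherical one. I would handle this via a dimension/Euler-characteristic count: doubling along $C(\bR)$ with $g+1$ components produces a surface of Euler characteristic $2(\chi(C_{\mathrm{normalized}}) ) + (g+1)$-type corrections, and the M-property forces this to be $2$, i.e.~genus $0$; Tutte's arborescence theorem then supplies the needed rigidity for how the black circles attach, ruling out non-spherical configurations. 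The rest --- the dictionary translating (2), (3), and the no-$2$-channel hypothesis into ``triangulation,'' ``no degenerate face,'' ``$3$-connected'' --- is routine once this homological core is in place.
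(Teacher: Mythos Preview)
Your plan has the right large-scale structure but misidentifies where the real difficulty lies, and your proposed mechanism differs substantially from the paper's.

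For the direction (spherical $\Rightarrow$ real structure), you gloss over the genuinely hard step. Given a checkerboard triangulation of $S^2$, the $g+1$ white triangles will indeed correspond to the $g+1$ ovals and the $g+1$ black triangles to the degree-$1$ components; a conjugation-invariant pair-of-pants decomposition can be built from this. But to get a \emph{type A} M-curve you must place one marked point on each oval except for a single oval carrying three. In the triangulation picture this means: fix one ``outer'' white triangle to receive three marked points (the three outer vertices), and then match the remaining $g$ vertices bijectively to the $g$ interior white triangles, with each vertex incident to its assigned triangle. The existence of such a perfect matching is not automatic and is exactly what Tutte's Trinity Theorem provides. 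You invoke Tutte, but for the wrong purpose (``homological independence of ovals'')---that independence is trivial once you have the sphere embedding. The matching of marked points to ovals is the substance.

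For the converse (real structure $\Rightarrow$ spherical), your ``cut along the $W_i$ and double, then Euler characteristic'' sketch is too vague to be convincing, and the paper proceeds quite differently. It uses Sepp\"al\"a's description of real stable curves: realize $C_t$ as two conjugate discs $D,\bar D$ with $g$ holes glued along $C_t(\bR)$, take a conjugation-invariant pair-of-pants decomposition, and shrink geodesic boundaries to obtain $C$. One then argues, using the no-acnode hypothesis (2) and the absence of $2$-channel factorization, that no pant boundary lies entirely in $D$ (else shrinking it and its conjugate gives a forbidden pair of complex-conjugate separating nodes), so every pair of pants is conjugation-invariant. Their quotients by conjugation are hexagons tiling $D$; after contracting the degree-$0$ pieces and marked-point pants one reads off black triangles, and gluing in white polygons along the oval arcs gives a checkerboard decomposition of $S^2$. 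The no-$2$-channel hypothesis is used once more to rule out $2$-sided white polygons, forcing all white faces to be triangles. This is concrete and avoids the homological bookkeeping you flag as the main obstacle; that obstacle does not really arise in the pair-of-pants approach.
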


\begin{figure}[htbp]
\includegraphics[width=\textwidth]{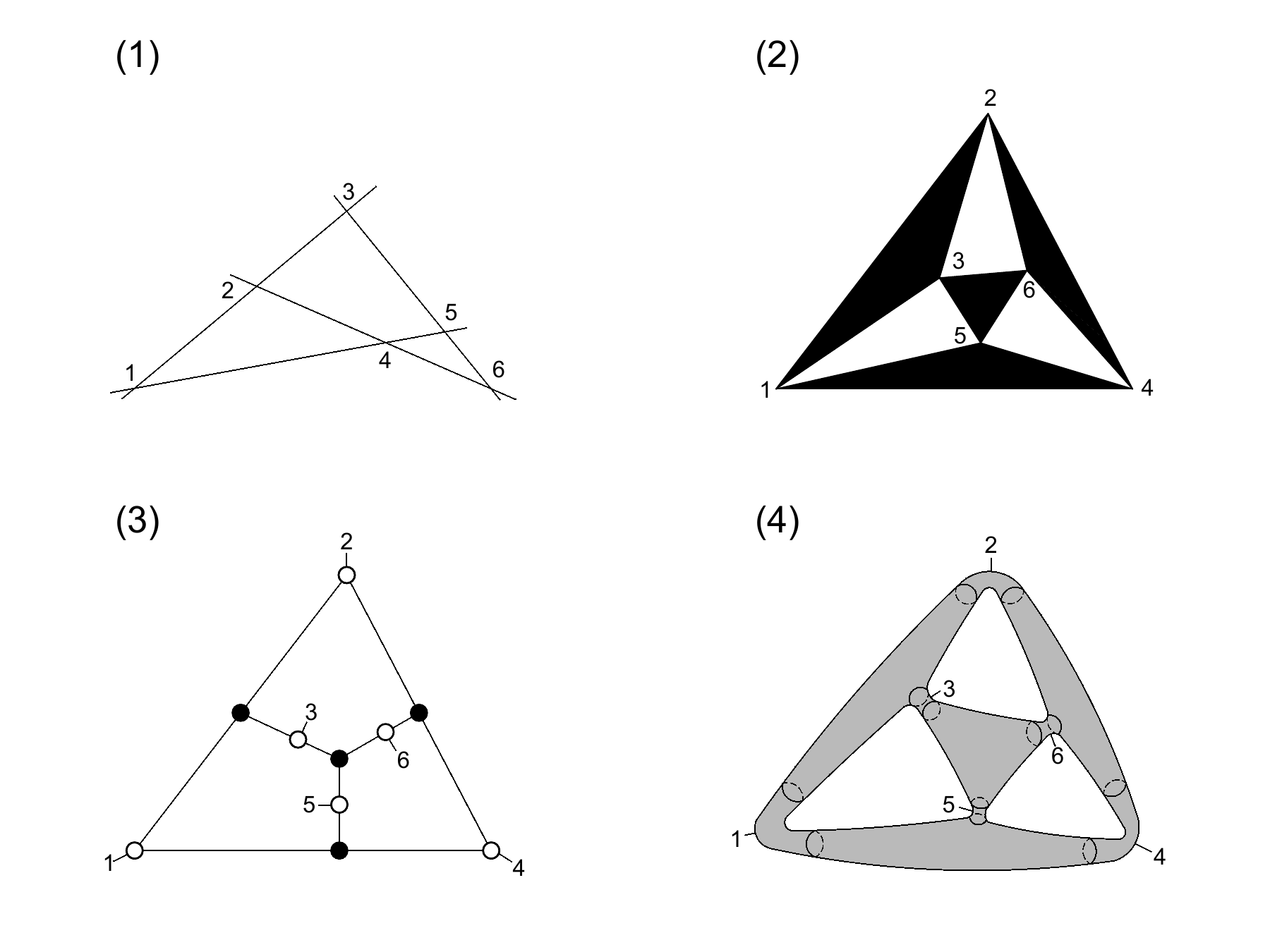}
\caption{\small A hypertree curve (1) of a spherical CT hypertree (2) with an on-shell diagram (3)  is a degeneration of a real algebraic M-curve (4).}\label{wertv23f}
\end{figure}

\begin{proof}
Let $C$ be a maximally degenerate stable MHV curve without $2$-channel factorization
that satisfies (1)--(3). By Theorem~\ref{hjgcjghcj}, it corresponds to a CT hypertree $\Gamma$.
We need to show that $\Gamma$ is spherical.
According to~\cite{Seppala}, we can obtain $C$ as follows\footnote{
More precisely, \cite{Seppala} is restricted to curves without marked points. But there is a  trick, a  morphism
from $\oM_{g,n}$ to the ``flag stratum'' of $\oM_{g+n}$, which attaches a nodal genus $1$ curve to every marking.
All~results that we need about the stable reduction follow from the corresponding results for $\oM_{g+n}$.}.
Start with a smooth pointed M-curve $C_t$ of type~$A$. Topologically, $C_t$ is obtained by gluing two identical discs $D$ and $\bar D$
with $g$ holes removed. In $C_t$, these disks are  interchanged by the complex conjugation. They
are glued along
the real locus $C(\bR)=D\cap\bar D$, which is the union of the outside oval of $D$ and $g$ inside ovals. 
Of course we can turn any oval into an ``outside'' oval by turning $D$ inside out. But there is a good choice:
we~choose the oval with $3$ marked points as an outside one and the remaining ovals (with
one marked point each) as the inside ovals. 

Next we fix a hyperbolic metric on $C_t\setminus\{p_1,\ldots,p_n\}$, 
remove small conjugation-invariant 
discs around marked points with geodesic boundaries and choose a conjugation-invariant pair of pants decomposition 
of the remaining surface with geodesic boundaries.
The stable curve $C$ is obtained by shrinking the boundaries of pairs of pants to nodes  except for the boundaries around marked points.

We investigate the structure of this pair of pants decomposition.
Since $C$ does not contain acnodes by condition (2), no real oval of $C_t$ is the boundary of a pair of pants.
We claim that no boundary $B$ is entirely contained in $D$ (or $\bar D$) either.
Indeed, if this were the case then by Jordan's lemma $B$ separates $D$ into two connected components.
Shrinking $B$ (and $\bar B$) creates a pair of complex-conjugate nodes of $C$.
Neither of these nodes is a separating node because $C$ does not admit a one-channel factorization by Lemma~\ref{qwefv2e2}.
Thus these nodes separate $C$ into two connected components.
But this contradicts the fact that $C$ does not admit a two-channel factorization\footnote{
Interestingly, if this two-channel factorization by complex-conjugate nodes does occur,
the resulting curve is of the easy type (I) completely described in Theorem~\ref{safbsfhsfn}.} and satisfies condition (3).

It follows that every pair of pants, a Riemann sphere with three  discs removed, is in fact conjugation-invariant,
moreover isomorphic to $\bP^1(\bC)$ with the usual complex conjugation and three removed discs are centered at three points of $\bP^1(\bR)$.
The quotient of $C_t$ with the pair of pants decomposition by the complex conjugation is the original disc $D$ with $g$ holes
and the quotient of each pair of pants is a curved hexagon with alternating sides that are either arcs of real ovals of $C(\bR)$ or arcs connecting the ovals.
The pairs of pants arising from the marked points look as follows: an arc around the marked point connecting two points of the same real oval $Z$,
then two arcs of $Z$, then two sides of the hexagon connecting $Z$ to an oval~$Z'$, then an arc of $Z'$.
Other pairs of pants connect three real ovals.
We claim that in fact in the first case $Z\ne Z'$ and in the second case all three real ovals are different.
Indeed, if one of the sides of the hexagon (not around the marked point) connects a real oval to itself then
the corresponding border of the pair of pants is shrunk to a node of $C$ that separates $C$ into two connected components,
which is impossible by Lemma~\ref{qwefv2e2}.
We illustrate the decomposition of $D$ into hexagons in Figure~\ref{asgwrg}, where we shade in gray components of $C$ of degree $0$.
\begin{figure}[htbp]
\includegraphics[width=0.45\textwidth]{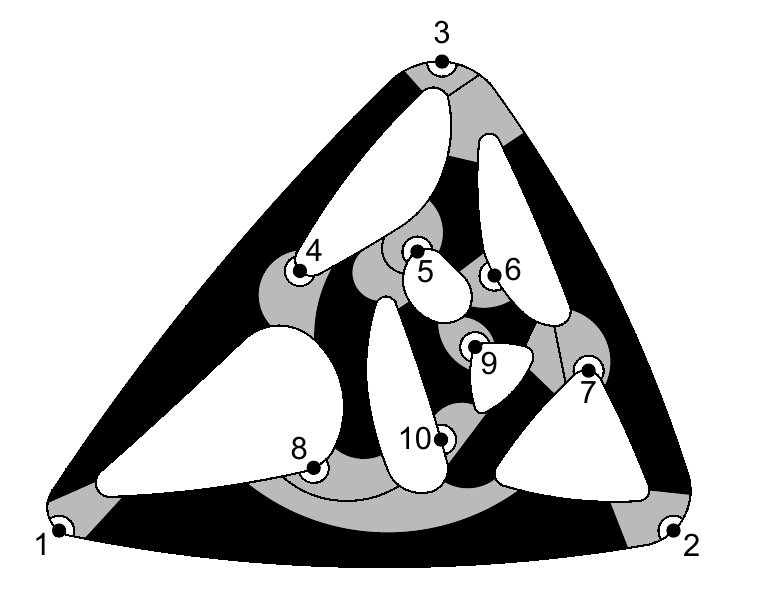}
\includegraphics[width=0.45\textwidth]{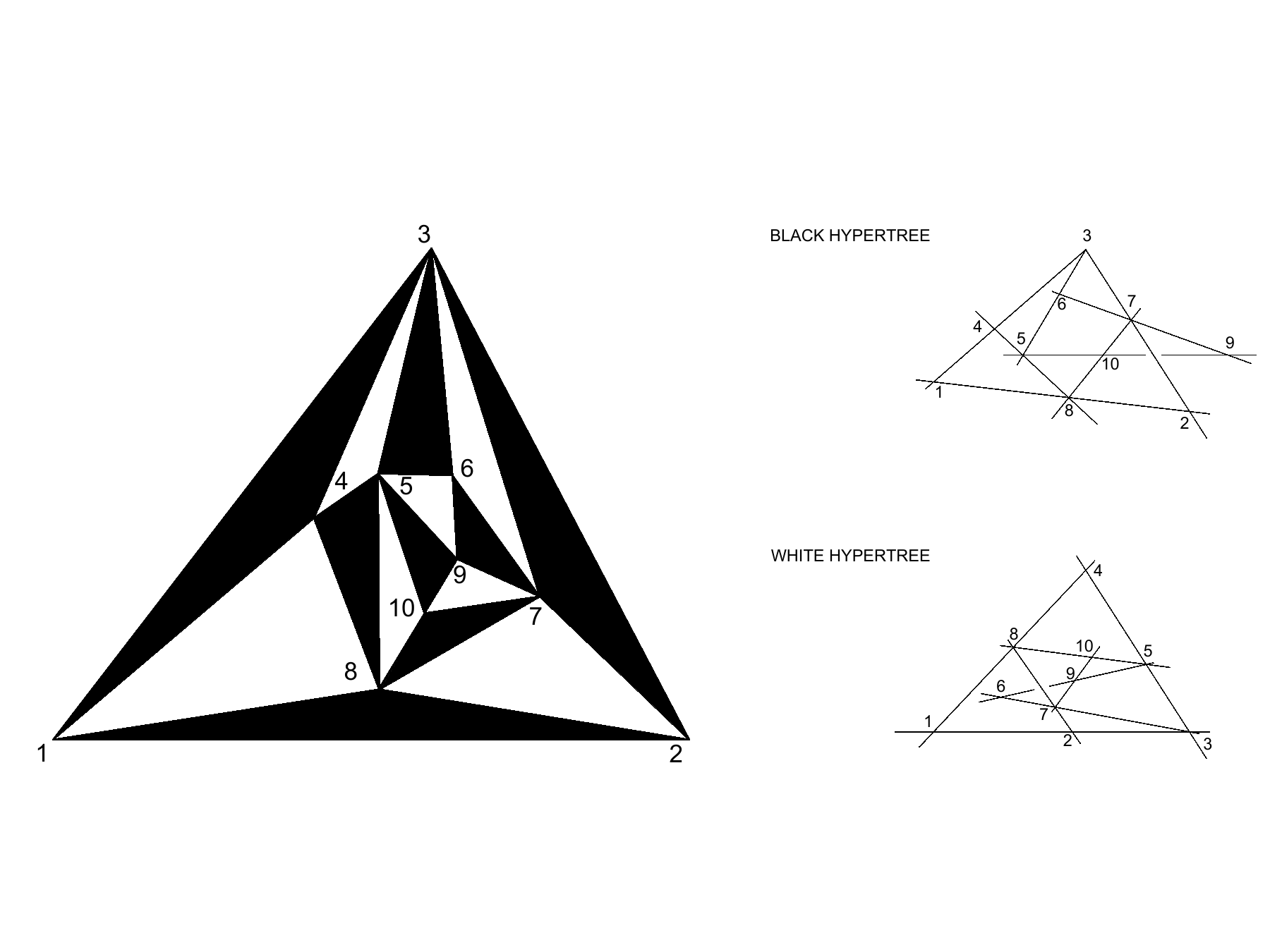}
\caption{\small Decomposition of the disk $D$ and the corresponding triangulation.}\label{asgwrg}
\end{figure}

Finally, we produce a checkerboard decomposition of the sphere, as follows.
We shrink pairs of pants arising from the marked points to points. Furthermore, we shrink hexagons that correspond to other degree $0$ components of $C$ to points.
Furthermore, we shrink arcs connecting real ovals of degree $1$ components of~$C$ to points. This gives $g+1$ black triangles.
We glue $g+1$ white polygons to the real ovals of $C$ along the arcs of black triangles. 
This gives a checkerboard decomposition of the sphere into $g+1$ black triangles and $g+1$ white polygons.
Apriori, some of these white polygons may have two sides, however if that's the case then two adjacent black triangles
will produce a two-channel factorization of~$C$, which is impossible. Thus all white polygons have at least three edges, and therefore
they are all triangles since the total number of edges of black and white triangles is the same.
Therefore, the CT hypertree is spherical. We illustrate the pair of pants decomposition 
and the corresponding triangulation in Figure~\ref{asgwrg}.

It remains to prove that every spherical CT hypertree gives a maximally degenerate stable MHV curve
that can be obtained as a limit of smooth pointed M-curves $C_t$ of type A.
The construction of the conjugation-invariant pair of pants decomposition should be obvious at this point.
The only non-trivial remaining part of the argument is to explain how to distribute points among the real ovals of $C_t(\bR)$.
Concretely, we start with a checker-board triangulation of the sphere, attach three marked 
points to the ``outside'' white triangle and now have to distribute the remaining $g$ marked points in $g$ inside white triangles
so that every triangle contains exactly one marked point as in Figure~\ref{asgwrg}. In other words, we have to construct a perfect matching between
non-exterior vertices of the triangulation and interior white triangles. This  non-trivial matching has been constructed 
by Tutte in his famous Trinity Theorem \cite{Tutte}.
The~algorithm goes as follows (see Figure~\ref{sfvwfeb}).

\begin{figure}[htbp]
\includegraphics[width=0.45\textwidth]{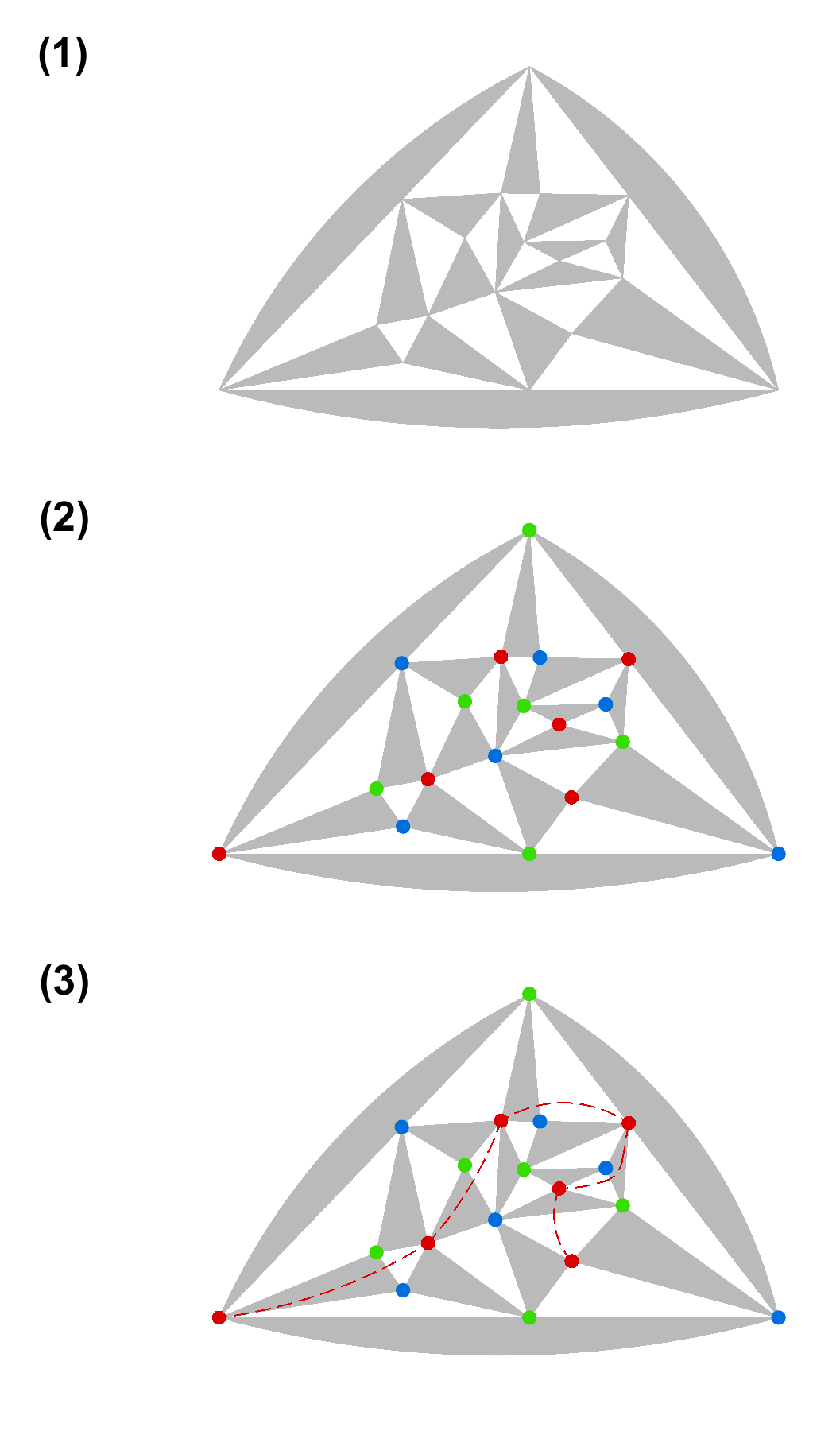}
\includegraphics[width=0.45\textwidth]{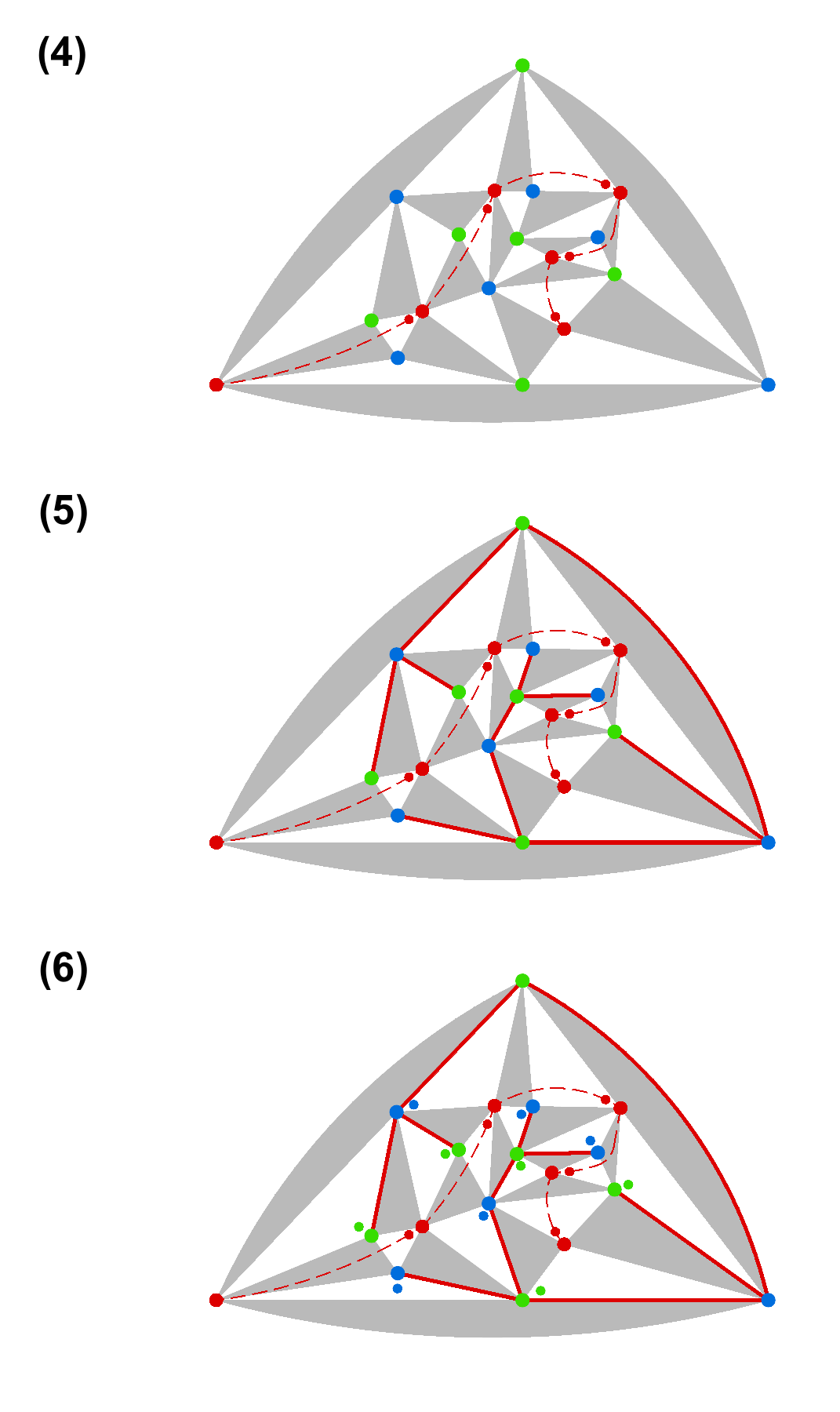}
\caption{Tutte matching algorithm \cite{Tutte}}\label{sfvwfeb}
\end{figure}

\begin{enumerate}
\item Start with any checker-board triangulation.
\item It is easy to see that the vertices of the triangulation can be colored red, green and blue so that every triangle
has vertices of all colors.
\item There always exists at least one red {\em arborescence}: a directed connected tree connecting an outside red vertex to all inside red vertices
with one arrow pointing into each inside red vertex. Every arrow first crosses a black and then a white triangle.
In our illustration the red arborescence is just a path but it can of course be a more complicated tree.
\item At this point we can already construct a partial Tutte matching: match interior red vertices to white triangles right before
them in the arborescence. We indicate this by small red dots in these white triangles.
\item Construct another connected red tree by connecting blue and green vertices by edges which are not intersected by the red arborescence.
\item Shrink the red tree leaf-by-leaf and on every step match a vertex from which the leaf is removed to a white triangle adjacent to the leaf.
We indicate the matching by small green and blue dots  in these white triangles. 
\end{enumerate}
This completes construction of the  matching and the proof of the Theorem.
\end{proof}

\section{Further remarks}\label{lastsection}

Returning to the general case of factorization $\omega_C(p_1+\ldots+p_n)=L\otimes\tilde L$,
 we write 
$d=\deg L=g+k-1$,
where $k$ is an expected dimension of $H^0(C,L)$, known in physical context as helicity.
When $k>2$, the line bundle $L$ itself does not determine a map $C\to\bP^1$,
one also needs to choose a two dimensional subspace in $H^0(C,L)$, i.e.~a pencil of divisors. There are several ways to turn this set-up into a problem
about geometry of linear systems.
One approach is to consider a diagram
$$
\begin{CD}
\cG  @>\bLa>>  M_{0,n} \\
@VVV    \\
\Pic^{d}C                     
\end{CD}
$$
where all arrows are rational maps, the vertical map is generically a Grassmannian fibration with fiber $G(2,H^0(C,L))$,
and the horizontal map sends a pencil into images of marked points in $\bP^1$.
While there is no natural probability distribution on $\cG$, one can still ask about properties of the map $\bLa$, for example about
its degree if it is generically finite, which happens if 
$g+2(k-2)=n-3$, i.e.~$n=g+2k-1$. 

Another approach is to consider maps $\phi_L:\,C\to\bP^{k-1}$
given by complete linear systems instead of maps to $\bP^1$ given by pencils.
This gives a rational map 
$$\Pic^{d}C\mathop{\dashrightarrow}^\Lambda X(k,n),$$
where $X(k,n)$ is the moduli space of $n$ points in $\bP^{k-1}$ modulo $\PGL_k$, see papers \cite{KT,ST,Tev} for the background on this space and its compactifications. With this definition,
we can again view the volume form on $\Pic^{d}C$ as a multi-valued meromorphic scattering amplitude form on $X(k,n)$, at least if the map $\Lambda$ is generically finite, which happens if $g=n(k-1)-k^2+1$, i.e.~if $n={g\over k-1}+k+1$.
Another advantage is that the duality between bundles $L$ and $\tilde L$ is manifest: we have 
$$\tilde d=\deg \tilde L=(2g-2+n)-(g+k-1)=g+(n-k)-1=g+\tilde k-1,$$
where $\tilde k=n-k$. In fact we have a commutative diagram 
$$
\begin{CD}
\Pic^{d}C  @>\simeq>>  \Pic^{\tilde d}C \\
@V\Lambda VV  @VV{\tilde\Lambda}V   \\
X(k,n) @>>\simeq> X(n-k,n)                     
\end{CD}
$$
with rational vertical arrows.
Indeed, the Gale duality  $X(k,n)\simeq X(n-k,n)$
corresponds to the pairing $(U\subset V)\longleftrightarrow (U^\perp\subset V^*)$ between subspaces of $\bC^n$ and global sections of $L$ and $\tilde L$ are indeed perpendicular due to the residue theorem.

\begin{Example}
The first non-MHV case is the following ``666 Puzzle''. Let $C$ be a genus $4$ curve with $6$ marked points $p_1,\ldots p_6$. 
A choice of a degree $6$ line bundle $L\in\Pic^6C$ realizes $C$ as a $6$-nodal sextic in $\Bbb P^2$ with $6$ marked points. 
This gives a generically finite rational map $\Lambda$ from $\Pic^6C$ to $X(3,6)$, which has dimension~$4$. 
What is the degree of $\Lambda$? An easier case for $k=3$ would be to consider nodal genus~$2$ quartics in $\Bbb P^2$ with $5$ marked points but by the Gale duality this case is equivalent to the MHV case $g=k=2$, so the degree of $\Lambda$ will be $4$.
\end{Example}

In the rest of this section we  show that, in the maximally degenerate case, our scattering amplitude form on $X(k,n)$ 
(which is isomorphic to $M_{0,n}$ in the MHV case $k=2$)
is equivalent to the leading singularity form of the scattering amplitude
on the Grassmannian $G(k,n)$ as described in \cite{Grass}. It~is convenient to pass to the $(\bC^*)^{n-1}$-torsor $\widehat{\Pic}\ C\to\Pic C$
which parametrizes line bundles $L$ along with trivializations 
$t_i:\,L|_{p_i}\simeq\bC$ for $i=1,\ldots,n$. This automatically gives trivializations $\tilde t_i:\,\tilde L|_{p_i}\simeq\bC$
such that $t_i\otimes\tilde t_i$ is the canonical trivialization  $\omega_C(p_1+\ldots+p_n)|_{p_i}\simeq\bC$ given by the residue.
The choice of global sections $s_\alpha\in H^0(C,L)$, $\tilde s_{\tilde\alpha}\in H^0(C,\tilde L)$ for $\alpha,\tilde\alpha=1,2$
gives spinor 2-vectors  $\lambda_i=t_i(s_\alpha|_{p_i})$ and $\tilde\lambda_i=\tilde t_i(\tilde s_\alpha|_{p_i})$
such that each (external) momentum can be written as  $\bold p_i=\lambda_i\tilde\lambda_i^T$.
The change of trivializations corresponds to the action of the little torus $(\bC^*)^n$ by 
$\lambda_i\mapsto t_i\lambda_i$ and $\tilde\lambda_i\mapsto t_i^{-1}\tilde\lambda_i$.

We have a commutative diagram  
$$
\begin{CD}
\widehat{\Pic}^{\vec d}C && \mathop{\dashrightarrow}\limits^{\hat\Lambda} && G(k,n)  \\             
@VVV    && @VV\pi V\\
\Pic^{\vec d}C && \mathop{\dashrightarrow}\limits^\Lambda && X(k,n)    \\           
\end{CD}
$$
Here $\hat\Lambda(L,t_1,\ldots,t_n)$ is the row space of the matrix $\left(t_i(s_j|_{p_i})\right)_{i=1,\ldots,n\atop j=1,\ldots,k}$,
where $s_1,\ldots,s_k$ is a basic of $H^0(C,L)$, and 
$\pi$ is a (rational) quotient map by the little torus acting on the Grassmannian. 
Being a torsor over $\Pic^{\vec d}C$, $\widehat{\Pic}^{\vec d}C$
also carries a canonical volume form and writing it down as a multi-valued form on $G(k,n)$ in Pl\"ucker coordinates is equivalent to writing a scattering amplitude form on $X(k,n)$. 

We specialize to the maximally degenerate case. Let $v$ be the number of irreducible components of $C$, i.e.~the number of vertices
of the on-shell diagram, and let $i$ be the number of internal edges. Then
$\dim\widehat{\Pic}^{\vec d}C=g+n-1=2v-i$. Furthermore, 
 $\widehat{\Pic}^{\vec d}C$ has a presentation as a quotient torus
$$\widehat{\Pic}^{\vec d}C\simeq\widehat{\Pic}^{\vec d}C^\nu/(\bC^*)^{i}\simeq(\bC^*)^{2v}/(\bC^*)^{i}$$ 
defined as follows.
The restriction of $L$ to every irreducible component of $C$ (equivalently, every connected component of the normalization $C^\nu$)
is either $\cO_{\bP^1}(1)$ (for black vertices) or $\cO_{\bP^1}$ (for white vertices). In either case,
choose its trivialization at three special points of that component 
and use these trivializations to glue a line bundle on $C$ from the line bundle on the normalization.
The action of the torus $(\bC^*)^{i}$ comes from simultaneously rescaling  trivializations at two points of $C^\nu$
that glue to the same node of $C$.  The standard $\dlog$  form on $(\bC^*)^{2v}/(\bC^*)^{i}$
then gives the formula \cite[(4.41)]{Grass} for the leading singularity of the scattering amplitude.
Various delta-functions in this formula 
simply encode the fact that choosing the spinor variables $\lambda_i$ and $\tilde\lambda_i$
corresponds to choosing a $2$-dimensional subspace in $H^0(C,L)$ (resp.~in the Gale-dual $H^0(C,\tilde L)$ -- orthogonal of $H^0(C,L)$).

\section*{References}

\begin{biblist}

\bib{Alex}{article}{   
  title={Compactified jacobians and Torelli map},
  author={Alexeev, Valery},
  journal={Publications of the Research Institute for Mathematical Sciences},
  volume={40},
  number={4},
  pages={1241--1265},
  year={2004},
  publisher={Research Institute forMathematical Sciences}
}

\bib{Casagrande}{article}{   
author = {Araujo, Carolina},
author={Casagrande, Cinzia},
journal = {Geometry \& Topology},
number ={5},
pages = {3009--3045},
title = {On the Fano variety of linear spaces contained in two odd-dimensional quadrics},
volume = {21},
year = {2017}
}

\bib{Araujo}{article}{
    author = {Araujo, Carolina},
    author={Fassarella, Thiago},
    author={Kaur, Inder},
    author={Massarenti, Alex},
    title = {On Automorphisms of Moduli Spaces of Parabolic Vector Bundles},
       journal = {International Math.~ Research Notices},
    volume = {2021},
    number = {3},
    pages = {2261--2283},
    year = {2019},
    month = {07}
}

\bib{ACGH}{book}{
author={Arbarello, E.}
author={Cornalba, M.}
author={Griffiths, P.A.}
author={Harris, J.}
title={Geometry of algebraic curves. Vol.~I.}, 
publisher={Springer--Verlag, New York}
year={1985}
pages={xvi+386 pp.}
}

\bib{MHV}{article}{
  title={On-shell structures of MHV amplitudes beyond the planar limit},
  author={{Arkani-Hamed}, Nima}
  author={Bourjaily, Jacob}
  author={Cachazo, Freddy}
  author={Postnikov, Alexander},
  author={Trnka, Jaroslav},
  journal={J. of High Energy Physics},
  volume={2015},
  number={6},
  pages={179},
  year={2015},
  publisher={Springer}
}

\bib{Grass}{book}{
title={Grassmannian Geometry of Scattering Amplitudes}, 
publisher={Cambridge University Press}, author={{Arkani-Hamed}, Nima}
author={Bourjaily, Jacob},
author={Cachazo, Freddy},
author={Goncharov, Alexander},
author={Postnikov, Alexander},
author={Trnka, Jaroslav}, year={2016}}

\bib{Bauer}{article}{
author = {Bauer, Stefan},
journal = {Mathematische Annalen},
number = {3},
pages = {509-526},
title = {Parabolic bundles, elliptic surfaces and SU (2)-representation spaces of genus zero Fuchsian groups.},
volume = {290},
year = {1991},
}

\bib{Beauville}{article}{   
author = {Beauville, Arnaud},
journal = {Acta Math.},
pages = {211--235},
title = {Jacobiennes des courbes spectrales et syst\`emes hamiltoniens compl\`etement int\'egrables},
volume = {164},
year = {1990}
}

\bib{Kumar}{article}{   
author = {Biswas, Indranil},
author= {Holla, Yogish},
author={Kumar, Chanchal},
journal = {Michigan Math. J.},
month = {08},
number = {2},
pages = {467--479},
title = {On moduli spaces of parabolic vector bundles of rank $2$ over~$\bC\bP^1$},
volume = {59},
year = {2010}
}


\bib{Gen7}{article}{   
      title={Tevelev degrees in Gromov-Witten theory}, 
      author={Buch, Anders},
      author={Pandharipande, Rahul},
      year={2021},
journal = {available at arXiv:2010.03519}
}

\bib{CT_Crelle}{article}{   
AUTHOR = {Castravet, Ana-Maria}
AUTHOR={Tevelev, Jenia},
     TITLE = {Hypertrees, projections, and moduli of stable rational curves},
  JOURNAL = {Journal f\"ur die Reine und Angewandte Mathematik. (Crelle's
              Journal)},
    VOLUME = {675},
      YEAR = {2013},
     PAGES = {121--180},
      ISSN = {0075-4102},
   MRCLASS = {14H10 (14H51)},
  MRNUMBER = {3021449},
MRREVIEWER = {Dawei Chen},
}

\bib{CT_Cont}{article}{   
AUTHOR = {Castravet, Ana-Maria}
AUTHOR={Tevelev, Jenia},
     TITLE = {Rigid Curves on $\oM _{0,n}$ and Arithmetic Breaks},
   JOURNAL = {Contemp. Math.,},
    VOLUME = {564},
      YEAR = {2012},
     PAGES = {19--67},
}

\bib{CTHilb}{article}{
title={Hilbert's 14th problem and Cox rings}, 
volume={142}, 
number={6}, 
journal={Compositio Math.}, 
publisher={London Mathematical Society}, 
author={Castravet, Ana-Maria}
author={Tevelev, Jenia}, 
year={2006}, 
pages={1479--1498}}

\bib{Gen6}{article}{
author = {Cela, Alessio},
title={Quantum Euler class and virtual Tevelev degrees of Fano complete intersections}, 
journal={Arkiv f\"or Matematik, to appear}}

\bib{Ceyhan}{article}{
        author = {Ceyhan, {\"O}zg{\"u}r},
	Journal = {Selecta Mathematica},
	Number = {2},
	Pages = {203},
	Title = {Graph homology of the moduli space of pointed real curves of genus zero},
	Volume = {13},
	Year = {2007},
}

\bib{Gen1}{article}{
title = {Generalized Tevelev degrees of $\Bbb P^1$},
journal = {Journal of Pure and Applied Algebra},
volume = {227},
number = {7},
year = {2023},
author = {Cela, Alessio},
author = {Lian, Carl},
}

\bib{Gen5}{article}{
author = {Cela, Alessio},
author={Pandharipande, Rahul},
author={Schmitt, J},
title={Tevelev degrees and Hurwitz moduli spaces}, 
volume={173}, 
number={3}, 
journal={Mathematical Proceedings of the Cambridge Philosophical Society}, 
publisher={Cambridge University Press}, 
year={2022}, 
pages={479--510}}

\bib{QM}{article}{
title={Moduli stacks of stable toric quasi-maps}, 
volume={225}, 
journal={Advances in Mathematics}, 
author={Ciocan-Fontanine, I.},
author={Kim, B.}, 
year={2010}, 
pages={3022--3051}}

\bib{Dol}{book}{
 place={Cambridge}, 
 title={Classical Algebraic Geometry}, 
 publisher={Cambridge Univ. Press}, 
 author={Dolgachev, Igor V.}, 
 year={2012}
}

\bib{Gen2}{article}{   
title={Linear series on general curves with prescribed incidence conditions},
journal={Journal of the Institute of Mathematics of Jussieu}, 
author={Farkas, Gavril},
author={Lian, Carl}, 
year={2022}, 
pages={1--21}}

\bib{FMP}{article}{   
     author = {Farkas, Gavril},
     author = {Musta\c t\v a, Mircea},
     author = {Popa, Mihnea},
     title = {Divisors on $\mathcal {M}_{g,g+1}$ and the minimal resolution conjecture for points on canonical curves},
     journal = {Ann. Sci. \'Ecole Normale Sup.},
     publisher = {Elsevier},
     volume = {Ser. 4, 36},
     number = {4},
     year = {2003},
     pages = {553--581},
}

\bib{Gen3}{article}{   
author={Lian, Carl}, 
title={Asymptotic geometric Tevelev degrees of hypersurfaces},
journal={Michigan Math. J., to appear}
}

\bib{Gen4}{article}{   
author={Lian, Carl}, 
author={Pandharipande, Rahul},
title={Enumerativity of virtual Tevelev degrees}
journal={Ann. Sc. Norm. Super. Pisa Cl. Sci., to appear}}

\bib{GH}{article}{   
author = {Gross, Benedict H.},
author={ Harris, Joe},
journal = {Annales scientifiques de l'\'Ecole Normale Sup\'erieure},
number = {2},
pages = {157--182},
title = {Real algebraic curves},
volume = {14},
year = {1981},
}

\bib{Halphen}{book}{
  title={M{\'e}moire sur la classification des courbes gauches alg{\'e}briques},
  author={Halphen, G.H.},
  series={Extrait du Journal de l'{\'E}cole Polytechnique},
  url={https://books.google.cl/books?id=gARLAAAAYAAJ},
  year={1882},
  publisher={Gauthier}
}

\bib{Hartshorne}{book}{
  author = {Hartshorne, Robin},
  publisher = {Springer},
  series = {Graduate Texts in Mathematics},
  title = {Algebraic Geometry},
  volume = {52},
  year = {1977}
}

\bib{Huisman}{article}{   
author = {Huisman, J.},
year = {2000},
month = {11},
pages = {},
title = {On the Geometry of Algebraic Curves Having Many Real Components},
volume = {14},
journal = {Revista Matem\'atica Complutense},
}

\bib{Jacobi}{article}{   
      author = {Jacobi, C.G.J.},
      title = {\"Uber eine neue Methode zur Integration der hyperelliptischen 
      Differentialgleichungen und \"uber die rationale Form ihrer vollst\"andigen algebraischen Integralgleichungen},
      journal = {Journal f\"ur die reine und angewandte Mathematik},
      year = {1846},
      number = {32},
      pages={220--226},
}

\bib{KT}{article}{   
author = {Keel, Sean},
author={Tevelev, Jenia},
journal = {Duke Math. Journal},
number = {2},
pages = {259--311},
title = {Geometry of Chow quotients of Grassmannians},
volume = {134},
year = {2006}
}

\bib{MOP}{article}{   
author = {Marian, Alina},
author={Oprea, Dragos},
author={Pandharipande, Rahul},
journal = {Geometry \& Topology},
number = {3},
pages = {1651--1706},
title = {The moduli space of stable quotients},
volume = {15},
year = {2011}
}

\bib{Moon}{article}{   
author = {Moon, Han-Bom},
author={Yoo, Sang-Bum},
    title = {Birational Geometry of the Moduli Space of Rank 2 Parabolic Vector Bundles on a Rational Curve},
    journal = {International Math. Research Notices},
    volume = {2016},
    number = {3},
    pages = {827--859},
    year = {2015},
    month = {06},
}

\bib{Mukai}{book}{
place={Cambridge}, 
title={An Introduction to Invariants and Moduli},
publisher={Cambridge Univ. Press}, 
author={Mukai, Shigeru}, 
year={2003}, 
collection={Cambridge Studies in Advanced Mathematics}
}

\bib{Mumford}{book}{
  title={Tata Lectures on Theta II: Jacobian theta functions and differential equations},
  author={Mumford, D},
  series={Modern Birkh{\"a}user Classics},
  year={2012},
  publisher={Birkh{\"a}user Boston}
}

\bib{Oda}{article}{  
  title={Compactifications of the generalized Jacobian variety},
  author={Oda, Tadao},
  author={Seshadri, Conjeerveram S},
  journal={Transactions of the American Mathematical Society},
  pages={1--90},
  volume={253}
  year={1979},
  publisher={JSTOR}
}

\bib{Opie}{article}{   
author = {Opie, Morgan},
journal = {Michigan Math. J.},
month = {06},
number = {2},
pages = {251--285},
publisher = {University of Michigan, Department of Mathematics},
title = {Extremal divisors on moduli spaces of rational curves with marked points},
volume = {65},
year = {2016}
}

\bib{OS}{techreport}{
 title={Database of hypertree divisors},
 author={Opie, Morgan},
 author={Scheidwasser, Ilya},
 year={2009, 2013}
 series={University of Massachusetts}
 note = { \url{https://people.math.umass.edu/~tevelev/HT_database/database.html}}
}

\bib{Pauly}{article}{   
author = {Pauly, Christian},
title = {{Espaces de modules de fibrés paraboliques et blocs conformes}},
volume = {84},
journal = {Duke Mathematical Journal},
number = {1},
publisher = {Duke University Press},
pages = {217 -- 235},
year = {1996},
}

\bib{ST}{article}{   
author = {Schaffler, Luca},
author = {Tevelev, Jenia},
    title = {Compactifications of Moduli of Points and Lines in the Projective Plane},
    journal = {International Mathematics Research Notices},
    volume = {2022},
    number = {21},
    pages = {17000--17078},
    year = {2021},
    month = {08},
}

\bib{Seppala}{article}{   
     author = {Sepp\"al\"a, M.},
     title = {Moduli spaces of stable real algebraic curves},
     journal = {Annales scientifiques de l'\'Ecole Normale Sup\'erieure},
     volume = {Ser. 4, 24},
     number = {5},
     year = {1991},
     pages = {519--544},
}

\bib{SS}{article}{   
author = {Sepp\"al\"a, M.},
 author={Silhol, R.},
journal = {Mathematische Zeitschrift},
number = {2},
pages = {151--166},
title = {Moduli Spaces for Real Algebraic Curves and Real Abelian Varieties.},
volume = {201},
year = {1989},
}

\bib{Sh}{book}{   
      author        = {Shafarevich, Igor R},
      title         = {Basic algebraic geometry},
      publisher     = {Springer},
      address       = {Berlin},
      year          = {2013},
}

\bib{Sim}{article}{   
  title={Moduli of representations of the fundamental group of a smooth projective variety I},
  author={Simpson, Carlos T},
  journal={Publications Math{\'e}matiques de l'IH{\'E}S},
  volume={79},
  pages={47--129},
  year={1994}
}

\bib{Sk}{article}{   
author = {Skorobogatov, Alexei},
volume ={56},
    number = {1/2},
year = {2010},
month = {01},
pages = {73--85},
title = {Del Pezzo surfaces of degree $4$ and their relation to Kummer surfaces},
journal = {L'\'Enseignement Math\'ematique. IIe S\'erie},
}

\bib{Tev}{article}{   
author = {Tevelev, Jenia},
year = {2007},
volume={129}
month = {04},
pages = {1087--1104},
title = {Compactifications of subvarieties of tori},
journal = {Amer. J. Math.}
}

\bib{Tutte}{article}{   
  author = {Tutte, W.T.},
  title={Duality and trinity},
  booktitle={Colloquium Math. Society Janos Bolyai},
  volume={10},
  pages={1459--72},
  year={1975}
}\end{biblist}

\end{document}